\documentclass[a4paper,reqno]{amsart}

\usepackage[plainpages=false,colorlinks,linkcolor=black,citecolor=black,urlcolor=black,breaklinks]{hyperref}

\usepackage{amsmath,amsthm, amssymb}
\usepackage{mathrsfs}
\usepackage[shortlabels]{enumitem}
\usepackage{graphicx}
\usepackage[font=small]{caption}
\usepackage{xcolor}
\usepackage{url}

\newcommand{\N}{\mathbb{N}}
\newcommand{\R}{{\mathbb{R}}}
\newcommand{\C}{{\mathbb{C}}}
\newcommand{\Z}{{\mathbb{Z}}}
\newcommand{\D}{{\mathbb{D}}}
\newcommand{\dd}{{{\rm d}}}
\newcommand{\ii}{{\rm i}}
\newcommand{\e}{{\rm e}}

\newcommand{\ov}{\overline}
\newcommand\wt{\widetilde}
\newcommand\wh{\widehat}

\newcommand{\la}{\lambda}

\newcommand{\eps}{\varepsilon}

\newcommand{\spp}{\sigma_{\rm p}}

\newcommand{\essinf}{\operatorname*{ess \,inf}}

\newcommand{\Dom}{{\operatorname{Dom}}}
\newcommand{\Ker}{{\operatorname{Ker}}}
\newcommand{\Ran}{{\operatorname{Ran}}}

\renewcommand{\Re}{\operatorname{Re}}
\renewcommand{\Im}{\operatorname{Im}}

\newcommand{\supp}{\operatorname{supp}}

\newcommand{\Num}{\operatorname{Num}}

\newcommand{\loc}{\mathrm{loc}}
\newcommand{\BigO}{\mathcal{O}}

\newcommand{\lspan}{{\operatorname{span}}}

\newcommand{\CiR}{{C^{\infty}(\R)}}

\newcommand{\Rd}{\mathbb{R}^d}

\newcommand{\Lt}{{L^2}}

\newcommand{\LtOm}{{L^2(\Omega)}}

\newcommand{\LOm}{{L^2(\Omega)}}

\newcommand{\LolocOm}{{L^1_{\loc}(\Omega)}}

\newcommand{\CcRd}{{C_c^{\infty}(\Rd)}}

\newcommand{\CcOm}{{C_c^{\infty}(\Omega)}}

\newcommand{\Ntime}{\partial_t}

\newcommand{\ls}{\lesssim 	}
\newcommand{\gs}{\gtrsim}

\theoremstyle{plain}

\newtheorem{theorem}{Theorem}[section]
\newtheorem{lemma}[theorem]{Lemma}

\newtheorem{proposition}[theorem]{Proposition}
\newtheorem{corollary}[theorem]{Corollary}

\theoremstyle{definition}

\newtheorem{remark}[theorem]{Remark}
\newtheorem{asm-sec}[theorem]{Assumption}

\newcommand\cA{\mathcal A}
\newcommand\cB{\mathcal B}
\newcommand\cD{\mathcal D}

\newcommand\cF{\mathcal F}

\newcommand\cH{\mathcal H}
\newcommand\cK{\mathcal K}

\newcommand\cW{\mathcal W}

\newcommand\frs{\mathfrak s}
\newcommand\frt{\mathfrak t}

\newcommand\frh{\mathfrak h}

\newcommand{\sfX}{{\mathsf X}}
\newcommand{\sfY}{{\mathsf Y}}

\renewcommand{\a}{\alpha}
\renewcommand{\d}{\delta}
\newcommand{\f}{\varphi}
\newcommand{\s}{\sigma}
\newcommand{\g}{\gamma}
\newcommand{\G}{\Gamma}
\renewcommand{\th}{\theta}
\newcommand{\p}{\psi}
\newcommand{\z}{\zeta}
\renewcommand{\o}{\omega}

\newcommand{\Ac}{{\mathcal A}}
\newcommand{\Bc}{{\mathcal B}}
\newcommand{\Cc}{{\mathcal C}}
\newcommand{\Dc}{{\mathcal D}}
\newcommand{\Gc}{{\mathcal G}}
\newcommand{\Hc}{{\mathcal H}}
\newcommand{\Kc}{{\mathcal K}}
\newcommand{\Oc}{{\mathcal O}}
\newcommand{\Sc}{{\mathcal S}}
\newcommand{\Vc}{{\mathcal V}}
\newcommand{\Wc}{{\mathcal W}}

\newcommand{\nr}[1]{\left\Vert #1\right\Vert}
\newcommand{\abs}[1]{\left\vert #1\right\vert}
\newcommand{\inv}{^{-1}}
\newcommand{\diff}{\, \mathrm d}
\newcommand{\1}{\mathds 1}
\newcommand{\set}[1]{\left\{ #1 \right\}}
\newcommand{\stepp}{\noindent {\bf $\bullet$}\quad }
\newcommand{\innp}[2]{\left< #1 , #2 \right>}
\newcommand{\pppg}[1] {\left< #1 \right>}
\newcommand {\limt}[2]{\xrightarrow[#1 \to #2]{}}
\newcommand{\fonc}[4] { \left\{ \begin{array}{ccc} #1 & \to & #2 \\ #3 & \mapsto & #4 \end{array} \right. }
\newcommand{\high}{{\mathsf{high}}}
\newcommand{\low}{{\mathsf{low}}}

\usepackage{mathtools,dsfont}
\mathtoolsset{showonlyrefs}

\numberwithin{equation}{section}
\numberwithin{figure}{section}

\newcommand{\HH}{{\mathcal H}}
\newcommand{\KK}{{\mathcal K}}

\newcommand{\scK}{\mathscr K}
\newcommand{\cC}{\mathcal C}

\newcommand{\AK}{\Ac_\KK}

\newcommand{\TX}{T_{\mathsf X}}
\newcommand{\TY}{T_{\mathsf Y}}

\newcommand{\kk}{{\mathsf K}}
\newcommand{\cc}{\mathfrak c}

\usepackage{soul}

\begin{document}
\title[Semigroup decay for unbounded damping]{Semigroup decay for the wave equation with unbounded damping}

\author{Antonio Arnal}

\address[PS, AA]{Institute of Applied Mathematics, Graz University of Technology, Steyrergasse 30, 8010 Graz, Austria}

\email{siegl@tugraz.at}
\email{aarnalperez01@qub.ac.uk}

\author{Borbala Gerhat}

\address[BG]{Institute of Science and Technology Austria, Am Campus 1, 3400 Kloster\-neu\-burg, Austria}

\email{borbala.gerhat@ista.ac.at}

\author{Julien Royer}

\address[JR]{Institut de Math\'{e}matiques de Toulouse, Universit\'{e} de Toulouse, 118 route de Narbonne, 31062 Toulouse C\'{e}dex 9, France}

\email{julien.royer@math.univ-toulouse.fr}

\author{Petr Siegl}

\thanks{
A.~Arnal acknowledges the support of NAWI Graz for his postdoctoral stay at TU Graz in 2023-2024. This research was partially funded by the Austrian Science Fund (FWF) 10.55776/P 33568-N.
B.~Gerhat has received funding from the European Union’s Horizon 2020
research and innovation programme under the Marie Sk\l odowska-Curie Grant
Agreement No.~101034413, and the EXPRO grant No.~20-17749X
of the Czech Science Foundation.
J.~Royer is supported by the Labex CIMI, Toulouse, France, under grant ANR-11-LABX-0040-CIMI.
We are grateful to Perry Kleinhenz for numerous inspiring discussions.
}

\subjclass[2010]{35L05, 35P05, 34G10, 34L40, 47A10, 47D06, 34D05, 35B40, 26A12}

\keywords{damped wave equation, unbounded damping, resolvent bounds, low frequencies, semi-uniform stability}

\date{\today}

\begin{abstract}
We study the damped wave equation with a damping coefficient which is possibly singular and unbounded at infinity. In general, zero belongs to the spectrum of the corresponding generator, which prevents a uniform (exponential) decay for the energy. However, for initial conditions in a suitable subspace, a detailed analysis of the resolvent norm for low frequencies leads to sharp polynomial time-decay rates for the solution and its energy. 
\end{abstract}

\maketitle

\section{Introduction and main result}
\label{sec:intro}

Consider the damped wave equation (DWE)
\begin{equation}
\label{dwe.2ndorder}
\left\{
\begin{aligned}
\partial_{tt} u(t,x) + a(x) \Ntime u(t,x) &= (\Delta - q(x)) u(t,x), & \quad t & > 0,&  \quad x  & \in \Omega,
\\
u(t,x) & =  0, & \quad t & > 0, & \quad x & \in \partial \Omega,\\
(u(0), \partial_t u(0)) & = (f,g),
\end{aligned}
\\
\right.
\end{equation}
where $\emptyset \neq \Omega \subset \Rd$ is open and the non-negative damping coefficient (absorption index) $a$ and non-negative potential $q$ satisfy a minimal regularity assumption (i.e.~only local integrability):
\begin{equation}\label{asm:1}
0\leq a,q \in \LolocOm.
\end{equation}
 Throughout this paper, we will consider the unique mild solution $u(t)$ for the problem \eqref{dwe.2ndorder} (in the sense given by Proposition \ref{prop:m-dissipative} and Remark \ref{rem:mild.sol} below). Our goal is to obtain the decay rates of several quantities, in particular the energy
\[
E(u;t) = \|\nabla u\|_{L^2}^2 + \|q^\frac12 \nabla u\|_{L^2}^2 + \|\partial_t u\|_{L^2}^2
\]
as well as (weighted) $L^2$-norms of the solution $u(t)$, depending on the coefficients $a$, $q$ and the initial conditions. We are mainly interested in the case where $\Omega$ is unbounded and the coefficients $a$ and $q$ may be large at infinity. We focus here on the case of uniformly positive $a$, which simplifies the analysis of high frequencies (but is not essential for our key low frequency estimates; see Section~\ref{sec:non.up.damping} where further results and comments on the non-uniformly positive case are discussed). An example of (regular) coefficients illustrating the setting is
\begin{equation}\label{intro.ex}
	\Omega = \Rd, \quad a(x) = |x|^2 +1, \quad q(x) \equiv q_0 \geq 0, \quad x \in \Omega.
\end{equation}

\subsection*{Main results}
In order to state our results, we introduce the energy Hilbert space
\begin{equation}\label{H.def.intro}
	\HH := \cW \oplus L^2(\Omega), \quad \|F\|_{\HH}^2 := \|f\|_{\cW}^2 + \|g\|_{L^2}^2, \quad F = (f,g) \in \HH.
\end{equation}
Here $\cW$ is the Hilbert completion  of $\CcOm$ with respect to the inner product $\langle \cdot, \cdot \rangle_{\cW}$ inducing the norm
\begin{equation} \label{W.norm}
	\|u \|_{\cW}^2 := \| \nabla u \|_{L^2}^2 + \| q^{\frac12} u \|_{L^2}^2, \qquad u \in \cW.
\end{equation}
In general, $\cW$ might not be contained in $L^2(\Omega)$ but, in any case, $\nabla u$ and $q^{\frac 12} u$ are (identified with) functions in $L^2(\Omega)$ (see Appendix~\ref{app:W0} for more details).

The essential first observation is that a uniform (and hence exponential) energy decay of solutions is possible only in the case when the damping coefficient $a$ is dominated by $-\Delta+q$ in the sense of \eqref{a.q.rel.bdd.intro} below. 

\begin{proposition} \label{prop:exp-decay}
	Let $\Omega \subset \Rd$ be non-empty and open, and let $0\leq a,q \in \LolocOm$. If there exist $\nu > 0$ and $C > 0$ such that for all $F=(f,g) \in \HH$ and $t \geq 0$ the solution $u(t)$ of \eqref{dwe.2ndorder} satisfies
	\begin{equation} \label{eq:exp-decay}
		\|\nabla u(t)\|_{L^2} + \|q^\frac12 u(t)\|_{L^2} + \|\partial_t u(t)\|_{L^2} \leq C \e^{-\nu t} \|F\|_{\HH},
	\end{equation}
	then there exists $M > 0$ such that
	\begin{equation}\label{a.q.rel.bdd.intro}
		\|a^\frac 12 \varphi\|_{L^2}^2 + \|\varphi\|_{L^2}^2 \leq M \left(\|\nabla \varphi\|_{L^2}^2 + \|q^\frac 12 \varphi\|_{L^2}^2 \right), \qquad \varphi \in \CcOm.
	\end{equation}
	Moreover, if $a(x) \geq a_0 > 0$ a.e.~in $\Omega$, then the reverse implication holds, so that \eqref{eq:exp-decay} and~\eqref{a.q.rel.bdd.intro} are equivalent in this case.
\end{proposition}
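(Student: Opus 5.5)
Throughout I work with the generator $G$ of the contraction semigroup on $\HH$ given by Proposition~\ref{prop:m-dissipative}. Formally
\[
G(f,g) = \bigl(g,\ (\Delta - q)f - a g\bigr).
\]
Exponential decay \eqref{eq:exp-decay} is equivalent to $\|\e^{tG}\| \le C\e^{-\nu t}$. Both directions will go through the resolvent at $0$ and the Gearhart--Prüss theorem.

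\textbf{Forward implication.} If \eqref{eq:exp-decay} holds, then $0 \in \rho(G)$ and $G^{-1} = -\int_0^\infty \e^{tG}\,\dd t$ is bounded. Formally, solving $G(f,g) = (\phi,\psi)$ gives $g = \phi$ and $(\Delta - q) f = \psi + a\phi$. So the bounded operator $G^{-1}$ encodes the map $(\phi,\psi) \mapsto f$ with $f = (-\Delta+q)^{-1}(-\psi - a\phi)$. From this I would extract \eqref{a.q.rel.bdd.intro} by duality.

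\begin{enumerate}[(a)]
\item The estimate $\|f\|_{\cW} \le C(\|\phi\|_{\cW} + \|\psi\|_{L^2})$ with $\psi = 0$ says that $\phi \mapsto a\phi$ is bounded from $\cW$ into the dual $\cW^*$. The dual is identified via the Riesz isomorphism $-\Delta+q\colon \cW \to \cW^*$. Testing against $\phi$ itself gives
\[
\|a^{1/2}\phi\|_{L^2}^2 = \langle a\phi, \phi\rangle \le C\|\phi\|_{\cW}^2 .
\]
\item With $\phi = 0$ the same estimate says that the embedding $L^2 \to \cW^*$, $\psi \mapsto \psi$, is bounded. Its adjoint is the embedding $\cW \hookrightarrow L^2$. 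This gives $\|\varphi\|_{L^2} \le C\|\varphi\|_{\cW}$.
\end{enumerate}

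To be rigorous I would not manipulate the formal inverse. Instead I take $\varphi \in \CcOm$ and write $G^{-1}$ in terms of the weak formulation of $\Dom(G)$. Alternatively, I pair the identity $G G^{-1}\Psi = \Psi$ against $(\varphi, 0)$ and $(0,\varphi)$ using the sesquilinear form defining $G$. The main obstacle is exactly this careful use of the domain description from Proposition~\ref{prop:m-dissipative}, since $a$ and $q$ are only $L^1_{\loc}$. The cleanest route seems to be to use the adjoint $G^*$, which is bounded invertible as well, and to compute $\langle G^{-1}(0,\psi), (\varphi,0)\rangle$ by approximation.

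\textbf{Reverse implication (with $a \ge a_0 > 0$).} By Gearhart--Prüss it suffices to show two things: $\ii\R \subset \rho(G)$, and $\sup_{\mu\in\R}\|(G - \ii\mu)^{-1}\| < \infty$. I would obtain a uniform a priori estimate $\|F\|_{\HH} \le C\|(G - \ii\mu)F\|_{\HH}$ for $F \in \Dom(G)$. Since $G$ is m-dissipative, such an estimate on $\ii\R$ yields invertibility via a standard continuity argument from the right half-plane. Write $(G - \ii\mu)F = (\phi,\psi)$, so that
\[
g = \ii\mu f + \phi, \qquad (-\Delta+q)f + (a+\ii\mu)g = -\psi .
\]
Dissipativity gives $\|a^{1/2}g\|^2 = -\Re\langle (G-\ii\mu)F, F\rangle$. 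Since $a \ge a_0$, this controls $\|g\|^2 \lesssim \|F\|\,\|(\phi,\psi)\|$.

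\begin{enumerate}[(i)]
\item \emph{High frequencies.} Pair the second equation with $f$ and take the real part. This gives $\|f\|_{\cW}^2$ in terms of $\Re\langle (a+\ii\mu) g, f\rangle$ and $\langle \psi, f\rangle$. Replace $\ii\mu f = g - \phi$: then $\ii\mu\langle g, f\rangle$ equals $\|g\|^2$ up to $\phi$-terms. The term $\langle a g, f\rangle$ is bounded by $\|a^{1/2}g\|\,\|a^{1/2}f\|$, and \eqref{a.q.rel.bdd.intro} gives $\|a^{1/2}f\| \le M^{1/2}\|f\|_{\cW}$. Likewise $\|f\|_{L^2} \lesssim \|f\|_{\cW}$.
\item \emph{Low frequencies.} For bounded $\mu$, and in particular near $0$, the same identity works, since \eqref{a.q.rel.bdd.intro} makes $a$ form-bounded.
\end{enumerate}

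Combining (i) and (ii) with Young's inequality absorbs $\|f\|_{\cW}$ and gives $\|F\| \le C\|(\phi,\psi)\|$ uniformly in $\mu$. The delicate point is $\phi$ paired with $a$: I would use $\|a^{1/2}\phi\| \lesssim \|\phi\|_{\cW}$, again from \eqref{a.q.rel.bdd.intro}.
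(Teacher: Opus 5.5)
Your proposal is correct. The forward implication is essentially the paper's argument; the converse takes a genuinely different route.

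\textbf{Forward implication.} The paper also gets $0\in\rho(\cA)$ from exponential stability. It then invokes Corollary~\ref{cor:0.sp}, which deduces $\cW=\cD_\frt$ from $\Ran(\cA)=\HH$ and applies the bounded inverse theorem. You instead read \eqref{a.q.rel.bdd.intro} directly off the inverse formula \eqref{A.inv}, which also gives an explicit $M$ in terms of $\|\cA^{-1}\|$. Your hesitation about rigor is unnecessary. The description \eqref{A.dom} makes \eqref{A.inv} exact, and $(\Delta-q)f=a\phi$ holds in $\cD_\frt^*$, so it may be tested against $\phi\in\CcOm$. Part (b) is even simpler than your duality argument: the second component of $\cA^{-1}(\phi,0)$ is $\phi$ itself, so $\|\phi\|_{L^2}\le\|\cA^{-1}\|\,\|\phi\|_\cW$.

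\textbf{Converse.} The paper combines $0\in\rho(\cA)$ with Proposition~\ref{prop:high.pos}, which is proved through the Schur complement $T_{\ii b}$ and Lemma~\ref{lem:T.to.A}. It then uses continuity of the resolvent near $0$ and Gearhart--Pr\"uss. You instead prove a single energy estimate.
\begin{itemize}
\item Under \eqref{a.q.rel.bdd.intro} one has $\cW=\cD_\frt\subset L^2(\Omega)$, so pairing the second equation with $f$ is legitimate.
\item Every term is then controlled by $\|f\|_\cW$, $\|(\phi,\psi)\|_\HH$, $\|g\|$ and $\|a^{1/2}g\|$.
\item The last two are bounded by $(\|F\|_\HH\|(\phi,\psi)\|_\HH)^{1/2}$, thanks to dissipativity and $a\ge a_0$.
\end{itemize}
Nothing in this computation depends on $\mu$, so your high/low splitting is superfluous. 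You get $\|F\|_\HH\le C(M,a_0)\,\|(\cA-\ii\mu)F\|_\HH$ uniformly for $\mu\in\R$, including $\mu=0$. Your perturbation argument from $\C_+$ then gives $\ii\R\subset\rho(\cA)$.

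\textbf{What each route buys.} Your route is more elementary and self-contained, with explicit constants. However, it uses \eqref{a.q.rel.bdd.intro} essentially, to bound $\|a^{1/2}f\|$ and $\|f\|_{L^2}$ by $\|f\|_\cW$. The paper's high-frequency bound does not need \eqref{a.q.rel.bdd.intro}. That is precisely why it can be reused for Theorem~\ref{thm:decay}, where \eqref{a.q.rel.bdd.intro} fails, so in the paper the converse comes almost for free.
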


Here we are interested in cases like \eqref{intro.ex}, where \eqref{a.q.rel.bdd.intro} does not hold and uniform energy decay cannot prevail.
In our main theorem below, we hence consider initial conditions in a subspace of $\cH$, namely in the Hilbert space
\begin{equation}\label{KK.def}
	\KK  := \big\{ (f,g)  \in \big(H^1_0(\Omega) \cap \Dom(q^\frac 12) \big)\times L^2(\Omega) \, : \,a f \in \LolocOm, \ af+g \in \cW^*\big\},
\end{equation}
endowed with the norm
\begin{equation} \label{KK.norm}
	\|F\|_{\KK}^2 = \|(f,g)\|_{\KK}^2   := \|(f,g)\|^2_{\HH} + \|f\|_{L^2}^2 + \|af + g\|_{\cW^*}^2,
\end{equation}
(see Section~\ref{ssec:Ran.A.K} below).
Since for $F \in \KK$ we have $a f + g  \in L^1_\loc(\Omega) \hookrightarrow \cD'(\Omega)$, the condition $a f + g \in \cW^*$ means that $a f + g$ has an extension to a bounded functional on $\cW$ (which is unique due to the density of $\CcOm$ in $\cW$). 

Our main result reads as follows.

\begin{theorem}
	\label{thm:decay}
	Let $\Omega \subset \Rd$ be non-empty and open, let $0\leq a,q \in \LolocOm$ and let $a(x) \geq a_0 > 0$ a.e.~in $\Omega$.
	Then there exists $C>0$ such that, for any initial data $F \in \KK$ and all $t \ge 0$, the solution $u(t)$ of \eqref{dwe.2ndorder} decays in time as
	\begin{align}
		%\|\partial_t u(t)\|_{L^2(\Omega)} +
		\|\nabla u(t)\|_{L^2} + \|q^\frac12 u(t)\|_{L^2}  &\leq C \|F\|_{\KK} \langle t \rangle^{-1},
		\label{u.energy.est}
		\\
		\|\partial_t u(t)\|_{L^2}  &\leq C \|F\|_{\KK} \langle t \rangle^{-\frac 32},
		\label{u.partialt.est}
		\\
		\|a^\frac12 u(t)\|_{L^2}+\|u(t)\|_{L^2} & \leq C \|F\|_{\KK}  \langle t \rangle^{- \frac 12}. \label{u.L2.est}
	\end{align}
	Moreover, if $\Omega$ is an exterior domain and, for some $\beta>0$ and $c > 0$,
	\begin{equation}\label{a.unbdd.intro}
		a(x) \geq c \langle x \rangle^\beta \quad \text{a.e.~in } \Omega,
	\end{equation}
	then, for all $t \geq 0$,
	\begin{align}\label{dtu.L2.est.gamma}
		\|\partial_t u(t)\|_{L^2} & \leq C \|F\|_{\KK} \langle t \rangle^{-\frac 32  - \frac{\beta}{2(2+\beta)}},
		\\
		\label{u.L2.est.gamma}
		\|u(t)\|_{L^2} & \leq C \|F\|_{\KK} \langle t \rangle^{-\frac12  - \frac{\beta}{2(2+\beta)}}.
	\end{align}
\end{theorem}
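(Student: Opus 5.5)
Our plan is to rewrite \eqref{dwe.2ndorder} as a first order system $\partial_t U = \cA U$ on $\HH$, with $U=(u,\partial_t u)$ and $\cA = \begin{pmatrix} 0 & I \\ \Delta - q & -a\end{pmatrix}$, and then to use semigroup techniques for semi-uniformly stable semigroups. The generator $\cA$ is m-dissipative (Proposition~\ref{prop:m-dissipative}). Because $a\ge a_0>0$, the imaginary axis away from zero lies in the resolvent set with uniformly bounded resolvent: high frequencies are controlled by the uniform damping through the energy identity $\Re\langle \cA U,U\rangle = -\|a^{1/2} g\|^2$. The only spectral obstruction is therefore at $0$.

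The key point is the characterization of $\KK$. Solving $\cA(f',g') = (f,g)$ gives $g' = f$ and $(\Delta-q) f' = g + a f$. Hence $\KK$ should coincide, with equivalent norms, with $\Ran(\cA)\cap\HH$-type data, that is $F\in\KK \iff F=\cA G$ for some $G$ in a suitable extended space. This is the content of Section~\ref{ssec:Ran.A.K}. The condition $af+g\in\cW^*$ is exactly solvability of $(-\Delta+q)f'=-(af+g)$ with $f'\in\cW$, and $f\in L^2$ is needed for $g'=f$. For such data, $e^{t\cA}F = \cA e^{t\cA} G$, and decay follows from low-frequency resolvent bounds. Concretely, we will prove estimates of the form $\|(\ii s-\cA)^{-1}\cA\|$ and, for the component norms, $\|\Pi_j(\ii s-\cA)^{-1}\|_{\KK\to \cdot} \lesssim |s|^{-\alpha_j}$ as $s\to0$, with $\alpha=0$ for the energy, a half-power gain for $\partial_t u$, and a half-power loss for $\|u\|_{L^2}$ and $\|a^{1/2}u\|$. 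We will obtain these by writing the resolvent equation as $(-\Delta+q -s^2 + \ii s a)u = $ data, testing against $u$, and taking real and imaginary parts. The imaginary part yields $s\|a^{1/2}u\|^2$, and the real part yields $\|u\|_\cW^2$. Using $\|u\|^2\le a_0^{-1}\|a^{1/2}u\|^2$ together with the $\cW^*$ bound on the data then gives $\|u\|_\cW \lesssim \|F\|_\KK$, $\|a^{1/2}u\|\lesssim |s|^{-1/2}\|F\|_\KK$, and $\|s u\| \lesssim |s|^{1/2}\|F\|_\KK$.

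Next we convert these into time decay. Since the resolvent bounds are polynomial near $0$ and bounded at infinity, Batty--Chill--Tomilov type results for polynomial singularity at zero apply (for bounded-at-infinity generators of bounded semigroups on Hilbert spaces). An equivalent route is a Plancherel/Laplace argument: integrate $t^k e^{t\cA}$, using that $\frac{d^k}{ds^k}$ of the resolvent gains factors. This turns resolvent behaviour $|s|^{-\alpha}$ for the component into decay of rate $t^{-(1-\alpha)}$ after one application of $\cA^{-1}$. Precisely, we will show $\|\Pi e^{t\cA}\cA\|\lesssim t^{-1}$ for the energy, and then use the orders $1,3/2,1/2$ obtained above by interpolating the component estimates. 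We also need the compatibility that $\partial_t u = \Pi_2 e^{t\cA}F$ gains an extra factor $s$.

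For the exterior-domain case with $a\ge c\langle x\rangle^\beta$, we will improve the $L^2$ bound. Split $u$ into the parts with $|x|\le R$ and $|x|>R$. The outer part satisfies $\|u\|_{|x|>R}^2\le R^{-\beta}\|a^{1/2}u\|^2 \lesssim R^{-\beta}|s|^{-1}$. The inner part we bound by a Hardy/Poincaré-type inequality, $\|u\|_{|x|<R}\lesssim R\|\nabla u\|+\dots\lesssim R$. Optimizing gives $R^{2+\beta}\sim |s|^{-1}$, so that $\|u\|\lesssim |s|^{-\frac12+\frac{\beta}{2(2+\beta)}}$, which translates into the stated rates. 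The main obstacle is the rigorous low-frequency resolvent analysis with merely $L^1_{\loc}$ coefficients and general $\cW\not\subset L^2$, together with the precise transfer from resolvent singularity orders to the fractional decay rates. For the latter we would try a direct Fourier-in-time argument for each component rather than a black-box theorem.
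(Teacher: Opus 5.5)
Your architecture is the paper's: $\KK=\Ran(\cA)$ (Proposition~\ref{prop:Ran.A}), a uniform resolvent bound away from $0$ coming from $a\ge a_0$, low-frequency bounds from the Schur complement $-\Delta+q+\la a+\la^2$ tested against $u$, and a Fourier/contour argument for the time decay. There are three smaller issues.

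\emph{High frequencies.} The real part $-\|a^{1/2}v\|^2$ alone controls only $v$. Pair the second row with $v$ and substitute $v=\ii s u+f$; the imaginary part then gives $|s|\,\big|\|u\|_{\cW}^2-\|v\|^2\big|\le\|F\|_{\HH}\|U\|_{\HH}$. Combined with the real part, this yields $\|(\cA-\ii s)^{-1}\|_{\cB(\HH)}\le 2/a_0+1/|s|$. Using the multiplier $u$ instead would produce $\langle av,u\rangle$, which is not controlled for unbounded $a$. This is simpler than Lemma~\ref{lem:T.to.A}, which the paper needs for non-uniformly positive damping.

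\emph{Off-axis bounds.} Your low-frequency computation treats only $\la\in\ii\R$. A contour argument needs the bounds uniformly on $\overline{\tau_0\D_+}\setminus\{0\}$; the paper obtains them by rotating, $|\frt_\la[u]|\ge\Re(\e^{-\ii\arg\la/2}\frt_\la[u])$.

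\emph{Unbounded damping.} For $a\gtrsim\langle x\rangle^\beta$, your splitting at $R\sim|s|^{-1/(2+\beta)}$ is an explicit form of Lemma~\ref{lem:hb.lbound}. However, a Hardy inequality is not available for $d\le 2$ in general (e.g.\ $\Omega=\R^2$). Use instead the scaled Poincar\'e-type bound $\|u\|_{L^2(B_R)}^2\lesssim R^2\|\nabla u\|_{L^2(B_R)}^2+\|u\|_{L^2(B_R\setminus B_{R/2})}^2$, and absorb the annulus term into $R^{-\beta}\|a^{1/2}u\|^2$. This is what the positivity of the Neumann ground state in the paper encodes.

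The genuine gap is the step you flag yourself: passing from resolvent bounds to time decay. There your heuristics are partly wrong.

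A bound $\|\Pi_j(\cA-\ii s)^{-1}\|\lesssim|s|^{-\alpha}$ does not by itself give $t^{-(1-\alpha)}$. What works (Theorem~\ref{thm:exp.res}) is a bound $\lesssim|\la|^{m-\kappa-j}$ on consecutive powers $(\cA_\KK-\la)^{-1-j}$, uniformly on the half-disc, for $j=m,m+1$. If $\kappa=0$ you also need $j=m+2$; otherwise the term $t^{-1}\int_{1/t\le|\tau|\le 2\tau_0}|\tau|^{-1}\,\dd\tau$ costs a factor $\log t$. The higher powers come from composing your component bounds with $\|(\cA_\KK-\la)^{-1}\|_{\cB(\KK)}\lesssim|\la|^{-1}$, see \eqref{res.H.to.K}.

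More seriously, the ``half-power gain for $\partial_t u$'' is false at the level of the first power. Since $v=\la u+f$, one has $\Pi_2(\cA_\KK-\la)^{-1}F=\la\,\Pi_1(\cA_\KK-\la)^{-1}F+f$, which is only $O(1)$. No interpolation between the $t^{-1}$ and $t^{-1/2}$ estimates can produce $t^{-3/2}$.

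The missing idea is to start at $m=1$. The first power then enters only through boundary terms at $\tau=\pm 2\tau_0$, which are exponentially small, so the constant term $f$ is harmless. Applying the same identity to $(\cA_\KK-\la)^{-1}F$ gives $\Pi_2(\cA_\KK-\la)^{-2}=\la\,\Pi_1(\cA_\KK-\la)^{-2}+\Pi_1(\cA_\KK-\la)^{-1}$, and analogously for the third power. This yields the bounds $|\la|^{-\gamma/2}$ and $|\la|^{-1-\gamma/2}$. Theorem~\ref{thm:exp.res} with $\kappa=\gamma/2$ then gives $\langle t\rangle^{\gamma/2-2}$, i.e.\ both \eqref{u.partialt.est} and \eqref{dtu.L2.est.gamma}.

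The paper reaches the same two bounds differently, through $\|\Pi_2(\cA-\la)^{-1}\|_{\cB(\HH,L^2)}\lesssim|\la|^{-\gamma/2}$. That route requires estimating $\wh T_\la^{-1}$ on all of $\cW^*$ and $L^2$, not only on data coming from $\KK$.
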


Notice that, combining \eqref{u.energy.est} and \eqref{u.partialt.est},  the energy  decays as
\begin{equation}
E(u;t) \leq C \| F \|_\KK^2 \langle t \rangle^{-2}, \qquad t \geq 0.
\end{equation}
Moreover, we get a similar estimate for the $L^2$-norm of $u(t)$ whenever it is controlled by the left-hand side of \eqref{u.energy.est} (for instance if $q$ is uniformly positive or if $\Omega$ is such that the Poincar\'e inequality holds, see also Remark~\ref{rem:super-Poincare} below).

We show in Section \ref{sec:optimal} that the estimates \eqref{u.energy.est}--\eqref{u.L2.est} are sharp in the model case $\Omega = \Rd$, $q (x) \equiv 0$, $a (x) \equiv 1$ and can thus not be improved in the general setting.

\subsection*{Strategy} 
The strategy of our proof relies on the semigroup point of view and the related spectral and resolvent analysis. More precisely, the energy decay is obtained via resolvent estimates for the damped wave operator in $\KK$ (see Section~\ref{sec:dwe}) specifically for the spectral parameter $\la \in \ii \R \setminus \{0\}$ (see Section~\ref{sec:decay} and in particular Theorem~\ref{thm:exp.res} below). From this spectral point of view, we can deal separately with the contributions of high ($\la \to \pm \ii \infty$) and low ($\la \to 0$) frequencies.

The contribution of high frequencies is not our primary concern in this paper. For uniformly positive $a$ (even with minimal regularity), the resolvent norm of the generator is bounded for high frequencies (see Proposition~\ref{prop:high.pos}). Thus the uniform exponential energy decay would follow if the resolvent were also bounded at low frequencies, which is the main issue here (see below).

In fact, our results generalize in a straightforward way if the resolvent remains bounded for high frequencies, which can hold also without a uniformly positive $a$. Namely, this is known in $d=1$ for a regular $a$ which is unbounded at $\pm \infty$ and possibly zero on a bounded subset of $\R$ (see \cite[Thm.~3.5]{arnal2026resolvent-dwe} or Theorem~\ref{thm:antonio} below), as well as for the example $a(x)= |x|^\beta$, $\beta>0$, $x \in \Rd$, discussed in Section~\ref{sec:non.up.damping}. These conclusions are natural as they suggest that the resolvent at $\pm \ii \infty$ remains bounded for unbounded damping also in higher dimensions as long as the usual Geometric Control Condition (GCC) is satisfied. We recall that the GCC says that all the rays of light (or classical trajectories), along which high frequency waves propagate, go through the damping region $\{a>0\}$ (see references below).

On the other hand, the presence of an undamped trajectory in a waveguide (see the example in Section~\ref{ssec:noGCC}) yields various rates of resolvent growth at $\pm \ii \infty$ depending on the behavior of $a$ in the neighborhood of this trajectory (in line with the conclusions in \cite{Leautaud-2017-26}). Nonetheless, the singularity of the resolvent for high frequencies in these examples is always milder than the one around zero (see below) and so the resulting energy decay rate originates in the low frequencies.

The main analysis in this paper concerns low frequencies ($\la \to 0$). Note that if \eqref{a.q.rel.bdd.intro} is not satisfied, then zero is in the essential spectrum of the generator (see Corollary~\ref{cor:0.sp} for details and Theorem~\ref{thm:A.basic} for further claims on the real essential spectrum). This is the effect responsible for non-exponential rates and was noticed first in \cite{Freitas-2018-264}.

The polynomial rates in Theorem \ref{thm:decay} are obtained from resolvent estimates for $\la \to 0$, which are proved without assuming the uniform positivity of $a$; for instance~$\Omega = \Rd$ and $a(x) \geq 1$ for a.e.~$|x| \geq 1$ is covered (see Assumption~\ref{asm:a.1}, Theorem~\ref{thm:low} and also Remark~\ref{rem:super-Poincare}). Although the damped wave operator is highly non-self-adjoint for unbounded damping (see Figure~\ref{fig:x2} and \cite{Arifoski-2020-52}), the key observation is that the resolvent estimates around zero in Theorem~\ref{thm:low} can be reduced to a self-adjoint spectral problem. This allows for classical general tools (Neumann bracketing \cite[Chap.~XIII.15]{Reed4} and asymptotic perturbation theory \cite[Chap.~VIII]{Kato-1966}). Moreover, the recent new functional-analytic understanding of the arising operators in \cite{gerhat2024schur} enabled us to work with the minimal $L^1_{\rm loc}$ assumptions on $a$ and $q$.

\subsection*{Literature}

Most existing results deal with the case where $a$ (and $q$) are bounded. When $\Omega$ is bounded, then \eqref{a.q.rel.bdd.intro} holds by the Poincar\'e inequality and low frequencies are not an issue. It is proved in \cite{RauchTay74} (see also \cite{Ralston69}) that the GCC is essentially necessary and sufficient for the uniform (exponential) energy decay (see also \cite{BardosLebRau92} for damping at the boundary). On the other hand, when the GCC is violated, the decay cannot be uniform and some regularity is required for the initial condition. Under minimal assumptions on the damping the decay is at least logarithmic in time (see \cite{Lebeau96}), and for intermediate situations, depending on the geometry of the undamped rays of light, various polynomial decay rates have been obtained (see for instance \cite{BurqHit07,AnantharamanLea14,Leautaud-2017-26,Kleinhenz19,KleinhenzWan} and references therein). These papers also raise the question of the link between the regularity of the damping where it vanishes and the decay rate of the wave. See also \cite{BurqGer20} for a refined version of the GCC for rough dampings.

If $q (x) \equiv 1$ (then we refer to \eqref{dwe.2ndorder} as the Klein-Gordon equation) and $a$ is bounded, then \eqref{a.q.rel.bdd.intro} always holds, even if $\Omega$ is unbounded, and the results are similar to the bounded case above. See \cite{BurqJol16} for the minimal logarithmic decay with loss of regularity under some weak assumption on $a$ and the exponential uniform decay under a suitable version of the GCC.

When $q (x)\equiv 0$ and $\Omega$ is unbounded and such that the Poincar\'e inequality does not hold, then \eqref{a.q.rel.bdd.intro} fails and the decay cannot be uniform because of the lack of decay in the contribution of low frequencies. Of course, depending on the geometry and the damping, we can still have independently a lack of decay in the contribution of high frequencies.

In particular, when $a (x)\equiv 0$, there is no decay at all. Nonetheless, one can consider a very closely related problem of the local energy decay (see \cite{Burq98,BoucletBur21} without damping, \cite{FahsRoy} when the damping is small at infinity, and references therein).

As already mentioned, the simplest model for a damped wave equation without \eqref{a.q.rel.bdd.intro} is the case with $\Omega = \Rd$ and $a (x)\equiv 1$, which will be discussed in Section \ref{sec:optimal}. Estimates for the solution $u (t)$ and its derivatives in Lebesgue spaces have first been proved in \cite{Matsumura76}. Similar results, as well as the diffusive phenomenon (the damped wave behaves for large times like a solution of a heat equation, see Section \ref{sec:optimal}) have then been proved in various settings.
See for instance \cite{Ikehata02,AlouiIbrKhe15} in an exterior domain, \cite{TodorovaYor09,IkehataTodYor13,wakasugi14,SobajimaWak16} for slowly decaying dampings, \cite{Royer18,MallougRoy18} in a waveguide and \cite{JolyRoy18} in a periodic setting. We also refer to \cite{ChillHar04,RaduTodYor11,RaduTodYor16,Batty-2016-270,Nishiyama16} for results in abstract settings.

In this paper, we are interested in the wave equation with unbounded dampings. We refer to \cite{Freitas-2018-264,Arifoski-2020-52} for earlier studied spectral properties of the wave operator. Closer to our problem, Ikehata and Takeda have proved in \cite{ikehata2020uniform} energy decay as in Theorem \ref{thm:decay} in the case $\Omega=\Rd$ with $d \geq 3$, $q (x) \equiv 0$ and uniformly positive $a \in C(\Rd)$. More precisely, by employing a modified Morawetz (multiplier) method, they established that for initial data satisfying
\begin{equation}\label{v.IT.intro}
	F=(f,g) \in \big(H^1(\Rd) \cap L^1(\Rd)\big) \times \big(L^2(\Rd) \cap L^1(\Rd) \big), \quad a f \in L^1(\Rd) \cap L^2(\Rd),
\end{equation}
weak solutions of \eqref{dwe.2ndorder} decay both in energy and $L^2$-norm as
\begin{equation}\label{IT:rates.intro}
	\begin{aligned}
		E(u;t)= \|\partial_t u(t)\|_{L^2}^2 + \|\nabla u(t)\|_{L^2}^2 &\leq C (F)^2 \langle t \rangle^{-2},
		\\
		\|u(t)\|_{L^2} &\leq C(F) \langle t \rangle^{- \frac 12}, \qquad t\ge0,
	\end{aligned}
\end{equation}
(see \cite[Thm.~1.2]{ikehata2020uniform}). The used method does not apply in dimensions $d=1,2$ and the decay in this case remained open (see \cite[Sec.~3]{ikehata2020uniform} for a detailed explanation). Our result covers all dimensions $d$, thus it solves the open problem, and it also allows for a minimal regularity of $a$ and $q$ as well as a general open $\Omega$ instead of $\Rd$. Moreover, in this generality, the same rates as in \eqref{IT:rates.intro} are obtained for the $L^2$-norm of $u(t)$ and its gradient, and we have a better decay for $\partial_t u(t)$. A detailed comparison of Theorem~\ref{thm:decay} and the result in \cite{ikehata2020uniform}, indicating also the origin of the restriction $d \geq 3$ coming from the Sobolev inequality, as well as the relation of $C(F)$ and $\|F\|_{\KK}$, can be found in Section~\ref{sec:comp.IT} below.

In \cite{sobajima2018diffusion}, Sobajima and Wakasugi obtained an improved estimate when the damping is indeed unbounded at infinity. On exterior domains $\Omega \subset \Rd$ with $d \geq2$ and $\partial \Omega$ smooth, $q(x)\equiv0$ and the damping $a \in C^2(\ov \Omega)$ satisfying $a >0$ on $\ov \Omega$ and
\begin{equation}
	\lim_{|x| \to \infty} \frac{a(x)}{|x|^\beta} = a_0>0
\end{equation}
with $\beta > 0$, they prove the following weighted $L^2$-norm decay for a compactly supported initial condition $F \in ( H^2(\Omega) \cap H^1_0(\Omega)) \times H^1_0(\Omega)$ (with an arbitrary $\delta>0$ when $d >2$ and $\delta=0$ in case that $d=2$)
\begin{equation}\label{Sob.Wak.rates}
	\| a^\frac 12 u(t)\|_{L^2} \leq \wt C(F) \langle t \rangle^{- \frac{d}{2(2+\beta)} - \frac{\beta}{2(2+\beta)} + \delta}, \qquad t>0,
\end{equation}
(see \cite[Cor.~1.2]{sobajima2018diffusion}). Their method relies on the comparison with the solutions of the associated heat equation and leads to a decay rate depending on the spatial dimension $d$. In comparison, our estimates \eqref{dtu.L2.est.gamma}--\eqref{u.L2.est.gamma} seem to be the first results exhibiting the effect of the unboundedness of $a$ at infinity \emph{without} restricting to initial data with compact support (unlike \eqref{Sob.Wak.rates}, our rates are $d$-independent).

\subsection*{Plan of the paper}
In Section \ref{sec:dwe}, we recall how the generator of the damped wave equation is defined in the presence of unbounded coefficients, and we give its basic properties in the spaces $\HH$ and $\KK$. In Section \ref{sec:low}, we prove the resolvent estimates for low frequencies and the associated coercivity for the Schur complement. Section~\ref{sec:high} is concerned with the high frequency resolvent estimates for uniformly positive damping. In Section \ref{sec:decay}, we show how we can deduce the time decay estimates from the resolvent estimates. In Section \ref{sec:optimal}, we prove that our rates are sharp in the model case with constant coefficients. The optimality in this case is based on the diffusive phenomenon and decay estimates for the heat equation.
Section~\ref{sec:non.up.damping} is devoted to the case of non-uniformly positive damping; we discuss some cases where GCC holds and the high frequency estimates remain valid. We also present an example where the GCC is violated (but nevertheless the appearing singularity of the resolvent does not spoil the decay rate of the semigroup).
Finally, we compare in Section \ref{sec:comp.IT} our approach with the previous result by Ikehata and Takeda which motivated our study, and some technical details about the space $\Wc$ are collected Appendix \ref{app:W0} at the end of the paper.

\subsection*{Notation}
\label{ssec:notation}
We write $\C_+ = \{\la \in \C: \Re\la  > 0\}$, $\C_-= - \C_+$, $\D = \{\la \in \C \, : \, |\la| < 1\}$, $\D_+ = \D \cap \C_+$. The resolvent set, spectrum, point spectrum and continuous spectrum of a linear operator $T$ are denoted by $\rho(T)$, $\sigma(T)$, $\spp(T)$ and $\sigma_{\rm c}(T)$, respectively. The essential spectra $\sigma_{\mathrm{e}j}(T)$ of $T$, $j=1,\dotsc,5$, are defined as in~\cite[Chap.~IX]{EE}.
To avoid introducing multiple constants whose exact values are inessential for our purposes, we write $a \lesssim b$ to indicate that, given $a,b \ge 0$, there exists a constant $C>0$, independent of any relevant variable or parameter, such that $a \le Cb$. The relation $a \gtrsim b$ is defined analogously whereas $a \approx b$ means that $a \lesssim b$ \textit{and} $a \gtrsim b$. Finally, we write $\langle t \rangle = \sqrt{1+t^2}$ for $t \in \R$.

\section{Generator of the damped wave equation for unbounded damping}
\label{sec:dwe}

\subsection{Definition of the generator and solution of the damped wave equation} \label{sec:generator-mild-solution}

As described in the introduction, we study the time dependent problem \eqref{dwe.2ndorder} from a spectral point of view. Formally, we can rewrite \eqref{dwe.2ndorder} as
\begin{equation}\label{Cauchy}
	\left\{ \begin{aligned}
		\partial_t U(t)
		& = \cA U, \\
		U(0)  & = F,
	\end{aligned}
	\right.
\end{equation}
where $ U(t) = (u(t),\partial_t u(t))$, $F = (f,g)$ and
\begin{equation}\label{A.action}
	\Ac = \begin{pmatrix}
		0 & I \\
		\Delta - q & -a
	\end{pmatrix}.
\end{equation}
It is natural to see $\Ac$ as an operator in the space $\HH = \cW \oplus L^2(\Omega)$ defined as in~\eqref{H.def.intro}, so that $\nr{U(t)}^2_{\HH}$ is then precisely the energy of $u(t)$. Note that if $\Omega$ and $q$ are such that
\begin{equation}\label{Om.q.Poincare}
		\|\nabla u\|_{L^2}^2 + \|q^\frac12 u\|_{L^2}^2 \gs \|u\|_{L^2}^2, \qquad u \in \CcOm,
\end{equation}
e.g.~when the Poincar\'e inequality applies or when $q(x)\geq q_0 >0$ a.e.~in $\Omega$, then $\cW$ is a subspace of $L^2(\Omega)$ and coincides (with equivalent norms) with $H^1_0(\Omega) \cap \Dom(q^\frac 12)$ equipped with the natural norm	%
\begin{equation}\label{W.form.dom}
		( \| \nabla u \|_{L^2}^2 +  \| q^{\frac12} u \|_{L^2}^2 + \|u\|_{L^2}^2)^\frac12,	
\end{equation}
i.e.~the form domain of the self-adjoint Dirichlet realization of $-\Delta+q$ in $L^2(\Omega)$.

Due to the minimal assumptions on the coefficients \eqref{asm:1}, where no relative boundedness (in any sense) of $a$ with respect to $-\Delta+q$ is available, the recent method of dominant Schur complements~\cite{gerhat2024schur} is employed to find a realization of $\cA$ which generates a $C_0$-contraction semigroup. To this end, we introduce the Hilbert space
\begin{equation}
	\label{Dt.def}
	\cD_\frt := H_0^1(\Omega) \cap \Dom (q^\frac12) \cap \Dom (a^\frac12),
\end{equation}
equipped with the inner product arising from
\begin{equation}
	\label{Dt.norm}
	\|u\|^2_{\cD_\frt} := \|\nabla u \|_{L^2}^2 + \| q^{\frac12} u \|_{L^2}^2 + \| a^{\frac12} u\|_{L^2}^2 + \| u\|_{L^2}^2. 
\end{equation}
We define $\cA$ as an operator in $\HH$ with the domain
\begin{equation} \label{A.dom}
\Dom(\cA) = \left\{F= (f, g) \in \cW \times \cD_\frt \, : \, (\Delta-q) f -ag \in L^2(\Omega) \right\};
\end{equation}
(see Appendix~\ref{app:W0} for details about $\cW$ and the meaning of $(-\Delta + q)f$ when $f \in \cW$).

\begin{proposition}[{\cite[Thm.~4.2]{gerhat2024schur}}]  \label{prop:m-dissipative}
The operator $\cA$ defined by \eqref{A.action} and \eqref{A.dom} is densely defined and m-dissipative in $\HH$, hence it generates a strongly continuous contraction semigroup $\e^{t \cA}$ on $\HH$. 
\end{proposition}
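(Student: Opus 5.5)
We prove Proposition~\ref{prop:m-dissipative} by the Lumer--Phillips theorem: we show that $\cA$ is dissipative on $\Dom(\cA)$, that $I-\cA$ (equivalently $\la - \cA$ for some $\la>0$) is surjective, and that $\Dom(\cA)$ is dense in $\HH$. Because $a$ is not assumed relatively bounded with respect to $-\Delta+q$, we cannot treat the damping as a perturbation. Instead we reduce the resolvent equation to a single equation for the second component, governed by a sesquilinear form on $\cD_\frt$. This is the Schur complement idea of \cite{gerhat2024schur}.

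\emph{Dissipativity.} Let $F=(f,g)\in\Dom(\cA)$, so $g\in\cD_\frt\subset\cW$ and $h:=(\Delta-q)f-ag\in L^2(\Omega)$. Then
\[
\langle \cA F,F\rangle_\HH=\langle g,f\rangle_\cW+\langle h,g\rangle_{L^2}.
\]
Interpreting $(\Delta-q)f\in\cW^*$ as in Appendix~\ref{app:W0}, and noting that $ag\in\cD_\frt^*$ while $g\in\cD_\frt$, the pairing splits as
\[
\langle h,g\rangle_{L^2}=-\ov{\langle g,f\rangle_\cW}-\|a^{1/2}g\|^2_{L^2}.
\]
Hence $\Re\langle\cA F,F\rangle_\HH=-\|a^{1/2}g\|_{L^2}^2\le0$. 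Justifying this splitting is a duality argument: $h$ lies in $L^2$, but its two pieces separately lie only in $\cW^*$ and $\cD_\frt^*$. Since $\cD_\frt\hookrightarrow\cW$ and $\cD_\frt\hookrightarrow L^2$, the identity is legitimate.

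\emph{Range condition.} Fix $\la>0$ and $(\phi,\psi)\in\HH$. We seek $(f,g)\in\Dom(\cA)$ with
\[
\la f-g=\phi,\qquad \la g-(\Delta-q)f+ag=\psi.
\]
Substituting $f=(g+\phi)/\la$ gives the equation
\[
\la g+ag+\la^{-1}(-\Delta+q)g=\psi-\la^{-1}(-\Delta+q)\phi=:r\in\cD_\frt^*.
\]
The form
\[
\frs(u,v)=\la\langle u,v\rangle+\langle a^{1/2}u,a^{1/2}v\rangle+\la^{-1}\langle u,v\rangle_\cW
\]
is bounded and coercive on $\cD_\frt$, with $\Re\frs(u,u)\gtrsim_\la\|u\|_{\cD_\frt}^2$ by \eqref{Dt.norm}. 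The functional $r$ is bounded on $\cD_\frt$, because $\psi\in L^2$ and $(-\Delta+q)\phi\in\cW^*$. Lax--Milgram therefore gives a unique $g\in\cD_\frt$. Then $f=(g+\phi)/\la\in\cW$, and
\[
(\Delta-q)f-ag=\la g-\psi\in L^2(\Omega),
\]
so $(f,g)\in\Dom(\cA)$. The main obstacle is this identification: we must make sure the distributional identity in $\cD_\frt^*$ really yields the $L^2$ membership required in \eqref{A.dom}. This holds because the combination $(\Delta-q)f-ag$ equals an explicit $L^2$ function tested against the dense set $\CcOm$.

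\emph{Density.} An m-dissipative operator on a Hilbert space is automatically densely defined. Indeed, if $G\perp\Dom(\cA)$, write $G=(I-\cA)F$; dissipativity of the adjoint, or the standard reflexivity argument, then forces $G=0$. Alternatively, the range argument applied with $\phi\in\CcOm$ and $\psi=0$ shows directly that the domain is rich. The Lumer--Phillips theorem then gives the contraction semigroup $\e^{t\cA}$.
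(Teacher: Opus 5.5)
Your proof is correct and follows essentially the route of the cited result \cite[Thm.~4.2]{gerhat2024schur} as it appears in this paper. Your form $\frs$ is exactly $\la^{-1}\frt_\la$ from \eqref{tla.def}, so Lax--Milgram on $\cD_\frt$ together with $f=(g+\phi)/\la$ reproduces the Frobenius--Schur resolvent formula \eqref{A.res}, and dissipativity comes from $\Re\langle\cA F,F\rangle_\HH=-\|a^{1/2}g\|_{L^2}^2$. The density step should be written out, since your ``alternatively'' remark is not an argument and no adjoint is needed: if $G\perp\Dom(\cA)$ and $G=(I-\cA)F$ with $F\in\Dom(\cA)$, then $0=\Re\langle G,F\rangle_\HH=\|F\|_\HH^2-\Re\langle\cA F,F\rangle_\HH\ge\|F\|_\HH^2$, hence $F=0$ and $G=0$.
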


\begin{remark}
	\label{rem:mild.sol}
	Recall that for $F \in \HH$, the function $U(t) = \e^{t\Ac} F$ is the unique mild solution of the Cauchy problem \eqref{Cauchy} in $\HH$ (and note that $U(t)$ is even the unique classical solution of~\eqref{Cauchy} in $\HH$ if $F \in \Dom (\cA)$).
		
	In detail, being a mild solution in $\HH$ means that $U : [0,\infty) \to \HH$ is continuous, its primitive satisfies 
	\begin{equation}
		\int_0^t U(s)\, \dd s \in \Dom(\cA), \qquad t \geq 0,
	\end{equation}
	and it solves the integrated Cauchy problem
	\begin{equation}
		\label{Cauchy.int}
		U(t)
		= \cA
		\left(\int_0^t U(s)\, \dd s\right)
		+ F, \qquad t \geq 0.
	\end{equation}
	The first component of the vector $U(t) = (u(t), v(t))$ is our solution for the DWE \eqref{dwe.2ndorder}. 	
	Considering the action of $\cA$ in~\eqref{A.action}, it reads
	\begin{equation}
		u(t) = \int_0^t v(s) \, \dd s +f , \qquad t \ge 0.
	\end{equation}
	This in particular implies
	\begin{equation} 
		\label{der-u-f2}
		u \in C^1 ([0,\infty); \LOm), \qquad U(t) = (u(t), \partial_t u(t)), \qquad t \ge 0.
	\end{equation}
	In Section~\ref{sec:comp.IT}, we discuss further properties of $u(t)$ for special initial conditions $F \in \cK$, as well as its relation to the weak solutions used in \cite{ikehata2020uniform}.
\end{remark}

\subsection{The Schur complement}

The space $\cD_{\frt}$ introduced in \eqref{Dt.def} and \eqref{Dt.norm} arises as the domain of the forms
\begin{equation}\label{tla.def}
\frt_\la [u] := \| \nabla u \|_{L^2}^2 + \| q^{\frac12} u \|_{L^2}^2 + \la \| a^{\frac12} u \|_{L^2}^2 + \la ^2 \|u\|_{L^2}^2, 
\quad 
\Dom(\frt_\la)  := \cD_\frt, 
\end{equation}
which are coercive on $\cD_{\frt}$ (after a shift and rotation) for $\la \in \C \setminus (-\infty, 0]$ (see~\cite[Lem.~4.13]{gerhat2024schur}). Thus the $\frt_\la$  define Schr\"odinger operators
\begin{equation}\label{Tla.def}
\begin{aligned}
T_\la &=  -\Delta + q + \la a + \la^2, 
\\
\Dom(T_\la) &= \left\{ u \in \cD_\frt \, : \, (-\Delta + q + \la a) u \in L^2(\Omega) \right\}, \quad \la \in \C \setminus (-\infty, 0],
\end{aligned}
\end{equation}
which are after a shift and rotation m-sectorial. Notice that $T_\la$ is essentially the Schur complement of $\cA$.
We will deduce the spectral properties of $\Ac$ from the analysis of $T_\la$. We recall the following spectral properties for $\Ac$ and $T_\la$.
\begin{theorem}[{\cite[Thm.~4.2]{gerhat2024schur}, \cite[Sec.~2--4]{Freitas-2018-264}}]\label{thm:A.basic}
Let $\cA$ and $T_\la$ be as in \eqref{A.action}, \eqref{A.dom} and \eqref{Tla.def}, respectively. Then the following claims hold.
\begin{enumerate}[\upshape (i), wide]
	\item 
	\label{item:spec.equiv} We have the spectral equivalence for $* \in \{ \ , {\rm p}, {\rm e2}\}$
	
	\begin{equation}\label{spec.equiv}
		\forall \, \la \in \C \setminus (-\infty,0] \,\,\, : \,\,\, \la \in \sigma_{*} (\cA) \,\,  \iff \,\, 0 \in \sigma_* (T_\la).
	\end{equation}
	\item If $ \la \in \sigma(\cA) \setminus (-\infty,0]$, then $\ov \la \in \sigma(\cA)$, 
	\begin{equation}
		\Re \la \leq - \frac12 \essinf_{x\in \Omega} a(x) \quad \text{and} \quad |\la|^2 \geq \inf( \sigma ( (-\Delta+q)_{\rm D}) ),
	\end{equation}
	where $(-\Delta+q)_{\rm D}$ is the self-adjoint Dirichlet realization of $-\Delta+q$ in $L^2(\Omega)$.

	\item If $T_\la$ has compact resolvent for some $\la \in \C \setminus (-\infty,0]$, then $\sigma(\cA) \setminus (-\infty,0]$ is purely discrete, i.e.~it consists of isolated eigenvalues with finite algebraic multiplicities. In particular, this holds if $\Omega$ is bounded or if
	\begin{equation}\label{a.unbd}
		\lim_{R \to \infty} \essinf_{|x|>R, \, x \in\Omega} a(x) = +\infty.
	\end{equation}
	\item Suppose in addition that $a,q \in C^1(\Omega)$ and $\Omega$ contains a sector
	\begin{equation}
		S_\delta:=\left\{(x_1,x') \in \R \times \R^{d-1} \, : \,  x_1>0, |x'| < \delta x_1\right\} \subset \Omega
	\end{equation}
	for some $\delta>0$, and that $a$ can be decomposed as $a(x) = a_{\rm r}(|x|) + a_{\rm p}(x)$ such that
	\begin{equation}
		\begin{aligned}
			\lim_{r \to \infty} a_{\rm r}(r) &=  \infty, \quad  \lim_{r \to \infty }\frac{a_{\rm r}'(r)}{a_{\rm r}(r)} = 0, \quad \lim_{x \to \infty, \,x \in S_\delta }\frac{a_{\rm p}^2(x)+q(x)}{a_{\rm r}(|x|)} = 0.
		\end{aligned}
	\end{equation}
	Then
	\begin{equation}\label{sp.ess}
		(-\infty,0] \subset \sigma_{e2} (\cA).
	\end{equation}
\end{enumerate}
\end{theorem}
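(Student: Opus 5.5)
This theorem is quoted from \cite[Thm.~4.2]{gerhat2024schur} and \cite{Freitas-2018-264}, so the plan is to reconstruct it from the block structure of $\cA$ and the Schur complement $T_\la$. The central identity is as follows. For $\la \in \C\setminus(-\infty,0]$ and $F=(f,g)$, solving $(\cA-\la)(u,v)=(f,g)$ amounts to $v=\la u+f$ together with
\[
T_\la u = -(g + a f + \la f),
\]
understood in the dual of $\cD_\frt$. Formally this gives
\[
(\cA-\la)^{-1} = \begin{pmatrix} -T_\la^{-1}(a+\la) & -T_\la^{-1} \\ I-\la T_\la^{-1}(a+\la) & -\la T_\la^{-1}\end{pmatrix}.
\]

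\textbf{Claim (i).} I would read off the spectral equivalence from this formula.
\begin{itemize}
\item If $0\in\rho(T_\la)$, I would show that the form $\frt_\la$ is coercive after rotation. Then $T_\la$ extends to an isomorphism $\cD_\frt\to\cD_\frt^*$, and the entries above are bounded on $\HH$. Hence $\la\in\rho(\cA)$.
\item Conversely, suppose $T_\la u_n\to0$ with $\|u_n\|=1$. Then $(u_n,\la u_n)$ is a Weyl sequence for $\cA$, after normalising in $\HH$; this also handles the ${\rm e2}$ case.
\item Kernels correspond via $u\mapsto(u,\la u)$, which gives the ${\rm p}$ case.
\end{itemize}
The main obstacle is the domain bookkeeping. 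For $u\in\cD_\frt$ with $(-\Delta+q+\la a)u\in L^2$, one must check that $(u,\la u)\in\Dom(\cA)$ as defined in \eqref{A.dom}. One must also check that no spurious spectrum appears from the closure of $\cW$ versus $\cD_\frt$. Here I would rely on the density results of Appendix~\ref{app:W0}.

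\textbf{Claim (ii).} The symmetry $\ov\la\in\sigma(\cA)$ follows from $\ov{T_\la u}=T_{\ov\la}\ov u$, since the coefficients are real. For the two bounds, take a singular sequence $u$ of $T_\la$ and test $\frt_\la[u]\approx0$. Write $\la=\mu+\ii\nu$ with $\nu\neq0$.
\begin{itemize}
\item The imaginary part gives $\nu(\|a^{1/2}u\|^2+2\mu\|u\|^2)\approx0$. Hence $\mu\|u\|^2 = -\tfrac12\|a^{1/2}u\|^2\le -\tfrac12\essinf a\,\|u\|^2$, which is the bound on $\Re\la$.
\item Substituting into the real part gives $\|\nabla u\|^2+\|q^{1/2}u\|^2 \approx |\la|^2\|u\|^2$. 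This yields $|\la|^2\ge\inf\sigma((-\Delta+q)_{\rm D})$.
\item For real positive $\la$ the form is positive, so $\la\in\rho(\cA)$.
\end{itemize}

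\textbf{Claim (iii).} This follows from analytic Fredholm theory. The family $T_\la^{-1}$ is holomorphic in $\la$. If it is compact for one $\la$, then by the resolvent identity it is compact for all $\la$ in the region, so the zeros of $T_\la$ are discrete. The resolvent formula then shows that the poles of $(\cA-\la)^{-1}$ have finite rank residues. For the sufficient conditions:
\begin{itemize}
\item If $\Omega$ is bounded, compactness follows from Rellich.
\item Under \eqref{a.unbd}, the form domain $\cD_\frt$ embeds compactly into $L^2$ by a tightness argument using $a^{1/2}u\in L^2$.
\end{itemize}

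\textbf{Claim (iv).} For $\la<0$, I would construct singular Weyl sequences for $T_\la$ supported in the sector $S_\delta$ far out, with $T_\la$ interpreted through the $\la\to\la\pm\ii0$ closure. The idea is a WKB ansatz: choose $u=\chi\,\e^{\ii\phi}$ with $|\nabla\phi|^2\approx -\la a_{\rm r}-\la^2$, adjusted to hit a prescribed point of $(-\infty,0]$. Cut-offs are placed on increasingly distant, long intervals where $a_{\rm r}'/a_{\rm r}\to0$ makes the phase error small. The hypotheses on $a_{\rm p}$ and $q$ ensure these terms are negligible relative to $a_{\rm r}$. This is the most technical step, and I would follow \cite{Freitas-2018-264} closely.
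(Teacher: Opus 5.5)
Your sketch has genuine gaps in (i) and (iv). The paper does not prove this theorem; it quotes it from \cite[Thm.~4.2]{gerhat2024schur} and \cite{Freitas-2018-264}. The two ingredients it does spell out, Lemma~\ref{lem:T.la.ext} and \eqref{A.res}, are exactly where your argument for (i) breaks.

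\textbf{Invertibility in (i).} First, $0\in\rho(T_\la)$ does not make $\frt_\la$ coercive after a rotation. For $\Re\la<0$, every $u$ with $\|u\|_{L^2}=1$, $\|a^{1/2}u\|_{L^2}^2=-2\Re\la$ and $\|u\|_{\cW}^2=|\la|^2$ satisfies $\frt_\la[u]=0$. This rules out $\Re(\e^{\ii\theta}\frt_\la[u])\gtrsim\|u\|_{\cD_\frt}^2$ for every $\theta$. Such $u$ exist: take a modulated Gaussian for $a=2x^2$ on $\R$ and $\la=-1+10\ii$, although this $\la\notin\sigma(\cA)$ by \eqref{sp.x^2}. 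The form is coercive only after a shift. So the invertibility of $\wh T_\la\colon\cD_\frt\to\cD_\frt^*$ must be transferred from $T_\la^{-1}\in\cB(L^2)$ via the resolvent identity, as in Lemma~\ref{lem:T.la.ext}.

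Second, your matrix contains $T_\la^{-1}(a+\la)f$ with $f\in\cW$. This is undefined in general, since $\cW\not\subset\cD_\frt$ and $\cW$ need not even lie in $L^2$. You must rewrite $a+\la=\la^{-1}(\wh T_\la+(\Delta-q))$ on $\cD_\frt$ and extend by density, which is precisely \eqref{A.res}.

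Third, in the converse direction an approximate kernel sequence exists only if $0$ lies in the approximate point spectrum of $T_\la$. You must either exclude residual spectrum, using $T_\la^*=T_{\ov\la}$ and $\ov{T_\la u}=T_{\ov\la}\ov u$, or simply note the following: if $\la\in\rho(\cA)$, then $g\mapsto$ minus the first component of $(\cA-\la)^{-1}(0,g)$ is a bounded inverse of $T_\la$.

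\textbf{The e2 case in (i).} You only obtain $0\in\sigma_{\rm e2}(T_\la)\Rightarrow\la\in\sigma_{\rm e2}(\cA)$. For the reverse, take a singular sequence $(u_n,v_n)$ with $(\cA-\la)(u_n,v_n)=(f_n,g_n)\to0$. This only yields $\wh T_\la v_n=-\la g_n-(\Delta-q)f_n\to0$ in $\cW^*\subset\cD_\frt^*$, not in $L^2$, because $f_n\to0$ only in $\cW$. So upper semi-Fredholmness has to be transferred between $T_\la$ on $L^2$, $\wh T_\la\in\cB(\cD_\frt,\cD_\frt^*)$ and $\cA-\la$; this is the real content of \cite[Thm.~4.2]{gerhat2024schur}.

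\textbf{Part (iv).} Here $\la\in(-\infty,0]$ lies outside the range of \eqref{Tla.def} and of (i), so there is no Schur complement to take a Weyl sequence of. The singular sequence must be built for $\cA-\la$ in $\HH$: take $U_n=(u_n,\la u_n)/\|(u_n,\la u_n)\|_{\HH}$, with $(\cA-\la)(u_n,\la u_n)=(0,-(-\Delta+q+\la a+\la^2)u_n)$. The energy normalization is what makes the hypotheses suffice. For your ansatz $u_n=\chi_n\e^{\ii\phi}$ with $|\nabla\phi|^2\approx|\la|a_{\rm r}$, one has $\|\nabla u_n\|_{L^2}\approx\sqrt{|\la|a_{\rm r}}\,\|u_n\|_{L^2}$. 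The phase error $\chi_n\Delta\phi$ is not small relative to $\|u_n\|_{L^2}$; for $a_{\rm r}=r^2$, $\Delta\phi$ is of order one. Relative to $\|\nabla u_n\|_{L^2}$, however, it is $\BigO(a_{\rm r}'/a_{\rm r}+1/r)\to0$. Likewise the term $\la a_{\rm p}u_n$ is negligible exactly because $a_{\rm p}^2/a_{\rm r}\to0$.

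\textbf{Parts (ii) and (iii)} are fine in outline. In (iii), the $\la$-independence of compactness is best seen from the compactness of $\cD_\frt\hookrightarrow L^2$, since the form domain does not depend on $\la$. The analytic Fredholm theorem should be applied to the shifted resolvents $(T_\la-z_0)^{-1}$ rather than to $T_\la^{-1}$.
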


As an illustration of the generic spectral (and pseudospectral) properties for unbounded damping which is not controlled by $-\Delta+q$, we recall an example with $\Omega =\R$, $a(x)=2x^2$ and $q=0$ from~\cite{Freitas-2018-264,Arifoski-2020-52,arnal2026resolvent-dwe}. Here
\begin{equation}\label{sp.x^2}
	\sigma(\cA) = (-\infty,0] \, \, \dot{\cup} \, \left\{2^\frac 13 \e^{\pm \ii \frac 23 \pi} (2k+1)^\frac 23\right\}_{k \in \N_0}
\end{equation}
is illustrated in Figure~\ref{fig:x2}; further examples with explicit eigenvalues can be found in \cite[Sec.~6]{Freitas-2018-264} and other sufficient conditions on $a$ and $q$ so that \eqref{sp.ess} holds in~\cite[Sec.~4]{Freitas-2018-264}.
\begin{figure}[htb!]
	\includegraphics[width= 0.6 \textwidth]{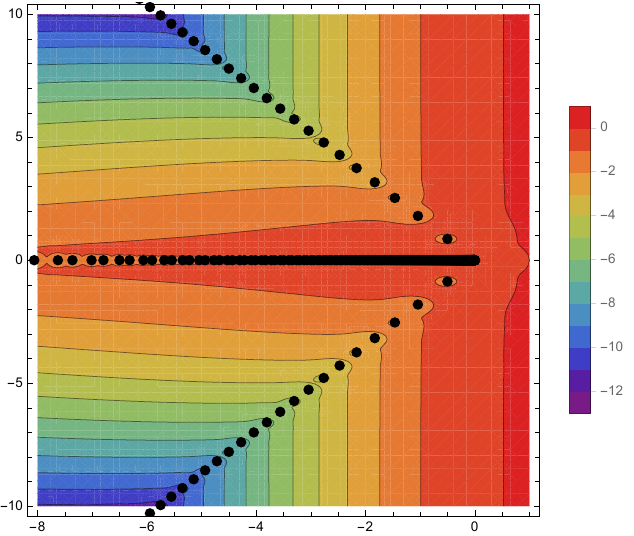}
	\caption{Figure reproduced from \cite{Arifoski-2020-52}. Numerical computation of spectrum (in black, see \eqref{sp.x^2}) and pseudospectra ($\log_{10}$ scale, approximation by  $800\times800$ matrix) of $\Ac$ (see Section \ref{sec:dwe}) with $a(x)=2x^2$ and $q(x)\equiv0$, $x\in \R$.}
	\label{fig:x2}
\end{figure}

Note that when the damping is unbounded at infinity, the resolvent behavior of the damped wave operator in the left complex half-plane is quite non-trivial (see Figure~\ref{fig:x2} for illustration), and qualitatively resembles Schr\"odinger operators with unbounded complex potentials (see e.g.~\cite{Davies-1999-200}).
Nevertheless, unlike for Schr\"odinger operators, where the resolvent behavior at $\pm \ii \infty$ depends on the growth rate of the potential at infinity (see e.g.~\cite{Boulton-2002-47,Pravda-Starov-2006-73,Dencker-2004-57,Helffer-2013-book,arnal2023resolvent}), the resolvent norm of the damped wave operator is bounded (and in general non-decaying) at $\pm \ii \infty$, irrespective of the growth rate of $a$ (see \cite{arnal2026resolvent-dwe} and Section~\ref{ssec.GCC}).

\subsection{The resolvent of \texorpdfstring{$\cA$}{A}}
\label{ssec:A.res}
We recall some steps in~\cite[Sec.~4]{gerhat2024schur} and in particular a representation of the resolvent of $\cA$ in terms of its Schur complement.

For $\lambda \in \C \setminus (-\infty,0]$ the form $\frt_\la$ defines a bounded distribution-valued operator
\begin{equation}
	\label{T.hat.def}
	\widehat T_\la \in \cB (\cD_\frt, \cD_\frt^*), \qquad \widehat T_\la u := \frt_\la[u,\cdot] \in \cD_\frt^*, \qquad u \in \cD_\frt.
\end{equation}
The operator $T_\la$ in \eqref{Tla.def} is in fact the restriction of $\widehat T_\la$ to the maximal domain in $\LOm$, namely, $T_\la := {\widehat T_\la}\vert_{\Dom(T_\la)}$. Due to the coercivity of $\frt_\la$ (without any shift or rotation for $\Re \la >0$, see~\cite[Lem.~4.13]{gerhat2024schur}), we have
\begin{equation}\label{T.hat.la.coer}
\wh T_{\la}^{-1} \in \cB(\cD_{\frt}^*,\cD_{\frt}), \qquad \Re \la >0.	
\end{equation}
Moreover, it is helpful to observe that the following bounded extension property holds (for further generalizations see \cite{gerhat2024schur}).

\begin{lemma}\label{lem:T.la.ext}
	Let $\la \in \C \setminus (-\infty,0]$ and let $T_\la$ and $\wh T_\la$ be as in \eqref{Tla.def} and \eqref{T.hat.def}, respectively. If $T_{\la}^{-1} \in \cB(L^2(\Omega))$, then $\wh T_{\la}^{-1} \in \cB(\cD_{\frt}^*,\cD_{\frt})$. 
\end{lemma}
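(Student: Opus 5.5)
The idea is to show that $\wh T_\la$ is injective with closed range, and then obtain surjectivity by duality. We assume $T_\la^{-1}\in\cB(L^2(\Omega))$.

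\emph{Step 1: an a priori bound on $\Dom(T_\la)$.} Fix $\la\in\C\setminus(-\infty,0]$ and choose a rotation $\omega\in\C$, $|\omega|=1$, and a shift $\mu\ge 0$ such that
\[
\Re\bigl(\omega\,\frt_\la[u]\bigr)+\mu\|u\|_{L^2}^2\gtrsim \|u\|_{\cD_\frt}^2,\qquad u\in\cD_\frt .
\]
This is possible by the coercivity after shift and rotation from \cite[Lem.~4.13]{gerhat2024schur}. Hence, for $u\in\cD_\frt$,
\[
\|u\|_{\cD_\frt}^2\lesssim |\langle \wh T_\la u,u\rangle|+\|u\|_{L^2}^2\le \|\wh T_\la u\|_{\cD_\frt^*}\|u\|_{\cD_\frt}+\|u\|_{L^2}^2 .
\]

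\emph{Step 2: controlling the $L^2$ term.} We use duality with the adjoint. The adjoint form is $\frt_\la^*=\frt_{\ov\la}$, and $T_\la^*=T_{\ov\la}$ (m-sectorial after rotation). Since $T_\la^{-1}$ is bounded, so is $T_{\ov\la}^{-1}=(T_\la^{-1})^*$. For $u\in\cD_\frt$ and $h\in L^2(\Omega)$, set $w=T_{\ov\la}^{-1}h\in\Dom(T_{\ov\la})\subset\cD_\frt$. Then
\[
\langle u,h\rangle=\ov{\frt_{\ov\la}[w,u]}=\frt_\la[u,w]=\langle\wh T_\la u,w\rangle,
\]
so $|\langle u,h\rangle|\le \|\wh T_\la u\|_{\cD_\frt^*}\|w\|_{\cD_\frt}$.

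To close the estimate we need $\|w\|_{\cD_\frt}\lesssim\|h\|_{L^2}$. Apply Step 1 to $T_{\ov\la}$ and $w$:
\[
\|w\|_{\cD_\frt}^2\lesssim|\langle h,w\rangle|+\|w\|_{L^2}^2\lesssim \|h\|_{L^2}^2 ,
\]
using $\|w\|_{L^2}\lesssim\|h\|_{L^2}$. Therefore $\|u\|_{L^2}\lesssim\|\wh T_\la u\|_{\cD_\frt^*}$. Inserting this into Step 1 and absorbing gives
\[
\|u\|_{\cD_\frt}\lesssim\|\wh T_\la u\|_{\cD_\frt^*},\qquad u\in\cD_\frt ,
\]
so $\wh T_\la$ is injective with closed range.

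\emph{Step 3: surjectivity.} The same bound applies to $\wh T_{\ov\la}$, since $T_{\ov\la}^{-1}$ is bounded. Identifying the Banach adjoint of $\wh T_\la$ (via the antilinear pairing on $\cD_\frt\times\cD_\frt^*$, with reflexivity) with $\wh T_{\ov\la}$, the adjoint is injective. Hence the range of $\wh T_\la$ is dense, and being closed, it is all of $\cD_\frt^*$. This yields $\wh T_\la^{-1}\in\cB(\cD_\frt^*,\cD_\frt)$.

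\emph{Main obstacle.} The delicate point is the adjoint identification $T_\la^*=T_{\ov\la}$ and $(\wh T_\la)'=\wh T_{\ov\la}$. For m-sectorial operators associated with closed forms this follows from Kato's representation theorem (applied after rotation and shift), which I would invoke.
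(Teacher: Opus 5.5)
Your proof is correct, but it takes a different route from the paper. The paper proves no a priori estimate for $\wh T_\la$. Instead it:
\begin{enumerate}[\upshape (a)]
\item takes a shift $\mu_\la$ with $(\wh T_\la-\mu_\la)^{-1}\in\cB(\cD_\frt^*,\cD_\frt)$, available by coercivity after a shift \cite[Lem.~4.13]{gerhat2024schur};
\item writes down the candidate inverse $R_\la=(\wh T_\la-\mu_\la)^{-1}-\mu_\la T_\la^{-1}(\wh T_\la-\mu_\la)^{-1}$ given by the first resolvent identity;
\item notes that $R_\la\in\cB(\cD_\frt^*,\cD_\frt)$ because $T_\la^{-1}\in\cB(L^2(\Omega),\cD_\frt)$ \cite[Lem.~2.12]{gerhat2024schur};
\item checks $R_\la\wh T_\la=I$ on $\Dom(T_\la)$ and $\wh T_\la R_\la=I$ on $L^2(\Omega)$, and extends both identities by continuity, using that $\Dom(T_\la)$ is dense in $\cD_\frt$ and $L^2(\Omega)$ is dense in $\cD_\frt^*$.
\end{enumerate}
This argument is essentially algebraic and involves no adjoints.

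Your approach is the standard PDE-style alternative. You combine a G{\aa}rding inequality with a duality argument through $T_{\ov\la}^{-1}$ to absorb the $L^2$ term, which gives $\|u\|_{\cD_\frt}\lesssim\|\wh T_\la u\|_{\cD_\frt^*}$ and hence injectivity and closed range. Compared with the paper's route:
\begin{itemize}
\item it reproves the bound $T_{\ov\la}^{-1}\in\cB(L^2(\Omega),\cD_\frt)$ inline and yields a quantitative bound directly;
\item it does not need the density of $\Dom(T_\la)$ in $\cD_\frt$;
\item it does need the identification $T_\la^*=T_{\ov\la}$ from Kato's representation theorem, which you use essentially in Step 2 and which the paper's argument avoids.
\end{itemize}

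Your Step 3 can also be shortened so that the Banach adjoint of $\wh T_\la$ is never needed. Since $T_\la\subset\wh T_\la$ and $T_\la$ maps onto $L^2(\Omega)$, the range of $\wh T_\la$ contains $L^2(\Omega)$, which is dense in $\cD_\frt^*$. Since the range is closed, it is all of $\cD_\frt^*$.
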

\begin{proof}
	Indeed, it is proven in~\cite[Lem.~4.13]{gerhat2024schur} that there exists a shift $\mu_\la \in \C$ such that $(\widehat T_{\la}-\mu_\la)^{-1} \in\cB(\cD_\frt^*,\cD_\frt)$. Then $(T_{\la}- \mu_\la)^{-1} \in \cB(L^2(\Omega))$ and, using the first resolvent identity, we infer that
	\begin{equation}
		\begin{aligned}
			T_{\la}^{-1} & = (T_{\la}- \mu_\la)^{-1} - \mu_\la T_{\la}^{-1} (T_{\la}- \mu_\la)^{-1} \\
			& \subset (\widehat T_{\la}- \mu_\la)^{-1} - \mu_\la T_{\la}^{-1} (\widehat T_{\la}- \mu_\la)^{-1} =: R_{\la}  \in \cB(\cD_\frt^*, \cD_\frt),
		\end{aligned}
	\end{equation}
	where for the boundedness we have used that $\cD_\frt \subset L^2(\Omega)$ is boundedly embedded and  $T_{\la}^{-1} \in \cB (L^2(\Omega) , \cD_\frt)$ (see~\cite[Lem.~2.12]{gerhat2024schur}). We then have
	\begin{equation}
		R_{\la} \widehat T_{\la} \vert_{\Dom (T_{\la})} = I_{\Dom (T_{\la})}, \qquad \widehat T_{\la} R_{\la}\vert_{L^2(\Omega)} = I_{\LtOm}.
	\end{equation}
	By continuity and density (see~\cite[Lem.~4.13]{gerhat2024schur} for the density of $\Dom (T_{\la})$ in $\cD_\frt$), these identities extend from $\Dom (T_{\la})$ and $\LtOm$ to $\cD_\frt$ and $\cD_\frt^*$, respectively. This implies $\widehat T_{\la}^{-1} = R_{\la}$. 
\end{proof}

The operator $\cA$ in $\HH$ is implemented as a restriction of a bounded distribution-valued operator as well. To this end, in the second component of the product space $\HH$, we consider the triple (see \eqref{Dt.def} and \eqref{Dt.norm})
\begin{equation}
	\cD_\frt \subset L^2(\Omega)~\widehat=~L^2(\Omega)^* \subset \cD_\frt^*
\end{equation}
and the distribution-valued operator matrix is introduced as
\begin{equation}\label{A.hat.def}
	{\widehat \cA} := \begin{pmatrix}
		0 & I \\
		\Delta - q & -a
	\end{pmatrix} \in \cB (\cW \oplus \cD_\frt, \cW \oplus \cD_\frt^*).
\end{equation}
While the entries in the first row are clearly well-defined and bounded between the respective spaces (note that 
\begin{equation}\label{Dt.W.incl}
\Dc_\frt =  H_0^1(\Omega) \cap \Dom (q^\frac12) \cap \Dom(a^{\frac 12}) \subset \cW	
\end{equation}
is boundedly embedded, see~\cite[Prop.~4.6]{gerhat2024schur}), the ones in the second row are defined weakly. More precisely, the operators
\begin{equation}
	\Delta - q  \in \cB (\cW , \cW^*), \qquad a  \in \cB (\cD_\frt, \cD_\frt^*),
\end{equation}
are defined by
\begin{equation}\label{CD.def}
	\begin{aligned}
		((\Delta-q)u,v)_{\cW^*\times \cW} & := -\int_\Omega \nabla u (x) \overline{ \nabla v (x)} \dd x - \int_\Omega q(x) u(x) \overline{v(x)} \dd x, \\
		(au,v)_{\cD_\frt^*\times \cD_\frt} & := \int_\Omega a (x)u (x)\overline {v(x)} \dd x,
	\end{aligned}
\end{equation}
for all $u$ and $v$ in $\cW$ or $\Dc_\frt$, respectively (see Appendix~\ref{app:W0} for details on $\cW$).
In fact, the above defined operator $-(\Delta-q)$ is nothing but the Riesz isomorphism $J_{\cW} : \cW \to \cW^*$. Hence, in particular, for $u \in \cW$ and $u^* \in \cW^*$,
\begin{equation}\label{Dq.Riesz}
	\|(\Delta-q)u \|_{\cW^*} = \|u\|_{\cW}, \qquad \|(\Delta-q)^{-1} u^* \|_{\cW} = \|u^*\|_{\cW^*}.
\end{equation}

The operator $\cA$ is defined as the maximal restriction of $\wh \cA$ to $\HH$, in detail,
\begin{equation}\label{A.T.op}
\cA := {\widehat \cA}\vert_{\Dom(\cA)},
\end{equation}
with $\Dom(\cA)$ introduced in \eqref{A.dom}. Note that the first row in \eqref{A.hat.def} does not contribute any restriction to the domain of $\cA$ since $\Dc_\frt \subset \cW$ and we employed a triple $\cD_\frt \subset L^2(\Omega) \subset \cD_\frt^*$ in the second space component only. 

The adjoint of $\cA$ has the expected structure. 
\begin{proposition}\label{prop:A*}
Let $\cA$ be as in \eqref{A.action} and \eqref{A.dom}. Then the adjoint of $\cA$ reads
\begin{equation}\label{A*.def}
\begin{aligned}
\Ac^* &= \begin{pmatrix}
	0 & -I \\
	-(\Delta - q)  & -a
\end{pmatrix},
\\
\Dom(\cA^*) & = \left\{ F =(f, g) \in \Wc \times \cD_t \, : \, (\Delta-q) f  +ag \in L^2(\Omega) \right\}.
\end{aligned}
\end{equation}
\end{proposition}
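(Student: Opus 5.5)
We will identify $\cA^*$ by noting that the operator $\cB$ defined by the right-hand side of \eqref{A*.def} is unitarily similar to $\cA$. Then $\cB$ is m-dissipative, and we check the inclusion $\cB \subset \cA^*$. Let $R := \operatorname{diag}(I,-I)$, which is a unitary involution on $\HH$. A direct computation on the level of the distribution-valued matrix \eqref{A.hat.def} gives
\begin{equation}
R \wh\cA R = \begin{pmatrix} 0 & -I \\ -(\Delta - q) & -a \end{pmatrix}.
\end{equation}
Moreover, $(f,g)\in \Dom(\cA)$ if and only if $(f,-g)$ satisfies $(\Delta-q)f + a(-g)\cdot(-1)\dots$. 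More precisely, $R\,\Dom(\cA)$ is exactly the set of $(f,g)\in\cW\times\cD_\frt$ with $(\Delta-q)f + ag \in L^2(\Omega)$, which is the claimed domain. Hence $\cB := R\cA R$ with $\Dom(\cB)=R\,\Dom(\cA)$ is precisely the operator in \eqref{A*.def}. By Proposition~\ref{prop:m-dissipative}, $\cB$ is m-dissipative.

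Next we show $\cB \subset \cA^*$, i.e.\ $\langle \cA F, G\rangle_\HH = \langle F, \cB G\rangle_\HH$ for $F=(f_1,f_2)\in\Dom(\cA)$ and $G=(g_1,g_2)\in\Dom(\cB)$. We expand
\begin{equation}
\langle \cA F, G\rangle_\HH = \langle f_2, g_1\rangle_\cW + \langle (\Delta-q)f_1 - af_2, g_2\rangle_{L^2}.
\end{equation}
Since $g_2\in\cD_\frt$ and the $L^2$ element $(\Delta-q)f_1 - af_2$ coincides with the corresponding element of $\cD_\frt^*$, we may evaluate it as the pairing $\cD_\frt^*\times\cD_\frt$. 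Splitting it via \eqref{CD.def} (using $\cD_\frt\subset\cW$ boundedly), we get
\begin{equation}
\langle (\Delta-q)f_1 - af_2, g_2\rangle = -\langle f_1,g_2\rangle_\cW - \int_\Omega a f_2 \ov{g_2}.
\end{equation}
The same procedure applied to $\langle F,\cB G\rangle_\HH = \langle f_1,-g_2\rangle_\cW + \langle f_2, -(\Delta-q)g_1 - ag_2\rangle_{L^2}$ yields
\begin{equation}
\langle F,\cB G\rangle_\HH = -\langle f_1,g_2\rangle_\cW + \langle f_2,g_1\rangle_\cW - \int_\Omega a f_2\ov{g_2}.
\end{equation}
The two expressions agree, which proves the inclusion.

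Finally, the equality follows by a standard maximality argument. $\cA$ is m-dissipative, so $\cA^*$ is m-dissipative as well; in particular $I-\cA^*$ is injective. On the other hand, $\cB$ is m-dissipative, so $I-\cB$ is surjective. Since $\cB\subset\cA^*$, surjectivity of $I-\cB$ together with injectivity of $I-\cA^*$ forces $\Dom(\cA^*)=\Dom(\cB)$.

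The main technical point is the splitting of the pairing in the second step. The combination $(\Delta-q)f_1-af_2$ lies in $L^2$, while the individual terms only lie in $\cW^*$ and $\cD_\frt^*$ respectively. The splitting is legitimate because the $L^2$ inner product with $g_2\in\cD_\frt$ coincides with the $\cD_\frt^*$ pairing, and $\cW^*\hookrightarrow\cD_\frt^*$ by restriction.
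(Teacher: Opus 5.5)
Your proof is correct and follows the paper's route. You show the candidate operator is m-dissipative, verify the inclusion into $\cA^*$ by splitting the $L^2$ pairing through the $\cW^*\times\cW$ and $\cD_\frt^*\times\cD_\frt$ dualities, and conclude by maximality. Writing the candidate as $R\cA R$ with the unitary involution $R=\operatorname{diag}(I,-I)$ is a clean way to make precise the paper's brief remark that m-dissipativity follows ``analogously as for $\cA$''.
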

\begin{proof}
We prove that the adjoint of $\Ac$ can be obtained as the restriction of the (adjoint) distributional matrix
\begin{equation}\label{C.hat.def}
	{\wh \cC}:= \begin{pmatrix}
		0 & -I \\
		-(\Delta - q)  & -a
	\end{pmatrix} \in \cB (\Wc \oplus \cD_t, \Wc \oplus \cD_t^*).
\end{equation}
We denote the restriction of $\wh \cC$ to the domain in~\eqref{A*.def} by $\cC$ and we prove that  $\cC = \Ac^*$. Analogously as for $\Ac$, it follows that $\cC$ is m-dissipative and since also $\Ac^*$
is m-dissipative, it is sufficient to show only the inclusion $\cC \subset \Ac^*$. To this end, fix $(f,g) \in \Dom (\cC)$ and let $(u,v)\in\Dom(\Ac)$ be arbitrary. With \eqref{CD.def} we compute
\begin{equation} \label{C.adj}
\begin{aligned}
\langle \cC (f,g), (u,v) \rangle_{\HH}
& = - \langle g, u \rangle_{\cW} - \langle (\Delta - q) f + a g,v\rangle_{L^2} \\
& = - \langle g, u \rangle_{\cW} - ( (\Delta - q) f, v)_{\Wc^* \times \Wc} - (ag,v)_{\cD_t^* \times \cD_t}\\
& = \overline{((\Delta-q) u,g)}_{\Wc^*\times \Wc}  + \overline{\langle v,f \rangle}_{\cW} -  \overline{(av,g)}_{\cD_t^* \times \cD_t}  \\
& = \overline{\langle (\Delta-q) u -av,g\rangle}_{L^2}  + \overline{\langle v,f \rangle}_{\cW}   \\
&  = \langle (f,g), \Ac (u,v) \rangle_{\HH}.
\end{aligned}
\end{equation}
It follows that $(f, g) \in \Dom (\Ac^*)$ and $\Ac^* (f, g) = \cC (f,g)$, i.e.~that $\cC \subset \Ac^*$.
\end{proof}

Finally, the resolvent of $\cA$ is constructed in terms of $\wh T_\la^{-1}$ as
\begin{equation}
	\label{A.res}
		(\cA-\la)^{-1} =
		- \begin{pmatrix}
			\la^{-1} \left(I +  \widehat T_{\la}^{-1} (\Delta-q) \right) &    \widehat T_{\la}^{-1}  \\[2mm]
			\widehat T_{\la}^{-1} (\Delta-q) &  \la    \widehat T_{\la}^{-1}
		\end{pmatrix},
\end{equation}
see~\cite[proof of Thm.~2.8]{gerhat2024schur}. 

\subsection{Range of \texorpdfstring{$\cA$}{A} and the space \texorpdfstring{$\KK$}{K}}
\label{ssec:Ran.A.K}

We can characterize the range of $\cA$ as
\begin{align}
%\nonumber
\lefteqn{\Ran(\cA)}\\
%\nonumber
& \quad = \big\{(f,g) \in \cW \times L^2(\Omega) \, : \, \exists~ (u,v) \in \Dom(\cA),\,
f = v,  \, g = (\Delta-q)u -av \big\}
\\
%\nonumber
& \quad = \left\{(f,g) \in \cD_\frt \times L^2(\Omega) \, : \, \exists~ u \in \cW, \, g = (\Delta-q)u -af \right\}
\\
\label{A.Ran}
& \quad = \left\{(f,g) \in \cD_\frt \times L^2(\Omega) \, : \, af + g \in \cW^* \right\},
\end{align}
where the bijectivity of $\Delta-q = - J_{\cW} : \cW \to \cW^*$ was used in the last step. It turns out that the range of $\cA$ is precisely the space $\KK$ introduced in \eqref{KK.def} and which appears in our main results.

\begin{proposition} \label{prop:Ran.A}
Let $\cA$ be as in \eqref{A.action} and \eqref{A.dom}. Then the following claims hold.
\begin{enumerate}[\upshape (i)]
	\item \label{item:A.inj} Both $\cA$ and $\cA^*$ are injective.
	\item \label{item:ran.A} $\Ran(\cA) = \KK$ and we have
	\begin{equation}\label{K.norm.equiv}
		\nr{F}_{\HH}^2 + \nr{\cA\inv F}_{\HH}^2 = \nr{F}_{\KK}^2, \qquad F \in \Ran(\cA).
	\end{equation}
	Moreover, the norm 
	\begin{equation}
		F = (f,g) \mapsto (\|f\|_{\cD_{\frt}}^2 + \|g\|_{L^2}^2 + \|af + g\|_{\cW^*}^2)^\frac12
	\end{equation}
	is equivalent to $\nr{\cdot}_{\KK}$. In particular,
	\begin{equation}\label{K.in.Dt+L2}
	\KK \subset \cD_{\frt} \oplus L^2(\Omega).
	\end{equation}
	
	\item $\KK$ is dense in $\HH$.
\end{enumerate}
\end{proposition}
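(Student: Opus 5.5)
For \ref{item:A.inj}, I would take $F=(f,g)\in\Dom(\cA)$ with $\cA F=0$. The first row gives $g=0$, and then the second row gives $(\Delta-q)f=0$ in $L^2(\Omega)$, hence in $\cW^*$. Since $-(\Delta-q)=J_\cW$ is the Riesz isomorphism, see \eqref{Dq.Riesz}, this forces $f=0$. The same argument works for $\cA^*$ using Proposition~\ref{prop:A*}: $-g=0$ and then $-(\Delta-q)f=0$.

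For \ref{item:ran.A}, I would start from the characterization \eqref{A.Ran}, namely $\Ran(\cA)=\{(f,g)\in\cD_\frt\times L^2: af+g\in\cW^*\}$, and compare it with \eqref{KK.def}.
\begin{itemize}
\item[] The inclusion $\Ran(\cA)\subset\KK$: for $f\in\cD_\frt$ we have $a^{1/2}f\in L^2$ and $a\in L^1_\loc$, so Cauchy--Schwarz gives $af=a^{1/2}\cdot a^{1/2}f\in L^1_\loc$.
\item[] The inclusion $\KK\subset\Ran(\cA)$ is the main obstacle: one must show that $(f,g)\in\KK$ forces $f\in\Dom(a^{1/2})$. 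I would set $u:=(\Delta-q)^{-1}(af+g)\in\cW$, so that $af=(\Delta-q)u-g$ in distributions. Then I would test against truncations $f_n$ of $f$, say $f_n=\chi_n\min(|f|,n)\operatorname{sgn}f$ with cutoffs $\chi_n$. These are admissible test functions in $\cW$, and $af\ov{f_n}\ge0$ pointwise, which gives
\[
\int a f\ov{f_n}=-\langle u,f_n\rangle_\cW-\langle g,f_n\rangle_{L^2}.
\]
Since the truncations satisfy $\|f_n\|_\cW\lesssim\|f\|_\cW$ and $\|f_n\|_{L^2}\le\|f\|_{L^2}$, Fatou's lemma would yield $\|a^{1/2}f\|^2\le\|af+g\|_{\cW^*}\|f\|_\cW+\|g\|\|f\|$. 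The delicate points are justifying the pairing of the $L^1_\loc$ function $af$ with $f_n$, and the $\cW$-boundedness of truncations; both use the lattice properties of $H^1_0\cap\Dom(q^{1/2})$ from Appendix~\ref{app:W0}. The same inequality also gives the equivalent norm, since it bounds $\|f\|_{\cD_\frt}$ by the $\KK$-norm, and \eqref{K.in.Dt+L2} follows.
\end{itemize}

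For the identity \eqref{K.norm.equiv}, with $F=(f,g)=\cA(u,v)$ we have $v=f$ and $u=(\Delta-q)^{-1}(af+g)$. Hence, using \eqref{Dq.Riesz},
\[
\|\cA^{-1}F\|_\HH^2=\|u\|_\cW^2+\|v\|_{L^2}^2=\|af+g\|_{\cW^*}^2+\|f\|_{L^2}^2,
\]
which is exactly \eqref{KK.norm}.

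For the density claim, I would use that for a closed, densely defined operator $(\Ran\cA)^\perp=\Ker\cA^*=\{0\}$ by \ref{item:A.inj}. Hence $\KK=\Ran(\cA)$ is dense in $\HH$.
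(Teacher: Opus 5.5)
Your treatment of (i), (iii), the inclusion $\Ran(\cA)\subset\KK$ and the identity \eqref{K.norm.equiv} is correct and matches the paper. There is, however, a genuine gap in $\KK\subset\Ran(\cA)$, i.e.\ in proving $a^{1/2}f\in L^2(\Omega)$ for $(f,g)\in\KK$. This is the only non-trivial point of the proposition. You defer exactly this step to ``lattice properties of $H^1_0\cap\Dom(q^{1/2})$ from Appendix~\ref{app:W0}'', but that appendix contains nothing of the kind; it only realizes $\cW$ as a closed subspace of $\dot\cD_q$.

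The test functions you write down also do not work for a general open $\Omega$. The identity $\int af\ov{f_n}=(af+g,f_n)_{\cW^*\times\cW}-\langle g,f_n\rangle_{L^2}$ is known a priori only for $f_n\in\CcOm$. Since $a\in L^1_\loc(\Omega)$ may fail to be integrable up to $\partial\Omega$, extending it by approximation requires all approximants to lie in a fixed compact subset of $\Omega$.
\begin{itemize}
\item If $\chi_n$ only cuts off at infinity, $f_n$ reaches $\partial\Omega$. The left-hand side, which is a priori possibly $+\infty$, is then precisely what is in question.
\item If instead $\chi_n\in\CcOm$, nothing controls $\|\min\{|f|,n\}\nabla\chi_n\|_{L^2}$ uniformly in $n$; that would require a Hardy-type inequality near $\partial\Omega$. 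So your bound $\|f_n\|_{\cW}\lesssim\|f\|_{\cW}$ is unjustified.
\end{itemize}

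The missing idea is to let the cutoff depend on $f$. Since $|f|\in H^1_0(\Omega)$, you can take $\phi_k\in\CcOm$ with $\phi_k\to|f|$ in $H^1$ and, after passing to a subsequence, a.e. Set $\varphi_k:=f\min\{1,\phi_k^+/|f|\}$, which is the pointwise projection of $f$ onto the disc of radius $\phi_k^+$. Then:
\begin{itemize}
\item Because $(z,r)\mapsto z\min\{1,r^+/|z|\}$ is $\sqrt2$-Lipschitz, $\varphi_k\in H^1(\Omega)$ with $\|\nabla\varphi_k\|_{L^2}^2\le 2(\|\nabla f\|_{L^2}^2+\|\nabla\phi_k\|_{L^2}^2)$.
\item Since $|\varphi_k|\le\min\{|f|,\phi_k^+\}$, $\varphi_k$ is bounded and supported in $\supp\phi_k$, and $\limsup_k\|\varphi_k\|_{\cW}\le 2\|f\|_{\cW}$.
\item $af\ov{\varphi_k}\ge 0$ and $af\ov{\varphi_k}\to a|f|^2$ a.e.
\item Mollifying $\varphi_k$ inside $\Omega$ and using dominated convergence with $a,q\in L^1(\supp\phi_k)$ gives $\varphi_k\in\cW$ and $\int(af+g)\ov{\varphi_k}=(af+g,\varphi_k)_{\cW^*\times\cW}$.
\end{itemize}
Fatou's lemma then yields $\|a^{1/2}f\|_{L^2}^2\le 2\|af+g\|_{\cW^*}\|f\|_{\cW}+\|g\|_{L^2}\|f\|_{L^2}$.

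With this repair your route is a valid alternative. It produces a quantitative a priori bound, and hence the norm equivalence and \eqref{K.in.Dt+L2}, in one step. The price is a chain rule for complex truncations and the lattice approximation of $|f|$.

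The paper never tests against anything built from $f$. It solves $\wh T_1h=(-\Delta+q)f+(af+g)+f-g\in\cD_\frt^*$ with $h\in\cD_\frt$. It then shows $u=f-h=0$ via Kato's inequality $\Delta|u|\ge|u|$, tested only against nonnegative $\CcOm$ functions approximating $|u|$ in $H^1$. The norm equivalence is obtained separately from $\wh T_1^{-1}\in\cB(\cD_\frt^*,\cD_\frt)$.
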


\begin{proof}
\begin{enumerate}[\upshape (i), wide]
	\item The claims follow from the injectivity of $\Delta-q$ on $\cW$ (see~\eqref{Dq.Riesz} and Proposition~\ref{prop:A*}).
	\item From \eqref{A.Ran}, we  have $\Ran(\cA) \subset \KK$. Since for $F = (f,g) \in \Ran(\Ac)$
	\begin{equation}\label{A.inv}
		\Ac \inv F = \big((\Delta-q)\inv (g +af), f \big),
	\end{equation}
	we obtain by \eqref{Dq.Riesz} that
	\begin{equation}
		\nr{F}_{\HH}^2 + \nr{\Ac\inv F}_{\HH}^2
		= \nr{f}_{\Wc}^2 + \nr{g}_{L^2}^2 + \nr{g +af}_{\Wc^*}^2 + \nr{f}_{L^2}^2 = \nr{F}_{\KK}^2.
	\end{equation}
	Moreover, using $\widehat T_1\inv \in \Bc(\Dc_\frt^*,\Dc_\frt)$ (see \eqref{T.hat.la.coer}), and the continuity of the embeddings $\Wc^* \subset \Dc_\frt^*$ and $L^2(\Omega) \subset \Dc_\frt^*$, we arrive at
	\begin{equation}
		\| a^{\frac 12} f \|_{L^2} \lesssim \|\widehat T_1 f\|_{\Dc_\frt^*} \lesssim \| (\Delta - q) f \|_{\Wc^*} + \|af + g\|_{\Wc^*} + \nr{f}_{L^2} + \nr{g}_{L^2} \lesssim \nr{F}_{\KK}.
	\end{equation}
	This shows the equivalence of the norms for $F \in \Ran(\cA)$.
	
	It remains to justify that $\KK \subset \Ran(\cA)$. Let $(f,g) \in \KK$. We have $f \in H^1_0(\Omega) \cap \Dom(q^\frac 12)$, $g \in \LOm$, $af \in \LolocOm$ and $af+g \in \cW^{*} \subset \cD_{\frt}^*$, so it is enough to show that $a^\frac12 f \in \LOm$. To this end, we write
	\begin{equation}
		\cD'(\Omega) \ni (-\Delta + q + a +1) f = (-\Delta + q)f + (a f +g) + f -g \in \cD_{\frt}^*.
	\end{equation}
	Since $\wh T_1$ is bijective between $\cD_\frt$ and $\cD_{\frt}^*$, there exists $h \in \cD_{\frt}$ such that the identity 
	\begin{equation}
		(-\Delta + q + a +1) f = \wh T_1 h = (-\Delta + q + a +1 ) h
	\end{equation}
	holds in $\cD'(\Omega)$. We show below that $f=h \in \cD_\frt$ and hence $a^\frac12 f \in \LOm$ as claimed. 
	
	Let $u := f-h$, then $u \in H^1_0(\Omega) \cap \Dom(q^\frac 12)$, $a u \in \LolocOm$ and
	\begin{equation}
		\Delta u = (q+a +1) u \in \LolocOm.
	\end{equation}
	Employing Kato's inequality (see e.g.~\cite[Sec.~VII.2.1]{EE} for details), we obtain
	\begin{equation}
		\Delta |u| \geq \Re \left(
		\frac{\ov u}{|u|} \Delta u
		\right)= \Re \left(
		(q+a +1) |u|
		\right) \geq |u| 
	\end{equation}
	in $\cD'(\Omega)$, i.e.~for all $0\leq \varphi \in \CcOm$
	\begin{equation}
		((1-\Delta)|u|, \varphi)_{\cD' \times \cD} \leq 0.
	\end{equation}
	Since $u \in H^1_0(\Omega) \cap \Dom(q^\frac 12) \subset H_0^1(\Omega)$, also $|u| \in H_0^1(\Omega)$. Moreover, there exists $\{\varphi_k\} \subset \CcOm$ with $\varphi_k \geq 0$, $k \in \N$, such that $\varphi_k \to |u|$ in $H^1_0(\Omega)$ (see e.g.~\cite[Cor.~VI.2.4, Thm.~VI.3.6]{EE}). Hence
	\begin{equation}
		\||u|\|_{H^1}^2 = \lim_{k \to \infty} \langle |u|, \varphi_k \rangle_{H^1} = \lim_{k \to \infty} ((1-\Delta)|u|, \varphi_k)_{\cD' \times \cD} \leq 0,
	\end{equation}
	and we conclude that $|u| = |f-h| = 0$ a.e.~in $\Omega$.
\item By \ref{item:ran.A} and \ref{item:A.inj}, we have $\overline{\Kc} = \overline{\Ran(\Ac)} = \Ker(\Ac^*)^\perp = \Hc$.
\qedhere
\end{enumerate}

\end{proof}

\begin{corollary}\label{cor:0.sp}
Let $\cA$ be as in \eqref{A.action} and \eqref{A.dom}. Then $0 \in \rho(\cA)$ if and only if there exists $M>0$ such that
\begin{equation}\label{a.q.rel.bdd}
\|a^\frac 12 f\|_{L^2}^2 + \|f\|_{L^2}^2 \leq M \big(\|\nabla f\|_{L^2}^2 + \|q^\frac 12 f\|_{L^2}^2 \big), \qquad f \in \CcOm;
\end{equation}
in this case $(-1/M,0] \subset \rho(\cA)$. Moreover, if $0 \in \sigma(\cA)$, then $0 \in \sigma_{\rm c} (\cA) \subset \sigma_{\rm e1} (\cA)$.
\end{corollary}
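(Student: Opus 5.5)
The characterization of $0\in\rho(\cA)$ rests on Proposition~\ref{prop:Ran.A}. Since $\cA$ is injective, $0\in\rho(\cA)$ if and only if $\Ran(\cA)=\HH$; by the closed graph theorem this gives boundedness of $\cA^{-1}$. By Proposition~\ref{prop:Ran.A}\ref{item:ran.A}, $\Ran(\cA)=\KK$, and \eqref{K.norm.equiv} gives $\|\cA^{-1}F\|_\HH^2=\|f\|_{L^2}^2+\|af+g\|_{\cW^*}^2$ for $F=(f,g)\in\KK$. The task is to relate surjectivity to \eqref{a.q.rel.bdd}.

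\emph{Sufficiency.} Assume \eqref{a.q.rel.bdd}. By density of $\CcOm$ in $\cW$, the inequality extends: every $f\in\cW$ lies in $L^2(\Omega)\cap\Dom(a^{1/2})$, and $\cW=\cD_\frt$ with equivalent norms. Take $(f,g)\in\HH$. Then $f\in\cD_\frt$, so $af\in\cD_\frt^*=\cW^*$ with $\|af\|_{\cW^*}\le M\|f\|_\cW$. Also $g\in L^2\subset\cW^*$, since $|\langle g,v\rangle|\le\|g\|\,\|v\|_{L^2}\le M^{1/2}\|g\|\,\|v\|_\cW$. Hence $af+g\in\cW^*$ and $(f,g)\in\KK=\Ran(\cA)$, so $0\in\rho(\cA)$.

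\emph{Necessity.} Assume $0\in\rho(\cA)$. Then $\cA^{-1}$ is bounded and $\|F\|_\KK\lesssim\|F\|_\HH$ on $\HH$. Apply this to $F=(\varphi,0)$ with $\varphi\in\CcOm$. This gives $\|\varphi\|_{L^2}+\|a\varphi\|_{\cW^*}\lesssim\|\varphi\|_\cW$, and the norm equivalence in Proposition~\ref{prop:Ran.A}\ref{item:ran.A} also gives $\|a^{1/2}\varphi\|_{L^2}\lesssim\|\varphi\|_\cW$. This is exactly \eqref{a.q.rel.bdd}. Alternatively, test $a\varphi$ against $\varphi$ to get $\|a^{1/2}\varphi\|^2\le\|a\varphi\|_{\cW^*}\|\varphi\|_\cW$.

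\emph{The interval $(-1/M,0]\subset\rho(\cA)$.} Here the Schur complement approach does not apply directly, since $\la\le0$ lies outside the domain of $T_\la$. Instead I would use a Neumann series around $0$: $\cA-\la=\cA(I-\la\cA^{-1})$. This yields $(-1/\|\cA^{-1}\|,1/\|\cA^{-1}\|)\subset\rho(\cA)$, so the key step is the bound $\|\cA^{-1}\|\le M$. For $F=(f,g)$ write $\cA^{-1}F=(w,f)$ with $w=(\Delta-q)^{-1}(g+af)$. Then
\[
\|w\|_\cW=\|g+af\|_{\cW^*}\le\|g\|_{\cW^*}+\|af\|_{\cW^*}\le M^{1/2}\|g\|+M\|f\|_\cW,
\]
and $\|f\|_{L^2}\le M^{1/2}\|f\|_\cW$. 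This only gives $\|\cA^{-1}\|\lesssim M$, so the stated radius $1/M$ needs a sharper argument; this is the main obstacle.

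To get the sharp radius I would argue differently. For real $\la\in(-1/M,0]$, consider the form $\frt_\la[u]=\|u\|_\cW^2+\la\|a^{1/2}u\|^2+\la^2\|u\|^2$. It satisfies
\[
\frt_\la[u]\ge\|u\|_\cW^2-|\la|\bigl(\|a^{1/2}u\|^2+\|u\|^2\bigr)\ge(1-|\la|M)\|u\|_\cW^2,
\]
using $\la^2\ge\la$ for $\la\le 0$ with $|\la|\le1$, or simply $\la^2\ge0$. So $\frt_\la$ is coercive on $\cW=\cD_\frt$, and $\wh T_\la$ is invertible by Lax--Milgram. The resolvent is then given by \eqref{A.res}, and one checks directly that it inverts $\cA-\la$, as in \cite[Thm.~2.8]{gerhat2024schur}.

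\emph{$0$ in the continuous spectrum.} If $0\in\sigma(\cA)$, then $\cA$ is injective (so $0\notin\spp(\cA)$) and its range $\KK$ is dense (Proposition~\ref{prop:Ran.A}). Thus $0\in\sigma_{\rm c}(\cA)$. Since $\Ran(\cA)$ is dense but not all of $\HH$, it is not closed, so $\cA$ is not semi-Fredholm and $0\in\sigma_{\rm e1}(\cA)$.
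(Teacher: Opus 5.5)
Your proof is correct and essentially the paper's argument: sufficiency at $\la=0$ via $\cW=\cD_\frt$ and the range characterization \eqref{A.Ran}, the interval $(-1/M,0)$ via coercivity of $\frt_\la$ on $\cD_\frt$ plus the Schur-complement resolvent formula, and $0\in\sigma_{\rm c}(\cA)$ from injectivity of $\cA$ and density of $\Ran(\cA)=\KK$. The only minor deviation is in necessity, where you extract \eqref{a.q.rel.bdd} directly from $\|\cA^{-1}\|$ and the identity \eqref{K.norm.equiv} rather than applying the bounded inverse theorem to the identity map $\cD_\frt\to\cW$ as the paper does (this also makes $M$ explicit in terms of $\|\cA^{-1}\|$); the Neumann-series detour is unnecessary.
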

\begin{proof}
Assume that $0 \in \rho(\cA)$. It follows that $\Ran(\cA) = \KK = \cH$, thus the inclusions \eqref{Dt.W.incl} and \eqref{K.in.Dt+L2} yield that $\cW = \cD_{\frt}$. Since the identity map $I_{\cD_{\frt} \to \cW}: \cD_{\frt} \to \cW : f \mapsto f$ is everywhere defined and bounded, the bounded inverse (or closed graph) theorem  shows that $I_{\cD_{\frt} \to \cW}^{-1} =   I_{\cW \to \cD_{\frt}}$ is bounded. Hence there exists $M>0$ such that for all $f \in \CcOm \subset \cD_{\frt}$
\begin{equation}
\|\nabla f\|_{L^2}^2 + \|q^\frac 12 f\|_{L^2}^2 + \|a^\frac 12 f\|_{L^2}^2 + \|f\|_{L^2}^2 \leq M \left( \|\nabla f\|_{L^2}^2 + \|q^\frac 12 f\|_{L^2}^2  \right),
\end{equation}
and \eqref{a.q.rel.bdd} follows.

To show the reverse implication, suppose that \eqref{a.q.rel.bdd} holds. It follows that for all $\la \in (-1/M,0)$
\begin{equation}
\frt_{\la}[f] \geq \big(1-|\la| M \big)\big(\|\nabla f\|_{L^2}^2 + \|q^\frac 12 f\|_{L^2}^2\big) + |\la|^2\|f\|_{L^2}^2 \gs \|f\|_{\cD_{\frt}}^2, \quad f \in \CcOm.
\end{equation}
Since $\CcOm$ is dense in $\cD_{\frt}$, the forms $\frt_{\la}$, $\la \in (-1/M,0)$, are coercive on $\cD_{\frt}$ and so we have $\wh T_\la^{-1} \in \cB(\cD_{\frt}^*,\cD_{\frt})$. It follows that $\la \in \rho(\cA)$ (see~\cite[Cor.~3.7, Sec.~4]{gerhat2024schur} for details). For $\la =0$, notice that \eqref{a.q.rel.bdd} implies that (with equivalent norms)
\begin{equation}
\cW = H^1_0(\Omega) \cap \Dom(q^\frac12) = \cD_\frt,	% = \Dom((-\Delta+q)_{\rm D}^\frac12);
\end{equation}
(see \eqref{W.form.dom}), and thus also $\cW^* = \cD_{\frt}^*$. It is then straightforward to see from~\eqref{A.Ran} that $\Ran(\cA) = \cH$, hence $\cA$ is bijective and $0 \in \rho(\cA)$ by the closed graph theorem.

Finally, since $\Ran (\cA)$ is dense in $\cH$ and $\cA$ is injective by Proposition~\ref{prop:Ran.A}, it follows that if $0 \in \sigma(\cA)$, then $0 \in \sigma_{\rm c}(\cA)$. It is immediate from the definitions that $\sigma_{\rm c} (\cA) \subset \sigma_{\mathrm e1} (\cA)$. 
\end{proof}

\subsection{Restriction of \texorpdfstring{$\e^{t\cA}$}{the} semigroup to \texorpdfstring{$\KK$}{K}}

For some estimates of Theorem~\ref{thm:decay}, we will use the restriction of the semigroup $\e^{t\cA}$ to the space $\KK$. Define the operator $\AK$ in $\KK$ by
\begin{equation}
	\begin{aligned} \label{AK.def}
		\AK & \subset  \Ac, \\
		\Dom(\AK) &= \{ U \in \Dom(\Ac) \cap \KK \, : \, \Ac U \in \KK\}
		= 	\Dom(\Ac) \cap \Ran(\cA).
	\end{aligned}
\end{equation}

\begin{proposition}
Let $\cA$ be as in \eqref{A.action} and \eqref{A.dom} and $\AK$ as in \eqref{AK.def}.
\begin{enumerate}[\upshape (i)]
\item\label{prop.Ak.i} We have $\rho(\Ac) \subset \rho(\AK)$ and
\begin{equation}\label{res.H.to.K}
	\nr{(\AK-\la)\inv}_{\cB(\KK)} \leq 	\nr{(\Ac-\la)\inv}_{\cB(\HH)}, \qquad \la \in \rho(\cA).
\end{equation}
\item For $t \geq 0$ we have $\e^{t\Ac} \KK \subset \KK$ and the restriction $(\e^{t\Ac}|_\KK)_{t \geq 0}$ defines a strongly continuous contraction semigroup $(\e^{t \cA_\KK})_{t\ge 0}$ on $\KK$ with the densely defined generator $\AK$.
\end{enumerate}
\label{prop:AK}
\end{proposition}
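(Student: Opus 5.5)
The whole argument rests on the fact that $\KK$ is the range of $\cA$, so that by \eqref{K.norm.equiv}
\[
\nr{F}_{\KK}^2=\nr{F}_{\HH}^2+\nr{\cA^{-1}F}_{\HH}^2,\qquad F\in\KK,
\]
that is, the $\KK$-norm is the graph norm of $\cA^{-1}$. The plan is to check that $\cA$ commutes with its resolvent and with the semigroup, and then transport both to this graph norm.

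\textbf{Part (i).} Fix $\la\in\rho(\cA)$ and $F\in\KK$, and put $U:=(\cA-\la)^{-1}F\in\Dom(\cA)$. We must show $U\in\Dom(\AK)=\Dom(\cA)\cap\Ran(\cA)$ and that $U$ solves $(\AK-\la)U=F$. Write $F=\cA G$ with $G\in\Dom(\cA)$. Then
\[
U=(\cA-\la)^{-1}\cA G=\cA(\cA-\la)^{-1}G\in\Ran(\cA)=\KK,
\]
and moreover $\cA U=F+\la U\in\KK$. Hence $U\in\Dom(\AK)$ and $(\AK-\la)U=F$.

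Injectivity of $\AK-\la$ is inherited from $\cA-\la$, since $\AK\subset\cA$, so $(\AK-\la)^{-1}=(\cA-\la)^{-1}|_{\KK}$. For the norm bound, use that $\cA^{-1}$ commutes with the resolvent on $\KK$:
\[
\cA^{-1}U=(\cA-\la)^{-1}G=(\cA-\la)^{-1}\cA^{-1}F .
\]
Writing $R:=\nr{(\cA-\la)^{-1}}_{\cB(\HH)}$, this gives
\[
\nr{U}_{\KK}^2=\nr{(\cA-\la)^{-1}F}_{\HH}^2+\nr{(\cA-\la)^{-1}\cA^{-1}F}_{\HH}^2\le R^2\big(\nr{F}_{\HH}^2+\nr{\cA^{-1}F}_{\HH}^2\big)=R^2\nr{F}_{\KK}^2 .
\]
In particular $\la\in\rho(\AK)$, since a closedness argument is not needed: we have exhibited a bounded everywhere defined inverse on $\KK$. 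Estimate \eqref{res.H.to.K} follows.

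\textbf{Part (ii).} For $F=\cA G\in\KK$, the semigroup commutes with $\cA$ on $\Dom(\cA)$, so
\[
\e^{t\cA}F=\cA\,\e^{t\cA}G\in\KK\quad\text{and}\quad \cA^{-1}\e^{t\cA}F=\e^{t\cA}\cA^{-1}F .
\]
Contractivity on $\KK$ then follows exactly as in part (i). For strong continuity, $t\mapsto\e^{t\cA}F$ and $t\mapsto\e^{t\cA}\cA^{-1}F$ are both continuous in $\HH$, so the map is continuous in the graph norm, i.e.\ in $\KK$.

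It remains to identify the generator $B$ of the restricted semigroup with $\AK$.
\begin{itemize}
\item[$\bullet$] \emph{$B\subset\AK$.} If $F\in\Dom(B)$, the difference quotients converge in $\KK$, hence in $\HH$. Therefore $F\in\Dom(\cA)$ and $BF=\cA F\in\KK$, so $F\in\Dom(\AK)$.
\item[$\bullet$] \emph{Equality.} Take $\la>0$. Then $\la\in\rho(B)$ because $B$ generates a contraction semigroup, and $\la\in\rho(\AK)$ by part (i). Two operators $B\subset\AK$ with a common point in their resolvent sets coincide: for $F\in\Dom(\AK)$, set $H:=(B-\la)^{-1}(\AK-\la)F$; then $(\AK-\la)H=(\AK-\la)F$, and injectivity of $\AK-\la$ gives $F=H\in\Dom(B)$.
\end{itemize}
Density of $\Dom(\AK)$ in $\KK$ is then automatic, since $\AK$ is a $C_0$-generator.

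\textbf{Main obstacle.} The only delicate point is the bookkeeping of the commutation identities, namely that $\cA^{-1}$ is well defined on $\KK$ and commutes with both the resolvent and the semigroup. This follows directly from $\KK=\Ran(\cA)$ and the injectivity of $\cA$ established in Proposition~\ref{prop:Ran.A}.
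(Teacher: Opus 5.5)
Your proof is correct. For part (i), and for the invariance, contractivity and strong continuity in part (ii), it is the paper's argument. The $\KK$-norm is the graph norm of $\cA^{-1}$, and $\cA^{-1}$ commutes with the resolvent and with the semigroup. Writing $F=\cA G$ and $U=\cA(\cA-\la)^{-1}G$ even spares you the paper's separate treatment of $\la=0$.

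The one real difference is how the generator is identified. The paper proves $\AK\subset\Gc$ from the identity $\e^{t\cA}U-U=\int_0^t \e^{s\cA}\cA U\,\dd s$, using that $s\mapsto \e^{s\cA}\cA U$ is continuous in $\KK$. It then concludes via m-dissipativity of $\AK$, which it obtains from part (i).

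You prove the reverse inclusion $B\subset\AK$, which follows directly from the continuous embedding $\KK\hookrightarrow\HH$, and then conclude with a common resolvent point $\la>0$. That last step only needs surjectivity of $B-\la$ (Hille--Yosida) and injectivity of $\AK-\la$. The latter is inherited from $\cA-\la$, so your generator step does not actually depend on part (i) at all.
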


\begin{proof}
\begin{enumerate}[\upshape (i), wide]
\item 
Let $\la \in \rho(\Ac)$. If $\la = 0$ then $\HH = \KK$ and the statement is clear. Now assume that $\la \neq 0$. Since $(\Ac-\la)$ is injective, then so is $(\Ac_\KK-\la)$. Let $F \in \KK \subset \HH$. Since $(\Ac-\la)$ is surjective, there exists $U \in \Dom(\cA)$ such that $(\cA-\la) U = F$. It follows that $U= \la^{-1} (\cA U - F) \in \Ran(\cA)$. Thus $U \in \Dom(\AK)$ and $(\AK-\la) U = F$. This proves that $(\Ac_\KK-\la)$ is surjective. Finally, for the boundedness of the inverse we estimate
	\begin{equation}
		\begin{aligned}
			\nr{(\AK-\la)\inv F}_\KK^2
			& = \nr{\Ac\inv (\Ac-\la)\inv F}_\HH^2 + \nr{(\Ac-\la)\inv F}_\HH^2 \\
			& = \nr{ (\Ac-\la)\inv \Ac\inv F}_\HH^2 + \nr{(\Ac-\la)\inv F}_\HH^2 \\
			& \leq  \nr{ (\Ac-\la)\inv}_{\cB(\HH)}^2
			\left(\nr{\Ac\inv F}_\HH^2 + \nr{F}_\HH^2 \right)
			\\
			&
			= \nr{ (\Ac-\la)\inv}_{\cB(\HH)}^2 \nr{F}_\KK^2,
		\end{aligned}
	\end{equation}
	where \eqref{K.norm.equiv} was used in the first and last steps.

\item
First, we prove that $(\e^{t\Ac}|_{\KK})_{t\geq 0}$ defines a contraction semigroup on $\KK$.
	To this end, let $F \in \Ran(\cA) = \KK$ and let $U \in \Dom(\Ac)$ be such that $F = \Ac U$. By \cite[Lem.~II.1.3~(ii)]{Engel-Nagel-book}, we have
	\[
	\e^{t\Ac} F = \e^{t\Ac} \Ac U = \Ac \e^{t\Ac} U \in \Kc,
	\]
	so $(\e^{t\Ac}|_{\KK})_{t\ge0}$ defines a semigroup on $\KK$. Similarly,
	\begin{equation}
		\Ac\inv \e^{t\Ac} F = \Ac\inv \e^{t\Ac} \Ac U = \e^{t\Ac} U = \e^{t\Ac}\Ac\inv F.
	\end{equation}
	Using \eqref{K.norm.equiv} and that $(\e^{t\cA})_{t\ge0}$ is contractive on $\HH$, we arrive at
	\[
	\begin{aligned}
		\nr{\e^{t\Ac} F}_\KK^2
		&= \nr{\Ac\inv \e^{t\Ac} F}_\HH^2 + \nr{ \e^{t\Ac} F}_\HH^2
		= \nr{\e^{t\Ac} \Ac\inv F}_\HH^2 + \nr{ \e^{t\Ac} F}_\HH^2
		\\ &\leq \nr{\Ac\inv F}_\HH^2 + \nr{F}_\HH^2 = \nr{F}_\KK^2.
	\end{aligned}
	\]
	We similarly check that $\nr{\e^{t\Ac}F-F}_\KK \to 0$ as $t\to 0$, and it follows that $(\e^{t\Ac}|_{\KK})_{t\geq 0}$ is a strongly continuous contraction semigroup on $\KK$. We denote its generator by $\Gc$ and we show that $\Gc= \AK$.

	Notice first that by the Hille--Yosida theorem \cite[Thm. II.3.5]{Engel-Nagel-book}, $\Gc$ is a densely defined maximal dissipative operator in $\KK$ (the dissipativity follows from the resolvent bound therein, see \cite[Sec.~V.3.10]{Kato-1966}). Since $\cA_\KK$ is also m-dissipative by \ref{prop.Ak.i}, it is sufficient to prove one inclusion $\AK \subset \Gc$. For $U \in \Dom(\AK)$ and $F =\cA U$, we have by~\cite[Lem.~II.1.3~(iv)]{Engel-Nagel-book}
	\[
	\e^{t\Gc}U -U = \e^{t\Ac} U - U = \cA \int_0^t \e^{s\Ac} U \diff s = \int_0^t \e^{s\Ac} F \diff s, \qquad t \geq 0.
	\]
	It follows that $U \in \Dom(\Gc)$ and $\Gc U = F = \Ac U = \AK U$, and hence $\AK \subset \Gc$.
	\qedhere
\end{enumerate}
\end{proof}

\section{Resolvent estimates for low frequencies}
\label{sec:low}

\subsection{Statements for the low frequency resolvent estimates}

The resolvent estimates for $\la\to 0$ are obtained under the following assumption (weaker than the assumption used in Theorem \ref{thm:decay}).

\begin{asm-sec}
	\label{asm:a.1}
	Let $\Omega \subset \R^d$ be open and non-empty and let $0 \leq a, q \in \LolocOm$. Suppose that there are open non-empty sets $\Omega_1, \Omega_2 \subset \Omega$ such that
	\begin{enumerate}[\upshape (i)]
		\item \label{itm:Om1.bded} $\Omega_1$ is a bounded region with Lipschitz continuous boundary (see e.g.~\cite[Def.~D.2.3.1,~p.~815]{bhattacharyya2012distributions}),
		\item \label{itm:Om.dun} $\Omega_1 \cap \Omega_2 = \emptyset$, $\left(\overline{\Omega_1 \cup \Omega_2}\right)^{\circ} = \Omega$,  $|\Omega \setminus (\Omega_1 \cup \Omega_2)| = 0$,
		\item \label{itm:Om12.a} $\|\1_{\Omega_1} a \|_{L^1} > 0$ and there exists $a_0 > 0$ such that $a(x) \ge a_0$ a.e.~in $\Omega_2$.
	\end{enumerate}
\end{asm-sec}

We remark that Assumption \ref{asm:a.1} is satisfied in the case when there exists a (non-empty) ball $B(x_0,r_0) \subset \Omega$ with $\ov{B(x_0,r_0)} \subset \Omega$ such that $\|\1_{B(x_0,r_0)} a\|_{L^1} >0$ and $a(x) \geq a_0 >0$ for a.e.~$x \in \Omega \setminus B(x_0,r_0)$.

For an exterior domain $\Omega$, we shall also explicitly consider damping coefficients $a$ which are unbounded at infinity.

\begin{asm-sec}
	\label{asm:a.2}
	Let $\Omega \subset \Rd$ be an exterior domain in $\Rd$ with a Lipschitz boundary, let $0 \leq a, q \in \LolocOm$ and assume that there exist $\beta > 0$ and $r_0 > 0$ such that $\Rd \setminus B(0,r_0) \subset \Omega$ and
	\begin{equation}\label{a.unbd.asm}
		a(x) \gs |x|^\beta, \qquad \text{a.e.} \,\,\, |x|>r_0.
	\end{equation}
\end{asm-sec}

Notice that Assumption~\ref{asm:a.2} is stronger than Assumption~\ref{asm:a.1}, since under Assumption~\ref{asm:a.2} we get Assumption~\ref{asm:a.1} by considering (for any $r>r_0>0$)
\begin{equation}\label{omega.a.unbd.def}
	\Omega_1 = B(0, r) \cap \Omega, \qquad \Omega_2 = \Omega \setminus \overline{B(0, r)}.
\end{equation}

We set
\begin{equation}\label{gamma.def}
	\gamma=	\begin{cases}
		1,& \text{under Assumption \ref{asm:a.1}},
		\\[1mm]
		\frac{2}{2+\beta}, & \text{if Assumption \ref{asm:a.2} holds in addition}.
	\end{cases}
\end{equation}

Our main spectral result is the analysis of the resolvent of $\cA$ near 0. 
To formulate it we define the bounded operators (see~\eqref{K.in.Dt+L2})
\begin{equation} \label{def:PI2}
\Pi_1: \KK \to \cD_{\frt}:  (f,g) \mapsto f, \qquad
\Pi_2: \KK \to L^2(\Omega):  (f,g) \mapsto g.
\end{equation}

\begin{theorem}
\label{thm:low}
Let $\cA$ and $\cA_\KK$ be as in \eqref{A.action}, \eqref{A.dom} and \eqref{AK.def}, respectively, and let Assumption~\ref{asm:a.1} (or the stronger Assumption \ref{asm:a.2}) hold. Let $\gamma$ be as in \eqref{gamma.def}. Then there exist $\tau_0>0$ and $C > 0$ such that for $\la \in \ov{\tau_0\D_+} \setminus \{0\}$, we have $\lambda \in \rho(\Ac) \cap \rho(\AK)$ and
\begin{align}
\|(\Ac - \la)^{-1} \|_{\cB(\HH)} + \|(\Ac_\Kc - \la)^{-1} \|_{\cB(\KK)}   & \leq C   |\la|^{-1},
\label{r.A.small.b.H-H}
\\
\|(\Ac_\KK - \la)^{-1} \|_{\cB(\KK,\HH)}  & \leq C ,
\label{r.A.small.b.K-H}
\\
\|\Pi_1(\Ac_\KK - \la)^{-1} \|_{\cB(\KK,L^2)}  & \leq C |\la|^{- \frac {\gamma}2 },
\label{r.A.small.b.K-L}\\
\|\Pi_1 (\Ac_\KK - \la)^{-1} \|_{\cB(\KK,\Dc_\frt)}  & \leq C |\la|^{-\frac 12},
\label{r.A.small.b.K-D}\\
\| \Pi_2(\Ac_\KK -\la)^{-1}\|_{\cB(\KK,L^2)} & \leq C,
\label{r.A.small-dt1}\\
\| \Pi_2(\Ac_\KK -\la)^{-2}\|_{\cB(\KK,L^2)} & \leq C |\la|^{-\frac \gamma 2}.
\label{r.A.small-dt2}
\end{align}
(In \eqref{r.A.small.b.K-H} it is understood that $(\Ac_\KK-\la)\inv$ is composed on the left with the embedding of $\KK$ into $\HH$, and in \eqref{r.A.small.b.K-L} that $\cD_{\frt}$ is embedded in $L^2(\Omega)$.)
\end{theorem}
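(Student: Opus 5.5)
Everything reduces to estimates for $\wh T_\la^{-1}$ through the resolvent formula \eqref{A.res}, for $\la \in \ov{\tau_0\D_+}\setminus\{0\}$, i.e.\ $\Re\la \ge 0$. The imaginary axis itself is covered by continuity once the bounds below are uniform up to $\Re\la=0$.

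The central estimate is coercivity of $\frt_\la$ in the following form. For $u\in\cD_\frt$,
\[
|\frt_\la[u]| \gs \|u\|_\cW^2 + |\la|\,\|a^{\frac12}u\|_{L^2}^2 + |\la|^{1+\gamma}\|u\|_{L^2}^2 .
\]
For the first two terms, take real and imaginary parts of $\e^{-\ii\arg\la/2}\frt_\la[u]$. Since $|\arg\la|\le \pi/2$, each term gets a factor with cosine bounded below, which gives $\|u\|_\cW^2+|\la|\|a^{1/2}u\|^2+|\la|^2\|u\|^2$. The weighted $L^2$ term is the real issue. Write $u=u_1+u_2$ according to Assumption~\ref{asm:a.1}.
\begin{itemize}
\item On $\Omega_2$, $a\ge a_0$, so $|\la|\|a^{1/2}u\|^2 \gs |\la|\,\|u\|_{L^2(\Omega_2)}^2$.
\item On $\Omega_1$, use a Poincaré-type inequality on the bounded Lipschitz set: $\|u\|_{L^2(\Omega_1)}^2\ls \|\nabla u\|_{L^2(\Omega_1)}^2+\int_{\Omega_1}a|u|^2$. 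I expect this to follow by compactness of $H^1(\Omega_1)\hookrightarrow L^2(\Omega_1)$ together with $\|\1_{\Omega_1}a\|_{L^1}>0$. It is of Neumann-bracketing type and applies after truncating $a$ to a bounded positive part.
\end{itemize}
This gives $\gamma=1$. Under Assumption~\ref{asm:a.2}, split at radius $R=|\la|^{-1/(2+\beta)}$. Outside, $|\la|a\gs |\la|R^\beta=|\la|^{2/(2+\beta)}$. Inside $B(0,R)$, a scaled Poincaré inequality costs $R^2$, so $\|u\|^2_{L^2(B_R)}\ls R^2(\|\nabla u\|^2+\|a^{1/2}u\|^2)$. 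This yields the weight $|\la|^{1+\gamma}$ with $\gamma=2/(2+\beta)$.

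From this, Lax--Milgram (Lemma~\ref{lem:T.la.ext}) gives $\wh T_\la^{-1}$ with the following norm bounds:
\begin{itemize}
\item $\cW^*\to\cW$: $O(1)$;
\item $L^2\to L^2$: $O(|\la|^{-1-\gamma})$;
\item $L^2\to\cW$ and $\cW^*\to L^2$: $O(|\la|^{-(1+\gamma)/2})$, by interpolating the quadratic form;
\item into $\cD_\frt$: an extra $|\la|^{-1/2}$ factor through the $a$-term.
\end{itemize}
Plugging these into \eqref{A.res} gives \eqref{r.A.small.b.H-H}. For example, the $(2,2)$ entry $\la\wh T_\la^{-1}$ on $L^2$ has norm $\ls |\la|^{-\gamma}\le|\la|^{-1}$. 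The $(1,1)$ entry needs care: write $\la^{-1}(I+\wh T_\la^{-1}(\Delta-q))=-\wh T_\la^{-1}(\la a+\la^2)$ on $\cW$. Together with \eqref{res.H.to.K} this also yields the $\KK$ bound.

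For the estimates on $\KK$, use $F=\cA U$ with $\|U\|_\HH\le\|F\|_\KK$ by \eqref{K.norm.equiv}. The identity
\[
(\cA_\KK-\la)^{-1}F=U+\la(\cA-\la)^{-1}U
\]
together with \eqref{r.A.small.b.H-H} gives \eqref{r.A.small.b.K-H}. For the refined components, compute $(\cA-\la)^{-1}U$ explicitly. With $U=(u,v)$ we have $F=(v,(\Delta-q)u-av)$. The first component of $(\cA_\KK-\la)^{-1}F$ then reduces to $-\wh T_\la^{-1}\bigl(g+(a+\la)f\bigr)$, where $g+af\in\cW^*$ and $f\in\cD_\frt\subset L^2$. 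The term $\wh T_\la^{-1}(g+af)$ is bounded $\cW^*\to L^2$ by $|\la|^{-(1+\gamma)/2}$. I need to sharpen this to $|\la|^{-\gamma/2}$ by using the $O(1)$ bound in $\cW$ plus the form inequality directly: $|\la|^{1+\gamma}\|w\|^2\le |\frt_\la[w]|\le \|h\|_{\cW^*}\|w\|_\cW$ with $\|w\|_\cW\ls\|h\|_{\cW^*}$. This still gives $(1+\gamma)/2$, so I must instead exploit that $\la\|a^{1/2}w\|^2$ is controlled, giving $\|a^{1/2}w\|\ls|\la|^{-1/2}$. This yields \eqref{r.A.small.b.K-D}, and combining it with the split argument gives the $L^2$ bound $|\la|^{-\gamma/2}$. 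The term $\la\wh T_\la^{-1}f$ is handled similarly, using $f\in\cD_\frt$.

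Then $\Pi_2(\cA_\KK-\la)^{-1}=\la\Pi_1(\cA_\KK-\la)^{-1}-\Pi_1$ (from the first row of \eqref{A.action}), which gives \eqref{r.A.small-dt1}. For \eqref{r.A.small-dt2}, combine \eqref{r.A.small.b.K-L} with the $\KK$ bounds, writing $(\cA_\KK-\la)^{-2}$ via the same identity twice.

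The main obstacle is the sharp exponent in \eqref{r.A.small.b.K-L}: getting $|\la|^{-\gamma/2}$ rather than $|\la|^{-(1+\gamma)/2}$. This requires carefully pairing the $\cW^*$ datum with the $\cW$-coercivity and the $a$-weighted term in the $\Omega_1/\Omega_2$ (or radius $R$) split.
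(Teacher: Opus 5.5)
Your architecture is the paper's: coercivity of $\frt_\la$ near $0$ via a split of $\Omega$, bounds for $\wh T_\la^{-1}$ between $\cW^*$, $L^2$, $\cW$, $\cD_\frt$, then \eqref{A.res} and its rearrangement for $F\in\KK$. Several side arguments are fine.
The compactness proof of the Poincar\'e-type inequality on $\Omega_1$ is a legitimate elementary substitute for Kato's perturbation argument. The identity $(\AK-\la)^{-1}\cA U=U+\la(\cA-\la)^{-1}U$ gives \eqref{r.A.small.b.K-H} neatly. Once \eqref{r.A.small.b.K-L} is known, \eqref{r.A.small-dt1}--\eqref{r.A.small-dt2} do follow from $v=\la u+f$; note the plus sign, $\Pi_2(\AK-\la)^{-1}=\la\Pi_1(\AK-\la)^{-1}+\Pi_1$.

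The genuine gap is the central coercivity. You state and use the weight $|\la|^{1+\gamma}\|u\|_{L^2}^2$, but the theorem needs $|\la|^{\gamma}\|u\|_{L^2}^2$ as in \eqref{tb.lbound}. Under Assumption~\ref{asm:a.1} your own computation already gives $|\la|\|u\|_{L^2}^2\ls\|\nabla u\|_{L^2}^2+|\la|\|a^{1/2}u\|_{L^2}^2$, i.e.\ $|\la|^\gamma$ with $\gamma=1$, so there the exponent is only mislabelled. Under Assumption~\ref{asm:a.2}, however, your interior bound $\|u\|^2_{L^2(B(0,R))}\ls R^2(\|\nabla u\|^2+\|a^{1/2}u\|^2)$ wastes exactly a factor $|\la|$. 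After rescaling to $B(0,1)$ you need the potential only on the annulus $\{R/2<|x|<R\}$, where $a\gs R^\beta$. This gives
\[
\|u\|^2_{L^2(B(0,R)\cap\Omega)}\ls R^2\|\nabla u\|^2_{L^2}+R^{-\beta}\|a^{\frac12}u\|^2_{L^2}=R^2\big(\|\nabla u\|^2_{L^2}+|\la|\,\|a^{\frac12}u\|^2_{L^2}\big),\qquad R=|\la|^{-\frac1{2+\beta}},
\]
i.e.\ the weight $R^{-2}=|\la|^{\gamma}$ of Lemma~\ref{lem:hb.lbound}. With your version the choice of $R$ buys nothing: $|\la|^{1+\gamma}$ is worse than the weight $|\la|$ already available from Assumption~\ref{asm:a.1}. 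Your ``sharpening by the split argument'' reuses the same lossy interior bound, so inside $B(0,R)$ it only gives $\|\wh T_\la^{-1}h\|_{L^2(B(0,R))}\ls R|\la|^{-1/2}\|h\|_{\cW^*}=|\la|^{-(1+\gamma)/2}\|h\|_{\cW^*}$. Hence \eqref{r.A.small.b.K-L} and \eqref{r.A.small-dt2} with $\gamma=2/(2+\beta)$ are not proved.

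With the correct coercivity the ``main obstacle'' disappears. The bound $|\frt_\la[u]|\gs|\la|^{\gamma/2}\|u\|_{\cW}\|u\|_{L^2}$ directly gives $\|\wh T_\la^{-1}\|_{\cB(\cW^*,L^2)}+\|\wh T_\la^{-1}\|_{\cB(L^2,\cW)}\ls|\la|^{-\gamma/2}$ and $\|\wh T_\la^{-1}\|_{\cB(L^2)}\ls|\la|^{-\gamma}$ (Lemma~\ref{lem:T.ineq}). So the first component $-\wh T_\la^{-1}(af+g)-\la\wh T_\la^{-1}f$ is bounded in $L^2$ by $|\la|^{-\gamma/2}\|af+g\|_{\cW^*}+|\la|^{1-\gamma}\|f\|_{L^2}$. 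With your stated bound $\|\wh T_\la^{-1}\|_{\cB(L^2)}\ls|\la|^{-1-\gamma}$, the term $\la\wh T_\la^{-1}f$ is only $O(|\la|^{-\gamma})$ in $L^2$, so ``handled similarly'' does not cover it either.

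The rotation step also does not produce $+|\la|^2\|u\|^2_{L^2}$. The $\la^2$-term picks up the factor $\cos(\tfrac32\arg\la)$, which is negative for $|\arg\la|>\pi/3$; for $a\equiv0$ and $u$ a Dirichlet eigenfunction with eigenvalue $b^2$ one even has $\frt_{\ii b}[u]=0$. What you get is $-|\la|^2\|u\|^2_{L^2}$ as in \eqref{t.la.rot}. It is absorbed only because the split supplies $|\la|^{\gamma}\|u\|^2_{L^2}$ with $\gamma<2$.

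A minor point: the $(1,1)$ entry of \eqref{A.res} needs no rewriting, since the crude bound $|\la|^{-1}(1+\|\wh T_\la^{-1}\|_{\cB(\cW^*,\cW)})$ suffices. Your rewriting should read $\wh T_\la^{-1}(a+\la)$, and it is valid only on $\cD_\frt$.
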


In the case of our main interest, i.e.~a strong damping for which \eqref{a.q.rel.bdd.intro} is not satisfied, zero belongs to the spectrum of $\cA$ (see Corollary~\ref{cor:0.sp}). Thus the rate in \eqref{r.A.small.b.H-H} above cannot be improved and we cannot have uniform decay for $\e^{t\cA}$ (see Proposition \ref{prop:exp-decay}). However, we still have some (polynomial) decay in some suitable topology and the rates in Theorem \ref{thm:decay} reflect the nature of the singularity of the resolvent $(\cA - \la)\inv$ around zero. Details are given in Section \ref{sec:decay}.

Theorem \ref{thm:low} is proved below. The main step is the following estimate on the form $\frt_\la$ for $\la \in \overline{\D_+}$ near zero.

\begin{proposition}
\label{prop:tb.lbound}
Let Assumption~\ref{asm:a.1} (or the stronger Assumption \ref{asm:a.2}) hold and let the form $\frt_\la$ be as in \eqref{tla.def}. Then as $\la \to 0$ in $\ov{\D_+} \setminus \{0\}$
\begin{equation}
\label{tb.lbound}
|\frt_\la[u]| \gs \| \nabla u \|_{L^2}^2 + \| q^{\frac12} u \|_{L^2}^2 + |\la| \| a^{\frac12} u \|_{L^2}^2 + |\la|^\gamma \| u \|_{L^2}^2, \qquad u \in \cD_\frt,
\end{equation}
where $\gamma$ is as in \eqref{gamma.def}.
\end{proposition}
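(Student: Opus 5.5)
The plan is to reduce \eqref{tb.lbound} to a lower bound for a self-adjoint (real) quadratic form and then absorb the indefinite $\la^2$-term. Write $\la = |\la| \e^{\ii\theta}$ with $|\theta| \le \pi/2$, and for $u \in \cD_\frt$ set
\begin{equation}
A := \|\nabla u\|_{L^2}^2 + \|q^{\frac12}u\|_{L^2}^2, \qquad B := \|a^{\frac12}u\|_{L^2}^2, \qquad C := \|u\|_{L^2}^2 .
\end{equation}
Then
\begin{equation}
\Re \frt_\la[u] = A + |\la|\cos\theta\, B + |\la|^2\cos 2\theta\, C, \qquad \Im \frt_\la[u] = |\la|\sin\theta\,\big(B + 2|\la|\cos\theta\, C\big).
\end{equation}
Since $\cos\theta \ge 0$, we have $|\Im \frt_\la[u]| \ge |\la||\sin\theta| B$. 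Using $|\frt_\la[u]| \gs \Re\frt_\la[u] + |\Im \frt_\la[u]|$ and $\cos\theta + |\sin\theta| \ge 1$, this gives
\begin{equation}
|\frt_\la[u]| \gs A + |\la| B - |\la|^2 C .
\end{equation}
It therefore suffices to prove the self-adjoint estimate
\begin{equation}\label{sa.key}
A + \mu B \gs \mu^\gamma C, \qquad u \in \cD_\frt, \quad 0 < \mu \le \mu_0 .
\end{equation}
Indeed, with $\mu = |\la|$ and $\gamma \le 1 < 2$, we get $|\la|^2 C \lesssim |\la|^{2-\gamma}(A + |\la| B)$, which is absorbed for small $|\la|$. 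Writing $A + |\la| B = \tfrac12(A+|\la|B) + \tfrac12(A+|\la|B)$ and applying \eqref{sa.key} to one half then yields \eqref{tb.lbound}. By density of $\CcOm$ in $\cD_\frt$, it is enough to prove \eqref{sa.key} for $u \in \CcOm$.

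Under Assumption~\ref{asm:a.1}, I would prove \eqref{sa.key} with $\gamma = 1$ by splitting $C = \|u\|_{L^2(\Omega_1)}^2 + \|u\|_{L^2(\Omega_2)}^2$, which is a Neumann-bracketing argument in form language since $|\Omega\setminus(\Omega_1\cup\Omega_2)|=0$. On $\Omega_2$ we have $\mu a_0 \|u\|_{L^2(\Omega_2)}^2 \le \mu B$. On $\Omega_1$, a bounded Lipschitz domain, I use a weighted Poincaré inequality
\begin{equation}
\|v\|_{L^2(\Omega_1)}^2 \lesssim \|\nabla v\|_{L^2(\Omega_1)}^2 + \|a^{\frac12} v\|_{L^2(\Omega_1)}^2, \qquad v \in H^1(\Omega_1).
\end{equation}
This is proved by the standard contradiction/compactness argument: Rellich compactness on Lipschitz $\Omega_1$, together with the fact that a constant $c$ with $\int_{\Omega_1} a|c|^2 = 0$ must vanish because $\|\1_{\Omega_1} a\|_{L^1} > 0$. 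One has to take care that $a$ is only $L^1$ there, so $\|a^{1/2}v_n\|$ does not pass to the limit directly; Fatou's lemma along an a.e.\ convergent subsequence handles this. For $\mu \le 1$ the right-hand side is $\le \mu^{-1}(A + \mu B)$, which gives \eqref{sa.key} with $\gamma = 1$. Alternatively, one can view this via asymptotic perturbation theory: the lowest Neumann eigenvalue of $-\Delta + \mu a$ on $\Omega_1$ behaves like $\mu \int_{\Omega_1} a / |\Omega_1|$.

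Under Assumption~\ref{asm:a.2}, the main obstacle is obtaining the sharper exponent $\gamma = 2/(2+\beta)$. I would bracket at the $\mu$-dependent radius $R_\mu := \mu^{-1/(2+\beta)}$, which is $> r_0$ for small $\mu$. On $\{|x| > R_\mu\}$ we have $\mu a \gs \mu R_\mu^\beta = \mu^{\gamma}$, hence $\mu^\gamma \|u\|^2_{L^2(|x|>R_\mu)} \lesssim \mu B$. On $\Omega \cap B(0,R_\mu)$ I split further into the fixed region $\Omega\cap B(0,2r_0)$, treated as in the previous paragraph, and the annulus $\{r_0<|x|<R_\mu\}$. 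On the annulus I need a scaled weighted Poincaré inequality
\begin{equation}
\|v\|^2 \lesssim R^2\|\nabla v\|^2 + R^{-\beta}\| |x|^{\beta/2} v\|^2 \quad \text{on } \{r_0 < |x| < R\},
\end{equation}
with a constant uniform in $R$. I would prove it by rescaling $x = R y$ to the annulus $\{r_0/R < |y| < 1\}$ and applying a Poincaré inequality with weight $|y|^\beta$, uniform for thin inner holes; its nontrivial weight near $|y| = 1$ kills constants. Plugging in $R = R_\mu$ gives $R^{-2} = \mu R^\beta = \mu^\gamma$, and since $\mu|x|^\beta \lesssim \mu a$ there, this yields $\mu^\gamma \|u\|^2_{\text{annulus}} \lesssim A + \mu B$. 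The uniformity of this rescaled inequality as the hole shrinks (in $d = 1, 2$ the hole does not disappear in capacity terms, but the estimate only needs control of $v$, not boundary values, so a Neumann-type inequality on annuli should suffice) is the step I expect to require the most care. An alternative that avoids it is to cover the annulus by dyadic shells $\{2^k < |x| < 2^{k+1}\}$ and use Poincaré on each shell with the weight $\mu 2^{k\beta}$ against $2^{-2k}$, summing the resulting bounds.
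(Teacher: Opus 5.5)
Your reduction to the self-adjoint estimate $A+\mu B\gtrsim \mu^\gamma C$ is correct and is essentially the paper's rotation argument \eqref{t.la.rot}. Your treatment of Assumption~\ref{asm:a.1} is also sound. The compactness/Fatou proof of the weighted Poincar\'e inequality on the connected Lipschitz set $\Omega_1$ is a legitimate, more elementary substitute for the paper's use of Kato's asymptotic perturbation theory. It yields only a lower bound of order $\mu$ rather than the exact first-order eigenvalue asymptotics, but that is all that is needed, and it does not even require $\int_{\Omega_1} a<\infty$.

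The gap is in the Assumption~\ref{asm:a.2} step. You treat the fixed region $\Omega\cap B(0,2r_0)$ ``as in the previous paragraph''. That argument only gives $\mu\|u\|^2_{L^2(\Omega\cap B(0,2r_0))}\lesssim A+\mu B$, i.e.\ exponent $1$, whereas you need $\mu^\gamma$ with $\gamma=2/(2+\beta)<1$. As written, your proof therefore yields only $\gamma=1$.

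No better estimate using that region alone can repair this. When $\Omega=\R^d$ (allowed by Assumption~\ref{asm:a.2}), the inequality $\mu^\gamma\|u\|^2_{L^2(B(0,2r_0))}\lesssim\|\nabla u\|^2_{L^2(B(0,2r_0))}+\mu\|a^{1/2}u\|^2_{L^2(B(0,2r_0))}$ is false. Test it with $u\in C_c^\infty(\R^d)$ equal to $1$ on $B(0,2r_0)$: it reduces to $\mu^\gamma|B(0,2r_0)|\lesssim\mu\int_{B(0,2r_0)}a$, which fails as $\mu\to 0$. The mass near the origin must be controlled through the gradient linking it to the zone $|x|\approx R_\mu$, where $\mu a\gtrsim\mu^\gamma$.

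Your dyadic-shell alternative has the same defect. With decoupled shells, constants on $\{2^k<|x|<2^{k+1}\}$ see only the potential. So the Neumann form $\|\nabla u\|^2+\mu\||x|^{\beta/2}u\|^2$ on that shell has bottom at most of order $\mu 2^{k\beta}$, which is $\ll\mu^\gamma$ whenever $2^k\ll R_\mu$.

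The paper avoids this, and also your thin-hole uniformity worry, by not splitting inside $B(0,R_\mu)$ at all. Since $u\in H_0^1(\Omega)$ and $\R^d\setminus\Omega\subset B(0,r_0)$, the zero extension $v$ lies in $H^1(\R^d)$. Rescaling $w(y)=R_\mu^{d/2}v(R_\mu y)$ bounds the contribution of $\Omega\cap B(0,R_\mu)$ from below by $R_\mu^{-2}\big(\|\nabla w\|^2_{L^2(B(0,1))}+\|\mathds 1_{\mathbb B}|y|^{\beta/2}w\|^2_{L^2(B(0,1))}\big)$, with the fixed annulus $\mathbb B=B(0,1)\setminus\overline{B(0,1/2)}$. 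This Neumann form on the unit ball has a positive bottom, giving the rate $R_\mu^{-2}=\mu^\gamma$ on the whole ball, inner region included.

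Within your own scheme, a minimal fix is a $\mu$-independent Poincar\'e inequality for the zero extension on $B(0,2r_0)$:
$\|v\|^2_{L^2(B(0,2r_0))}\lesssim\|\nabla v\|^2_{L^2(B(0,2r_0))}+\|v\|^2_{L^2(\{r_0<|x|<2r_0\})}$.
Combine it with your annulus estimate for the last term.
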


In Section \ref{ssec:res.A.zero} we deduce Theorem \ref{thm:low} from Proposition \ref{prop:tb.lbound}. Then we provide a proof of Proposition \ref{prop:tb.lbound} in Section \ref{ssec:t.nearzero}.

\begin{remark}
\begin{enumerate}[\upshape (i), wide ]
\item
If $\Omega$ and $q$ are such that \eqref{Om.q.Poincare} holds, one can easily improve \eqref{tb.lbound} to $\gamma = 0$, formally corresponding to $\beta = \infty$ (see \eqref{t.la.rot}). This will also improve \eqref{dtu.L2.est.gamma} and \eqref{u.L2.est.gamma} accordingly.
\item For $q=0$ and $a(x) = |x|^\beta$, $x \in \Rd$, a scaling argument shows that the rate $\gamma={2/(2+\beta)}$ in \eqref{tb.lbound} is in fact optimal.
\item The conditions in Assumption~\ref{asm:a.1} can be further relaxed. Consider the example $a(x,y)=x^2y^2$, $(x,y) \in \R^2$, for which the self-adjoint operator $-\Delta + x^2 y^2$ in $L^2(\R^2)$ has compact resolvent and the lowest eigenvalue is positive (see e.g.~\cite{Simon-1983-146}). By a scaling argument applied on the right-hand side of \eqref{t.la.rot}, one obtains that \eqref{tb.lbound} holds with $\gamma = 1/3$ (like for $a(x,y) = |(x,y)|^4$). Note that $a(0,y) = a(x,0) = 0$, so Assumption \ref{asm:a.1} is not satisfied in this example.
\end{enumerate}
\label{rem:super-Poincare}
\end{remark}

\subsection{The resolvent norm of \texorpdfstring{$\Ac$}{A} near zero}
\label{ssec:res.A.zero}
Assume that Proposition \ref{prop:tb.lbound} holds. The inequality~\eqref{tb.lbound} allows us to estimate the norm of the distributional Schur complement $\wh T_\la$ between suitable spaces, which leads to an estimate of the Frobenius--Schur factorization~\eqref{A.res} of the resolvent.
\begin{lemma}\label{lem:T.ineq}
Let $\widehat T_{\la}$ be defined as in~\eqref{T.hat.def} and let~Assumption~\ref{asm:a.1} or the stronger Assumption \ref{asm:a.2} hold. Then there exists $\tau_0>0$ such that $\wh T_\la^ {-1}: \cD_\frt^* \to \cD_\frt$ is bounded for $\lambda \in \overline{\tau_0\D_+} \setminus \set 0$ and, as $\la \to 0$,
\begin{equation}\label{T.la.inv.norm}
\begin{aligned}
\|\wh T_\la^{-1}\|_{\cB(\cW^*,\cW)} &
\ls
1,
&
\quad \|\wh T_\la^{-1}\|_{\cB(\Wc^*,L^2)} &\ls  |\la|^{- \frac \gamma 2 },
\\
\|\wh T_\la^{-1}\|_{\cB(L^2,\cW)} &\ls  |\la|^{- \frac \gamma 2 } ,
&
\|\wh T_\la^{-1}\|_{\cB(L^2,L^2)} &\ls
|\la|^{- \gamma},
\\
\|\wh T_\la^{-1}\|_{\cB(\Wc^*,\Dc_\frt)} & \ls  |\la|^{- \frac 1 2 },
&
\|\wh T_\la^{-1}\|_{\cB(L^2,\Dc_\frt)} & \ls  |\la|^{- \frac {\gamma+1} 2 },
\end{aligned}
\end{equation}
where $\gamma$ is as in \eqref{gamma.def}.
\end{lemma}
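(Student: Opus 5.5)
The plan is to deduce everything from the coercivity bound of Proposition~\ref{prop:tb.lbound} via a Lax--Milgram type argument with weighted norms. Choose $\tau_0$ so that \eqref{tb.lbound} holds for $\la \in \overline{\tau_0\D_+}\setminus\{0\}$. Introduce on $\cD_\frt$ the $\la$-dependent norm
\begin{equation}
\|u\|_\la^2 := \|\nabla u\|_{L^2}^2 + \|q^{\frac12}u\|_{L^2}^2 + |\la|\,\|a^{\frac12}u\|_{L^2}^2 + |\la|^\gamma\|u\|_{L^2}^2 ,
\end{equation}
which is equivalent to $\|\cdot\|_{\cD_\frt}$ for fixed $\la\ne0$. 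Then \eqref{tb.lbound} reads $|\frt_\la[u]|\gs \|u\|_\la^2$. Boundedness of $\frt_\la$ on $\cD_\frt$ is clear, so the form is coercive on $\cD_\frt$ in the sense $|\frt_\la[u]|\gs\|u\|^2_{\cD_\frt}$ with a $\la$-dependent constant. By the Lax--Milgram lemma (in the version with $|\frt[u]|$ bounded below, which applies since the numerical range of $\frt_\la$ lies in a sector for $\la\in\overline{\D_+}$), $\wh T_\la$ is an isomorphism $\cD_\frt\to\cD_\frt^*$. Alternatively one can use that $\frt_\la$ coercive in the numerical-range sense gives injectivity with closed range, and the adjoint form $\frt_{\ov\la}$ satisfies the same bound, giving surjectivity.

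For the quantitative estimates, let $u=\wh T_\la^{-1}\phi$. Then
\begin{equation}
\|u\|_\la^2 \ls |\frt_\la[u]| = |(\phi,u)_{\cD_\frt^*\times\cD_\frt}| .
\end{equation}
If $\phi\in\cW^*$, the right-hand side is at most $\|\phi\|_{\cW^*}\|u\|_\cW\le \|\phi\|_{\cW^*}\|u\|_\la$, so $\|u\|_\la\ls\|\phi\|_{\cW^*}$. Reading off components gives the following bounds:
\begin{itemize}
\item $\|u\|_\cW\ls\|\phi\|_{\cW^*}$ (the $\cB(\cW^*,\cW)$ bound);
\item $|\la|^{\gamma/2}\|u\|_{L^2}\ls\|\phi\|_{\cW^*}$ (the $\cB(\cW^*,L^2)$ bound);
\item $|\la|^{1/2}\|a^{\frac12}u\|_{L^2}\ls\|\phi\|_{\cW^*}$.
\end{itemize}
Since $|\la|^\gamma\ge|\la|$ for $|\la|\le1$ and $\gamma\le1$, these give $\|u\|_{\cD_\frt}\ls|\la|^{-1/2}\|\phi\|_{\cW^*}$.

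If instead $\phi\in L^2$, bound the right-hand side by $\|\phi\|_{L^2}\|u\|_{L^2}\le|\la|^{-\gamma/2}\|\phi\|_{L^2}\|u\|_\la$, so $\|u\|_\la\ls|\la|^{-\gamma/2}\|\phi\|_{L^2}$. This yields:
\begin{itemize}
\item the $\cB(L^2,\cW)$ bound $|\la|^{-\gamma/2}$;
\item the $\cB(L^2,L^2)$ bound $|\la|^{-\gamma}$;
\item the $\cB(L^2,\cD_\frt)$ bound $|\la|^{-\gamma/2}\cdot|\la|^{-1/2}$ from the $a$- and $L^2$-components, using again $|\la|^{-\gamma/2}\le|\la|^{-1/2}$.
\end{itemize}

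The one point needing care, and the main obstacle as I see it, is the meaning of $(\phi,u)$ for $\phi\in\cW^*$ viewed in $\cD_\frt^*$. Here one uses the bounded embedding $\cD_\frt\subset\cW$ from \eqref{Dt.W.incl}, so that $\cW^*\subset\cD_\frt^*$ by restriction, with the pairing bounded by $\|\phi\|_{\cW^*}\|u\|_\cW$. Similarly, $L^2\subset\cD_\frt^*$ via the triple. One also has to make sure \eqref{tb.lbound} is uniform for $\la$ in the closed quarter disc, including the imaginary axis, which is exactly what Proposition~\ref{prop:tb.lbound} provides (after possibly shrinking $\tau_0$).
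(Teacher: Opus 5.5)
Your proposal is correct and follows essentially the same route as the paper: the coercivity in Proposition~\ref{prop:tb.lbound} gives bounded invertibility, and each bound comes from the identity $\frt_\la[u] = (\varphi,u)_{\cD_\frt^*\times\cD_\frt}$ for $u=\wh T_\la^{-1}\varphi$, with the pairing estimated through the $\cW^*$--$\cW$ or $L^2$ duality. The only difference is packaging: your $\la$-weighted norm $\|\cdot\|_\la$ yields all six estimates at once, whereas the paper rewrites the coercivity as $|\la|^{\gamma/2}\|u\|_{\cW}\|u\|_{L^2}$, proves the $\cB(\cW^*,L^2)$ bound, and calls the remaining bounds analogous.
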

\begin{proof}
Note first that by~\eqref{tb.lbound} there exists $\tau_0>0$ such that for all $0 \neq \la \in \overline{\tau_0\D_+}$ we have $|\frt_{\la}[u]|\gtrsim |\la|\|u\|_{\cD_{\frt}}^2$, and thus $\wh T_\la^ {-1}: \cD_\frt^* \to \cD_\frt$ is bounded. Since $\cD_\frt$ is a (dense and boundedly embedded) subspace of $\Wc$ and $L^2(\Omega)$, it follows that $L^2(\Omega)$ and $\Wc^*$ are (dense and boundedly embedded) subspaces of $\cD_\frt^*$. Thus one can indeed view $\wh T_\la^ {-1}$ as an operator between the various spaces as in \eqref{T.la.inv.norm}, where the respective restrictions are not written explicitly as they are clear from the context.

We show the second inequality in the first line of \eqref{T.la.inv.norm}, the remaining ones are justified analogously.
By Proposition~\ref{prop:tb.lbound} we have, as $\la \to 0$ in $\overline{\D_+} \setminus \{0\}$,
\begin{equation}
\label{eq_coer}
|(\widehat T_{\la} u, u)_{\cD_\frt^* \times \cD_\frt}|
\gtrsim |\la|^{\frac \gamma2} \|u\|_{\cW} \|u\|_{L^2}, \qquad u \in \cD_\frt.
\end{equation}
Hence, using \eqref{eq_coer} for $u = \wh T_{\la}^{-1} \varphi$ with $\varphi \in \cW^*$ 
\begin{equation}
\|\wh T_\la^{-1}\|_{\cB(\cW^*, L^2)}  = \sup_{0\neq \varphi \in \cW^*} \frac{\|\widehat T_{\la}^{-1} \varphi \|_{L^2}}{\|\varphi\|_{\cW^*}}
%\\
%&
\ls |\la|^{-\frac \gamma2} \sup_{0\neq \varphi \in \cW^*}
\frac{|(\varphi, \wh T_{\la}^{-1} \varphi)_{\cW^* \times \cW}|}{\|\varphi\|_{\cW^*} \|\wh T_{\la}^{-1} \varphi\|_{\cW}}
\ls |\la|^{-\frac \gamma2};
\end{equation}
notice that the pairing in $\cD_\frt^* \times \cD_\frt$ can be written in the duality $\cW^* \times \cW$ since $\f \in \Wc^*$ and $\wh T_{\la}^{-1} \varphi \in \cD_{\frt} \subset \cW$.
\end{proof}

\begin{proof}[Proof of Theorem~\ref{thm:low}]
We prove first \eqref{r.A.small.b.H-H}, based on the estimates in Lemma \ref{lem:T.ineq} and the representation of the resolvent in \eqref{A.res}.
Notice that $\overline{\tau_0 \D_+} \setminus \{0\}\subset \rho(\cA) \subset \rho (\cA_\cK)$ by the spectral equivalence~\eqref{spec.equiv} and the bounded invertibility of $T_\la$ for $\la \in \overline{\tau_0\D_+} \setminus \{0\}$ in Lemma~\ref{lem:T.ineq} (see also Proposition~\ref{prop:AK}).
For any $F = (f,g)\in \HH$ we have
\begin{equation}
\begin{aligned}
 \|(\Ac-\la)^{-1} F\|_{\HH} 
&   \ls |\la|^{-1} \left(\|f\|_{\cW} + \|\wh T_{\la}^{-1}(\Delta-q)f\|_{\Wc} \right)
+ \|\wh T_{\la}^{-1}g\|_{\Wc}
\\
& \qquad \qquad 
+ \|\wh T_{\la}^{-1}(\Delta-q)f\|_{L^2} + |\la| \|\wh T_{\la}^{-1}g\|_{L^2}
\\
& \ \ls
|\la|^{-1} \nr{f}_{\Wc}  + \abs\la\inv \|\wh T_{\la}^{-1}\|_{\cB(\Wc^*,\Wc)}\|(\Delta-q)\|_{\cB(\Wc,\Wc^*)}  \|f\|_{\Wc}
\\
&  
\qquad \qquad  + \|\wh T_{\la}^{-1}\|_{\cB(\Wc^*,L^2)}\|(\Delta-q)\|_{\cB(\Wc,\Wc^*)}  \|f\|_{\Wc}  \\
& \qquad \qquad 
+\|\wh T_{\la}^{-1}\|_{\cB(L^2,\Wc)} \|g\|_{L^2} 
+ |\la| \|\wh T_{\la}^{-1}\|_{\cB(L^2)} \|g\|_{L^2}.
\end{aligned}
\end{equation}
Employing Lemma \ref{lem:T.ineq}, \eqref{gamma.def} and \eqref{Dq.Riesz}, we obtain
\begin{equation} \label{res.A.H-H}
\|(\Ac-\la)^{-1} F\|_{\HH} \lesssim  |\la|^{-1} \nr{F}_{\HH},
\end{equation}
i.e.~\eqref{r.A.small.b.H-H} holds for $\Ac$. Then the estimate for $\Ac_\KK$ in \eqref{r.A.small.b.H-H} follows from \eqref{res.H.to.K}. 

Notice that if we only consider the second row in \eqref{A.res} we get
\begin{equation} \label{res.A.H-0L2}
\|\Pi_2 (\Ac-\la)^{-1} F\|_{L^2} \lesssim  |\la|^{-\frac \gamma 2} \nr{F}_{\HH}.
\end{equation}

To justify the remaining inequalities, we first rearrange the formula \eqref{A.res} for $(\Ac-\la)^{-1} F$ when $F = (f,g) \in \KK = \Ran (\cA)$ (see~\eqref{A.Ran} and Proposition~\ref{prop:Ran.A}). Since $f \in \cD_\frt$ implies $a f \in \cD_\frt^*$, we have
\begin{equation}
\wh T_\la^{-1}(\Delta-q) f = \wh T_\la^{-1}\left(
(\Delta-q) f - \la a f - \la^2 f +  \la a f + \la^2 f
\right)
= -f + \la \wh T_\la^{-1} ( af + \la f).
\end{equation}
Hence,
\begin{equation}
(\Ac-\la)^{-1} F=
-
\begin{pmatrix}
\wh T_\la^{-1} (a f +g) + \la \wh T_\la^{-1}f
\\[1mm]
\la	\wh T_\la^{-1} (a f +g) + \la^2 \wh T_\la^{-1}f - f
\end{pmatrix} .
\end{equation}
We set $U = (u,v) = (\Ac-\la)^{-1} F$. Since $ af+g \in \cW^*$ and $f \in \LOm$, by \eqref{T.la.inv.norm} we arrive at (with $\iota, \kappa \in \{0,1\}$)
\begin{equation} 
\begin{aligned}
& \|u\|_{\cW} + \iota \|u\|_{L^2} + \kappa \|a^\frac12 u\|_{L^2}
\\
& \quad \ls \left(\|\wh T_\la^{-1}\|_{\cB(\cW^*,\cW)} + \iota \|\wh T_\la^{-1}\|_{\cB(\cW^*,L^2)} + \kappa \|\wh T_\la^{-1}\|_{\cB(\cW^*,\cD_{\frt})} \right)\|af + g\|_{\cW^*} 
\\
& \qquad + |\la| \left( \|\wh T_\la^{-1}\|_{\cB(L^2,\cW)} + \iota \|\wh T_\la^{-1}\|_{\cB(L^2,L^2)} + \kappa \|\wh T_\la^{-1}\|_{\cB(L^2,\cD_{\frt})} \right)  \|f\|_{L^2}
\\ & \quad \ls 
\left(1 + \iota |\la|^{- \frac \gamma 2} + \kappa |\la|^{- \frac 1 2} \right) \|af + g\|_{\cW^*}  
+ \left(|\la|^{1-\frac{\gamma}{2}} + \iota |\la|^{1- \gamma} + \kappa |\la|^{1- \frac{\gamma+1} 2} \right) \|f\|_{L^2}
\\
& \quad \ls \left(1 + \iota |\la|^{- \frac \gamma 2} + \kappa |\la|^{- \frac 1 2} \right) \|F\|_\KK
\end{aligned}
\end{equation}
and 
\begin{equation}
\begin{aligned}
\|v\|
& \ls |\la| \|\wh T_\la^{-1}\|_{\cB(\cW^*,L^2)} \|af + g\|_{\cW^*}  +
|\la|^2 \|\wh T_\la^{-1}\|_{\cB(L^2)} \|f\|_{L^2} + \|f\|_{L^2}
\\[1mm]
& \ls
\|f\|_{L^2}+ \|af + g\|_{\cW^*} \ls \|F\|_\KK,
\end{aligned}
\end{equation}
(see~\eqref{KK.norm}). The inequalities~\eqref{r.A.small.b.K-H}, \eqref{r.A.small.b.K-L} and \eqref{r.A.small.b.K-D} follow by taking $\iota=\kappa=0$, $\iota=1$ and $\kappa=0$, and $\iota=\kappa=1$, respectively. Finally, \eqref{r.A.small-dt1} follows from \eqref{r.A.small.b.K-H} since $\|\Pi_2\|_{\cB(\HH,L^2)} = 1$ and \eqref{r.A.small-dt2} follows from \eqref{r.A.small.b.K-H} and \eqref{res.A.H-0L2}.
\end{proof}

\subsection{Low frequency estimate for the associated quadratic form}
\label{ssec:t.nearzero}

In the rest of the section we prove Proposition \ref{prop:tb.lbound}. The proof is based on a classical Neumann-bracketing argument (see \cite[Chap.~XIII.15]{Reed4}) and asymptotic perturbation theory for a family of self-adjoint operators in the sense of quadratic forms (see \cite[Chap.~VIII]{Kato-1966}). The main ingredient is the following lower bound.

\begin{lemma}\label{lem:hb.lbound}
	Let Assumption~\ref{asm:a.1} or Assumption~\ref{asm:a.2} hold.
	Then as $b \to 0^+$
	\begin{equation}
		\label{eq:Hb.lbound}
			\|\nabla u\|^2_{L^2} + b \|a^\frac12 u \|^2_{L^2}  \gs b^\gamma \|u \|^2_{L^2}, \qquad u \in H^1_0(\Omega) \cap \Dom(a^\frac12), 
	\end{equation}
	where $\gamma$ is as in~\eqref{gamma.def}.
\end{lemma}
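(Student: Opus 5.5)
We split $\Omega$ by Neumann bracketing along the decomposition of Assumption~\ref{asm:a.1}. For $u \in H^1_0(\Omega)\cap\Dom(a^{\frac12})$ we have $u|_{\Omega_j} \in H^1(\Omega_j)$, and since $|\Omega\setminus(\Omega_1\cup\Omega_2)|=0$,
\[
\|\nabla u\|^2_{L^2} + b\|a^{\frac12}u\|^2_{L^2} = \sum_{j=1,2}\Big(\|\nabla u\|^2_{L^2(\Omega_j)} + b\|a^{\frac12}u\|^2_{L^2(\Omega_j)}\Big).
\]
It therefore suffices to bound each piece from below by $b^\gamma\|u\|^2_{L^2(\Omega_j)}$, with the Dirichlet condition on $\partial\Omega$ simply dropped (Neumann-type relaxation on $\partial\Omega_j$).

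\emph{The piece $\Omega_2$.} Under Assumption~\ref{asm:a.1} this is trivial: $a\ge a_0$ a.e.\ on $\Omega_2$, so the $\Omega_2$ contribution is at least $b a_0\|u\|^2_{L^2(\Omega_2)}$, which is the claim with $\gamma=1$.

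\emph{The piece $\Omega_1$ and asymptotic perturbation.} Consider on $H^1(\Omega_1)$ the closed nonnegative forms $\frh_b[w]=\|\nabla w\|^2_{L^2(\Omega_1)}+b\|a^{\frac12}w\|^2_{L^2(\Omega_1)}$, with the Neumann Laplacian $-\Delta_N$ on $\Omega_1$ as the unperturbed operator. Since $\Omega_1$ is bounded and Lipschitz, $-\Delta_N$ has compact resolvent, lowest eigenvalue $0$ simple with constant eigenfunction $\phi_0=|\Omega_1|^{-1/2}$, and a spectral gap. The perturbation $b\,a$ is a nonnegative form, possibly not relatively form-bounded because $a$ is only $L^1_{\rm loc}$, but $\phi_0\in\Dom(a^{\frac12})$ because $\1_{\Omega_1}a\in L^1$. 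Hence Kato's asymptotic perturbation theory for monotone form families (\cite[Chap.~VIII]{Kato-1966}) gives for the lowest eigenvalue
\[
\mu_1(b)= b\,\frac{\|\1_{\Omega_1}a\|_{L^1}}{|\Omega_1|}+o(b).
\]
Since $\|\1_{\Omega_1}a\|_{L^1}>0$, this yields $\mu_1(b)\gtrsim b$ for small $b$, so $\frh_b[w]\gtrsim b\|w\|^2$. This is the $\gamma=1$ bound on $\Omega_1$.

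If one wants to avoid the asymptotic expansion, a direct argument also works. Write $w=c\phi_0+w_\perp$. The Poincaré–Neumann inequality controls $w_\perp$ by $\|\nabla w\|$. For the constant part, use
\[
b\|a^{\frac12}w\|^2 \ge \tfrac b2|c|^2\|a^{\frac12}\phi_0\|^2 - b\|a^{\frac12}w_\perp\|^2,
\]
and then truncate $a$ to $\min(a,M)$ so that $\|a^{\frac12}w_\perp\|^2\le M\|w_\perp\|^2$. Choosing $M$ with $\int_{\Omega_1}\min(a,M)>0$, which is possible since $\|\1_{\Omega_1}a\|_{L^1}>0$, and taking $b$ small closes the estimate. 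I would actually use this truncation route, since it avoids technicalities about asymptotic expansions for singular $a$.

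\emph{Assumption~\ref{asm:a.2} and the improved exponent $\gamma=2/(2+\beta)$.} This is the main obstacle. Here I would take the split \eqref{omega.a.unbd.def} with a $b$-dependent radius $r=R_b$, where $R_b:=b^{-1/(2+\beta)}$. On the outer region, $a\gtrsim R_b^\beta$, so that contribution is at least $bR_b^\beta\|u\|^2=b^{2/(2+\beta)}\|u\|^2$. On the inner region $B(0,R_b)\cap\Omega$, I would rescale $x=R_b y$. The form becomes $R_b^{-2}$ times
\[
\|\nabla_y w\|^2 + bR_b^{2+\beta}\int R_b^{-\beta}a(R_b y)|w|^2
=\|\nabla_y w\|^2 + \int \tilde a_b(y)|w|^2 ,
\]
with $\tilde a_b(y)=R_b^{-\beta}a(R_b y)\gtrsim |y|^\beta$ on the shell $\{r_0/R_b<|y|<1\}$. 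Restricting $\tilde a_b$ to the fixed annulus $\{1/2<|y|<1\}$, where it is at least a positive constant, the Neumann-type argument of the previous step applied on the fixed Lipschitz domain $B(0,1)$ gives a uniform lower bound $c>0$ for the rescaled form. Undoing the scaling yields $R_b^{-2}c\|u\|^2=c\,b^{2/(2+\beta)}\|u\|^2$. The delicate point is the hole of $\Omega$ near the origin in rescaled variables. I would handle it by extending $u$ by zero into the obstacle, which is allowed since $u\in H^1_0(\Omega)$, so the hole causes no loss.
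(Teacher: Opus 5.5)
Your proof is correct. The splitting along $\Omega_1,\Omega_2$, the bound $ba_0\|u\|^2_{L^2(\Omega_2)}$, and the whole treatment of Assumption~\ref{asm:a.2} match the paper's proof. That treatment consists of the radius $r=b^{-1/(2+\beta)}$, the zero extension of $u$ across the obstacle, the rescaling to $B(0,1)$, and keeping only the annulus $\{1/2<|y|<1\}$.

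The routes differ only in the $\Omega_1$ step under Assumption~\ref{asm:a.1}. The paper applies Kato's asymptotic perturbation theorem \cite[Thm.~VIII.4.9]{Kato-1966} to the Neumann forms $\frh_b$. This requires verifying several hypotheses: density of $\Dom(\frh_b)$ in $H^1(\Omega_1)$, strong convergence in the generalized sense, compactness of the Neumann resolvent, and stability of the simple eigenvalue $0$ with $\Ran P_0\subset\Dom(a^{\frac12})$. In return, the paper obtains the sharp first-order asymptotics $\lambda_0(b)=b|\Omega_1|^{-1}\|\1_{\Omega_1}a\|_{L^1}+o(b)$.

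Your truncation route replaces this machinery with the Poincar\'e--Wirtinger inequality on the connected Lipschitz set $\Omega_1$, together with $|x+y|^2\ge\frac12|x|^2-|y|^2$. One point of order: you must apply that inequality to $\min(a,M)\le a$, not to $a$ itself, since $\|a^{\frac12}w_\perp\|$ is not controlled otherwise; your write-up states this in the wrong order, but the intent is clear. With that fix you get
\[
\frh_b[w]\ \ge\ \min\Big\{\tfrac1{2C_P},\ \tfrac{\kappa_M}{2|\Omega_1|}\Big\}\, b\,\|w\|^2_{L^2(\Omega_1)}, \qquad b\le \min\{1,(2C_PM)^{-1}\},
\]
where $\kappa_M=\int_{\Omega_1}\min(a,M)>0$, and any $M>0$ works. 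This gives an explicit threshold and constant, though not the optimal leading coefficient, which the lemma does not need. It also makes plain why the mere $L^1_{\rm loc}$ regularity of $a$ is harmless: the large values of $a$ are simply discarded.

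The same elementary argument, now with the bounded weight $c_0\1_{\mathbb B}$ so that no truncation is needed, also covers the rescaled problem on $B(0,1)$. There it replaces the paper's appeal to the positivity of the lowest eigenvalue of the Neumann realization of $-\Delta+\1_{\mathbb B}|y|^\beta$.
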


\begin{proof}
Let $u\in H^1_0(\Omega) \cap \Dom(a^\frac12)$ be arbitrary. We split the problem into its parts on $\Omega_1$ and $\Omega_2$ as in Assumption~\ref{asm:a.1}. More precisely,
\begin{equation}
	\label{eq:hb.lbound.split}
		\|\nabla u \|^2_{L^2(\Omega)} + b\|a^\frac12 u\|^2_{L^2(\Omega)}  = \sum_{j=1}^2 \left( \|\nabla u_j\|^2_{L^2(\Omega_j)} + b \|a^\frac12 u_j \|^2_{L^2(\Omega_j)} \right),
\end{equation}
where we write $u_j := u\vert_{\Omega_j}$ for $j=1,2$. We thus may prove~\eqref{eq:Hb.lbound} for each summand separately.

\stepp \emph{Assumption~\ref{asm:a.1}:} Let us first consider the case when only Assumption~\ref{asm:a.1} holds. For the $j=2$ summand, we clearly have by Assumption~\ref{asm:a.1}~\ref{itm:Om12.a}
	\begin{equation}\label{eq:hb.lbound.1}
		\|\nabla u_2\|^2_{L^2(\Omega_2)} + b \|a^\frac12 u_2 \|^2_{L^2(\Omega_2)} \ge a_0 b \| u_2 \|_{L^2(\Omega_2)}^2.
	\end{equation}

	For $j=1$, we use a perturbative argument in $L^2 (\Omega_1)$. Let $H_0$ denote the Neumann Laplacian on $\Omega_1$, i.e. the self-adjoint operator associated with the quadratic form
	\begin{equation*}
		\frh_0 [v] := \| \nabla v \|_{L^2(\Omega_1)}^2, \qquad \Dom(\frh_0) := H^{1}(\Omega_1),
	\end{equation*}
	and consider the form perturbation
	\begin{equation*}
		\mathfrak a [v] := \| a^\frac12 v \|_{L^2(\Omega_1)}^2, \qquad \Dom(\mathfrak a) :=  \Dom(a^\frac12\vert_{\Omega_1}).
	\end{equation*}
	Using~\cite[Thm.~VIII.4.9]{Kato-1966} we derive an expansion for the lowest eigenvalue $\la_0(b)$ of the operator $H_b$ associated with the form $\frh_b := \frh_0 + b \mathfrak a$ in the regime $b \to 0^+$. 
	
	Note that both $\frh_0$ and $\mathfrak a$ are densely defined, closed and non-negative. Since $\Omega_1$ has Lipschitz boundary, the restrictions of functions in $\CcRd$ to $\Omega_1$ are dense in $H^1(\Omega_1)$ (see~\cite[Thm.~8.10.7]{bhattacharyya2012distributions}). Since these functions are also contained in  
	\begin{equation}
		\Dom (\frh_b) = H^1(\Omega_1) \cap \Dom(a^\frac12\vert_{\Omega_1}),
	\end{equation}
	the latter is dense in $\Dom (\frh_0)$. Hence, the conditions in \cite[p.~464]{Kato-1966} hold with $\frt = \frh_0$ and $\frt^{\bf (1)} = \mathfrak a$. Moreover, by \cite[Thms.~VIII.3.11,~VIII.3.15]{Kato-1966}, the operators $H_b$ converge to $H_0$ strongly in the generalized sense as $b \to 0^+$. Since $\Omega_1$ has Lipschitz boundary, the limit $H_0$ has compact resolvent (see e.g.~\cite[Thm.~8.11.4, ~p.~614]{bhattacharyya2012distributions}), while its lowest eigenvalue $\la_0(0) = 0$ is simple and stable (in the sense of the definition in \cite[p.~437]{Kato-1966}) with constant eigenfunction. With $P_0$ denoting the orthogonal projection on the corresponding eigenspace $\lspan\{1\}$, recalling that $a \in \LolocOm$ by assumption, we have
	\begin{equation}
		\Ran(P_0) \subset \Dom(a^\frac12\vert_{\Omega_1}) = \Dom (\frt^{\bf (1)}).
	\end{equation}
	
	By the previous paragraph, the assumptions of \cite[Thm.~VIII.4.9]{Kato-1966} are satisfied. Hence, the lowest eigenvalue of $H_b$ admits the expansion
	\begin{equation}
		\la_0(b) = b |\Omega_1|^{-1} \|a\vert_{\Omega_1}\|_{L^1(\Omega_1)} + o(b), \qquad b \to 0^+.
	\end{equation}
	Considering Assumption~\ref{asm:a.1}~(iii), it thus follows that
	\begin{equation}\label{eq:hb.lbound.2}
		\frh_b[v] \gs b \|v\|_{L^2(\Omega_2)}^2, \qquad v \in \Dom (\frh_b).
	\end{equation}
	Now~\eqref{eq:Hb.lbound} with $\gamma=1$ follows from~\eqref{eq:hb.lbound.split},~\eqref{eq:hb.lbound.1} and~\eqref{eq:hb.lbound.2} with $v = u_1 \in \Dom (\frh_b)$.

\stepp \emph{Assumption~\ref{asm:a.2}:} Let Assumption~\ref{asm:a.2} hold in addition, where $\Omega_1$ and $\Omega_2$ are as in \eqref{omega.a.unbd.def} with 
\begin{equation}\label{eq:r.a.unbd.def}
	r \equiv r(b) :=  b^{-\frac{1}{2 + \beta}}.
\end{equation}
For the summand $j=2$ in~\eqref{eq:hb.lbound.split}, we apply \eqref{a.unbd.asm} to find
\begin{equation}\label{eq:hb.lbound.unbdd.1}
	\|\nabla u_2\|^2_{L^2(\Omega_2)} + b \|a^\frac12 u_2 \|^2_{L^2(\Omega_2)} \ge  b r^{\beta} \| u_2 \|_{L^2(\Omega_2)}^2 = b^\frac{2}{2+\beta}  \| u_2 \|_{L^2(\Omega_2)}^2.
\end{equation}

To treat the summand $j=1$, let $v$ denote the zero extension of $u$ to $\Rd$, i.e.~$v \in H^1(\Rd)$ with $v(x) = 0$ on $\Rd \setminus \Omega$. Note that, for $b$ small enough, we have $r\ge 2r_0$ and thus
\begin{equation}
\mathbb B := B(0,1) \setminus \overline {B(0,\tfrac12)} \subset B(0,1) \setminus \overline {B(0,\tfrac{r_0}r)} \subset r^{-1} \Omega_1.
\end{equation}
Setting $w (y) = r^\frac d2 v(ry)$ and using~\eqref{a.unbd.asm}, we obtain the following lower estimate
\begin{equation}
	\begin{aligned}
		& \|\nabla u_1 \|_{L^2(\Omega_1)}^2 + b \|a^\frac12u_1\|_{L^2(\Omega_1)}^2 \\
		& \qquad  \qquad = \int_{B(0,r)} |\nabla v (x) |^2 \dd x + b \int_{\Omega_1} a(x) |v(x)|^2 \dd x \\
		& \qquad \qquad = r^{-2} \int_{B(0,1)} |\nabla w (y)|^2 \dd y + b \int_{r^{-1} \Omega_1} a (r y) |w(y)|^2 \dd y \\
		& \qquad \qquad \gs  r^{-2} \left( \int_{B(0,1)} | \nabla w(y)|^2 \dd y + \int_{\mathbb B} |y|^\beta |w(y)|^2 \dd y \right).
	\end{aligned}
\end{equation}
It is easy to see that the Neumann realization of $-\Delta + \1_{\mathbb B} |y|^\beta$ in $L^2 (B(0,1))$, i.e.~the self-adjoint operator associated with the form
\begin{equation}
	\frs [w]  : = \|\nabla w\|_{L^2(B(1,0))}^2 + \|\1_{\mathbb B} |y|^\frac\beta 2 w\|_{L^2(B(1,0))}^2, \qquad \Dom (\frs) := H^1(B(1,0)),
\end{equation}
has compact resolvent and a positive lowest eigenvalue. This further implies 
\begin{equation}\label{eq:hb.lbound.unbdd.2}
	\|\nabla u_1 \|_{L^2(\Omega_1)}^2 + b \|a^\frac12u_1\|_{L^2(\Omega_1)}^2  \gs  r^{-2} \|w\|^2_{L^2(B(0,1))} = b^{\frac{2}{2+\beta}} \|u_1\|_{L^2(\Omega_1)}^{2}.
\end{equation}
Putting together~\eqref{eq:hb.lbound.split}, \eqref{eq:hb.lbound.unbdd.1} and~\eqref{eq:hb.lbound.unbdd.2}, the claim~\eqref{eq:Hb.lbound} follows with $\gamma=2/(2+\beta)$.
\end{proof}

\begin{proof}[Proof of Proposition~\ref{prop:tb.lbound}]
For any $u \in \cD_\frt$ and $\la \in \ov{\D_+} \setminus \{0\}$, setting $b:=|\la|>0$ it can be readily verified that
\begin{equation}\label{t.la.rot}
\begin{aligned}
|\frt_\la[u]|
&\ge |\Re(\e^{-\frac{\ii}{2} \arg{\la}} \frt_\la[u])|
\\ &
\geq \frac{\sqrt 2}{2}
\big(
\|\nabla u\|_{L^2}^2 + \|q^\frac12 u\|_{L^2}^2 + b \|a^\frac12 u\|_{L^2}^2 \big) - b^2 \|u\|_{L^2}^2
.
\end{aligned}
\end{equation}
The claim now follows from Lemma~\ref{lem:hb.lbound}.
\end{proof}

\section{Resolvent estimates for high frequencies}
\label{sec:high}

It is for high frequencies that we add the assumption $a(x) \geq a_0 > 0$ a.e. in $\Omega$ in the statement of Theorem \ref{thm:decay}. Under this assumption, implying in particular the GCC, we prove uniform boundedness of the resolvent of $\cA$ on the imaginary axis away from zero. The challenge in the proof is the minimal regularity of the coefficients.  Some cases with non-uniformly positive damping are discussed in Section~\ref{sec:non.up.damping}.

\begin{proposition}
\label{prop:high.pos}
Let $\Ac$ and $\AK$ be as in \eqref{A.action}, \eqref{A.dom} and \eqref{AK.def}, respectively. Assume that there exists $a_0 > 0$ such that $a(x) \ge a_0$ a.e.~in $\Omega$. Then $\overline{\C_+} \setminus \set 0 \subset \rho(\cA)$ and for all $\eps >0$ it holds that
	\begin{equation}
		\label{res.n.A.large.b}
		\sup_{\la \in \ov {\C_+ \setminus \eps\D}} \|(\Ac - \la)^{-1} \|_{\cB(\HH)} < \infty, \quad
		\sup_{\la \in \ov {\C_+ \setminus \eps\D}} \|(\Ac_\KK - \la)^{-1} \|_{\cB(\KK)} < \infty.
	\end{equation}
\end{proposition}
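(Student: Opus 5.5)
We will work throughout with the Schur complement representation \eqref{A.res}. The estimate for $\AK$ then follows at once from the one for $\cA$ by \eqref{res.H.to.K}, so it suffices to treat $\cA$.

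\emph{Step 1: spectrum.} Since $\cA$ is m-dissipative, $\C_+ \subset \rho(\cA)$ and
\[
\|(\cA-\la)^{-1}\|_{\cB(\HH)} \le (\Re\la)^{-1}.
\]
This disposes of the region $\Re \la \ge \delta$ for any fixed $\delta>0$. It remains to handle $\la = \ii s + \mu$ with $0 \le \mu < \delta$ and $|s| \ge \eps/2$.

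\emph{Step 2: coercivity of the Schur complement near the imaginary axis.} For $\la = \mu + \ii s$ and $u \in \cD_\frt$ we take the imaginary and real parts of $\frt_\la[u]$:
\begin{align}
\Im \frt_\la[u] &= s\big(\|a^{\frac12}u\|_{L^2}^2 + 2\mu\|u\|_{L^2}^2\big), \\
\Re \frt_\la[u] &= \|u\|_{\cW}^2 + \mu\|a^{\frac12}u\|_{L^2}^2 + (\mu^2 - s^2)\|u\|_{L^2}^2 .
\end{align}
Since $a \ge a_0$, the first identity gives $|\frt_\la[u]| \ge |s|\,\|a^{\frac12}u\|^2 \ge |s| a_0 \|u\|^2$. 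To control also $\|u\|_{\cW}^2$, we combine the two parts:
\[
\Re\frt_\la[u] + \frac{2|s|}{a_0}\,|\Im \frt_\la[u]| \ge \|u\|_{\cW}^2 + \mu\|a^{\frac12}u\|^2 + s^2\|u\|^2 .
\]
This yields
\[
|\frt_\la[u]| \gtrsim \langle s\rangle^{-1}\big(\|u\|_{\cW}^2 + s^2\|u\|^2\big) + |s|\,\|a^{\frac12}u\|^2 .
\]
Hence $\wh T_\la^{-1} \in \cB(\cD_\frt^*,\cD_\frt)$, and by the spectral equivalence \eqref{spec.equiv} together with Lemma~\ref{lem:T.la.ext} we get $\la \in \rho(\cA)$. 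In particular $\overline{\C_+}\setminus\{0\} \subset \rho(\cA)$.

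\emph{Step 3: uniform bounds for $\wh T_\la^{-1}$.} Using the lower bound from Step 2 exactly as in the proof of Lemma~\ref{lem:T.ineq}, I aim for the following bounds, uniform for $|s| \ge \eps/2$:
\[
\|\wh T_\la^{-1}\|_{\cB(L^2,L^2)} \lesssim |s|^{-1}, \qquad \|\wh T_\la^{-1}\|_{\cB(L^2,\cW)} \lesssim 1, \qquad \|\wh T_\la^{-1}\|_{\cB(\cW^*,L^2)} \lesssim |s|^{-1}.
\]
The first comes from $|\frt_\la[u]| \ge a_0|s|\,\|u\|^2$. For the second, set $u = \wh T_\la^{-1}g$; then $\|u\|_\cW^2 \le \Re\frt_\la[u] + s^2\|u\|^2 \le |(g,u)| + s^2\|u\|^2$, and with $\|u\|\lesssim\|g\|/|s|$ this gives $\|u\|_\cW \lesssim \|g\|$. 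The third follows from the second by duality, since $\wh T_{\la}^* = \wh T_{\ov\la}$.

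The delicate bound is $\cB(\cW^*,\cW)$, which enters \eqref{A.res} through $\la^{-1}(I + \wh T_\la^{-1}(\Delta - q))$. Rather than estimating that operator directly, I will use the identity from the proof of Theorem~\ref{thm:low},
\[
\wh T_\la^{-1}(\Delta-q)f = -f + \la \wh T_\la^{-1}(af + \la f).
\]
This identity requires $f \in \cD_\frt$, so I will apply it for $f$ in the dense subspace $\cD_\frt$ and extend by continuity. It is not directly usable, however, because $af$ is not controlled by $\|f\|_\cW$.

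\emph{Main obstacle: the $(1,1)$ and $(2,1)$ entries.} These are exactly the entries that the identity above cannot handle. To deal with them, I will instead estimate $U = (u,v) = (\cA-\la)^{-1}F$ directly by an energy argument. The resolvent equation reads
\[
v = \la u + f, \qquad (\Delta - q)u - av - \la v = g .
\]
First, taking the real part of $\langle (\cA-\la)U, U\rangle_\HH$ gives
\[
\|a^{\frac12}v\|^2 + \mu\|U\|_{\HH}^2 \le \|F\|_{\HH}\|U\|_{\HH},
\]
and hence, since $a \ge a_0$, $\|v\|_{L^2}^2 \lesssim \|F\|\,\|U\|$. Second, pairing the second equation with $u$ and using $v = \la u + f$ yields
\[
\|u\|_\cW^2 = \langle -(a+\la)v - g,\, u\rangle .
\]
The terms $\langle av, u\rangle = \la^{-1}\langle av, v - f\rangle$ and $\langle v,u\rangle$ are controlled by $|s| \ge \eps/2$ together with $\|a^{\frac12}v\|^2$, except for $\la^{-1}\langle a^{\frac12}v, a^{\frac12}f\rangle$. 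That term is problematic because $f \in \cW$ need not lie in $\Dom(a^{\frac12})$.

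I would handle this term by first proving the bound for $F \in \cD_\frt \times L^2$ with constants independent of $\|a^{\frac12}f\|$, obtained by an approximation or duality trick. Concretely, I would try to reverse roles: apply the same argument to $\cA^*$ from Proposition~\ref{prop:A*}, which controls the entries acting on $g$, and then use the adjoint structure to transfer the bounds. This step, controlling the $f$-dependence uniformly without relative boundedness of $a$, is where I expect the real difficulty.
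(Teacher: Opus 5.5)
Your proposal never establishes the one estimate that matters. This is a bound, uniform for \(\la=\mu+\ii s\) with \(0\le\mu<\delta\) and \(|s|\ge\eps/2\), on the \((1,1)\) entry \(-\la^{-1}\bigl(I+\wh T_\la^{-1}(\Delta-q)\bigr)\) of \eqref{A.res}, viewed as an operator on \(\cW\). The energy argument you switch to stalls exactly on the term \(\la^{-1}\langle a^{1/2}v,a^{1/2}f\rangle\), as you note yourself. The remedy you sketch cannot work. In \(\HH=\cW\oplus L^2(\Omega)\), the adjoint of the \((1,1)\) block of \((\cA-\la)^{-1}\) is the \((1,1)\) block of \((\cA^*-\ov\la)^{-1}\). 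That block has the same structure and the same problematic term. Passing to \(\cA^*\) therefore only exchanges the \((2,1)\) and \((1,2)\) entries and never reaches the map \(f\mapsto u\). As written, the first bound in \eqref{res.n.A.large.b} is not proved.

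The missing observation is already contained in your Step~2. Because of the prefactor \(\la^{-1}\), you only need \(\|\wh T_\la^{-1}\|_{\cB(\cW^*,\cW)}=O(|\la|)\), not \(O(1)\).
\begin{itemize}
\item Your inequality gives \(\|u\|_\cW^2\le(1+2|s|/a_0)\,|\frt_\la[u]|\).
\item For \(u=\wh T_\la^{-1}\varphi\) with \(\varphi\in\cW^*\), one has \(|\frt_\la[u]|=|(\varphi,u)_{\cW^*\times\cW}|\le\|\varphi\|_{\cW^*}\|u\|_\cW\).
\item Hence \(\|\wh T_\la^{-1}\|_{\cB(\cW^*,\cW)}\le 1+2|s|/a_0\), and by \eqref{Dq.Riesz} the \((1,1)\) entry has norm at most \(|\la|^{-1}(2+2|s|/a_0)\le 4/\eps+2/a_0\).
\end{itemize}
The \((2,1)\) entry needs only \(\|\wh T_\la^{-1}\|_{\cB(\cW^*,L^2)}\lesssim 1\), which is what your duality argument actually yields. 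Your claimed \(|s|^{-1}\) is wrong: duality transfers the bound \(\lesssim 1\), not something better. Indeed, for \(\Omega=\R^d\), \(a\equiv1\), \(q\equiv0\) and \(\la=\ii s\), this norm is \(\approx 1\), attained near \(|\xi|=|s|\).

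With these two corrections, choose \(\delta\le\frac{\sqrt3}{2}\eps\), so that \(\Re\la<\delta\) and \(|\la|\ge\eps\) force \(|s|>\eps/2\). Then your Steps 1--3 give a complete proof that is more elementary than the paper's.

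The paper instead invokes Lemma~\ref{lem:T.to.A}. There the same \(O(|b|)\) bound on \(\|\wh T_{\ii b}^{-1}\|_{\cB(\cW^*,\cW)}\) comes from the second resolvent identity, comparing against the self-adjoint operator \(B=\wh T_b^{-1}J_\cW\) on \(\cW\). That route uses only \(\|T_{\ii b}^{-1}\|_{\cB(L^2)}\), so it also covers the non-uniformly positive dampings of Section~\ref{sec:non.up.damping}. Your Re/Im combination, by contrast, depends essentially on \(a\ge a_0\).
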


The proof of Proposition \ref{prop:high.pos} relies on a simple resolvent estimate for $T_{\la}$ with $\la \in \ii \R$  together with the following lemma. The latter is an extension of the argument in \cite[Proof of Thm.~3.5]{arnal2026resolvent-dwe} and establishes the respective resolvent estimates for $\Ac$ (without assuming uniform positivity of the damping).
\begin{lemma}\label{lem:T.to.A}
Let $\Ac$ and $T_\la$ be as in \eqref{A.action}, \eqref{A.dom} and \eqref{Tla.def}, respectively.
Assume that for some $b_0 > 0$ we have $\s(\Ac) \cap \ii\R \subset \ii(-b_0,b_0)$.
Then there exists $C_0>0$ such that
\begin{equation}\label{eq:G.T.rate}
\|(\Ac - \ii b)^{-1} \|_{\cB(\HH)} \leq C_0 \left(\frac{1}{|b|}+|b| \|T_{\ii b}^{-1}\|_{\cB(L^2)} \right), \qquad |b| \geq b_0.
\end{equation}
\end{lemma}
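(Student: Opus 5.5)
The plan is to read off the bound directly from the Frobenius--Schur representation \eqref{A.res} of the resolvent, after estimating $\widehat T_{\ii b}^{-1}$ between the spaces $L^2$, $\cW$ and $\cW^*$ in terms of $N_b := \|T_{\ii b}^{-1}\|_{\cB(L^2)}$.

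\textbf{Step 1: invertibility.} Fix $|b| \geq b_0$. Then $\ii b \in \rho(\cA)$ and $\ii b \notin (-\infty,0]$. By the spectral equivalence \eqref{spec.equiv}, $T_{\ii b}^{-1} \in \cB(L^2)$. By Lemma~\ref{lem:T.la.ext}, $\widehat T_{\ii b}^{-1} \in \cB(\cD_\frt^*,\cD_\frt)$, so all entries in \eqref{A.res} make sense.

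\textbf{Step 2: the bound $L^2 \to \cW$.} The only structural input is the identity
\[
\Re \frt_{\ii b}[u] = \|u\|_{\cW}^2 - b^2\|u\|_{L^2}^2 ,
\]
the damping term being purely imaginary. For $g\in L^2$ and $u = T_{\ii b}^{-1}g$ we get
\[
\|u\|_\cW^2 \le \|g\|\|u\| + b^2\|u\|^2 \le (N_b + b^2N_b^2)\|g\|^2 .
\]
Using $N_b \le \tfrac12(b^{-2} + b^2N_b^2)$ (AM--GM), this gives
\[
\|\widehat T_{\ii b}^{-1}\|_{\cB(L^2,\cW)} \ls |b|^{-1} + |b| N_b .
\]

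\textbf{Step 3: the bound $\cW^* \to L^2$, by duality.} The adjoint of $\widehat T_{\la}$ is $\widehat T_{\ov\la}$. Moreover, complex conjugation intertwines $T_{\ii b}$ and $T_{-\ii b}$, since $a$ and $q$ are real, so $N_{-b} = N_b$. Hence
\[
\|\widehat T_{\ii b}^{-1}\|_{\cB(\cW^*,L^2)} = \|\widehat T_{-\ii b}^{-1}\|_{\cB(L^2,\cW)} \ls |b|^{-1} + |b|N_b .
\]

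\textbf{Step 4: the bound $\cW^* \to \cW$.} For $\varphi \in \cW^*$ and $u = \widehat T_{\ii b}^{-1}\varphi$, take real parts:
\[
\|u\|_\cW^2 \le \|\varphi\|_{\cW^*}\|u\|_\cW + b^2 \|u\|_{L^2}^2 .
\]
Inserting Step 3 to control $\|u\|_{L^2}$ yields
\[
\|\widehat T_{\ii b}^{-1}\|_{\cB(\cW^*,\cW)} \ls 1 + b^2 N_b .
\]

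\textbf{Step 5: assembling \eqref{A.res}.} We use \eqref{Dq.Riesz} throughout and bound the four entries separately.
\begin{itemize}
\item The $(1,1)$ entry is $\la^{-1}(I + \widehat T^{-1}_\la(\Delta-q))$ acting on $\cW$. Its norm is at most $|b|^{-1}(1 + 1 + b^2N_b) \ls |b|^{-1} + |b|N_b$.
\item The $(1,2)$ entry is $\widehat T_\la^{-1}: L^2 \to \cW$, bounded by Step 2.
\item The $(2,1)$ entry is $\widehat T_\la^{-1}(\Delta-q): \cW \to L^2$, bounded by Step 3.
\item The $(2,2)$ entry is $\la \widehat T_\la^{-1}$ on $L^2$, with norm $|b| N_b$.
\end{itemize}
Summing the four bounds gives \eqref{eq:G.T.rate}, and $C_0$ depends on nothing beyond universal constants.

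\textbf{Main obstacle.} The only delicate point is to avoid losing powers of $b$. This is handled by using only the real part of the form, which is insensitive to the possibly unbounded $a$. The AM--GM trick then absorbs the bare $N_b$ terms into $|b|^{-1} + |b|N_b$. The duality step needs care in verifying $\widehat T_{\la}^* = \widehat T_{\ov\la}$ on $\cD_\frt$, which follows from the symmetry of the form coefficients.
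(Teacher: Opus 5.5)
Your proof is correct. Steps~1--3 and~5 follow the paper essentially verbatim: invertibility from \eqref{spec.equiv} and Lemma~\ref{lem:T.la.ext}, the $L^2\to\cW$ bound from $\Re\frt_{\ii b}[u]=\|u\|_{\cW}^2-b^2\|u\|_{L^2}^2$ with $\sqrt{N_b}\ls|b|^{-1}+|b|N_b$, the $\cW^*\to L^2$ bound by duality with $\widehat T_{-\ii b}$, and the entrywise estimate of \eqref{A.res}.

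The genuine difference is Step~4. The paper gets the $\cW^*\to\cW$ bound from the second resolvent identity \eqref{T.la.mu.id}, relating $\la=\ii b$ to the real parameter $\mu=b$. Since $\frt_b$ is symmetric, $B=\widehat T_b^{-1}J_{\cW}$ is self-adjoint in $\cW$, so $\|(B-\ii/(\ii-1))^{-1}\|_{\cB(\cW)}\le 2$. This route needs the auxiliary bounds \eqref{T.b.est} on $\widehat T_b$, and it still uses the $\cW^*\to L^2$ estimate \eqref{T.ib.-ib}.

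You instead apply the real-part identity directly to $u=\widehat T_{\ii b}^{-1}\varphi$, obtaining $\|u\|_{\cW}^2\le\|\varphi\|_{\cW^*}\|u\|_{\cW}+b^2\|u\|_{L^2}^2$. You then close this quadratic inequality with Step~3. This gives the same bound $1+b^2N_b$ with strictly less machinery.

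Your route has two further advantages. It treats both signs of $b$ symmetrically; as written, the paper's choice $\mu=b$ needs $b>0$, because $\widehat T_\mu$ is only defined for $\mu\notin(-\infty,0]$, and for $b<0$ one would take $\mu=|b|$. It also yields an absolute constant for every $|b|\ge b_0$ directly, whereas the paper argues asymptotically as $|b|\to\infty$. The paper's self-adjoint detour, carried over from the one-dimensional argument this lemma extends, buys nothing additional for this statement.
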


\begin{proof}
We recall that by \eqref{spec.equiv} and Lemma~\ref{lem:T.la.ext} the operator $\widehat T_{\ii b}$ defined as in~\eqref{T.hat.def} is boundedly invertible for $\abs b \geq b_0$ and $(\Ac-\ii b)\inv$ is given by \eqref{A.res}.
Let $F = (f,g) \in \HH$. By \eqref{Dq.Riesz} we have
\begin{equation}\label{A.large.b.gen}
\begin{aligned}
& \|(\Ac-\ii b)^{-1} F \|_{\HH} \\
& \quad \ls |b|^{-1}\left(\|f\|_{\Wc}+\|\wh T_{\ii b}^{-1}(\Delta-q)f\|_{\cW} \right)+ \|\wh T_{\ii b}^{-1} g\|_{\Wc}
\\
& \quad \quad 
+ \|\wh T_{\ii b}^{-1}(\Delta-q)f\|_{L^2} + |b| \|\wh T_{\ii b}^{-1}g\|_{L^2}
\\
& \quad \ls |b|^{-1}\left(1+\|\wh T_{\ii b}^{-1}\|_{\cB(\cW^*,\cW)}\right) \|f\|_{\Wc} + \|\wh T_{\ii b}^{-1}\|_{\cB(\cW^*,L^2)} \|f\|_{\Wc}
\\
& \quad \quad + \|T_{\ii b}^{-1}\|_{\cB(L^2,\Wc)} \|g\|_{L^2} + |b| \| T_{\ii b}^{-1}\|_{\cB(L^2)} \|g\|_{L^2}.
\end{aligned}
\end{equation}

For $u \in \cD_{\frt}$ and $b \in \R$ we have
\begin{equation}
\|u\|_{\cW}^2 = \Re(\frt_{\ii b}[u]) + b^2 \|u\|_{L^2}^2 \leq |\frt_{\ii b}[u]| + b^2 \|u\|_{L^2}^2.
\end{equation}
For $v \in L^2(\Omega)$ we can use this with $u= T_{\ii b}^{-1} v \in \Dom (T_{\ii b})$ to arrive at
\begin{equation}
		\|T_{\ii b}^{-1} v\|_{\cW}^2
		\ls \left(\|T_{\ii b}^{-1}\|_{\cB(L^2)}+b^2 \|T_{\ii b}^{-1}\|_{\cB(L^2)}^2\right) \|v\|_{L^2}^2.
\end{equation}
Hence, as $\abs b \to \infty$,
\begin{equation}\label{T.ib.est}
\|T_{\ii b}^{-1}\|_{\cB(L^2,\cW)} \ls \sqrt{\|T_{\ii b}^{-1}\|_{\cB(L^2)}}+|b| \|T_{\ii b}^{-1}\|_{\cB(L^2)} \lesssim \frac 1 {\abs b} + |b| \|T_{\ii b}^{-1}\|_{\cB(L^2)}.
\end{equation}

Next we estimate for arbitrary $\varphi \in \cW^*$ that
\begin{equation*}
	\begin{aligned}
		\|\wh T_{\ii b}^{-1} \varphi\|_{L^2} & = \sup_{0\neq f \in L^2(\Omega)} \frac{|\langle f, \wh T_{\ii b}^{-1} \varphi \rangle_{L^2}|}{\|f\|_{L^2}} && = \sup_{0\neq f \in L^2(\Omega)} \frac{|\frt_{-\ii b}[\wh T_{-\ii b}^{-1} f, \wh T_{\ii b}^{-1} \varphi ]|}{\|f\|_{L^2}} \\
		& = \sup_{0\neq f \in L^2(\Omega)} \frac{|\frt_{\ii b}[\wh T_{\ii b}^{-1} \varphi , \wh T_{-\ii b}^{-1} f ]|}{\|f\|_{L^2}} && = \sup_{0\neq f \in L^2(\Omega)} \frac{|(\varphi , \wh T_{-\ii b}^{-1} f)_{\cW^*\times \cW}|}{\|f\|_{L^2}} \\
		& \le \|\varphi\|_{\cW^*} \sup_{0\neq f \in L^2(\Omega)} \frac{\|\wh T_{-\ii b}^{-1} f\|_{\cW}}{\|f\|_{L^2}} && \le \|\varphi\|_{\cW^*} \|T_{-\ii b}^{-1}\|_{\cB(L^2,\cW)}
	\end{aligned}
\end{equation*}
and hence
\begin{equation}\label{T.ib.-ib}
\|\wh T_{\ii b}^{-1}\|_{\cB(\cW^*,L^2)} \le  \| T_{-\ii b}^{-1}\|_{\cB(L^2,\cW)}\ls \frac 1 {|b|} + |b| \|T_{\ii b}^{-1}\|_{\cB(L^2)}, \qquad |b| \to \infty.
\end{equation}

Finally, we estimate the remaining term $\|\wh T_{\ii b}^{-1}\|_{\cB(\cW^*,\cW)}$. For distinct spectral parameters $\la, \mu \in \C \setminus (-\infty,0]$ with $0 \in \rho(T_\la)\cap \rho(T_\mu)$, the second resolvent identity leads to
\begin{equation}
\begin{aligned}
\wh T_{\la}^{-1} &= \wh T_{\mu}^{-1}  + (\mu-\la) \wh T_{\mu}^{-1} (a+\la+\mu) \wh T_{\la}^{-1}
\\
& = \frac{\mu}{\la} \wh T_{\mu}^{-1} + \mu (\mu-\la) \wh T_{\mu}^{-1} \wh T_{\la}^{-1} + \frac{\mu-\la}{\la} \wh T_{\mu}^{-1}(\Delta-q) \wh T_{\la}^{-1}.
\end{aligned}
\end{equation}
Note that there are no issues with the domains of the involved operators since $\widehat T_\la^{-1},\widehat T_\mu^{-1} \in \Bc(\Dc_\frt^*,\Dc_\frt)$ and $a,(\Delta-q) \in \Bc(\Dc_\frt,\Dc_\frt^*)$. Further manipulations yield
\begin{equation}\label{T.la.mu.id}
\left(
\wh T_{\mu}^{-1}(\Delta-q) +  \frac{\la}{ \la - \mu }
\right)  \wh T_{\la}^{-1}
= \frac{\mu}{\la - \mu} \wh T_{\mu}^{-1} - \la \mu \wh T_{\mu}^{-1} \wh T_{\la}^{-1}.
\end{equation}
We restrict this identity to $\cW^*$ and select $\la = \ii b$, $\mu = b$. The next step is to invert the operator in parentheses on the left hand side, i.e.
\begin{equation}
B -  \frac{\ii}{\ii-1}	
\end{equation}
where
\begin{equation}\label{B.def.TG}
B := I_{\cD_\frt \to \cW} \wh T_{b}^{-1} I_{\cW^* \to \cD_\frt^*} J_{\cW} \in \cB(\cW);
\end{equation}
recall that $J_{\cW}=-(\Delta-q) :\cW \to \cW^*$ is the Riesz isomorphism (see~\eqref{CD.def}). For $u,v \in \Wc$ we derive
	\begin{equation*}
		\begin{aligned}
				\langle u, Bv \rangle_{\cW} & = (J_{\cW} u, I_{\cD_\frt \to \cW} \wh T_{b}^{-1} I_{\cW^* \to \cD_\frt^*} J_{\cW} v )_{\cW^*\times \cW} \\
				& = (I_{\cW^* \to \cD_\frt^*} J_{\cW} u,  \wh T_{b}^{-1} I_{\cW^* \to \cD_\frt^*} J_{\cW} v )_{\cD_\frt^* \times \cD_\frt} \\
				& = \frt_{b} [\wh T_{b}^{-1} I_{\cW^* \to \cD_\frt^*} J_{\cW} u,  \wh T_{b}^{-1} I_{\cW^* \to \cD_\frt^*} J_{\cW} v]
		\end{aligned}
	\end{equation*}
Due to the symmetry of $\frt_{b}$, it follows that $B$ is symmetric and hence self-adjoint.
Then $\big(B-\ii/(\ii-1)\big)$ is boundedly invertible and
\begin{equation}\label{B.inv}
\left \|
\left( B -  \frac{\ii}{\ii-1} \right)^{-1}
\right \|_{\cB(\cW)} \leq 2.
\end{equation}

Observe that it follows from
\begin{equation}
	\frt_b[u] = \|\nabla u\|_{L^2}^2 + \|q^\frac12 u\|_{L^2}^2 + b \|a^\frac 12 u\|_{L^2}^2 + b^2 \|u\|_{L^2}^2, \qquad u \in \cD_{\frt},
\end{equation}
that (see the proof of Lemma~\ref{lem:T.ineq} for details on analogous arguments)
\begin{equation}\label{T.b.est}
	\|\wh T_{b}^{-1}\|_{\cB(\cW^*,\cW)} \leq 1, \qquad \| T_{b}^{-1}\|_{\cB(L^2,\cW)} \ls \frac{1}{|b|}, \qquad |b| \to  \infty.
\end{equation}

In summary, we have from \eqref{T.la.mu.id}, \eqref{B.inv}, \eqref{T.b.est} and \eqref{T.ib.-ib} that
\begin{equation}\label{T.ib.W0}
\begin{aligned}
\|\wh T_{\ii b}^{-1}\|_{\cB(\cW^*,\cW)} & \leq 2
\left(
\|\wh T_{b}^{-1}\|_{\cB(\cW^*,\cW)} + b^2 \|T_{b}^{-1}\|_{\cB(L^2,\cW)} \|\wh T_{\ii b}^{-1}\|_{\cB(\cW^*,L^2)}
\right)
\\
& \ls
1 + b^2 \frac{1}{|b|} \left(\frac 1 {|b|} + |b| \|T_{\ii b}^{-1}\|_{\cB(L^2)}\right), \qquad b \to \infty.
\end{aligned}
\end{equation}
Thus the lemma follows from \eqref{A.large.b.gen}, \eqref{T.ib.est}, \eqref{T.ib.-ib} and \eqref{T.ib.W0}.
\end{proof}

\begin{proof}[Proof of Proposition~\ref{prop:high.pos}]
For the first claim 
$
\overline\C_+ \setminus \{0\} \subset \rho(\Ac),
$
see Theorem~\ref{thm:A.basic} (ii).
For $u \in \Dc_\frt$ we have
\begin{equation*}
|\frt_{\ii b} [u]| \ge | \Im \frt_{\ii b} [u] |= |b| \|a^\frac12 u\|_{L^2}^2 \ge |b| a_0  \|u\|_{L^2}^2,
\end{equation*}
so $\|T_{\ii b}^{-1}\|_{\cB(L^2)}\ls |b|^{-1}$ for $0 \neq b\in \R$ and the first part of \eqref{res.n.A.large.b} is obtained by Lemma \ref{lem:T.to.A}. The second estimate then follows from \eqref{res.H.to.K}.
\end{proof}

\section{Time decay}
\label{sec:decay}

In this section we recall how in general one can convert resolvent estimates into time decay for the corresponding semigroup, and in particular we prove that Theorem \ref{thm:low} and Proposition \ref{prop:high.pos} imply Theorem \ref{thm:decay}.

\subsection{Proof of the main results}

We begin with Proposition \ref{prop:exp-decay}.

\begin{proof}[Proof of Proposition \ref{prop:exp-decay}]
By Corollary \ref{cor:0.sp}, \eqref{a.q.rel.bdd.intro} holds if and only if $0 \in \rho(\Ac)$.
First, if~\eqref{eq:exp-decay} holds, then $0 \in \rho(\Ac)$ by \cite[Thm.~II.3.8]{Engel-Nagel-book}. Conversely, assuming uniform positivity of $a$, it follows from $0 \in \rho(\cA)$ and Proposition \ref{prop:high.pos} that the resolvent of $\Ac$ is well-defined and uniformly bounded on the imaginary axis. Hence,~\eqref{eq:exp-decay} holds by the Gearhart--Pr\"uss theorem \cite[Thm.~V.1.11]{Engel-Nagel-book}.
\end{proof}

\begin{remark}
\label{rem:decay.mild}
If the operator norm of the semigroup decays in time, the decay is necessarily exponential (see~\cite[Prop.~V.1.2]{Engel-Nagel-book}). Hence, when $0 \in \sigma(\Ac)$ we cannot have a norm decay of the solutions $U (t) = \e^{t \Ac} F$ which is uniform with respect to the initial value $F \in \HH$, i.e.~an estimate of the type
\begin{equation}
\forall~t \ge 0, \,\, \forall~F \in \HH \,\,\, : \,\,\, \| U(t) \|_{\HH} \le \rho(t) \|F\|_{\HH}
\end{equation}
with a positive function $\rho$ decaying at infinity. To obtain the uniform estimates in Theorem~\ref{thm:decay}, one thus restricts to a subspace of initial values $F \in\KK$. Nevertheless, the energy of the solution decays to 0 for any initial value. This follows from the boundedness of the semigroup $(\e^{t\Ac})_{t\ge0}$, the estimates \eqref{u.energy.est} and~\eqref{u.partialt.est} in Theorem~\ref{thm:decay} and the density of $\KK$ in $\HH$.
\end{remark}

Now we turn to the proof of the main Theorem \ref{thm:decay}. For these estimates, we use the following abstract result.

\begin{theorem}\label{thm:exp.res}
Let $\scK$ be a Hilbert space and let $A$ be the generator of a strongly continuous semigroup on $\scK$. Let $\sfX$ and $\sfY$ be normed vector spaces. Let $\TX \in \Bc(\sfX,\scK)$ and $\TY \in \Bc (\scK , \sfY)$. Suppose that the following two assumptions hold.
\begin{enumerate}[\upshape (i)]
\item There exists $\tau_0 \geq 1$ such that
\begin{equation}
\overline{\C_+} \setminus \ii (-\tau_0, \tau_0) \subset \rho(A),
\end{equation}
and we have
\begin{equation} \label{hyp:high-freq}
\sup_{\la \in \overline{\C_+ \setminus \tau_0 \D}} \nr{(A-\la)\inv}_{\cB(\scK)} < \infty.
\end{equation}

\item There exist $m \in \N_0$, $\kappa \in [0,1)$ and $C_0 > 0$ such that for all
\begin{equation}
j \in
\begin{cases}
\{ m, m+1, m+2 \} & \text{if } \kappa = 0,\\
\{ m, m+1 \} & \text{if } \kappa \neq 0,
\end{cases}
\end{equation}
and $\la \in \tau_0 \D_+$, we have
\begin{equation}\label{hyp:low-freq}
\nr{\TY (A-\la)^{-1-j} \TX}_{\cB(\sfX,\sfY)}  \leq
C_0 \abs \la^{m-\kappa-j} .
\end{equation}
\end{enumerate}
Then there exists $C > 0$ such that for all $t \ge 0$ we have
\begin{equation} \label{eq:abstract-time-decay}
\nr{\TY \e^{tA} \TX}_{\cB(\sfX,\sfY)} \leq C
\pppg t^{-m-1+\kappa} .
\end{equation}
\end{theorem}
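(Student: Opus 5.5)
We plan to represent $\e^{tA}$ by an inverse Laplace transform along a vertical line and then move the contour to the imaginary axis. By (i) and a shift, all $\la$ with $\Re\la\ge0$ and $\la\ne0$ lie in $\rho(A)$; near $0$ the singularity is governed by (ii). For $F$ in a dense set (say $F=\TX x$ with enough smoothness, or by working with $(A-\mu)^{-N}$ for fixed $\mu>0$), we write
\begin{equation}
\e^{tA}F=\frac{1}{2\pi\ii}\int_{\eta-\ii\infty}^{\eta+\ii\infty}\e^{\la t}(\la-A)^{-1}F\,\dd\la,\qquad \eta>0 .
\end{equation}
Integrating by parts $m+1$ times in $\la$ (using $\partial_\la^k(\la-A)^{-1}=k!(\la-A)^{-1-k}$ up to sign) gives
\begin{equation}
\e^{tA}F=\frac{c_m}{t^{m+1}}\int_{\eta-\ii\infty}^{\eta+\ii\infty}\e^{\la t}(\la-A)^{-2-m}F\,\dd\la .
\end{equation}
We then let $\eta\to0^+$ and split the integral with a smooth cutoff $\chi(\Im\la)$ supported in $|\Im\la|<\tau_0$ into low and high frequency parts.

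For the high frequencies, $(1-\chi)$, we want uniform bounds. We use the Hilbert space structure: by (i) and the resolvent identity, $\sup\|(A-\la)^{-1}\|$ is finite on $\{\Re\la\ge0,|\la|\ge\tau_0\}$, so $\|\e^{tA}\|\le M\e^{\omega t}$. In the Hilbert space setting, Plancherel applied to $t\mapsto\e^{-\eta t}\e^{tA}F$ yields $\int_\R\|(\eta+\ii s-A)^{-1}F\|^2\dd s\lesssim\|F\|^2$ uniformly in small $\eta$, and similarly for $A^*$. Combining these by Cauchy--Schwarz, in the style of Gearhart--Prüss/Batty--Chill--Tomilov, bounds the $(1-\chi)$ part of the high-power integrand uniformly in $t$. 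Together with the prefactor $t^{-m-1}$, this part decays faster than required, and in fact arbitrarily fast if we take more derivatives, since the resolvent is smooth there. The main technical care is justifying convergence of the oscillatory integral at infinity, which we handle by applying the identity to $(A-\mu)^{-2}F$ and using density.

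For the low frequencies, the $\chi$ part is $\TY\int_{|s|<\tau_0}\chi(s)\e^{\ii st}(\ii s-A)^{-1-j}\TX\,\dd s$. The naive bound with $j=m+1$ gives the integrand $|s|^{-1-\kappa}$, which is not integrable. This is the main obstacle: we must capture the oscillation. We split at $|s|=1/t$. On $|s|<1/t$ we use the undifferentiated form, i.e.\ $m$ integrations by parts, so the integrand is $\lesssim|s|^{-\kappa}$, and together with the factor $t^{-m}$ this gives $t^{-m}\cdot t^{-1+\kappa}$. On $|s|>1/t$ we perform one more integration by parts, whose boundary terms at $|s|=1/t$ are bounded by $t^{-m-1}\,t^{\kappa}$ using $j=m$. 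If $\kappa>0$, the remaining integral of $|s|^{-1-\kappa}$ over $|s|>1/t$ is $\lesssim t^{\kappa}$, giving $t^{-m-1+\kappa}$. If $\kappa=0$, this produces a logarithm, so we integrate by parts once more, using $j=m+2$ with integrand $|s|^{-2}$; this gives a boundary term $\lesssim t$ times $t^{-m-2}$, hence $t^{-m-1}$. Derivatives of $\chi$ contribute terms supported away from $0$ that are harmless. Smallness at $t\le1$ follows from the boundedness of the semigroup on compact time intervals, which yields $\langle t\rangle$.

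Finally, we pass from the dense set to all of $\sfX$ by boundedness of $\TY\e^{tA}\TX$, and the uniformity of all the bounds gives \eqref{eq:abstract-time-decay}.
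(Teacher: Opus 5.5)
Your low-frequency argument is the paper's own: $m$ integrations by parts, a split at $\abs\tau=1/t$, then one more integration by parts if $\kappa\neq0$ and two if $\kappa=0$, with the same exponent bookkeeping. Bounding the high-frequency part by the prefactor $t^{-m-1}$ and Cauchy--Schwarz for $A$ and $A^*$ is a legitimate variant of the paper's step, which instead moves that part of the contour to $\Re\la=-\g<0$. The set-up, however, has gaps.

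First, (i) and (ii) do not give $\ii s\in\rho(A)$ for $0<\abs s<\tau_0$.
\begin{itemize}
\item (i) excludes exactly this segment, and (ii) only bounds the sandwiched operators $\TY(A-\la)^{-1-j}\TX$ on the \emph{open} half-disc. So your integrals over $\ii(-\tau_0,\tau_0)$ need not be defined.
\item Even if they were, you cannot let $\eta\to0^+$ in the $(m+1)$-times integrated form, whose integrand has size $\abs\la^{-1-\kappa}$. All manipulations must be done on $\Re\la=\eps>0$. Your bounds only use $\abs\la\geq\abs{\Im\la}$, so they hold uniformly in $\eps$ with an extra factor $\e^{\eps t}$. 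Since the left-hand side does not depend on $\eps$, you let $\eps\to0$ only in the final inequality; this is what the paper does with $\G_{\eps,\low}$.
\item Likewise, $\int_\R\nr{(\eta+\ii s-A)^{-1}F}^2\,\dd s$ is in general not bounded uniformly as $\eta\to0^+$. Plancherel needs $\eta$ above the growth bound $\o$ and gives a constant of order $(\eta-\o)^{-1}$. What is true is the bound for the part $\abs s\geq\tau_0$, obtained from a line $\Re\la=\mu>\o$ via the resolvent identity and (i).
\end{itemize}

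Second, the density step fails in the generality of the theorem. Hypothesis (ii) concerns only $\TY(\cdot)\TX$.
\begin{itemize}
\item Replacing $\TX x$ by a smoother vector of $\scK$ loses (ii).
\item Inserting $(A-\mu)^{-N}$ at fixed $\mu$ also loses (ii), and it estimates the different operator $\TY\e^{tA}(A-\mu)^{-N}\TX$.
\item Nothing makes $\{x\in\sfX:\TX x\in\Dom(A^2)\}$ dense in $\sfX$; take $\sfX=\C$ with $\TX 1\notin\Dom(A)$.
\end{itemize}
In your scheme smooth data are really needed. With the high-frequency contour kept near $\ii\R$, the first integration by parts on a truncated line produces boundary terms at $\pm\ii R$ of size $t^{-1}\e^{\eta t}\nr{(\eta\pm\ii R-A)^{-1}\TX x}$. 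These are not $\Oc(t^{-m-1+\kappa})$ and need not vanish as $R\to\infty$.

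The paper never uses smooth data:
\begin{itemize}
\item It works with the truncated integral $\wt U_R(t)$, which by Plancherel converges to $\e^{t(A-\mu)}\TX\f$ in $L^2(\R_+;\scK)$ for every $\f$.
\item By continuity in $t$, it reduces the claim to a bound on $\limsup_{R\to\infty}\nr{\TY U_R(t)\TX\f}_{\sfY}$.
\item It deforms the finite contour so that its high-frequency part lies on $\Re\la=-\g$. All boundary terms there, at $\pm R$ and at $\pm2\tau_0$, then carry $\e^{-\g t}$.
\item It uses density in $\scK$ only to discard the horizontal segments $\G_{R,\pm}$, where the resolvent is uniformly bounded and (ii) plays no role.
\end{itemize}
Alternatively, you can keep your route by estimating $\TY\e^{tA}\nu(\nu-A)^{-1}\TX$ and letting $\nu\to\infty$ at the end. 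Its Laplace integrand $\frac{\nu}{\nu-\la}\TY(\la-A)^{-1}\TX$ is absolutely integrable and still involves only the sandwiched resolvent.
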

The proof of Theorem~\ref{thm:exp.res} is postponed to Section \ref{sec:from-resolvent-to-time-decay}. Now we use Theorem \ref{thm:exp.res} to prove Theorem \ref{thm:decay}.

\begin{proof}[Proof of Theorem~\ref{thm:decay}]
We apply Theorem \ref{thm:exp.res} with $A = \Ac_\KK$, $\scK = \sfX = \KK$ and $\TX = I_{\KK}$.
The assumption \eqref{hyp:high-freq} holds by Proposition~\ref{prop:high.pos}.

We begin with \eqref{u.energy.est}. By \eqref{r.A.small.b.H-H} and \eqref{r.A.small.b.K-H} in Theorem \ref{thm:low}, the estimates \eqref{hyp:low-freq} hold with $\sfY = \HH$, $\TY = I_{\KK \to \HH}$ and $m=\kappa = 0$. Then Theorem~\ref{thm:exp.res} gives
\[
\nr{\e^{t\Ac}}_{\Bc(\KK,\HH)} \lesssim \langle t \rangle^{-1}.
\]
This proves in particular \eqref{u.energy.est} (as well as an estimate for $\|\partial_t u(t)\|_{L^2}$ which is not good enough).

Next we prove \eqref{u.partialt.est} and \eqref{dtu.L2.est.gamma} simultaneously. Indeed, setting $\sfY = L^2(\Omega)$, $\TY = \Pi_2$, $m=1$ and $\kappa = \frac \gamma2$ (see \eqref{gamma.def} and \eqref{def:PI2}), it follows from \eqref{r.A.small.b.H-H}, \eqref{r.A.small-dt1} and \eqref{r.A.small-dt2} in Theorem~\ref{thm:low} that
\begin{equation}
	\nr{\Pi_2 \e^{t\Ac}}_{\Bc(\KK,L^2)} \lesssim \langle t \rangle^{\frac\gamma 2-2}.
\end{equation}

For \eqref{u.L2.est} we choose $\sfY = \cD_t$, $\TY= \Pi_1 : \KK \to\cD_t$ (see \eqref{def:PI2}), $m = 0$ and $\kappa = \frac 12$. The claim then follows since \eqref{r.A.small.b.K-D} and \eqref{r.A.small.b.H-H} give
\begin{equation}
	\nr{\Pi_1 \e^{t\Ac}}_{\Bc(\KK,\cD_{\frt})} \lesssim \langle t \rangle^{-\frac12}.
\end{equation}

Finally, for \eqref{u.L2.est.gamma} we use the estimates \eqref{r.A.small.b.K-L}, \eqref{r.A.small.b.H-H} to apply Theorem \ref{thm:exp.res} with $\sfY=L^2(\Omega)$, $\TY= I_{\cD_t \to L^2} \Pi_1 : \KK \to L^2(\Omega)$, $m=0$ and $\kappa = \frac \gamma 2$. This yields the final claim by
\begin{equation*}
	\nr{\Pi_1 \e^{t\Ac}}_{\Bc(\KK,L^2)} \lesssim \langle t \rangle^{\frac\gamma 2-1}. \qedhere
\end{equation*}
\end{proof}

\begin{remark} \label{rem:Batty}
Estimates of the solution for initial conditions in the range of the generator have been proved in general settings. With the resolvent estimates \eqref{r.A.small.b.H-H} and \eqref{res.n.A.large.b}, we can apply \cite[Thm.~7.6]{batty2016fine} to our bounded (contractive) $C_0$-semigroup $(\e^{t\Ac})_{t\ge0}$ in $\HH$, which gives
\begin{equation}
\label{sg.decay.rate}
\| \e^{t\Ac} \Ac (\Ac - 1)^{-1} \|_{\cB(\HH)} = \BigO\left(t^{-1}\right), \qquad t \to \infty.
\end{equation}
Then \eqref{u.energy.est} follows with Proposition \ref{prop:Ran.A}, since for $F \in \KK$ we get
\begin{equation} \label{semigr}
\begin{aligned}
\| \e^{t\Ac} F \|_{\HH}
\lesssim \langle t \rangle^{-1} \|(\Ac-1) \Ac^{-1} F \|_{\HH}
 \lesssim \langle t \rangle^{-1} {\nr{F}_{\KK}}.
\end{aligned}
\end{equation}
This however does not give \eqref{u.partialt.est} for $\|\partial_t u(t)\|_{L^2}$.
\end{remark}

\subsection{From resolvent estimates to time decay} \label{sec:from-resolvent-to-time-decay}

In this paragraph we prove Theorem \ref{thm:exp.res}. The proof is inspired by standard ideas (see the proof of the Gearhart--Pr\"uss theorem for the contribution of high frequencies and \cite[Prop.~5.3]{BoucletBur21} for the contribution of low frequencies).

\begin{proof}[Proof of Theorem~\ref{thm:exp.res}]
\stepp \emph{Approximating $\e^{tA}$ in terms of resolvent:}
Let $M \ge 1 $ and $\o \geq 0$ be such that
\[
\forall~t \geq 0 \,\,\, : \,\,\,  \|\e^{tA}\|_{\cB(\scK)} \leq M \e^{t\o},
\]
(see \cite[Prop.~I.5.5]{Engel-Nagel-book} and notice that if we can take $\o < 0$ then the result is clear). We choose some fixed $\mu > \o$. Let $\f \in \sfX$ and recall (see \cite[Thm.~II.1.10~(ii)]{Engel-Nagel-book}) that for all $\tau \in \R$ we have in $\scK$
\begin{equation} \label{eq:res-prop}
(A-(\mu+\ii\tau))\inv \TX \f = - \int_0^\infty  \e^{t(A-(\mu+\ii\tau))} \TX \f \diff t.
\end{equation}
Thus $\tau \mapsto (A-(\mu+\ii\tau))\inv \TX \f$ is the Fourier transform of $t \mapsto -\1_{\R_+} (t) \e^{-t\mu} \e^{tA} \TX \f$ (which belongs to $L^1(\R;\scK) \cap L^2(\R;\scK)$). For $R > 0$ we set
\[
\widetilde U_R(t) = -\frac 1 {2\pi} \int_{-R}^R \e^{\ii t\tau} (A-(\mu+\ii\tau))\inv \diff \tau  \quad \in \cB(\scK).
\]
Then it follows from \cite[Rem.~5.1.5]{Seifert-2022-} that
\begin{equation} \label{eq:etA-UR-1}
\int_0^{\infty} \|\TY \big( \e^{t(A-\mu)} - \widetilde U_R(t) \big) \TX \f\|_{\sfY}^2 \diff t \limt R \infty 0.
\end{equation}
\stepp \emph{Deriving sufficient condition for \eqref{eq:abstract-time-decay}:} We set
\[
U_{R}(t) = \e^{t\mu} \widetilde U_R(t) = -\frac 1 {2\ii \pi} \int_{\G_{R}} \e^{t\la} (A-\la)\inv \diff \la,
\]
where $\G_{R}$ is the line segment joining $\mu-\ii R$ to $\mu+\ii R$.
We show below that if there exists a constant $C > 0$ such that
\begin{equation} \label{eq:estim-UR}
	\forall~\f \in \sfX, \,\, \forall~ t\geq 0 \,\,\, : \,\,\,  \limsup_{R \to \infty} \|\TY U_{R}(t) \TX \f\|_{\sfY} \leq C \pppg t^{-m-1+\kappa} \nr{\f}_{\sfX},
\end{equation}
then \eqref{eq:abstract-time-decay} holds with this constant $C$. Indeed, assume for contradiction that there exist $t_0 > 0$ and $\f \in \sfX$ such that
\[
\eta = \|\TY \e^{t_0A} \TX \f\|_{\sfY} - C \pppg {t_0}^{-m-1+\kappa} \nr{\f}_{\sfX} > 0.
\]
By continuity, there exists a non-trivial interval $[t_1,t_2]$ around $t_0$ such that for all $t \in [t_1,t_2]$, using \eqref{eq:estim-UR} we have
\begin{multline*}
\liminf_{R \to \infty} \left( \| \TY \e^{tA} \TX \f\|_{\sfY} - \|\TY U_{R}(t) \TX \f\|_{\sfY} \right)\\
\geq \| \TY \e^{tA} \TX \f\|_{\sfY} - C \pppg t^{-m-1+\kappa} \nr{\f}_{\sfX} \geq \frac \eta 2.
\end{multline*}
This gives a contradiction with \eqref{eq:etA-UR-1}.

\stepp \emph{Splitting of contour integral:}
By \eqref{hyp:high-freq} there exist $\gamma > 0$ and $C_1>0$ such that for any $\la \in \C$ with $\Re\la \geq -2\gamma$ and $\abs{\Im(\la)} \geq \tau_0$ we have $\la \in \rho(A)$ and
\begin{equation} \label{eq:hyp-high-freq-2}
\nr{(A-\la)\inv}_{\cB(\scK)} \leq C_1.
\end{equation}
We consider $ \th \in C^\infty(\R;[0,1])$ supported in $[-2\tau_0,2 \tau_0]$ with $ \th = 1$ on $[-\tau_0,\tau_0]$.
Let $\eps \in (0,1]$. For $\tau \in \R$ we set
\[
\th_\eps (\tau) = -\g + (\eps +\g)  \th(\tau)  \in [-\g,\eps].
\]
In particular, $\th_\eps (\tau) = \eps$ if $\abs \tau \leq \tau_0$ and $\th_\eps(\tau) = -\gamma$ if $\abs \tau \geq 2\tau_0$. Then for $R \geq 2 \tau_0$ we consider the contour $\G_{R,\eps}$ defined by the parametrization
\[
\G_{R,\eps} : \fonc{[-R,R]}{\C}{\tau}{\th_\eps(\tau) + \ii \tau.}
\]
We also consider the contours $\G_{R,-}$ and $\G_{R,+}$ defined as the line segments joining $\mu-\ii R$ to $-\g-\ii R$ and $-\g+\ii R$ to $\mu+\ii R$, respectively (see Figure \ref{fig:contours}). Then  we have
\[
U_{R}(t) =  U_{R,-}(t) + U_{R,\eps}(t) + U_{R,+}(t), \qquad t \geq 0,
\]
where for $* \in \{-,\eps,+\}$ we have set
\[
U_{R,*}(t) = -\frac 1 {2\ii \pi} \int_{\G_{R,*}} \e^{t\la} (A-\la)\inv \diff \la.
\]
\begin{figure}
\begin{center}
\includegraphics[width = 0.3\linewidth]{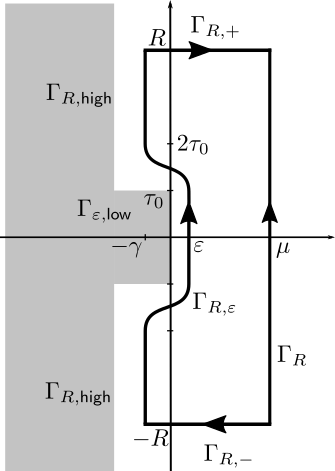}
\caption{The contours $\G_R$, $\G_{R,\pm}$, $\G_{R,\eps}$, $\G_{R,\high}$ and $\G_{\eps,\low}$ {\em (in grey a region which contains the spectrum of $A$)}.}
\label{fig:contours}
\end{center}
\end{figure}

\stepp \emph{Eliminating contribution of $\Gamma_{R,\pm}$ in \eqref{eq:estim-UR}:} Let $\f \in \sfX$ and $\psi \in \Dom(A)$. Given a fixed $\la_0 \in \rho(A)$ we have, for $\la \in \rho(A)$,
\[
(A-\la)\inv \psi = \frac 1 {\la-\la_0} \big( (A-\la)\inv (A-\la_0) - I  \big) \psi.
\]
With \eqref{eq:hyp-high-freq-2}, this implies that there exists $C_2>0$ independent of $t\geq 0$, $R > 0$ and $\psi \in \Dom(A)$ such that
\[
\nr{U_{R,\pm} (t) \psi}_\scK \leq \frac{C_2 \e^{t\mu} \left(\|A\psi\|_\scK + \|\psi\|_\scK \right)}{R}.
\]
On the other hand, for some $C_3 > 0$ independent of $\f \in \sfX$, $t\geq 0$ and $R > 0$, \eqref{eq:hyp-high-freq-2} gives that
\[
\nr{U_{R,\pm} (t) (\psi-\TX \f)}_\scK \leq C_3 \e^{t\mu} \nr{\psi- \TX \f}_{\scK} ,
\]
and so
\[
\nr{U_{R,\pm} (t) \TX \f}_\scK  \leq \frac {C_2 \e^{t\mu} \left(\|A\psi\|_\scK + \|\psi\|_\scK \right)}{R} + C_3 \e^{t\mu} \nr{\psi- \TX \f}_{\scK}.
\]
Choosing first $\psi \in \Dom(A)$ close to $\TX \f$ in $\scK$ and then $R$ large, we see that for all $t \geq 0$ and $\f \in \sfX$ we have
\begin{equation} \label{eq:lim-UR}
\nr{\TY U_{R,\pm} (t) \TX \f}_\sfY \limt R \infty 0.
\end{equation}
Now \eqref{eq:estim-UR} will follow from the estimate
\begin{equation} \label{eq:estim-UR-gamma}
\limsup_{R \to \infty} \nr{\TY U_{R,\eps}(t) \TX \f}_{\sfY} \leq C_4 \e^{\eps t} \pppg t^{-m-1+\kappa} \nr{\f}_{\sfX},
\end{equation}
where the constant $C_4>0$ is independent of $\f \in \sfX$, $t \geq 0$ and $\eps > 0$. Indeed, \eqref{eq:lim-UR} and \eqref{eq:estim-UR-gamma} imply that \eqref{eq:estim-UR-gamma} holds with $U_R(t)$ instead of $U_{R,\eps}(t)$ (the left-hand side not depending on $\eps$), so we can let $\eps \to 0$ in this estimate to arrive at \eqref{eq:estim-UR}.

\stepp \emph{Splitting of relevant contour $\Gamma_{R,\eps}$:} Define $\G_{\eps,\low}$ as the restriction of $\G_{R,\eps}$ to $\tau \in [-2\tau_0,2\tau_0]$ (which does not depend on $R \geq 2 \tau_0$), and let $\G_{R,\high}$ be the restriction to $\tau \in [-R,R] \setminus [-2\tau_0,2\tau_0]$ (which does not depend on $\eps$). Let
\begin{align*}
U_{R,\high}(t)
& =  -\frac 1 {2\ii\pi} \int_{\G_{R,\high}} \e^{t\la} (A-\la)\inv \diff \la\\
& = - \frac {\e^{-\g t}} {2\pi} \int_{2 \tau_0 \leq \abs \tau \leq R} \e^{\ii t\tau} (A-(-\g+\ii \tau))\inv \diff \tau
\end{align*}
and
\begin{align*}
U_{\eps,\low}(t)
& = -\frac 1 {2\ii \pi} \int_{\G_{\eps,\low}} \e^{t\la} (A-\la)\inv \diff \la\\
& = -\frac 1 {2 \ii \pi} \int_{\abs \tau \leq 2\tau_0} \e^{t(\th_\eps(\tau) + \ii\tau)} (\theta_\eps'(\tau) + \ii) (A-(\th_\eps(\tau)+\ii\tau))\inv \diff \tau.
\end{align*}
Then we have
\begin{equation} \label{eq:Uhigh-Ulow}
U_{R,\eps}(t) = U_{\eps,\low}(t) + U_{R,\high}(t), \qquad t \ge 0.
\end{equation}

\stepp \emph{\eqref{eq:estim-UR-gamma} for $U_{R,\high}$:} We begin with the contribution of high frequencies. By \eqref{eq:res-prop} and Plancherel theorem (see \cite[Thm.~5.1.4]{Seifert-2022-}), we have for $\f \in \sfX$
\[
\int_\R \|(A-(\mu+\ii \tau))\inv \TX \f\|_\scK^2 \diff \tau \leq \frac{M^2} {2\pi} \int_0^\infty \e^{-2t(\mu-\o)} \| \TX \f\|_\scK^2 \diff t = \frac {M^2 \nr{\TX \f}_\scK^2}{4\pi(\mu-\o)}.
\]
For $\abs \tau \geq 2 \tau_0$ we have
\begin{align*}
( A-(-\g+\ii \tau) )\inv
= \big( 1 - (\mu+\g) ( A-(-\g+\ii \tau) )\inv \big) (A-(\mu+\ii \tau) )\inv ,
\end{align*}
so, by \eqref{eq:hyp-high-freq-2},
\[
\|( A-(-\g+\ii \tau))\inv \TX \varphi\|_{\scK} \leq \big( 1 + C_1 (\mu+\gamma) \big) \|( A-(\mu+\ii \tau) )\inv \TX \varphi\|_{\scK}.
\]
After integration we get
\begin{equation} \label{eq:estim-res-L2-A}
\int_{\abs \tau \geq 2 \tau_0} \|( A-(-\g+\ii \tau) )\inv \TX \f\|_\scK^2 \diff \tau \leq \frac {M^2\big( 1 + C_1 (\mu+\gamma) \big)^2}{4\pi(\mu-\o)}\nr{\TX \f}_\scK^2.
\end{equation}
The adjoint $A^*$ of $A$ is the generator of the semigroup $\big((\e^{tA})^*\big)_{t \geq 0}$ (see \cite[Thm.~7.3.3]{Davies-2007}), which has the same growth bound as $(\e^{tA})_{t\geq 0}$. Moreover, for all $\abs \tau \geq 2\tau_0$, we have by \eqref{eq:hyp-high-freq-2}
\[
\|( A^*-(-\g-\ii \tau) )\inv \|_{\cB(\scK)} = \|( A-(-\g+\ii \tau) )\inv \|_{\cB(\scK)} \leq C_1,
\]
so for $\psi \in \sfY^*$ we similarly get
\begin{equation} \label{eq:estim-res-L2-A-star}
\int_{\abs \tau \geq 2 \tau_0} \|( A^*-(-\g-\ii \tau) )\inv \TY^* \psi\|_\scK^2 \diff \tau \leq \frac {M^2\big( 1 + C_1 (\mu+\gamma) \big)^2}{4\pi(\mu-\o)}\nr{\TY^* \psi}_\scK^2,
\end{equation}
where we use the identification $\scK^* \simeq \scK$. Now for $\f \in \sfX$ and $\psi \in \sfY^*$ we have
\begin{multline*}
(-\ii t) \innp{U_{R,\high}(t) \TX \f}{\TY^* \psi}_{\scK}\\
= - \frac {\e^{-\g t}} {2\pi} \int_{2 \tau_0 \leq \abs \tau \leq R} \left(-\partial_\tau \e^{\ii t\tau}\right) \langle(A-(-\g+\ii \tau))\inv \TX \f, \TY^*\psi \rangle_{\scK} \diff \tau .
\end{multline*}
We integrate by parts. The boundary terms can be neglected since they are exponentially decaying in $t$. More precisely, for $\abs\tau \in \{2\tau_0,R\}$ we have by \eqref{eq:hyp-high-freq-2}
\[
\abs{\frac {\e^{-\g t}} {2\pi} \e^{\ii t\tau} \langle(A-(-\g+\ii \tau))\inv \TX \f, \TY^* \psi\rangle_{\scK}} \leq \frac {\e^{-\g t}} {2\pi} C_1 \nr{\TX \f}_\scK \nr{\TY^*\psi}_\scK.
\]
Then, by the Cauchy-Schwarz inequality and \eqref{eq:estim-res-L2-A}--\eqref{eq:estim-res-L2-A-star},
\begin{eqnarray*}
\lefteqn{ \abs{\frac {\e^{-\g t}}{2\pi} \int_{2 \tau_0 \leq \abs \tau \leq R} \e^{\ii t\tau} \partial_\tau \langle (A-(-\g+\ii \tau))\inv \TX \f, \TY^* \psi\rangle_{\scK} \diff \tau}} \\
&& \qquad\leq \frac {\e^{-\g t}}{2\pi} \int_{2 \tau_0 \leq \abs \tau \leq R} \abs{ \langle(A-(-\g+\ii \tau))^{-2} \TX \f, \TY^* \psi\rangle_{\scK} } \diff \tau\\
&& \qquad \leq  \frac {\e^{-\g t}}{2\pi} \left(\int_{\abs \tau \geq 2 \tau_0}  \|(A-(-\g+\ii \tau))\inv \TX \f\|^2 \diff \tau \right)^{\frac 12}
\\
&& \qquad \qquad \qquad \quad \times  \left(\int_{\abs \tau \geq 2 \tau_0}   \|(A^*-(-\g-\ii \tau))\inv \TY^* \psi\|^2 \diff \tau \right)^{\frac 12}
\\
&& \qquad \leq  C_5 \e^{-\g t} \nr \f_{\sfX} \nr \p_{\sfY^*},
\end{eqnarray*}
for a constant $C_5>0$ independent of $t$, $R$, $\f$ and $\psi$. Finally, by the above there exists $C_6>0$ such that for all $t \geq 0$, $\f \in \sfX$ and $R > 0$ we have
\begin{equation} \label{estim:Uhigh}
\nr{\TY U_{R,\high}(t) \TX \f}_\sfY  = \sup_{0\neq \psi \in \sfY^*} \frac{|\big( \psi, \TY U_{R,\high}(t) \TX \f \big)_{\sfY^* \times \sfY}|}{\|\psi\|_{\sfY^*}}  \leq C_6 \e^{-t\g} \nr{\f}_\sfX.
\end{equation}

\stepp \emph{\eqref{eq:estim-UR-gamma} for $U_{\eps,\low}$:}
Now we estimate the contribution of low frequencies. It is enough to show \eqref{eq:estim-UR-gamma} for $t \ge 1$. Notice first that due to \eqref{hyp:high-freq}, \eqref{eq:hyp-high-freq-2} and $|\Gamma_{\eps,\low}(\tau)| \ge |\tau|$,~\eqref{hyp:low-freq} is valid (with a possibly different constant) for $\la$ running along the curve $\Gamma_{\eps,\low}$ and $\abs\la$ replaced by $\abs \tau$ on the right-hand side.

Let $\varphi \in \sfX$ and $\psi \in \sfY^*$ be arbitrary. Integrating by parts, for $j \in \N_0$, we start by deriving 
\begin{equation}
	\begin{aligned}
		& \int_{\abs \tau \leq 2\tau_0} \e^{t(\th_\eps(\tau) + \ii\tau)} (\th_\eps'(\tau) + \ii) \langle  (A-(\th_\eps(\tau)+\ii\tau))^{-1-j} \TX \varphi, \TY^* \psi \rangle_{\scK} \diff \tau  \\
		&  = \frac1t \int_{\abs \tau \leq 2\tau_0} \partial_\tau \left( \e^{t(\th_\eps(\tau) + \ii\tau)} \right) \langle  (A-(\th_\eps(\tau)+\ii\tau))^{-1-j}\TX \varphi, \TY^* \psi \rangle_{\scK} \diff \tau \\ 
		&  = \frac{j+1}{(-t)} \int_{\abs \tau \leq 2\tau_0} \e^{t(\th_\eps(\tau) + \ii\tau)} (\th_\eps'(\tau) + \ii)\langle  (A-(\th_\eps(\tau)+\ii\tau))^{-1-(j+1)}\TX \varphi, \TY^* \psi \rangle_{\scK} \diff \tau \\
		& \qquad \qquad \qquad \qquad  - \frac1{(-t)} \left[ \e^{t(\th_\eps(\tau) + \ii\tau)}  \langle  (A-(\th_\eps(\tau)+\ii\tau))^{-1-j}\TX \varphi, \TY^* \psi \rangle_{\scK} \right]_{-2\tau_0}^{2\tau_0}.
	\end{aligned}
\end{equation}
By an inductive argument, for $m \in \N_0$ we get
\begin{equation}
	\begin{aligned}
		& - 2 \ii \pi \langle U_{\eps,\low}(t) \TX \varphi, \TY^* \psi \rangle_{\scK} \\
		& = \int_{\abs \tau \leq 2\tau_0} \e^{t(\th_\eps(\tau) + \ii\tau)} (\th_\eps'(\tau) + \ii) \langle  (A-(\th_\eps(\tau)+\ii\tau))\inv \TX \varphi, \TY^* \psi \rangle_{\scK} \diff \tau  \\
		&  = \frac{m!}{(-t)^m} \int_{\abs \tau \leq 2\tau_0} \e^{t(\th_\eps(\tau) + \ii\tau)} (\th_\eps'(\tau) + \ii)\langle  (A-(\th_\eps(\tau)+\ii\tau))^{-1-m}\TX \varphi, \TY^* \psi \rangle_{\scK}  \diff \tau \\
		& \qquad \qquad \qquad - \sum_{j=0}^{m-1} \frac{j!}{(-t)^{j+1}} \left[ \e^{t(\th_\eps(\tau) + \ii\tau)}  \langle  (A-(\th_\eps(\tau)+\ii\tau))^{-1-j}\TX \varphi, \TY^* \psi \rangle_{\scK} \right]_{-2\tau_0}^{2\tau_0}.
	\end{aligned}
\end{equation}
Note that by \eqref{eq:hyp-high-freq-2} and since $\th_\eps(\pm 2 \tau_0) = -\g$, the boundary terms decay exponentially in time and can thus be neglected.

It remains to estimate the integral on the right hand side, where we split the integration at $\abs \tau = 1/t \le 1 \le \tau_0$.
For the integral around zero, we use \eqref{hyp:low-freq} with $j=m$ to obtain 
\begin{equation}
	\begin{aligned}
		\left| \int_{\abs \tau \leq \frac 1 t} \e^{t(\th_\eps(\tau) + \ii\tau)} (\th_\eps'(\tau) + \ii)\langle  (A-(\th_\eps(\tau)+\ii\tau))^{-1-m}\TX \varphi, \TY^* \psi \rangle_{\scK} \diff \tau\right| \qquad \qquad  \\
		\lesssim \e^{\eps t} \int_{\abs\tau \leq \frac 1 t}  \abs \tau^{-\kappa} \diff \tau \|\varphi \|_\sfX \|\psi\|_{\sfY^*}
		\lesssim \e^{\eps t} t^{\kappa -1} \|\varphi \|_\sfX \|\psi\|_{\sfY^*};
	\end{aligned}
\end{equation}
recall that $\theta_\eps'(\tau) = (\eps + \gamma) \theta'(\tau)$ is uniformly bounded in $\eps$. 

To estimate the remaining integral, we integrate by parts once more and obtain
\begin{equation}
	\begin{aligned}
		& \int_{\frac 1 t \leq \abs \tau \leq 2\tau_0} \e^{t(\th_\eps(\tau) + \ii\tau)} (\th_\eps'(\tau) + \ii)\langle  (A-(\th_\eps(\tau)+\ii\tau))^{-1-m}\TX \varphi, \TY^* \psi \rangle_{\scK} \diff \tau \\
		& = \frac{m+1}{-t} \int_{\frac 1 t \leq \abs \tau \leq 2\tau_0} \e^{t(\th_\eps(\tau) + \ii\tau)} (\th_\eps'(\tau) + \ii)\langle  (A-(\th_\eps(\tau)+\ii\tau))^{-1-(m+1)}\TX \varphi, \TY^* \psi \rangle_{\scK} \diff \tau \\
		& \qquad \qquad \qquad \qquad  + \frac1t \left[ \e^{t(\th_\eps(\tau) + \ii\tau)}  \langle  (A-(\th_\eps(\tau)+\ii\tau))^{-1-m}\TX \varphi, \TY^* \psi \rangle_{\scK} \right]_{-2\tau_0}^{2\tau_0} \\
		& \qquad \qquad \qquad \qquad  -  \frac1t \left[ \e^{t(\th_\eps(\tau) + \ii\tau)}  \langle  (A-(\th_\eps(\tau)+\ii\tau))^{-1-m}\TX \varphi, \TY^* \psi \rangle_{\scK} \right]_{-\frac1t}^{\frac1t}.
	\end{aligned}
\end{equation}
As before, the boundary terms at $\tau = \pm 2 \tau_0$ are decaying exponentially in time. Using that $\th_\eps(\pm 1/t) = \eps$ together with \eqref{hyp:low-freq} for $j=m$, the boundary terms at $\tau = \pm 1/t$ are dominated by $\e^{\eps t} t^{\kappa-1}\nr \f_{\sfX} \nr \p_{\sfY^*}$. 

Altogether, in the case $\kappa \neq 0$, it follows from \eqref{hyp:low-freq} with $j=m+1$ that
\begin{equation}
	\begin{aligned}
		& \left| \int_{\frac 1 t \leq \abs \tau \leq 2\tau_0} \e^{t(\th_\eps(\tau) + \ii\tau)} (\th_\eps'(\tau) + \ii)\langle  (A-(\th_\eps(\tau)+\ii\tau))^{-1-m}\TX \varphi, \TY^* \psi \rangle_{\scK} \diff \tau \right| \\
		& \qquad \qquad \ls  \e^{\eps t} t^{\kappa-1} \|\varphi \|_\sfX \|\psi\|_{\sfY^*} + {\e^{\eps t}}t^{-1} \int_{\frac 1 t \leq \abs \tau \leq 2\tau_0}   \abs{\tau}^{-\kappa-1} \diff \tau  \|\varphi \|_\sfX \|\psi\|_{\sfY^*} \\
		& \qquad \qquad  \ls \e^{\eps t} t^{\kappa-1} \|\varphi \|_\sfX \|\psi\|_{\sfY^*}.
	\end{aligned}
\end{equation}

If $\kappa = 0$ we integrate by parts once more (notice that the bound \eqref{hyp:low-freq} for $j = m+2$ is only used in this case). As before, the boundary terms at $\pm 2 \tau_0$ are exponentially decaying, while from \eqref{hyp:low-freq} with $j=m+1$ we see that the boundary terms at $\pm 1/t$ are dominated by $\e^{\eps t}t^{-1}\nr \f_{\sfX} \nr \p_{\sfY^*}$, so we can write 
\begin{equation}
	\begin{aligned}
		& \int_{\frac 1 t \leq \abs \tau \leq 2\tau_0} \e^{t(\th_\eps(\tau) + \ii\tau)} (\th_\eps'(\tau) + \ii) \langle  (A-(\th_\eps(\tau)+\ii\tau))^{-1-m}\TX \varphi, \TY^* \psi \rangle_{\scK} \diff \tau \\
		& = \frac{(m+1)(m+2)}{t^2} \int_{\frac 1 t \leq \abs \tau \leq 2\tau_0} \e^{t(\th_\eps(\tau) + \ii\tau)} (\th_\eps'(\tau) + \ii) \\
		& \qquad \qquad \qquad \qquad \qquad \qquad \qquad \times \langle  (A-(\th_\eps(\tau)+\ii\tau))^{-1-(m+2)}\TX \varphi, \TY^* \psi \rangle_{\scK} \diff \tau \\
		& \qquad \qquad \qquad \qquad  + \Oc(\e^{\eps t}t^{-1})  \|\varphi \|_\sfX \|\psi\|_{\sfY^*}.
	\end{aligned}
\end{equation}
As above, we get from \eqref{hyp:low-freq} with $j=m+2$ that
\begin{equation}
	\begin{aligned}
		& \left| \int_{\frac 1 t \leq \abs \tau \leq 2\tau_0} \e^{t(\th_\eps(\tau) + \ii\tau)} (\th_\eps'(\tau) + \ii)\langle  (A-(\th_\eps(\tau)+\ii\tau))^{-1-m}\TX \varphi, \TY^* \psi \rangle_{\scK} \diff \tau \right| \\
		& \qquad \qquad \ls  \e^{\eps t} t^{-1} \|\varphi \|_\sfX \|\psi\|_{\sfY^*} + {\e^{\eps t}}t^{-2} \int_{\frac 1 t \leq \abs \tau \leq 2\tau_0}   \abs{\tau}^{-2} \diff \tau  \|\varphi \|_\sfX \|\psi\|_{\sfY^*} \\
		& \qquad \qquad  \ls \e^{\eps t} t^{-1} \|\varphi \|_\sfX \|\psi\|_{\sfY^*}.
	\end{aligned}
\end{equation}
In any case, as in \eqref{estim:Uhigh} for high frequencies, it follows that there exists $C_7 >0$ such that for all $t\geq 0$, $\eps > 0$ and $\varphi \in \sfX$ we have
\begin{equation} \label{estim:Ulow}
\nr{\TY U_{\eps,\low}(t) \TX \varphi }_{\sfY} \leq C_7 \e^{\eps t} \pppg t^{-m-1+\kappa} \|\varphi\|_{\sfX}.
\end{equation}
With \eqref{estim:Uhigh} and \eqref{estim:Ulow} we deduce \eqref{eq:estim-UR-gamma}, which concludes the proof.
\end{proof}

\section{The case of constant damping and optimality of the decay} \label{sec:optimal}

In this section we prove optimality for the estimates of Theorem \ref{thm:decay} by considering the model case $q(x)\equiv 0$ and $a(x)\equiv 1$ on the full Euclidean space $\Omega=\Rd$. More precisely, we prove the following result (see also Remark~\ref{rem:almost.opt} below), where for $j \in \{0,1,2\}$ we have set
\[
\cc_j = \left(\sup_{s \geq 0} s^{j+1} \e^{-2s} \right)^{\frac 12} = \left( \frac {j+1} {2\e} \right)^{\frac {j+1} 2}.
\]

\begin{proposition}\label{prop:lower-bound}
Suppose $\Omega = \R^d$, $q (x) \equiv 0$ and $a (x)\equiv 1$. Let $\rho_0,\rho_1,\rho_2 : \R_+ \to \R_+$ be such that for all $F \in \KK$ and all $t > 0$ we have
\begin{equation} \label{eq:hyp-rho}
\begin{aligned}
\nr{u(t)}_{L^2} & \leq \rho_0(t)\nr F_\KK,\\
\nr{\nabla u(t)}_{L^2} & \leq \rho_1(t) \nr{F}_{\KK}, \\
\nr{\partial_t u(t)}_{L^2} & \leq \rho_2(t) \nr{F}_{\KK},
\end{aligned}
\end{equation}
where $\KK$ is as in~\eqref{KK.def}--\eqref{KK.norm} and $u(t)$ is the solution of \eqref{dwe.2ndorder}. For every $\eps > 0$ there exists $t_0 \geq 1$ such that for all $t \ge t_0$ we have
\begin{equation} \label{eq:concl-rho}
\rho_0(t) \geq \frac {\cc_0 - \eps} {\sqrt{t}}, \qquad
\rho_1(t) \geq \frac {\cc_1 - \eps }{ t}, \qquad
\rho_2(t) \geq  \frac {\cc_2 - \eps} {t^{\frac 32}}.
\end{equation}
\end{proposition}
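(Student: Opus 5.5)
The plan is to test the assumed bounds \eqref{eq:hyp-rho} against explicit initial data concentrated at low frequencies, computed via the Fourier transform. In the model case the equation reads $\partial_{tt}u+\partial_t u=\Delta u$ on $\R^d$, so $\hat u(t,\xi)$ solves an ODE for each $\xi$. The characteristic roots are
\[
\la_\pm(\xi)=\tfrac12\bigl(-1\pm\sqrt{1-4|\xi|^2}\bigr).
\]
I take $f=0$, which gives
\[
\hat u(t,\xi)=\hat g(\xi)\,\frac{\e^{\la_+ t}-\e^{\la_- t}}{\la_+-\la_-}.
\]
For $|\xi|\to0$ we have $\la_+=-|\xi|^2+\BigO(|\xi|^4)$, $\la_-=-1+\BigO(|\xi|^2)$ and $\la_+-\la_-=1+\BigO(|\xi|^2)$. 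Hence, uniformly for small $|\xi|$,
\[
\hat u(t,\xi)=\hat g(\xi)\e^{-|\xi|^2t}\bigl(1+\BigO(|\xi|^2)+\BigO(t|\xi|^4)\bigr)+\BigO(\e^{-t/2})\hat g(\xi),
\]
and similarly $\widehat{\partial_t u}=\la_+\hat u+$ (exponentially small).

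Next I compute the $\KK$-norm. With $f=0$ we get $af+g=g$ and $\|g\|_{\cW^*}=\||\xi|^{-1}\hat g\|_{L^2}$ by Plancherel, since $\cW$ is the homogeneous $\dot H^1$ space. So, up to the $(2\pi)^d$ normalisations that cancel in the ratios,
\[
\|F\|_\KK^2=\int|\hat g|^2\,(1+|\xi|^{-2})\,\dd\xi .
\]
If $\hat g$ vanishes near $0$, then $F\in\KK$. For each $j\in\{0,1,2\}$ I fix $s=\frac{j+1}{2}$, which is the maximiser of $s^{j+1}\e^{-2s}$, and a small $\delta>0$. I then choose $\hat g$ to be the indicator of the thin shell
\[
S_t=\bigl\{\xi: |\xi|^2\in[\tfrac st(1-\delta),\tfrac st(1+\delta)]\bigr\}.
\]
On $S_t$ we have $|\xi|^{-2}=\frac ts(1+\BigO(\delta))$, so $\|F\|_\KK^2=\frac ts(1+\BigO(\delta)+\BigO(t^{-1}))\|\hat g\|^2$.

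For the numerators: on $S_t$ we have $t|\xi|^4=\BigO(t^{-1})$ and $|\xi|^2t=s(1+\BigO(\delta))$. Therefore
\[
\|u(t)\|^2=\e^{-2s}(1+\BigO(\delta)+o(1))\|\hat g\|^2 .
\]
The gradient and the time derivative carry the extra factors $|\xi|^2\approx s/t$ and $|\la_+|^2\approx s^2/t^2$ respectively, so $\|\nabla u(t)\|^2$ and $\|\partial_tu(t)\|^2$ equal $\|u(t)\|^2$ multiplied by $\frac st$ and $\frac{s^2}{t^2}$, up to the same relative errors. Dividing numerator by $\|F\|_\KK^2$ yields
\[
\frac{\|\cdot\|^2}{\|F\|_\KK^2}= \frac{s^{j+1}\e^{-2s}}{t^{j+1}}\bigl(1+\BigO(\delta)+o(1)\bigr),\qquad t\to\infty,
\]
for $j=0,1,2$ corresponding to $u$, $\nabla u$, $\partial_t u$. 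Since $\rho_j(t)\ge\|\cdot\|/\|F\|_\KK$, we obtain $\rho_j(t)\ge (\cc_j-\eps)t^{-(j+1)/2}$ for $t\ge t_0$ once $\delta=\delta(\eps)$ is chosen small.

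The main point requiring care is the uniform control of the error terms in the root expansion over the shell, in particular confirming that the $\e^{\la_-t}$ contribution and the $\BigO(t|\xi|^4)$ correction are negligible uniformly in $\xi\in S_t$. I expect both to be harmless, since $|\xi|\lesssim t^{-1/2}$ on $S_t$. A secondary point is to check that this Fourier solution coincides with the semigroup (mild) solution of Remark~\ref{rem:mild.sol}. This holds by uniqueness, because the explicit formula gives a classical solution for such smooth, frequency-localised data.
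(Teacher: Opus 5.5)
Your proposal is correct. It reaches the conclusion by a more direct route than the paper, although the core idea is shared: test on data whose Fourier support sits at $t|\xi|^2\approx\frac{j+1}{2}$, the maximiser of $s^{j+1}\e^{-2s}$.

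The paper splits the argument into two general lemmas. First, Lemma~\ref{lem:comparison-heat} is a diffusion-phenomenon comparison valid for every $F\in\KK$. It shows that $u(t)-\e^{t\Delta}(f+g)$ is $\BigO(t^{-1})$, $\BigO(t^{-3/2})$ and $\BigO(t^{-2})$ in the three norms. This needs separate treatment of high, intermediate and low frequencies, including the Jordan block at $|\xi|=1/2$. Second, Lemma~\ref{lem:lower-bound-heat} is a lower bound for the heat flow on $\kk=H^1\cap\dot H^{-1}$, using data with $\eta_1\le t|\xi|^2\le\eta_2$. The paper then transfers this to $\rho_j$ with $F=(f,0)$, for which $\nr{F}_\KK=\nr{f}_\kk$.

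You instead take $F=(0,g)$ with $\hat g$ supported in a shell of radius $\approx t^{-1/2}$. Because of this, you never meet high or intermediate frequencies. The exact formula $\hat u=\hat g\,(\e^{\la_+t}-\e^{\la_-t})/(\la_+-\la_-)$ gives the asymptotics on the shell directly, using $0\le-(\la_++|\xi|^2)\lesssim|\xi|^4$, $\la_-\le-\tfrac12$ and $\widehat{\partial_tu}-\la_+\hat u=\hat g\,\e^{\la_-t}$.

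What your route buys is brevity and self-containedness. What the paper's route buys is a comparison with the heat flow for arbitrary data in $\KK$. That is of independent interest, and it lets fixed-datum statements such as Remark~\ref{rem:almost.opt} pass from the heat equation to the damped wave.

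One small simplification: for a lower bound on $\rho_j$ you only need $\nr{g}_{\cW^*}\le\nr{|\xi|^{-1}\hat g}_{L^2}$. This follows from Plancherel and Cauchy--Schwarz against test functions in $C_c^\infty(\R^d)$. It avoids identifying $\cW$ with $\dot H^1$, which in $d=1,2$ only holds modulo constants.

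Your identification of the Fourier solution with the semigroup solution is also fine. Indeed $F=(0,g)\in\Dom(\cA)$, since $g\in H^1=\cD_\frt$ and $-g\in L^2(\R^d)$, so uniqueness of classical solutions applies.
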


To prove Proposition \ref{prop:lower-bound}, we first check that $u(t)$ behaves for large times like a solution of the heat equation.

\begin{lemma} \label{lem:comparison-heat}
Suppose $\Omega = \R^d$, $q (x) \equiv 0$ and $a (x)\equiv 1$. Let $F = (f,g) \in \Kc$ with $\KK$ as in~\eqref{KK.def}--\eqref{KK.norm}, and let $u(t)$ be the solution of \eqref{dwe.2ndorder}. For $t\geq 0$ we set $u_0(t) = \e^{t\Delta} (f+g)$. Then there exists $c > 0$ such that for all $t \geq 1$ we have
\begin{equation}
\begin{aligned}
	\label{eq:comp.heat}
\nr{u(t)-u_0(t)}_{L^2} & \leq c t^{-1} \nr{F}_{H^1 \oplus L^2},\\
\nr{\nabla (u(t)-u_0(t))}_{L^2} & \leq c t^{-\frac 32} \nr{F}_{H^1 \oplus L^2},\\
\nr{\partial_t (u(t)-u_0(t))}_{L^2} & \leq c t^{-2} \nr{F}_{H^1 \oplus L^2}.
\end{aligned}
\end{equation}
\end{lemma}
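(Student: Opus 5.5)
We will work on the Fourier side. Since $\Omega=\R^d$, $q\equiv0$, $a\equiv1$, the equation becomes, for each $\xi\in\R^d$, the ODE $\hat u''+\hat u'+|\xi|^2\hat u=0$ with $\hat u(0)=\hat f$, $\hat u'(0)=\hat g$. For $F\in\KK$ we have $f\in H^1(\R^d)$ and $g\in L^2(\R^d)$ by \eqref{K.in.Dt+L2}, so everything reduces to pointwise bounds on the explicit multipliers followed by Plancherel. We split frequencies into $|\xi|\ge \frac14$ and $|\xi|<\frac14$.

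\emph{High frequencies} ($|\xi|\ge\frac14$). The characteristic roots $\nu_\pm(\xi)=\frac{-1\pm\sqrt{1-4|\xi|^2}}{2}$ satisfy $\Re\nu_\pm\le -c_0<0$ uniformly on this region. The same holds near $|\xi|=\frac12$, where the roots coalesce and we use the form $\e^{-t/2}(\cos/\sin)$, or $t\e^{-t/2}$ at the double root. Hence $|\hat u|+|\xi||\hat u|+|\hat u'|\lesssim \e^{-c t}(|\xi||\hat f|+|\hat g|)$. The heat part satisfies $|\e^{-t|\xi|^2}(\hat f+\hat g)|\lesssim \e^{-t/16}(|\hat f|+|\hat g|)$, and its time and gradient derivatives carry at most polynomial factors in $|\xi|$ times $\e^{-t|\xi|^2}$, which stay bounded by $\e^{-ct}$ times a constant. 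This part therefore decays exponentially in all three norms, since $|\xi|^k\e^{-t|\xi|^2}\lesssim \e^{-ct}$ for $|\xi|\ge\frac14$ and $t\ge1$.

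\emph{Low frequencies} ($|\xi|<\frac14$). Here the roots are real, with $\nu_-\le -\frac12$ and $\nu_+=-|\xi|^2+O(|\xi|^4)$; more precisely $\nu_+ = -|\xi|^2-|\xi|^4+O(|\xi|^6)$. The solution is
\[
\hat u=\frac{\nu_+\e^{t\nu_-}-\nu_-\e^{t\nu_+}}{\nu_+-\nu_-}\hat f+\frac{\e^{t\nu_+}-\e^{t\nu_-}}{\nu_+-\nu_-}\hat g .
\]
The $\e^{t\nu_-}$ terms are $O(\e^{-t/2})$ and are harmless. The coefficients of $\e^{t\nu_+}$ are $\frac{-\nu_-}{\nu_+-\nu_-}=1+O(|\xi|^2)$ and $\frac{1}{\nu_+-\nu_-}=1+O(|\xi|^2)$. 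Thus
\[
\hat u-\e^{-t|\xi|^2}(\hat f+\hat g)=O(|\xi|^2)\e^{t\nu_+}(|\hat f|+|\hat g|)+\bigl(\e^{t\nu_+}-\e^{-t|\xi|^2}\bigr)(\hat f+\hat g)+O(\e^{-t/2}).
\]
Since $\nu_+\le -|\xi|^2$, the mean value theorem gives $|\e^{t\nu_+}-\e^{-t|\xi|^2}|\le t|\nu_++|\xi|^2|\e^{-t|\xi|^2}\lesssim t|\xi|^4\e^{-t|\xi|^2}$. Using $\sup_{s\ge0} s^k\e^{-s}<\infty$ with $s=t|\xi|^2$, we get $|\xi|^2\e^{-t|\xi|^2}\lesssim t^{-1}$ and $t|\xi|^4\e^{-t|\xi|^2}\lesssim t^{-1}$. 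This proves the first line of \eqref{eq:comp.heat}. Multiplying by $|\xi|$ gives an extra factor $t^{-1/2}$, which proves the second line.

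\emph{Time derivative.} Differentiating in $t$, the $\e^{t\nu_+}$ terms acquire a factor $\nu_+=O(|\xi|^2)$, and the heat term acquires $-|\xi|^2$. The difference becomes $O(|\xi|^4)\e^{t\nu_+}+|\xi|^2(\e^{t\nu_+}-\e^{-t|\xi|^2})+O(|\xi|^4)\e^{-t|\xi|^2}$ times $(|\hat f|+|\hat g|)$, and each term is $\lesssim t^{-2}$. This proves the third line.

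Plancherel then gives all three bounds with $\|\hat f\|_{L^2}+\|\hat g\|_{L^2}$, which is controlled by $\|F\|_{H^1\oplus L^2}$. Finally, the mild solution from Proposition~\ref{prop:m-dissipative} coincides with this Fourier solution. Both are classical solutions for data in $\Dom(\cA)$, which is dense, and both depend continuously on $F$ in $\HH$. The data are controlled by the $\HH$ norm since $\cW$ and $H^1$ coincide on the relevant $L^2$ data here.

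The main obstacle is the careful expansion of $\nu_+$ and of the coefficients near $\xi=0$, in particular obtaining the cancellation of the leading $\hat f+\hat g$ term. For $\partial_t$ this must hold to order $|\xi|^4$, which is what yields the gain $t^{-2}$.
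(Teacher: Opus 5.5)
Your proposal is correct and follows essentially the same route as the paper: Fourier transform, exponential decay of both $\hat u$ and the heat term away from $\xi=0$, and at low frequencies the expansion $\nu_+=-|\xi|^2+O(|\xi|^4)$, coefficients $1+O(|\xi|^2)$, a mean-value bound on $\e^{t\nu_+}-\e^{-t|\xi|^2}$, and $\sup_{s\ge0} s^k\e^{-s}<\infty$. Your observation $\nu_+\le-|\xi|^2$ gives the factor $\e^{-t|\xi|^2}$ directly, so you need neither a small $\delta$ nor a separate intermediate region, which slightly streamlines the paper's argument. One small correction: your high-frequency bound uses $|\xi||\hat f|$, so the final estimate requires the full $H^1\oplus L^2$ norm rather than just $\|\hat f\|_{L^2}+\|\hat g\|_{L^2}$.
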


\begin{proof}
For $t \geq 0$ we recall that $U(t) = \e^{t\Ac} F = (u(t),\partial_t u(t))$. We  prove \eqref{eq:comp.heat} for the corresponding Fourier transformed quantities (with respect to the space variable $x$), uniformly in $F \in \KK$ and $t \ge 1$ (fixed but arbitrary). For $\xi \in \Rd$ we have
\[
\widehat U(t,\xi) = \e^{tM(\xi)} \widehat F (\xi), \qquad M(\xi) = 
\begin{pmatrix} 0 & 1 \\ -\abs \xi^2 & -1 
\end{pmatrix},
\]
as well as
\begin{equation}
	\label{eq:u0.hat}
	\wh u_0 (t,\xi) =   \e^{-t|\xi|^2} (\wh f (\xi)+ \wh g(\xi)).
\end{equation}
Moreover, we have
\begin{equation}	
	\label{eq:dt.u0.hat}
	\qquad \wh{\partial_t u}_0(t,\xi) = \partial_t \wh u_0 (t,\xi) =  -|\xi|^2 \wh u_0(t,\xi).
\end{equation}
Corresponding to the weight in the norm in $\cF ( H^1 (\Rd) \oplus L^2(\Rd))$, for $\xi \in \R^d$ and $\z = (\z_1,\z_2) \in \C^2$ we set
\[
\abs{\z}_{\C^2_\xi}^2 := (\abs \xi^2 + 1) \abs{\z_1}^2 + \abs{\z_2}^2.
\]
To prove the Fourier analogues of~\eqref{eq:comp.heat}, we split the integration in three regions:
\begin{enumerate}[\upshape (i)]
	\item high frequencies $|\xi | > 1$;
	\item intermediate frequencies $\delta \le |\xi | \le 1$, with $\delta \in(0,1/8)$ selected suitably in \eqref{eq:low.delta};
	\item small frequencies $|\xi|<\delta$.
\end{enumerate}
The main contribution comes from the part in (iii), while (i) and (ii) lead to exponentially small terms in time (both for $\wh u(t)$ and $\wh u_0(t)$ separately). Note that in (ii) and (iii), the Euclidean norm $|\cdot|_{\C^2}$ is equivalent to $|\cdot|_{\C^2_\xi}$ (with uniform constants in $\xi$).

\stepp 
\emph{Basic estimates and expansions for $\wh U(t)$:}	
The eigenvalues of $M(\xi)$ read
\[
\la_\pm(\xi) = \frac {-1 \pm \sqrt{1-4\abs \xi^2}}2, \qquad \la_+ (\xi) - \la_-(\xi) = \sqrt{1-4|\xi|^2},
\]
(with one double eigenvalue if $4 \abs \xi^2 = 1$). Their real parts satisfy
\begin{equation}
	\label{eq:re.la.pm}
	\begin{aligned}
			\Re \la_-(\xi) & \le -\frac12, \qquad & \xi & \in \Rd, \\
			\Re \la_\pm (\xi)  & = -\frac12, \qquad & |\xi| & \ge \frac14, \\
			\Re \la_+(\xi) & \le \frac{-1 + \sqrt{1-4\delta^2}}{2} := - \eta \equiv - \eta(\delta) < 0, \qquad & |\xi | & \ge \delta.
	\end{aligned}
\end{equation}
When $4 \abs \xi^2 \neq 1$, we can write
\begin{equation}
	\label{eq:eigenproj}
	\e^{tM(\xi)} = \e^{t\la_+(\xi)} \big\langle\Psi_+(\xi),\cdot \big\rangle_{\C^2} \Phi_+ (\xi) + \e^{t\la_-(\xi)} \big\langle \Psi_-(\xi), \cdot\big\rangle_{\C^2} \Phi_-(\xi),
\end{equation}
where
\[
\Phi_\pm(\xi) = \begin{pmatrix} 1 \\ \la_\pm(\xi) \end{pmatrix}, \qquad \overline{ \Psi_\pm(\xi)} = \pm \frac 1 {\la_+(\xi)-\la_-(\xi)} \begin{pmatrix} - \la_\mp(\xi) \\ 1 \end{pmatrix}.
\]
We have
\begin{equation}
	\label{eq:Phi.pm.est.high.freq}
	|\la_+(\xi) - \la_-(\xi)| \gs |\xi|, \qquad |\la_\pm(\xi)| \le |\Phi_\pm(\xi)|_{\C^2_\xi}  \lesssim \abs \xi, \qquad |\xi|  > 1,
\end{equation}
as well as 
\begin{equation}\label{eq:bound.Phi-.Psi-}
	|\la_+(\xi) - \la_-(\xi)| \gs 1, \qquad | \Psi_-(\xi)|_{\C^2}  |\Phi_-(\xi)  |_{\C^2}  \ls 1, \qquad |\xi| < \delta ,
\end{equation}
(independently of $\delta<1/8$). One moreover computes
\begin{equation}
	\label{eq:exp.la+}
	\la_+(\xi) = -\abs \xi^2 + \Oc(\abs \xi^4), \qquad \xi \to 0,
\end{equation}
and finally
\begin{equation}
	\label{eq:exp.Psi.Phi}
	\Psi_+(\xi) = \begin{pmatrix} 1 \\ 1  \end{pmatrix} + \Oc(\abs \xi^2), \qquad \Phi_+(\xi) = \begin{pmatrix} 1 \\ -\abs \xi^2 + \Oc(\abs \xi^4) \end{pmatrix}, \qquad \xi \to 0.
\end{equation}

\stepp \emph{(i) High  /  (ii) intermediate frequencies for $\wh u_0(t)$:} For every $|\xi|\ge \delta$, using~\eqref{eq:u0.hat} we estimate 
\begin{equation}
	\label{eq:bound.nabla.u0.hat}
	|\xi \wh u_0(t, \xi)| \le \frac1{\sqrt t} \sqrt{t |\xi|^2} \e^{-\frac t2 |\xi|^2}\e^{-\frac t2|\xi|^2} |\wh F(\xi)|_{\C^2_\xi}\ls \e^{-\frac {\delta^2}2 t}|\wh F(\xi)|_{\C^2_\xi}.
\end{equation}
Similarly, considering~\eqref{eq:dt.u0.hat}, one derives
\begin{equation}
	\label{eq:bound.dt.u0.hat}
	|\wh{\partial_t u}_0 (t, \xi)| \ls \e^{-\frac {\delta^2}2 t} |\wh F(\xi)|_{\C^2_\xi}, \qquad |\xi|\ge \delta.
\end{equation} 
By integration, it follows from~\eqref{eq:u0.hat}, \eqref{eq:bound.nabla.u0.hat} and \eqref{eq:bound.dt.u0.hat} that 
\begin{equation} \label{eq:u0.high}
	\begin{aligned}
		 \|\wh u_0(t)\|_{L^2(|\xi|\ge \delta)} + & \|\xi  \wh u_0(t)\|_{L^2(|\xi|\ge \delta)} + \|\wh{\partial_t u}_0(t)\|_{L^2(|\xi|\ge \delta)} \\
		& \qquad \qquad \lesssim \e^{-\frac {\delta^2}2 t} \| |\wh F (\xi)|_{\C^2_\xi}\|_{L^2(|\xi|\ge \delta)} \le \e^{-\frac {\delta^2}2 t} \| F\|_{H^1\oplus L^2}.
	\end{aligned} 
\end{equation}

\stepp \emph{(i) High frequencies for $\wh U(t)$:} 
From \eqref{eq:Phi.pm.est.high.freq}, we obtain for $|\xi |> 1$ that
\begin{equation}
	\begin{aligned}
	\big| \langle \Psi_\pm(\xi),\widehat F(\xi) \rangle_{\C^2} \big| & \lesssim \frac{1}{|\la_+(\xi) - \la_-(\xi)|} \left(|\la_\mp(\xi) | |\wh f(\xi)| + |\wh g(\xi)| \right) \\
	& \ls |\wh f(\xi)| + |\xi|^{-1} |\wh g(\xi)|  \lesssim \abs \xi\inv |\wh F(\xi)|_{\C^2_\xi}.
	\end{aligned}
\end{equation}
Combining this with \eqref{eq:eigenproj}, \eqref{eq:re.la.pm} and \eqref{eq:Phi.pm.est.high.freq}, we get
\begin{equation} \label{eq:estim-etM-high-freq}
	\begin{aligned}
		|\wh U(t)|_{\C^2_\xi} = |\e^{tM(\xi)} \wh F(\xi)|_{\C^2_\xi}
		\lesssim \e^{-\frac {t}2} |\widehat F(\xi)|_{\C^2_\xi}, \qquad |\xi| > 1.
	\end{aligned}
\end{equation}
Integrating this estimate leads to 
\begin{equation} 
	\label{eq:u.high}
		\|\wh u(t)\|_{L^2(|\xi|>1)} +  \|\xi \wh u(t) \|_{L^2(|\xi|>1)} + \|\wh{\partial_t u}(t) \|_{L^2(|\xi|>1)}  \lesssim \e^{-\frac t2} \|F\|_{H^1\oplus L^2}.
\end{equation}

\stepp \emph{(ii) Intermediate frequencies for $\wh U(t)$:}
For every fixed $\xi_0 \in \Rd$ such that $\delta \le |\xi_0| \le 1$, by computing the matrix exponential of $M(\xi_0)$ and considering \eqref{eq:re.la.pm}, there exists $C_{\xi_0}>0$ such that
\begin{equation}
	\|\e^{tM(\xi_0)}\|_{\cB(\C^2)} \leq C_{\xi_0} \e^{-\eta t} (1 + t) \ls C_{\xi_0} \e^{-\frac{\eta}{2} t};
\end{equation}
note that the factor $(1+t)$ is due to the Jordan block structure for $|\xi_0| = 1/2$.  
By continuity of $M$, there exists a neighborhood $\Vc_{\xi_0}$ of $\xi_0$ such that this estimate holds for all $\xi \in \Vc_{\xi_0}$ with $\eta/2$ replaced by $\eta/4$. Finally, by compactness, we obtain
\begin{equation} \label{eq:estim-etM-intermediate-freq}
	\|\e^{tM(\xi)}\|_{\cB(\C^2)} \ls \e^{-\frac{\eta}4 t}, \qquad \delta \le |\xi| \le 1.
\end{equation}
Similarly as in (i), upon suitable integration (and using the uniform equivalence between $|\cdot|_{\C^2}$ and $|\cdot|_{\C^2_\xi}$) this leads to 
\begin{equation} \label{eq:u.interm}
	\begin{aligned}
		\|\wh u(t)\|_{L^2(\delta \le |\xi|\le1)} + & \|\xi \wh u(t) \|_{L^2(\delta \le |\xi|\le1)} \\
		 & \qquad \quad + \|\wh{\partial_t u}(t) \|_{L^2(\delta \le |\xi|\le1)} \lesssim  \e^{-\frac \eta4 t}  \|F\|_{H^1\oplus L^2}.
	\end{aligned} 
\end{equation}

\stepp \emph{(iii) Low frequencies for $\wh u(t) - \wh u_0(t)$:}
Considering~\eqref{eq:exp.la+}, we can select $\delta$ sufficiently small such that 
\begin{equation}
	\label{eq:low.delta}
	|\la_+(\xi) + |\xi|^2 | \le \frac{|\xi|^2}{2}, \qquad |\xi| <\delta;
\end{equation}
note also that $\la_+(\xi) \le 0$. It follows that, for $|\xi |<\delta$,
\begin{align}
	\label{eq:exp.e^tla+}
	\big|\e^{t\la_+(\xi)} - \e^{-t|\xi|^2}\big| & = \left|\la_+(\xi) + |\xi|^2\right| \int_0^t \e^{-t|\xi|^2 \left(1-\frac st \frac{\la_+(\xi)+ |\xi|^2}{|\xi|^2}\right)} \diff s \\
	& \le \left|\la_+(\xi) + |\xi|^2\right| t \e^{-\frac t2 |\xi|^2} = t \e^{-\frac t2 |\xi|^2} \Oc (|\xi|^4), \qquad \xi \to 0.
\end{align}
Using \eqref{eq:exp.Psi.Phi} and \eqref{eq:u0.hat}, for $|\xi |<\delta$, we estimate 
\begin{equation}
	\label{eq:e^tla.Psi+.low}
	\begin{aligned}
		& \e^{t\la_+(\xi)}\big\langle \Psi_+(\xi), \wh F(\xi) \big\rangle_{\C^2} \\
		&\qquad \quad  = \left(\e^{t\la_+(\xi)} - \e^{-t|\xi|^2} \right)\big\langle \Psi_+(\xi), \wh F(\xi) \big\rangle_{\C^2} +  \e^{-t |\xi|^2}  \big\langle \Psi_+(\xi), \wh F(\xi) \big\rangle_{\C^2} \\
		& \qquad \quad = \left(\e^{t\la_+(\xi)} - \e^{-t|\xi|^2} \right) \left( \wh f(\xi) + \wh g(\xi) + \Oc \big(\abs \xi^2 |\wh F(\xi)|_{\C^2}\big)  \right) \\
		& \qquad \qquad \qquad \qquad \quad + \wh u_0 (t,\xi) + \e^{-t|\xi|^2} \Oc \big(\abs \xi^2 |\wh F(\xi)|_{\C^2}\big), \qquad \xi \to 0.
	\end{aligned}
\end{equation}
From the first component of~\eqref{eq:eigenproj}, by combining 
\eqref{eq:e^tla.Psi+.low}, \eqref{eq:exp.e^tla+}, \eqref{eq:re.la.pm}, \eqref{eq:bound.Phi-.Psi-} and using $|\cdot|_{\C^2} \approx |\cdot|_{\C^2_\xi}$, we compute 
\begin{equation}
\label{eq:estim-etM-low-freq}
\begin{aligned}
	\big| \wh u(t,\xi) - \wh u_0(t,\xi) \big| & \le \left|\e^{t\la_+(\xi)} \big\langle \Psi_+(\xi), \wh F(\xi) \big\rangle_{\C^2} - \wh u_0(t,\xi) \right| \\
	& \qquad \qquad \qquad \quad  + | \e^{t\la_-(\xi)} | | \Psi_-(\xi)|_{\C^2}  |\Phi_-(\xi)  |_{\C^2} |\wh F(\xi) |_{\C^2} \\
	& \lesssim  \left( \e^{-t\abs \xi^2}  \Oc(\abs \xi^2)  + t \e^{-\frac t2 \abs \xi^2}  \BigO(\abs \xi^4) + \e^{-\frac t 2} \right) |\wh F(\xi)|_{\C^2_\xi}, \\
	& \ls \frac1t \left( t |\xi|^2 \e^{-t\abs \xi^2} + (t |\xi|^2)^2 \e^{-\frac t2 \abs \xi^2} + t \e^{-\frac t 2} \right) |\wh F(\xi)|_{\C^2_\xi}, 
\end{aligned}
\end{equation}
for all $|\xi |<\delta$. 
After integration, and using that the last bracket on the right above is uniformly bounded in $\xi$ and $t$, we have
\begin{equation} \label{eq:u-u0}
\nr{\wh u(t) - \wh u_0(t)}_{L^2(|\xi|<\delta)}  \ls \frac1t \|F\|_{H^1 \oplus L^2}.
\end{equation}
By a similar argument, \eqref{eq:estim-etM-low-freq} implies that 
\begin{equation} \label{eq:u-u0-nabla}
\nr{\xi \big(\wh u(t) - \wh u_0(t) \big)}_{L^2(|\xi|<\delta)}
 \ls \frac{1}{t^\frac32} \|F\|_{H^1 \oplus L^2}.
\end{equation}
For the remaining  estimate, using the second component of~\eqref{eq:eigenproj} and combining \eqref{eq:exp.Psi.Phi}, \eqref{eq:dt.u0.hat},  \eqref{eq:e^tla.Psi+.low}, \eqref{eq:exp.e^tla+}, \eqref{eq:re.la.pm}, \eqref{eq:bound.Phi-.Psi-} and  $|\cdot|_{\C^2} \approx |\cdot|_{\C^2_\xi}$, we obtain
\begin{equation}
	\begin{aligned}
		\big| \wh{\partial_t u}(t,\xi) - \wh{\partial_t u}_0 (t,\xi) \big| & \ls \big|\e^{t\la_+(\xi)} \big\langle \Psi_+(\xi), \wh F(\xi) \big\rangle_{\C^2} \big(-|\xi|^2 + \Oc (|\xi|^4)\big) + |\xi|^2 \wh u_0(t,\xi) \big| \\
		& \qquad \qquad + | \e^{t\la_-(\xi)} | | \Psi_-(\xi)|_{\C^2}  |\Phi_-(\xi)  |_{\C^2} |\wh F(\xi) |_{\C^2} \\
		&  \lesssim  \left( \e^{-t\abs \xi^2}  \Oc(\abs \xi^4)  + t \e^{-\frac t2 \abs \xi^2}  \BigO(\abs \xi^6) + \e^{-\frac t 2} \right)|\wh F(\xi)|_{\C^2_\xi} \\
		&   \ls \frac1{t^2} \left( (t |\xi|^2)^2 \e^{-t\abs \xi^2} + (t |\xi|^2)^3 \e^{-\frac t2 \abs \xi^2} + t^2 \e^{-\frac t 2} \right) |\wh F(\xi)|_{\C^2_\xi}, 
	\end{aligned}
\end{equation}
for all $|\xi |<\delta$. Hence, after integration,
\begin{equation} \label{eq:u-u0-dt}
	\|\wh{\partial_t u}(t) - \wh{\partial_t u}_0 (t) \|_{L^2(|\xi|<\delta)}
	\ls \frac1{t^2} \|F\|_{H^1 \oplus L^2}.
\end{equation}

\stepp \emph{Conclusion:}
The proof is completed by combining~\eqref{eq:u0.high}, \eqref{eq:u.high}, \eqref{eq:u.interm}, \eqref{eq:u-u0}, \eqref{eq:u-u0-nabla}, \eqref{eq:u-u0-dt} and unitarity of the Fourier transform.
\end{proof}

We define $\kk = H^1(\R^d) \cap \dot H\inv(\R^d)$, with norm
\[
\nr{f}_{\kk}^2 = \nr{f}_{H^1(\R^d)}^2 + \nr{f}_{\dot H\inv(\R^d)}^2 =  \int_{\R^d} \big(\abs \xi^2+ 1 + \abs \xi^{-2} \big) \big|\wh f(\xi)\big|^2 \diff \xi.
\]

\begin{lemma} \label{lem:lower-bound-heat}
Suppose $\Omega = \R^d$, $q (x)\equiv 0$ and $a (x)\equiv 1$. Let $\wt \rho_0,\wt \rho_1, \wt \rho_2 : \R_+ \to \R_+$ and $T_0>0$ be such that for all $f_0 \in \kk$ and $t \ge T_0$ we have
\begin{equation} \label{eq:hyp-rho-heat}
\begin{aligned}
\nr{\e^{t\Delta} f_0}_{L^2} & \leq \wt \rho_0(t) \nr{f_0}_{\kk},\\
\nr{\nabla \e^{t\Delta} f_0}_{L^2} & \leq \wt \rho_1(t) \nr{f_0}_{\kk},\\
\nr{\partial_t  \e^{t\Delta} f_0}_{L^2} = \nr{\Delta  \e^{t\Delta} f_0}_{L^2} & \leq \wt \rho_2(t) \nr{f_0}_{\kk}.
\end{aligned}
\end{equation}
For every $\eps>0$ there exists $t_0 \geq T_0$ such that for all $j \in \{0,1,2\}$ and $t \geq t_0$ we have
\begin{equation} \label{eq:concl-rho-heat}
\wt \rho_j(t) \geq  \frac {\cc_j - \eps}{t^{\frac {j+1} 2}}.
\end{equation}
\end{lemma}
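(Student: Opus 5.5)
We test the hypothesis \eqref{eq:hyp-rho-heat} against an explicit family of initial data concentrated at low frequencies, and then optimize over the concentration scale. Everything happens on the Fourier side: by Plancherel,
\[
\nr{\e^{t\Delta} f_0}_{L^2}^2 = \int |\wh f_0(\xi)|^2 \e^{-2t|\xi|^2}\diff\xi,\qquad
\nr{\nabla \e^{t\Delta} f_0}_{L^2}^2 = \int |\xi|^2|\wh f_0|^2 \e^{-2t|\xi|^2}\diff\xi,
\]
and the $\Delta$-term is the analogous integral with weight $|\xi|^4$. In all three cases, for $j\in\{0,1,2\}$, the quantity to bound from below is $\int |\xi|^{2j}\e^{-2t|\xi|^2}|\wh f_0|^2\diff \xi$. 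The $\kk$-norm squared is $\int(|\xi|^2+1+|\xi|^{-2})|\wh f_0|^2\diff\xi$.

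The key observation is the pointwise identity
\[
|\xi|^{2j}\e^{-2t|\xi|^2} = t^{-(j+1)}\,(t|\xi|^2)^{j+1}\e^{-2t|\xi|^2}\,|\xi|^{-2}.
\]
Hence the ratio of the two integrands approaches $t^{-(j+1)}\cc_j^2$ wherever $s:=t|\xi|^2$ is near the maximizer $s_j=(j+1)/2$ of $s^{j+1}\e^{-2s}$, and wherever $|\xi|$ is small enough that $|\xi|^{-2}$ dominates $|\xi|^2+1$.

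Accordingly, for fixed $\eps$ we choose $\eta>0$ with $(s^{j+1}\e^{-2s})\ge \cc_j^2-\eps'$ on $|s-s_j|\le\eta$, uniformly for the three values of $j$. We then take $\wh f_0$ to be the indicator function of the annulus
\[
A_t=\{\xi : |t|\xi|^2-s_j|\le \eta\}.
\]
Its inverse Fourier transform belongs to $\kk$, since $A_t$ is bounded and bounded away from $0$. On $A_t$ we have $|\xi|^2\le (s_j+\eta)/t$, so $|\xi|^2+1\le 2 t^{-1}(s_j+\eta)\,|\xi|^{-2}\cdot|\xi|^2$; more simply, $|\xi|^2+1+|\xi|^{-2}\le (1+\delta_t)|\xi|^{-2}$ with $\delta_t = O(t^{-1})$. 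Therefore
\[
\int |\xi|^{2j}\e^{-2t|\xi|^2}|\wh f_0|^2 \;\ge\; t^{-(j+1)}\frac{\cc_j^2-\eps'}{1+\delta_t}\,\nr{f_0}_{\kk}^2 .
\]
Combined with \eqref{eq:hyp-rho-heat}, this gives $\wt\rho_j(t)^2\ge t^{-(j+1)}(\cc_j^2-\eps')/(1+\delta_t)$. Taking square roots and choosing $\eps'$ small and $t_0$ large (so that $\delta_t$ is negligible) yields \eqref{eq:concl-rho-heat}.

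The only point requiring care is the constant bookkeeping: relating $\cc_j-\eps$ to $\sqrt{(\cc_j^2-\eps')/(1+\delta_t)}$, which is handled by continuity of the square root. The choice of the test function adapted to $t$ is the essential idea, and it is legitimate because the hypothesis holds for each $f_0$ and each $t$ separately. No substantial obstacle is expected.
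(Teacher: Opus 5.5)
Your proposal is correct and follows essentially the paper's argument: a $t$-dependent test function with $\wh f_0$ supported where $t|\xi|^2$ lies near the maximizer of $s^{j+1}\e^{-2s}$, combined with the pointwise fact that on this set $|\xi|^{-2}$ dominates the full weight $|\xi|^2+1+|\xi|^{-2}$. The paper uses a Schwartz function and the explicit comparison $(\cc_j-\eps/2)^2|\xi|^{-2}\ge(\cc_j-\eps)^2(|\xi|^{-2}+1+|\xi|^2)$ in place of your indicator function and $\delta_t$ bookkeeping, which is immaterial. One slip: the intermediate inequality you wrote, $|\xi|^2+1\le 2t^{-1}(s_j+\eta)|\xi|^{-2}\cdot|\xi|^2$, is false because of the stray factor $|\xi|^2$; the bound you actually use afterwards, $|\xi|^2+1\le\delta_t|\xi|^{-2}$ on $A_t$ with $\delta_t=O(t^{-1})$, is correct.
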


\begin{proof}
Let $j \in \{0,1,2\}$ and, without loss of generality, consider $\eps \in \big(0,\frac {\cc_j}2 \big)$ with $\eps \le \frac4{T_0}$. Let $0 \leq \eta_1 <\eta_2 \leq 2$ be such that 
\begin{equation}
\inf_{s \in [\eta_1,\eta_2]} s^{j+1}\e^{-2s} \geq \left(\cc_j -\frac\eps2\right)^2,	
\end{equation}
(notice that the maximum value $\cc_j^2 \in (0,1)$ is reached at $\frac {j+1}2 \in (0,2)$). 
For fixed $t \ge t_0 := \frac{4}{\eps} \ge T_0$, we consider
$f_0 \in \Sc(\R^d)$ such that
\begin{equation} %\label{eq:support-xi}
\wh f_0(\xi) \neq 0 \quad \implies \quad \eta_1 \leq t \abs \xi^2 \leq \eta_2.
\end{equation}
In particular, we have $f_0 \in \kk$. Notice that if $\abs \xi^2 \leq \frac {\eta_2} t \leq \frac\eps2$, then 
%\begin{equation*} 
%\frac {c_0^2}{\abs \xi^2} - (c_0-\eps)^2 \left( \frac 1 {\abs \xi^2}  + 1 + \abs \xi^2\right)
%= \frac \eps {\abs \xi^2} \left( 2 c_0 - \eps  - (c_0 - \eps)^2 \frac {\abs \xi^2 + \abs \xi^4}{\eps} \right) \geq 0.
%\end{equation*}
%
\begin{equation}
	\frac {\left(\cc_j-\frac{\eps}{2}\right)^2}{\abs \xi^2} - (\cc_j-\eps)^2 \left( \frac 1 {\abs \xi^2}  + 1 + \abs \xi^2\right)
	\ge \frac {\eps(\cc_j-\eps)} {\abs \xi^2} \left( 1 - (\cc_j - \eps) \frac {\abs \xi^2 + \abs \xi^4}{\eps} \right) \geq 0;
\end{equation}
recall that  $\cc_j \in (0,1)$ and $\eps < \frac{\cc_j}{2}$. Then for all $\xi \in \R^d$ we have 
\begin{align}
|\xi|^{2j} \e^{-2t |\xi|^2} |\wh f_0(\xi)|^2
& = \e^{-2t\abs \xi^2} (t \abs \xi^2)^{j+1} \mathds 1_{[\eta_1,\eta_2]}(t \abs \xi^2) \frac {|\widehat{f_0}(\xi)|^2}{t^{j+1} \abs \xi^2}\\
& \geq \frac {\left(\cc_j - \frac \eps2 \right)^2 |\widehat{f_0}(\xi)|^2}{t^{j+1} \abs \xi^2}\mathds 1_{[\eta_1,\eta_2]}(t \abs \xi^2)\\
& \geq \frac {\left(\cc_j -\eps \right)^2}{t^{j+1}} \left(\frac 1 {\abs \xi^2} + 1 + \abs \xi^2 \right) |\wh f_0(\xi)|^2.
\end{align}
After integration over $\xi \in \R^d$ and by the Plancherel theorem,
\[
\wt \rho_j(t) \nr{f_0}_{\kk} \ge  \nr{(-\Delta)^{\frac j 2} \e^{t\Delta} f_0}_{L^2} \geq \frac {\cc_j - \eps}{t^{\frac {j+1} 2}} \nr{f_0}_{\kk}. \qedhere
\]
%Since this holds for any $c_0 \in \big(\frac {\cc_j}2 , \cc_j\big)$, the conclusion follows.
\end{proof}

Finally we can prove Proposition \ref{prop:lower-bound}.

\begin{proof}[Proof of Proposition \ref{prop:lower-bound}]
Let $\rho_0: \R_+ \to \R_+$ such that \eqref{eq:hyp-rho} holds and let $f \in \kk$; notice that then $F = (f,0) \in \cK$ and $\nr{F}_\KK = \nr{f}_\kk$. Let $c > 0$ be given by Lemma~\ref{lem:comparison-heat} and let $T_0\ge 1$ such that
\begin{equation}
	c t^{-1} \leq \frac{\eps}{2\sqrt t}, \qquad t \ge T_0.
\end{equation}
Then for all $t \geq T_0$ we have
\begin{align*}
\nr{\e^{t\Delta} f}_{L^2} = \|u_0(t)\|_{L^2}
\leq \rho_0(t) \nr{F}_\KK + c t^{-1} \nr{F}_{H^1 \oplus L^2}
\leq \left( \rho_0(t) + \frac {\eps} {2\sqrt{t}}  \right) \nr{f}_{\kk}.
\end{align*}
By Lemma \ref{lem:lower-bound-heat} applied with $\frac\eps2$, there exists $t_0 \ge T_0$ such that for all $t \ge t_0$
\[
\rho_0(t) + \frac \eps {2\sqrt{t}} \geq \frac {\cc_0 - \frac \eps 2} {\sqrt{t}} .
\]
The proofs for $\rho_1(t)$ and $\rho_2(t)$ are similar.
\end{proof}

\begin{remark}\label{rem:almost.opt}
		For every $\eps>0$, one can construct an initial condition which up to a factor $t^\eps$ achieves the decay rates in~\eqref{eq:concl-rho-heat}. To see this, let $\chi \in C_c^\infty(\R^d;[0,1])$ with $\chi = 1$ on $B(0,\d)$ for some $\d > 0$. Let $f_0$ be the inverse Fourier transform of $\xi \mapsto \abs \xi^{1-\frac d 2 + \eps} \chi(\xi)$. Then $f_0  \in \kk$ and for $t > 0$ we have
	\begin{align*}
		\nr{\e^{t\Delta} f_0}_{L^2}^2
		& \geq \int_{B(0,\d)} \e^{-2t\abs \xi^2} \abs \xi^{2-d+2\eps} \diff \xi
		= (2t)^{-(1+\eps)} \int_{B(0,\sqrt {2t} \d)} \e^{-\abs \eta^2} \abs \eta^{2-d+2\eps} \diff \eta,
	\end{align*}
	so, for all $t \ge t_0$ with a fixed $t_0 > 0$,
	\[
	\nr{\e^{t\Delta} f_0}_{L^2} \gtrsim t^{-\frac 1 2 - \frac \eps 2}.
	\]
	Similarly, for all $t \ge t_0$,
	\[
	\nr{\nabla \e^{t\Delta} f_0}_{L^2} \gtrsim t^{-1  - \frac \eps 2}, \qquad \nr{\partial_t \e^{t\Delta} f_0}_{L^2} \gtrsim t^{-\frac 32  - \frac \eps 2}.
	\]
\end{remark}

\section{Remarks on non-uniformly positive damping}
\label{sec:non.up.damping}

We have stated Proposition \ref{prop:high.pos} about the high frequency estimate and hence the energy estimates of Theorem \ref{thm:decay} under the assumption that the damping coefficient $a$ is uniformly positive on $\Omega$. In this section, we discuss some settings where this assumption is not satisfied.

\subsection{Cases with the geometric control condition}
\label{ssec.GCC}

If the (unbounded) damping $a$ satisfies the geometric control condition, it is expected that the resolvent norm of $\Ac$ is bounded at $\pm \ii \infty$ as in Proposition \ref{prop:high.pos}. In the one-dimensional case, this was established in \cite{arnal2026resolvent-dwe}. (The spectral property $\sigma(\cA) \cap \ii \R \subset \{0\}$, which implies \eqref{eq:Gk.res.bd.infty.C} and is not discussed in \cite{arnal2026resolvent-dwe}, follows by a simple ODE argument.)

\begin{theorem}[{\cite[Thm.~3.5]{arnal2026resolvent-dwe}}]
\label{thm:antonio}
Let $\Omega = \R$ and let $\cA$ be as in \eqref{A.action} and \eqref{A.dom}. Let $0 \leq a, q \in \CiR$ satisfy the following conditions.
\begin{enumerate} [\upshape (i)]
\item \label{itm:a.incr.unbd} $a$ is unbounded at infinity:
\begin{equation}
\label{eq:a.unbd.incr}
\lim_{x \to \pm \infty} a(x) = + \infty,
\end{equation}
\item \label{itm:a.symbolclass} $a$ has controlled derivatives: 
\begin{equation}
\label{eq:a.symb}
\forall~ n \in \N, \, \exists ~C_n>0, \,\forall ~x \in \R \, : \, |a^{(n)}(x)| \le C_n \left(1 + a(x)\right) \langle x \rangle^{- n},
\end{equation}
\item \label{itm:q.symbolclass} $q$ has controlled derivatives:
\begin{equation}
\label{eq:q.symb}
\forall ~n \in \N, \,\exists~C_n'>0,\, \forall~x \in \R \, : \, |q^{(n)}(x)| \le C'_n  \left(1 + q(x)\right) \langle x \rangle^{- n},
\end{equation}
\item \label{itm:a.gt.q} $q$ is eventually not bigger than $a$:
\begin{equation}
\label{eq:a.gt.q}
\exists~ x_0 \geq 0, \,\exists~ K > 0, \,\forall~ |x| > x_0 \, : \,  q(x) \le K a(x).
\end{equation}
\end{enumerate}
Then
\begin{equation}\label{eq:Gk.res.bd.infty}
\| (\Ac - \ii b)^{-1} \|_{\cB(\HH)} \approx 1, \qquad |b| \to \infty,
\end{equation}
and, for any $\eps>0$,
\begin{equation}\label{eq:Gk.res.bd.infty.C}
\sup_{\la \in \ov {\C_+ \setminus \eps\D}}	\| (\Ac - \la)^{-1} \|_{\cB(\HH)} < \infty.
\end{equation}
\end{theorem}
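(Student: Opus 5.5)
The plan is to reduce everything to Lemma~\ref{lem:T.to.A}: if we can show that $\sigma(\cA)\cap \ii\R \subset \{0\}$ and that $\|T_{\ii b}^{-1}\|_{\cB(L^2)} \lesssim |b|^{-2}$ as $|b|\to\infty$, then \eqref{eq:G.T.rate} gives $\|(\cA-\ii b)^{-1}\|_{\cB(\HH)}\lesssim 1$ for large $|b|$. The matching lower bound $\gtrsim 1$ in \eqref{eq:Gk.res.bd.infty} I would get by testing the resolvent on a quasimode, namely a Gaussian wave packet concentrated where $a$ is of order one, e.g.\ near a minimum of $a$. Finally, \eqref{eq:Gk.res.bd.infty.C} follows from the bound on $\ii\R$ together with the Hille--Yosida bound $\|(\cA-\la)^{-1}\|\le (\Re\la)^{-1}$ for $\Re\la$ large, continuity of the resolvent on compact subsets of $\rho(\cA)$ near the axis, and a Neumann series argument to pass from $\ii b$ to $\ii b+\sigma$ with small $\sigma>0$.

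For the spectral inclusion, Theorem~\ref{thm:A.basic}~(i) reduces the question to injectivity of $T_{\ii b}$ for $b\neq 0$, since $T_{\ii b}$ is Fredholm here by the compact-resolvent property coming from \eqref{eq:a.unbd.incr}. Take $u \in \Ker T_{\ii b}$. Then $\Im \frt_{\ii b}[u]=b\|a^{1/2}u\|^2=0$, so $u=0$ wherever $a>0$. Because $a\not\equiv 0$ is continuous, $u$ vanishes on an open interval, and uniqueness for the ODE $-u''+(q-b^2)u=0$ gives $u\equiv 0$.

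The main obstacle is the estimate $\|T_{\ii b}^{-1}\|\lesssim b^{-2}$, i.e.\ a lower bound $\|(-\partial_x^2+q+\ii b a - b^2)u\|\gtrsim b^2\|u\|$. I would use a semiclassical rescaling with $h=1/b$ and split phase space with a smooth partition adapted to the sizes of $a$ and $q$.
\begin{enumerate}[\upshape (a)]
\item Where $a \ge C$ for a large constant $C$, the imaginary part gives $|\langle T_{\ii b}u,u\rangle|\ge b\,a\|u\|^2 \ge Cb\|u\|^2$. This is too weak by itself, so I would combine it with the real part via the numerical range of $-\partial_x^2+q-b^2+\ii b a$.
\item The real part is elliptic away from the energy surface $\xi^2+q=b^2$.
\item Near the energy surface the imaginary part $b a \gtrsim b\cdot b^{0}$ is not enough, so I expect to need a propagation or commutator argument. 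I would use a multiplier $\langle x\rangle$-weighted escape function $x\xi$, which is admissible thanks to the symbol bounds \eqref{eq:a.symb} and \eqref{eq:q.symb}.
\end{enumerate}
This geometric-control step is exactly the content of \cite[Thm.~3.5]{arnal2026resolvent-dwe}, and I would follow that argument. The condition \eqref{eq:a.gt.q} is used to ensure that where $q$ is large, $a$ is comparably large, so that the damping controls the region where the energy surface reaches large $|x|$. This step is the technical core, and the derivative bounds on $a$ and $q$ are needed precisely to control the commutator errors.
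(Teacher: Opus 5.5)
Your overall architecture matches the paper's. There the statement is quoted from \cite[Thm.~3.5]{arnal2026resolvent-dwe}, whose proof is the argument that Lemma~\ref{lem:T.to.A} extends. The paper's only addition is the simple ODE argument excluding nonzero purely imaginary eigenvalues, from which \eqref{eq:Gk.res.bd.infty.C} follows along the lines you describe: the contraction bound, continuity on compacts, and a Neumann series. Your kernel argument and your quasimode lower bound are fine.

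\textbf{The gap.} The key estimate you set out to prove is false. Lemma~\ref{lem:T.to.A} only requires $\|T_{\ii b}^{-1}\|_{\cB(L^2)}\lesssim|b|^{-1}$, and the bound $|b|^{-2}$ cannot hold. Take a fixed $0\neq\chi\in C_c^\infty(\R)$ and $u_b:=\e^{\ii bx}\chi$. Then $T_{\ii b}u_b=\e^{\ii bx}\big(-\chi''-2\ii b\chi'+q\chi+\ii b a\chi\big)$, so $\|T_{\ii b}u_b\|\lesssim|b|\,\|u_b\|$ and hence $\|T_{\ii b}^{-1}\|\gtrsim|b|^{-1}$.

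Your plan is also internally inconsistent. Inserting $|b|^{-2}$ into \eqref{eq:G.T.rate} would give $\|(\cA-\ii b)^{-1}\|=O(|b|^{-1})$, contradicting the lower bound $\gtrsim 1$ you also claim. That lower bound is proved by the same $u_b$: $U_b=(u_b,\ii b u_b)\in\Dom(\cA)$ satisfies $(\cA-\ii b)U_b=(0,-T_{\ii b}u_b)$ and $\|U_b\|_{\HH}\approx|b|$. So the inequality $\|T_{\ii b}u\|\gtrsim b^2\|u\|$ that your steps (a)--(c) aim at is unattainable. Any argument following \cite{arnal2026resolvent-dwe} must aim at $|b|^{-1}$ instead.

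\textbf{How to repair it.} With the correct target, your assessment in (a) is reversed. Wherever $a\ge c>0$, the inequality $|b|\,\|a^{1/2}u\|^2=|\Im\langle T_{\ii b}u,u\rangle|\le\|T_{\ii b}u\|\,\|u\|$ already gives exactly the required order, as in the proof of Proposition~\ref{prop:high.pos}.

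By (i), the remaining set $\{a<c\}$ is bounded. In one dimension, control can be carried across it with no escape function. Suppose $T_{\ii b}u=f$ and set $E:=|u'|^2+b^2|u|^2$. Then
$E'=2\Re\big((q+\ii ba)u\overline{u'}\big)-2\Re(f\overline{u'})$, and hence
$|(E^{1/2})'|\le\tfrac12(q/|b|+a)E^{1/2}+|f|$.

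Gronwall on a bounded interval containing $\{a<c\}$ bounds $\sup E$ there by two terms: the integral of $E$ over an adjacent unit interval inside $\{a\ge c\}$, plus $\|f\|^2$. That integral is $\lesssim|b|\,\|f\|\,\|u\|$, by a cutoff applied to the real part of the form together with the imaginary-part bound. Combining the two regions gives $b^2\|u\|^2\lesssim|b|\,\|f\|\,\|u\|+\|f\|^2$, i.e.\ $\|T_{\ii b}^{-1}\|\lesssim|b|^{-1}$. This is exactly what Lemma~\ref{lem:T.to.A} needs.
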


In higher dimensions with $\Omega = \Rd$, the analogous uniform resolvent bound for the damping $a(x) = |x|^\beta$, $\beta>0$, follows from a result \cite[Thm.2.1]{Leautaud-2017-26} for Schr\"odinger operators with imaginary potentials $-\Delta + \ii |x|^\beta$ in $L^2(\Rd)$.
\begin{proposition}
\label{prop:x^beta}
Let $\cA$ be as in \eqref{A.action} and \eqref{A.dom} with $\Omega = \Rd$, $a(x) = |x|^\beta$, $\beta>0$, and $q=0$. Then, for any $\eps >0$,
\begin{equation}
\label{res.A.large.x.beta}
\sup_{\la \in \ov {\C_+ \setminus \eps\D}} \|(\Ac - \la)^{-1} \|_{\cB(\HH)} < \infty.
\end{equation}
\end{proposition}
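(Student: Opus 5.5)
The plan is to apply Lemma~\ref{lem:T.to.A}, which reduces the high-frequency resolvent bound for $\cA$ to a bound on $\|T_{\ii b}^{-1}\|_{\cB(L^2)}$ for large $|b|$. In this setting $T_{\ii b} = -\Delta + \ii b |x|^\beta - b^2$ is a Schr\"odinger operator with imaginary potential shifted by $-b^2$. It suffices to show that $\sigma(\cA)\cap \ii\R \subset \{0\}$ and that
\begin{equation}
\|T_{\ii b}^{-1}\|_{\cB(L^2)} \ls |b|^{-2}, \qquad |b| \to \infty.
\end{equation}
Then \eqref{eq:G.T.rate} yields boundedness of $(\cA-\ii b)^{-1}$ for $|b|\ge b_0$. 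Boundedness on compact subsets of $\ov{\C_+}\setminus\eps\D$ follows from Theorem~\ref{thm:A.basic} (real part bound, which gives $\ov{\C_+}\setminus\ii\R\subset\rho(\cA)$) and continuity of the resolvent. Boundedness for $\Re\la$ large follows from $\|(\cA-\la)^{-1}\|\le (\Re\la)^{-1}$ by m-dissipativity. To pass from the imaginary axis to a strip $0\le \Re\la\le c$ with $|\Im\la|$ large, I would use the standard Neumann series perturbation $(\cA-\la)^{-1} = (\cA-\ii b)^{-1}(I-(\la-\ii b)(\cA-\ii b)^{-1})^{-1}$, valid for $\Re\la$ less than the inverse of the uniform bound. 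Combined with the $(\Re\la)^{-1}$ bound, this covers the whole region.

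For the key estimate, I would write $T_{\ii b} = \ii b\big(|x|^\beta + \ii b^{-1}(-\Delta) + \ii b\big)$ and rescale $x = b^{\sigma}y$ with $\sigma$ chosen so that both terms scale together. Setting $b^{\beta\sigma} = b^{-1-2\sigma}$ gives $\sigma = -1/(2+\beta)$. After the unitary dilation, the operator becomes $b^{\beta/(2+\beta)}\big(-\Delta_y + \ii |y|^\beta\big)$ (times a phase) minus $b^2$. Thus $T_{\ii b}^{-1}$ equals $h^{-1}$ times $(-\Delta+\ii|y|^\beta - \mu)^{-1}$, where $h=b^{2\cdot\frac{\beta+1}{\beta+2}}\cdot b^{-\frac{2\beta}{...}}$; I would redo the exponents carefully. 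The net effect is that the spectral parameter $\mu$ moves to $+\infty$ along the positive real axis, with $\mu \approx b^{2-2/(2+\beta)\cdot\ldots}$. The estimate needed is therefore a resolvent bound for $P=-\Delta+\ii|y|^\beta$ along the real axis at large energies $\mu$:
\begin{equation}
\|(P-\mu)^{-1}\| \ls \mu^{-\alpha}
\end{equation}
with a specific decay exponent $\alpha$. This is exactly what \cite[Thm.~2.1]{Leautaud-2017-26} provides; I expect it to give decay $\mu^{-\beta/(\beta+2)}$-type, i.e.\ the rate coming from the damping $|y|^\beta$ felt by trajectories at energy $\mu$. Translating back must produce exactly $|b|^{-2}$. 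Verifying that the exponents match exactly is the main obstacle, and I would carry it out first, since a loss of any power would break the uniform bound.

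Finally, for $\sigma(\cA)\cap\ii\R\subset\{0\}$ I would use the spectral equivalence \eqref{spec.equiv}. If $T_{\ii b}u=0$, then taking the imaginary part of $\frt_{\ii b}[u]$ gives $b\||x|^{\beta/2}u\|^2=0$, so $u=0$ a.e. This gives injectivity. Combined with the resolvent bound for large $|b|$, and with the Fredholm property from the compactness in Theorem~\ref{thm:A.basic}(iii) (since $a\to\infty$), this yields invertibility for every $b\neq0$, because $\sigma(\cA)\setminus(-\infty,0]$ is discrete there and hence consists of eigenvalues.
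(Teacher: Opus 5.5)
Your overall route is the paper's. You use Lemma~\ref{lem:T.to.A}, a dilation reducing $T_{\ii b}$ to $P=-\Delta+\ii|y|^\beta$ at a large real energy, the resolvent bound of \cite[Thm.~2.1]{Leautaud-2017-26}, and absence of imaginary spectrum via $\Im\frt_{\ii b}[u]=b\|a^{1/2}u\|^2$ plus discreteness. Your Neumann-series step from $\ii\R$ to the closed half-plane is a detail the paper leaves implicit.

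\textbf{The gap: wrong target estimate.} The estimate in Lemma~\ref{lem:T.to.A} involves $|b|\,\|T_{\ii b}^{-1}\|_{\cB(L^2)}$. So what you need is $\|T_{\ii b}^{-1}\|_{\cB(L^2)}\lesssim|b|^{-1}$, not $|b|^{-2}$. Moreover, $|b|^{-2}$ is false. Take a fixed $0\neq\chi\in C_c^\infty(\R^d)$ and $u=\chi(x)\e^{\ii b x_1}$. Then $T_{\ii b}u=(-\Delta\chi-2\ii b\,\partial_1\chi+\ii b|x|^\beta\chi)\e^{\ii bx_1}$, so $\|T_{\ii b}u\|\lesssim|b|\,\|u\|$ and hence $\|T_{\ii b}^{-1}\|\gtrsim|b|^{-1}$. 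The step you call the main obstacle therefore cannot succeed as stated. Worse, by your own criterion (``a loss of any power would break the uniform bound''), the correct computation, which gives exactly $|b|^{-1}$, would look to you like a failure.

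\textbf{The corrected computation.} Your intermediate bookkeeping also has errors.
\begin{itemize}
\item The factorization should read $T_{\ii b}=\ii b\big(|x|^\beta-\ii b^{-1}(-\Delta)+\ii b\big)$.
\item After the dilation, the prefactor in front of $P$ is $b^{2/(2+\beta)}$, not $b^{\beta/(2+\beta)}$.
\item The Léautaud--Lerner rate is $\mu^{-\beta/(2(\beta+1))}$, not $\mu^{-\beta/(\beta+2)}$.
\end{itemize}
For $b>0$, conjugate by the unitary $(V_\sigma u)(x)=\sigma^{d/2}u(\sigma x)$ with $b\sigma^{2+\beta}=1$. This gives
\[
\|T_{\ii b}^{-1}\|=\sigma^2\|(P-\mu)^{-1}\|, \qquad \sigma^2=b^{-2/(2+\beta)}, \qquad \mu=\sigma^2b^2=b^{2(\beta+1)/(\beta+2)}.
\]
Inserting $\|(P-\mu)^{-1}\|\lesssim\mu^{-\beta/(2(\beta+1))}$ yields
\[
\|T_{\ii b}^{-1}\|\lesssim b^{-2/(\beta+2)}\,b^{-\beta/(\beta+2)}=b^{-1}.
\]
This is exactly what makes the right-hand side of Lemma~\ref{lem:T.to.A} bounded. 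It is also sharp: the quasimode above, rescaled, shows one cannot get $\mu^{-1}$, which is what your $|b|^{-2}$ would require. For $b<0$, use $T_{\ii b}^*=T_{-\ii b}$.

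With the target corrected to $|b|^{-1}$, the rest of your argument goes through as planned.
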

\begin{proof}[Remarks on the proof of Proposition~\ref{prop:x^beta}]
	In \cite[Thm.~2.1]{Leautaud-2017-26} it is proved in particular that
	\begin{equation}\label{Schr.LL.est}
		\|(-\Delta + \ii |x|^\beta - \mu)^{-1}\|_{\cB(L^2)} \ls \mu^{-\frac{\beta}{2(\beta+1)}}, \qquad \mu \to  +\infty.
	\end{equation}
	Consider only $b >0$. Rescaling $T_{\ii b} = -\Delta +  \ii b |x|^\beta - b^2$ with a parameter $\sigma>0$ leads to
	\begin{equation}
		\|T_{\ii b}^{-1}\|_{\cB(L^2)}  = \sigma^2 \|(-\Delta +   \ii b \sigma^{2+\beta}  |x|^\beta - \sigma^2 b^2)^{-1}\|_{\cB(L^2)} .
	\end{equation}
	Selecting $\sigma$ such that $b \sigma^{2+\beta}  =1$, employing \eqref{Schr.LL.est} and considering that $T_{\ii b}^* = T_{-\ii b}$ (see~\cite[Thm.~2.4]{Freitas-2018-264}), we arrive at
	\begin{equation}
		\|T_{-\ii b}^{-1}\|_{\cB(L^2)} = \|T_{\ii b}^{-1}\|_{\cB(L^2)} \ls b^{-1}, \qquad b \to +\infty.
	\end{equation}
	The claim then follows by Lemma~\ref{lem:T.to.A} as there are no purely imaginary eigenvalues (since $T_{\ii b} f =0$ implies $b \|a^\frac12 f\|^2=\Im \frt_{\ii b}[f] = 0$ and $a(x) \neq 0$ a.e.~in $\Rd$, cf.~Theorem~\ref{thm:A.basic}~(i)).
\end{proof}

Since Assumption~\ref{asm:a.1} is satisfied for both settings in Theorem~\ref{thm:antonio} and Proposition~\ref{prop:x^beta} (in the latter even Assumption~\ref{asm:a.2} holds), the low frequency estimate from Theorem~\ref{thm:low} applies and we arrive at the same conclusions for the decay of solutions as in Theorem~\ref{thm:decay}.
\begin{corollary}
Let $\Omega$, $a$ and $q$ be as in Theorem~\ref{thm:antonio} or Proposition~\ref{prop:x^beta}. Then the solution $u(t)$ of~\eqref{dwe.2ndorder} satisfies the decay estimates \eqref{u.energy.est}--\eqref{u.L2.est} in Theorem~\ref{thm:decay}, where in the setting of Proposition~\ref{prop:x^beta} even~\eqref{dtu.L2.est.gamma}--\eqref{u.L2.est.gamma} hold.
\end{corollary}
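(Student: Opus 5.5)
The plan is to repeat the proof of Theorem~\ref{thm:decay} verbatim, applying the abstract Theorem~\ref{thm:exp.res} with $A = \cA_\KK$, $\scK = \sfX = \KK$ and $\TX = I_\KK$. The only place where uniform positivity of $a$ was used there is the high frequency hypothesis \eqref{hyp:high-freq}, which came from Proposition~\ref{prop:high.pos}. So we replace that input by \eqref{eq:Gk.res.bd.infty.C} (setting of Theorem~\ref{thm:antonio}) or by \eqref{res.A.large.x.beta} (setting of Proposition~\ref{prop:x^beta}). These give $\sup_{\la \in \ov{\C_+\setminus\eps\D}}\|(\cA-\la)^{-1}\|_{\cB(\HH)}<\infty$, and via \eqref{res.H.to.K} in Proposition~\ref{prop:AK} the same bound for $\cA_\KK$ in $\cB(\KK)$. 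In particular $\ov{\C_+}\setminus \ii(-\tau_0,\tau_0)\subset\rho(\cA_\KK)$ for any $\tau_0\ge1$, so hypothesis (i) of Theorem~\ref{thm:exp.res} holds.

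For the low frequencies we check the assumptions of Theorem~\ref{thm:low}. In the setting of Theorem~\ref{thm:antonio}, $a$ is smooth and tends to $+\infty$ at $\pm\infty$. We choose $R$ with $a\ge1$ on $|x|>R$, and take $\Omega_1=(-R,R)$ (enlarging $R$ if needed so that $\|\1_{\Omega_1}a\|_{L^1}>0$; this is possible since $a\not\equiv0$) and $\Omega_2=\R\setminus[-R,R]$. This gives Assumption~\ref{asm:a.1}, with $\gamma=1$. In the setting of Proposition~\ref{prop:x^beta}, $a=|x|^\beta$ and $\Omega=\Rd$. Assumption~\ref{asm:a.2} holds trivially, since $\Rd$ is an exterior domain (complement of the empty compact set, or any ball may be excluded without effect), and $a(x)\gtrsim|x|^\beta$ everywhere. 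Hence $\gamma=2/(2+\beta)$.

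Theorem~\ref{thm:low} then yields \eqref{r.A.small.b.H-H}--\eqref{r.A.small-dt2} on $\ov{\tau_0\D_+}\setminus\{0\}$. With these in hand, the four applications of Theorem~\ref{thm:exp.res} in the proof of Theorem~\ref{thm:decay} go through unchanged:
\begin{enumerate}[\upshape (i)]
\item $\TY=I_{\KK\to\HH}$, $m=\kappa=0$;
\item $\TY=\Pi_2$, $m=1$, $\kappa=\gamma/2$;
\item $\TY=\Pi_1$ into $\cD_\frt$, $m=0$, $\kappa=1/2$;
\item $\TY=\Pi_1$ into $L^2$, $m=0$, $\kappa=\gamma/2$.
\end{enumerate}
These give \eqref{u.energy.est}--\eqref{u.L2.est}, and in the $|x|^\beta$ case also \eqref{dtu.L2.est.gamma}--\eqref{u.L2.est.gamma}.

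The only point needing care is the region of the low frequency hypothesis. Theorem~\ref{thm:exp.res} needs \eqref{hyp:low-freq} on $\tau_0\D_+$ with the same $\tau_0\ge1$ as in (i), while Theorem~\ref{thm:low} gives some possibly small $\tau_0'$. We bridge the gap using the high frequency bound: on $\ov{\C_+}\cap\{\tau_0'\le|\la|\le\tau_0\}$ all resolvent powers are uniformly bounded by \eqref{eq:Gk.res.bd.infty.C} or \eqref{res.A.large.x.beta} together with \eqref{res.H.to.K}, and $|\la|$ is bounded above and below there. So the bounds \eqref{hyp:low-freq} extend after enlarging $C_0$. This is routine, so I expect no real obstacle beyond this bookkeeping.
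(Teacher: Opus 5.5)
Your proposal is correct and follows the paper's argument: Assumption~\ref{asm:a.1} (resp.\ Assumption~\ref{asm:a.2}) holds, so Theorem~\ref{thm:low} applies. Proposition~\ref{prop:high.pos} is replaced by \eqref{eq:Gk.res.bd.infty.C} or \eqref{res.A.large.x.beta}, transferred to $\KK$ via \eqref{res.H.to.K}, and the four applications of Theorem~\ref{thm:exp.res} from the proof of Theorem~\ref{thm:decay} are then rerun. Your extra bookkeeping, which extends the low-frequency bounds from the small half-disc of Theorem~\ref{thm:low} to $\tau_0\D_+$ using the uniform resolvent bound on the intermediate region, is correct and fills in a step the paper leaves implicit.
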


\subsection{Example without the geometric control condition}
\label{ssec:noGCC}

Non-uniformly positive damping coefficients which do not satisfy the geometric control condition may lead to a different spectral behavior for $\Ac$ at infinity. We discuss the following example on a two-dimensional strip, where the resolvent is no longer bounded as $\la = \ii b \to \pm \ii \infty$.

In detail, let for some $n \in \N$
\begin{equation}\label{ex.asm}
\Omega = \R \times (-1, 1) \subset \R^2, \quad q(x) \equiv 0, \quad a(x,y) = x^{2n}, \quad (x,y) \in \Omega.
\end{equation}
Notice that $a(0,y)\equiv0$, i.e.~there is one trajectory in $\Omega$ with no damping. This results in eigenvalues approaching the imaginary axis at $\pm \ii \infty$ with a rate depending on $n$ (see Proposition~\ref{prop:ex.sp} below, and also \cite[Prop.~6.3]{Freitas-2018-264}). Consequently, the resolvent of $\Ac$ is unbounded at $\pm \ii \infty$ and we show in Proposition~\ref{prop:ex.res} that the rate predicted by the eigenvalues~\eqref{ev.strip.rate} is optimal.

The proofs are based on the spectral equivalence to the Schur complement $T_\la$, separation of variables and the subsequent analysis of one-dimensional operators. In particular, for $-\partial_y^2$ subject to Dirichlet boundary conditions in $L^2(-1,1)$, we have an orthonormal basis of $L^2(-1,1)$ given by (normalized) eigenfunctions $\left\{g_j\right\}_{j \in \N}$ such that
\begin{equation}\label{y.spec}
- g_j'' = 	\zeta_j g_j, \qquad \zeta_j = \left(\frac {j \pi}2\right)^2, \qquad g_j \in H_0^1(-1,1) \cap H^2(-1,1), \qquad j \in \N.
\end{equation}
Hence, the Schur complement
\begin{equation}\label{Tla.noGCC}
T_\la = -\Delta +  \la {x^{2n}} + \la^2
\end{equation}
acting in $L^2(\Omega)$ (defined by its quadratic form) is unitarily equivalent to the orthogonal sum of a sequence of operators acting in $L^2(\R)$, namely
\begin{equation}\label{T.decomp}
\begin{aligned}
T_\la  \simeq & \, \bigoplus_{j \in \N} T_{\la,j},
\\
T_{\la,j}   := & -\partial_x^2 +  \la x^{2n} + \la^2 + \zeta_j,
\\[2mm]
\Dom(T_{\la,j})  :=  & \, H^{2}(\R) \cap \Dom(x^{2n}), \qquad \quad  j \in \N,
\end{aligned}
\end{equation}
(see Appendix~\ref{app:separation} for details).
This decomposition allows us to characterize the spectrum of $\cA$ as well as its resolvent norm on the imaginary axis.

\begin{proposition}\label{prop:ex.sp}
Let $\cA$ be as in \eqref{A.action} and \eqref{A.dom} with $\Omega$, $a$ and $q$ as in \eqref{ex.asm}. Then
\begin{equation*}
\sigma_{\rm e2}(\cA) = (-\infty, 0]
\end{equation*}
and, for every fixed $k \in \N_0$, there exists a sequence $\{\la_{k,j}\}_{j \in \N}$ with $\la_{k,j}, \ov{\la_{k,j}} \in \sigma_{\rm p} (\cA)$ having the asymptotic behavior
\begin{equation}\label{ev.strip.rate}
\la_{k,j} = \frac{\ii \pi}{2} j + \frac{\mu_k}2  \left(\frac{\pi j}2\right)^{-\frac{n}{n+1}} \e^{\ii\frac{ \pi}{2} \frac{(n+2)}{(n+1)}} +  \BigO_k\left(j^{-\frac{3n+1}{n+1}}\right), \qquad j \to \infty.
\end{equation}
Here $\left\{\mu_k\right\}_{k \in \N_0}$ denote the eigenvalues of the self-adjoint anharmonic oscillator $-\partial_x^2 + x^{2n}$ in $\Lt(\R)$ on the domain $H^2(\R)\cap \Dom (x^{2n})$.
\end{proposition}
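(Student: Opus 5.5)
The plan is to reduce everything to the Schur complement via the spectral equivalence in Theorem~\ref{thm:A.basic}~(i), and then to the direct-sum decomposition \eqref{T.decomp}: for $\la \in \C\setminus(-\infty,0]$ we have $\la\in\sigma_*(\cA)$ iff $0\in\sigma_*(T_\la)$ for $*\in\{\ ,{\rm p},{\rm e2}\}$. The operator $T_\la$ is unitarily equivalent to $\bigoplus_j T_{\la,j}$, so $0\in\spp(T_\la)$ iff $-\zeta_j-\la^2$ is an eigenvalue of $-\partial_x^2+\la x^{2n}$ for some $j$.

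For the essential spectrum, first note that the non-real part is excluded, since each $T_{\la,j}$ has compact resolvent. For $\la\notin(-\infty,0]$, the operator $-\partial_x^2+\la x^{2n}$ is a complex-rotated anharmonic oscillator with compact resolvent, and the $T_{\la,j}$ are uniformly invertible for large $j$: the numerical range lies in a sector, and $\Re(\la^2+\zeta_j)$ dominates, after rotation, as $\zeta_j\to\infty$. Hence $\sigma_{\rm e2}(T_\la)$ is empty off the negative axis, and $\sigma_{\rm e2}(\cA)\subset(-\infty,0]$. The reverse inclusion should follow from Theorem~\ref{thm:A.basic}~(iv) or a direct Weyl-sequence argument. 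Here $a$ is unbounded at $x\to\pm\infty$ uniformly in $y$, but it is not radial. I would therefore build Weyl sequences for $\cA$ at $-s<0$ directly, taking $u=\phi(x)g_1(y)$ with $\phi$ supported where $x^{2n}\approx s$, mimicking \cite[Sec.~4]{Freitas-2018-264}. The point $0$ is covered by Corollary~\ref{cor:0.sp}, since \eqref{a.q.rel.bdd} fails, and closedness then gives all of $(-\infty,0]$.

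For the eigenvalue asymptotics, I would scale $x=\sigma y$ with $\la\sigma^{2n+2}=1$, so that $-\partial_x^2+\la x^{2n}$ is unitarily equivalent to $\sigma^{-2}(-\partial_y^2+y^{2n})$ via complex scaling. Its eigenvalues are therefore $\la^{1/(n+1)}\mu_k$, with analytic dilation justified because $\la^{1/(2n+2)}$ lies in the admissible sector for $|\arg\la|<\pi$. The eigenvalue equation becomes
\[
\la^2+\zeta_j+\la^{\frac1{n+1}}\mu_k=0 .
\]
I would solve it near $\la\approx \ii\sqrt{\zeta_j}=\ii\pi j/2$ by writing $\la=\ii\sqrt{\zeta_j}+\delta$. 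This gives
\[
2\ii\sqrt{\zeta_j}\,\delta+\delta^2+\mu_k(\ii\sqrt{\zeta_j}+\delta)^{\frac1{n+1}}=0 ,
\]
so to leading order
\[
\delta\approx -\frac{\mu_k}{2\ii}\,\zeta_j^{\frac1{2(n+1)}-\frac12}\e^{\ii\frac{\pi}{2(n+1)}}
=\frac{\mu_k}2\Big(\frac{\pi j}{2}\Big)^{-\frac{n}{n+1}}\e^{\ii\frac\pi2\frac{n+2}{n+1}} .
\]
A Rouché or fixed-point argument in a disc of radius $Cj^{-(3n+1)/(n+1)}$ then gives existence and the error term. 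Expanding $(\ii\sqrt{\zeta_j}+\delta)^{1/(n+1)}$ produces a relative correction of order $\delta/j\sim j^{-(2n+1)/(n+1)}$, and together with $\delta^2/j$ this yields the stated $\BigO_k(j^{-(3n+1)/(n+1)})$. Conjugate symmetry follows from Theorem~\ref{thm:A.basic}~(ii).

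The main obstacle is the rigorous identification of the spectrum of the non-self-adjoint $-\partial_x^2+\la x^{2n}$ with $\la^{1/(n+1)}\mu_k$, which requires a complex-scaling/analyticity argument uniform in $\arg\la$ near $\pi/2$. A secondary difficulty is checking that the $T_{\la,j}$ are uniformly invertible for the essential spectrum claim.
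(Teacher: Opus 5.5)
Your treatment of the eigenvalues follows the paper's proof. Complex scaling turns $0\in\spp(T_{\la,j})$ into $\la^2+\zeta_j+\mu_k\la^{1/(n+1)}=0$. (The dilation is not unitary, but it does identify the eigenvalues; the paper cites \cite[Prop.~6.1]{arnal2026resolvent-dwe} for this.) Rouch\'e on a circle of radius $Cj^{-(3n+1)/(n+1)}$ around the two-term guess then gives \eqref{ev.strip.rate}.

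Your argument for $\sigma_{\rm e2}(\cA)\subset(-\infty,0]$, via uniform invertibility of $T_{\la,j}$ for large $j$ from the numerical range, also works. The paper gets it immediately from Theorem~\ref{thm:A.basic}~(iii): \eqref{a.unbd} holds in the strip, so $T_\la$ has compact resolvent and $\sigma(\cA)\setminus(-\infty,0]$ consists of eigenvalues. This is also what upgrades $\ov{\la_{k,j}}\in\sigma(\cA)$, which is all Theorem~\ref{thm:A.basic}~(ii) gives, to $\ov{\la_{k,j}}\in\spp(\cA)$.

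The gap is the inclusion $(-\infty,0)\subset\sigma_{\rm e2}(\cA)$.

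First, Theorem~\ref{thm:A.basic}~(iv) does not apply, because the strip $\R\times(-1,1)$ contains no sector $S_\delta$. Non-radiality of $a$ is not the obstruction.

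Second, the Weyl sequence you propose cannot work. The set $\{x^{2n}\approx s\}\cap\Omega$ is a bounded set $K$, and no singular sequence for $\cA+s$ can be supported in a fixed bounded set. Take $U_m=(u_m,-su_m)$. Then $(\cA+s)U_m=(0,(\Delta+sa-s^2)u_m)$ and $\|U_m\|_{\HH}^2=\|\nabla u_m\|^2+s^2\|u_m\|^2$. If $U_m\rightharpoonup 0$ and $\operatorname{supp} u_m\subset K$, Rellich gives $u_m\to 0$ in $L^2$. Since $a$ is bounded on $K$, pairing $(\Delta+sa-s^2)u_m\to 0$ with $u_m$ forces $\|\nabla u_m\|\to 0$, contradicting $\|U_m\|_{\HH}=1$. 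The vanishing of $sa-s^2$ near $\{a=s\}$ does not help, because $\Delta u_m$ itself would then have to be small.

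The sequence has to escape to $|x|\to\infty$, where $-s\,x^{2n}$ dominates, and it must oscillate at the WKB frequency. For instance, set
\begin{itemize}
\item $u_m(x,y)=\phi_m(x)g_1(y)$ with $\phi_m=\chi_m\e^{\ii S}$ and $\chi_m=c_m\chi(\cdot/R_m)$;
\item $\chi\in C_c^\infty((1,2))$, $R_{m+1}>2R_m\to\infty$, and $S'=(s x^{2n}-s^2-\zeta_1)^{1/2}$;
\item $c_m\approx R_m^{-n-1/2}$, chosen so that $\|U_m\|_{\HH}=1$.
\end{itemize}
Then $(\Delta+sa-s^2)u_m=(\phi_m''+S'^2\phi_m)g_1$, and
\[
\phi_m''+S'^2\phi_m=(\chi_m''+2\ii\chi_m'S'+\ii\chi_mS'')\e^{\ii S}.
\]
Each term on the right is $\BigO(R_m^{-1})$ in $L^2(\R)$. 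Disjoint supports give $U_m\rightharpoonup 0$.

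The paper does not write this out; it refers to the argument of \cite[Prop.~6.3]{Freitas-2018-264} for $n=1$. Once the negative half-line is established, your treatment of the point $0$ via Corollary~\ref{cor:0.sp}, or by closedness, is fine.
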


\begin{proof}
Since \eqref{a.unbd} is satisfied, we conclude that $\sigma(\Ac) \setminus (-\infty, 0]$ consists only of eigenvalues of finite multiplicities that may at most accumulate at $(-\infty, 0]$ and are symmetric with respect to the real axis. Furthermore, arguing as in \cite[Prop.~6.3]{Freitas-2018-264} where the case $n=1$ was treated, we have $(-\infty,0] = \sigma_{e2}(\cA)$.

To study the eigenvalues of $\Ac$, we employ the spectral equivalence \eqref{spec.equiv} and the decomposition \eqref{T.decomp}, which in a straightforward way implies
\begin{equation}
\sigma (T_\la) = \sigma_{\rm p} (T_\la) = \bigcup_{j \in \N} \sigma_{\rm p} \left(T_{\la,j}\right).
\end{equation}
We thus search for $\la \in \C$ with $\Re\la \le 0$, $\Im\la > 0$ and $0 \in \spp(T_{\la,j})$ for some $j \in \N$.
Employing a standard complex scaling argument for fixed $j \in \N$ (see e.g.~\cite[Prop.~6.1]{arnal2026resolvent-dwe} for details), one can prove that $0 \in \spp(T_{\la,j})$ if and only if 
\begin{equation}\label{ev.eq}
F(\la) \equiv F_{j,k}(\la) := \la^2 + \zeta_j + \mu_k \la^{\frac1{n+1}} = 0
\end{equation}
for some $k \in \N_0$ and $\zeta_j$ as in~\eqref{y.spec} . Here and in the following, the complex $(n+1)$-th root is taken as the holomorphic branch
\begin{equation}\label{root}
(\cdot)^\frac1{n+1} \,\, : \,\, \C \setminus (-\infty, 0] \to \C, \quad r\e^{\ii\varphi} \mapsto r^\frac1{n+1} \e^{\ii \frac\varphi {n+1}}, \quad \f \in (-\pi,\pi).
\end{equation}
We define $F$ as a holomorphic function on the domain
$
\{z \in \C \,: \, \arg z \in (\pi/2 - \eps, \pi) \}
$
with some $\eps >0$, which contains the range of sought solutions.

We fix $k \in \N_0$. To obtain the existence of the solutions to~\eqref{ev.eq} and their asymptotic behavior as $j \to \infty$, we guess the first terms of the expansion and apply the Rouch\'e theorem suitably. From now on, asymptotic relations are understood as $j \to \infty$ which will be considered large enough if needed. Assume that $\la$ is a solution of \eqref{ev.eq} for some large $j$. Taking powers in~\eqref{ev.eq}, it follows that
\begin{equation}
(\la^2 + \zeta_j)^{n+1} + (-1)^n \la \mu_k^{n+1} = 0.
\end{equation}
Since $\abs{\zeta_j} \approx j^2$, a solution $\la$ is necessarily of order $j$ and then, more precisely,
\begin{equation}\label{la.approx.j}
|\la| \approx j, \qquad r_j:=\la^2 + \zeta_j = \BigO\left(j^{\frac1{n+1}}\right).
\end{equation}
From this we further derive
\begin{equation*}
\la^2 = -\zeta_j + r_j = -\zeta_j \left(1 + \BigO\left(j^{-\frac{2n+1}{n+1}}\right)\right),
\end{equation*}
which by the requirement $\Im \la >0$ then implies
\begin{equation*}
\la =  \ii\sqrt{\zeta_j} + \BigO\left(j^{-\frac{n}{n+1}}\right).
\end{equation*}
Using this relation, we finally arrive at
\begin{equation}
\begin{aligned}
\la - \ii \sqrt{\zeta_j} = \frac{\la^2 + \zeta_j}{\la + \ii \sqrt{\zeta_j}} & = 
- \frac{ \mu _k \la^\frac{1}{n+1}}{2 \ii\sqrt{\zeta_j} + \BigO\left(j^{-\frac{n}{n+1}}\right)} \\
& = -  \frac{\mu _k}2 \frac{\left(\ii \sqrt{\zeta_j}\right)^\frac{1}{n+1}}{\ii \sqrt {\zeta_j}} \frac{\left(1 + \BigO \left(j^{-\frac{2n+1}{n+1}} \right)\right)}{\left(1 + \BigO \left(j^{-\frac{2n+1}{n+1}}\right)\right)} \\
& = -\frac{\mu _k}2 \left(\ii \sqrt{\zeta_j} \right)^{-\frac{n}{n+1}} \left( 1 + \BigO \left( j^{-\frac{2n+1}{n+1}} \right)\right).
\end{aligned}
\end{equation}
From the last identity we can easily read off that the first two terms in the expansion should be
\begin{equation}
\la_0 := \frac{\ii \pi}{2} j + \frac{\mu_k}2 \left(\frac{\pi j}2\right)^{-\frac{n}{n+1}}\e^{\ii \frac{\pi}{2} \frac{n+2}{n+1}} \neq 0.
\end{equation}

Considering a small parameter $\alpha \in \C$, we are looking for zeros of the function $f(\alpha) := F (\la_0 + \alpha)$. Note that  $f$ is holomorphic for $|\alpha| \leq 1$ if $j$ is large enough. A straightforward calculation yields
\begin{equation}\label{f.alph}
\begin{aligned}
f(\alpha) = 2 \alpha \la_0 + \alpha^2 & - \left(\frac{\mu_k}2 \right)^2 \left( \frac{\pi j}2 \right)^{-\frac{2n}{n+1}} \e^{\ii \pi \frac{1}{n+1}}  \\
& \qquad \qquad - \mu_k \left(\frac{ \pi j}2 \right)^{\frac1{n+1}} \e^{\ii \frac{\pi}{2} \frac{1}{n+1}} + \mu_k  (\la_0 + \alpha)^{\frac1{n+1}}.
\end{aligned}
\end{equation}
Using the formula $(z_1z_2)^{1/(n+1)} = z_1^{1/(n+1)} z_2^{1/(n+1)}$ (which is justified with the convention~\eqref{root} when $z_1$ is purely imaginary and $z_2$ is in a small neighborhood of $1$), we obtain
\[
 (\la_0 + \alpha)^{\frac1{n+1}}  = \left( \frac{\ii \pi j}{2}  \right)^{\frac1{n+1}} \left( 1 + x \right)^\frac1{n+1}, \qquad x = \frac{\mu_k}2 \left(\frac{\pi j}2 \right)^{- \frac{2n+1}{n+1}} \e^{\ii \frac\pi2 \frac1{n+1}} - \frac{2 \ii\alpha }{\pi j}.
\]
The expansion $(1+x)^\frac{1}{n+1} = 1 + \frac x {n+1} + \Oc(\abs x^2)$ then gives
\begin{align*}
 (\la_0 + \alpha)^{\frac1{n+1}}
 & = \left( \frac {\pi j}{2} \right)^{\frac 1 {n+1}}  \e^{\ii 
 	\frac\pi2 \frac {1}{n+1}} + \frac {\mu_k}{2(n+1)} \left( \frac {\pi j}{2} \right)^{-\frac {2n}{n+1}} \e^{{\ii\pi}\frac 1{n+1}} \\
& \quad - \frac {\ii \a}{n+1} \left( \frac {\pi j}{2} \right)^{-\frac n {n+1}} \e^{\ii 
	\frac\pi2 \frac {1}{n+1}} + \Oc_k \big(j^{-\frac {2n+1}{n+1}}\big),
\end{align*}
where the remainder depends on $k$ but is uniform in $\abs \a \leq 1$.
Putting together the above, one calculates
\begin{equation}	\label{eq:f.exp}
\begin{aligned}
f(\alpha)  = 2\alpha \la_0 + \alpha^2 & + \frac{\mu_k}{2} \e^{\ii \pi \frac{1}{n+1}} \left( \frac{1}{n+1} -  \frac{\mu_k}{2}\right) \left(\frac{\pi j}{2}\right)^{-\frac{2n}{n+1}}  \\
& - \frac{\ii \mu_k}{n+1} \e^{\ii \frac{\pi}2 \frac{1}{n+1}} \alpha \left(\frac{\pi j}{2}\right)^{-\frac{n}{n+1}} + \BigO_k \left(j^{-\frac{2n+1}{n+1}}\right).
\end{aligned}
\end{equation}

We apply the Rouch\'e theorem (see e.g.~\cite[Thm.~V.3.8]{Conway-1978-11}) to $f$ and $g: \alpha \mapsto 2\alpha \la_0$. To this end, we bound their difference by the sum of their moduli on a circle $|\alpha| = C j^{-(3n+1)/(n+1)}$ (matching the sought remainder) for some $C \equiv C_k>0$ to be selected suitably. For such $\alpha$, considering~\eqref{eq:f.exp} we have
\begin{equation}
	\label{eq:f-g.exp}
	\begin{aligned}
		|f(\alpha)-g(\alpha)| & = \frac{\mu_k}{2} \left|\frac{1}{n+1} - \frac{\mu_k}{2}\right| \left(\frac{\pi j}{2}\right)^{-\frac{2n}{n+1}} \left(1 + \Oc_k (j^{-\frac1{n+1}})\right), \\
		|g(\alpha)| & =  \pi C  j^{-\frac{2n}{n+1}} \left(1 + \Oc_k (j^{-\frac{2n+1}{n+1}})\right) .
	\end{aligned}
\end{equation}
Choosing first 
\begin{equation}
	C = \frac{\mu_k}{\pi} \left|\frac{1}{n+1} - \frac{\mu_k}{2}\right| \left(\frac{\pi}{2}\right)^{-\frac{2n}{n+1}},
\end{equation}
(depending only on $k$), we can find $j_0$ such that, for every $j \ge j_0$ we have $|\alpha|\le 1$ and \eqref{eq:f.exp} applies uniformly in $\alpha$. It then follows from \eqref{eq:f-g.exp} that, possibly increasing $j_0$, we have
\begin{equation}
	|f(\alpha) - g(\alpha) | < |g(\alpha)|, \qquad |\alpha| = C j^{-\frac{3n+1}{n+1}}.
\end{equation} 
By the Rouch\'e theorem, we conclude that, for every $j \ge j_0$, $f$ has exactly one zero $\alpha_{k,j}$ with $|\alpha_{k,j}| < C j^{-\frac{3n+1}{n+1}}$, and in turn $F$ has exactly one zero $\la_{k,j}$ with
\begin{equation}
|\la_{k,j} - \la_0| \le C j^{-\frac{3n+1}{n+1}},
\end{equation}
which is precisely the claim~\eqref{ev.strip.rate}.
\end{proof}

\begin{proposition}\label{prop:ex.res}
Let $\cA$ be as in \eqref{A.action} and \eqref{A.dom} with $\Omega$, $a$ and $q$ as in \eqref{ex.asm}. Then
\begin{equation}\label{ex.res.G}
\|(\Ac-\ii b)^{-1}\|_{\cB(\cH)} \ls |b|^{\frac{n}{n+1}}, \qquad |b| \to \infty.
\end{equation}
(Notice that this rate is optimal due to Proposition~\ref{prop:ex.sp}).
\end{proposition}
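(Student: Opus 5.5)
The plan is to reduce the claim to a resolvent bound for the Schur complement and then apply Lemma~\ref{lem:T.to.A}.

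\emph{Hypotheses of Lemma~\ref{lem:T.to.A}.} We need $\sigma(\cA)\cap\ii\R\subset\ii(-b_0,b_0)$ for some $b_0$. Take $b\neq0$ and suppose $T_{\ii b}f=0$. Then
\[
\Im\frt_{\ii b}[f]=b\|x^n f\|^2=0,
\]
so $f=0$ a.e. Since Theorem~\ref{thm:A.basic}(iii) gives discreteness of $\sigma(\cA)\setminus(-\infty,0]$, spectral equivalence shows that there is no nonzero imaginary spectrum. Lemma~\ref{lem:T.to.A} then yields
\[
\|(\cA-\ii b)^{-1}\|\lesssim |b|^{-1}+|b|\,\|T_{\ii b}^{-1}\|_{\cB(L^2)}.
\]
So it suffices to prove $\|T_{\ii b}^{-1}\|\lesssim |b|^{-1/(n+1)}$ for $b\to+\infty$; the case $b\to-\infty$ follows by adjointness, $T_{\ii b}^*=T_{-\ii b}$.

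\emph{Reduction to one dimension.} By the decomposition \eqref{T.decomp}, $\|T_{\ii b}^{-1}\|=\sup_j\|T_{\ii b,j}^{-1}\|$ with
\[
T_{\ii b,j}=-\partial_x^2+\ii b x^{2n}+(\zeta_j-b^2).
\]
Rescale $x=\sigma y$ with $b\sigma^{2n+2}=1$, i.e.\ $\sigma=b^{-1/(2n+2)}$. Then $T_{\ii b,j}$ is unitarily equivalent to $\sigma^{-2}\bigl(-\partial_y^2+\ii y^{2n}+\sigma^2(\zeta_j-b^2)\bigr)$, and $\sigma^{-2}=b^{1/(n+1)}$. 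Writing $S:=-\partial_y^2+\ii y^{2n}$ and $\nu=\sigma^2(b^2-\zeta_j)\in\R$, we obtain
\[
\|T_{\ii b,j}^{-1}\|=b^{-\frac1{n+1}}\|(S-\nu)^{-1}\|.
\]
Therefore the whole claim reduces to the uniform estimate
\[
\sup_{\nu\in\R}\|(S-\nu)^{-1}\|<\infty.
\]

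\emph{Main step: uniform bound for $(S-\nu)^{-1}$ on the real line.} This is a statement about a fixed non-self-adjoint operator $S$, and it is where the real work lies. I split $\nu\in\R$ into three regimes.

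For $\nu\to-\infty$, we have $\Re\langle(S-\nu)u,u\rangle\ge|\nu|\|u\|^2$, which gives a bound $\le|\nu|^{-1}$.

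For $\nu$ in a compact set, I would first check that $S$ has no real spectrum. The eigenvalues of $S$ are $\e^{\ii\pi/(2n+2)}\mu_k$ by complex scaling, so they lie strictly off $\R$. Since $S$ has compact resolvent, $\nu\mapsto\|(S-\nu)^{-1}\|$ is continuous on $\R$ and hence bounded on compact sets.

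For $\nu\to+\infty$, I would invoke \cite[Thm.~2.1]{Leautaud-2017-26}, as in \eqref{Schr.LL.est} with $\beta=2n$. That result gives $\|(S-\mu)^{-1}\|\lesssim\mu^{-n/(2n+1)}\to0$, which is certainly bounded. If one prefers a self-contained argument, a direct proof is also possible. Test with $\Im\langle(S-\nu)u,u\rangle=\|y^nu\|^2$, which controls $u$ where $|y|\ge1$. Near $y=0$, where $\Re$ gives $\|u'\|^2-\nu\|u\|^2$, use a localized commutator/multiplier argument. This yields decay like $\nu^{-n/(2n+1)}$. I expect this high-energy regime to be the delicate point, but citing the known Schr\"odinger estimate settles it.

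\emph{Conclusion.} Combining the three regimes gives $\|T_{\ii b}^{-1}\|\lesssim|b|^{-1/(n+1)}$ uniformly in $j$. Plugging this into Lemma~\ref{lem:T.to.A} gives
\[
\|(\cA-\ii b)^{-1}\|\lesssim|b|^{-1}+|b|^{1-\frac1{n+1}}=|b|^{-1}+|b|^{\frac n{n+1}},
\]
which is \eqref{ex.res.G}.
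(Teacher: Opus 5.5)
Your proof is correct and follows the paper's argument. The common steps are: reduction via Lemma~\ref{lem:T.to.A} to the bound $\|T_{\ii b}^{-1}\|\lesssim |b|^{-1/(n+1)}$, separation of variables via \eqref{T.decomp}, the scaling $\sigma=b^{-1/(2(n+1))}$, and a uniform bound on $\|(-\partial_x^2+\ii x^{2n}-\nu)^{-1}\|$ over $\nu\in\R$ (from accretivity, absence of real eigenvalues and a large-$\nu$ decay estimate). The differences are minor: you take the large-$\nu$ estimate from \cite[Thm.~2.1]{Leautaud-2017-26} instead of \cite[Eq.~(7.2)]{arnal2023resolvent}, and you explicitly verify the hypothesis $\sigma(\cA)\cap\ii\R\subset\ii(-b_0,b_0)$ of Lemma~\ref{lem:T.to.A}, which the paper leaves implicit in this proof.
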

\begin{proof}
We show below that
\begin{equation}
\| T_{\ii b}^{-1}\|_{\cB(L^2(\Omega))} \ls |b|^{-\frac1{n+1}}, \qquad b \in \R \setminus \{0\},
\end{equation}
and the claim then follows by Lemma~\ref{lem:T.to.A}.
We consider only  $b > 0$, the case $b < 0$ follows by using the fact that $T_{\la}^* = T_{\overline\la}$ (see \cite[Thm.~2.4]{Freitas-2018-264}).
To this end, we use that as a consequence of \eqref{T.decomp} (see Appendix~\ref{app:separation} for details), we have
\begin{equation}
\label{Tla.res.orth}
\| T_{\la}^{-1}\|_{\cB(L^2(\Omega))}  = \sup_{j \in \N} \| T_{\la,j}^{-1}\|_{\cB(L^2(\R))} .
\end{equation}

Let $j \in \N$ be fixed. For $\sigma > 0$ (to be chosen suitably below) we define the family of unitary operators 
\begin{equation}
\left(U_{\sigma} u\right)(x) := \sigma^{\frac12} u(\sigma x), \qquad u \in \Lt(\R), \qquad x \in \R.
\end{equation}
Then  we have
\begin{equation}
\left(U_{\sigma} T_{\ii b,j} U_{\sigma}^{-1} u\right)(x) = - \frac1{\sigma^2} u''(x) + \left(\ii b \sigma^{2n} x^{2n} - b^2 +\zeta_j\right) u(x), \qquad x \in \R,
\end{equation}
for any $u \in U_{\sigma}\left(\Dom(T_{\ii b,j})\right)$. Setting
$\sigma := b^{-\frac1{2(n+1)}}$,
we define the operator family
\begin{equation}
S_{b,j} := \sigma^2 U_{\sigma} T_{\ii b,j} U_{\sigma}^{-1} = -\partial_x^2 + \ii x^{2n} + \sigma^2(\zeta_j - b^2).
\end{equation}
The operator
\begin{equation}
H_n = -\partial_x^2 + \ii x^{2n}, \qquad \Dom(H_n) = H^{2}(\R) \cap \Dom\left(x^{2n}\right),
\end{equation}
is known to be m-accretive and it follows from \cite[Eq.~(7.2)]{arnal2023resolvent} that there exist constants $a_0$, $K_0 > 0$ such that
\begin{equation}
\| (H_n - a)^{-1} \|_{\cB(L^2(\R))} \le K_0 a^{-\frac{n}{2n+1}}, \qquad a \geq a_0.
\end{equation}
Moreover,
\begin{equation}
\Num(H_n) \subset \left\{ \la \in \C\, : \, \Re \la \ge0, \, \Im \la \ge 0 \right\}
\end{equation}
and $H_n$ has no real eigenvalues (by complex scaling, the eigenvalues of $H_n$ are complex-rotated eigenvalues of the self-adjoint operator $-\partial_x^2 + x^{2n}$ in $L^2(\R)$). Hence, there exists $K_1>0$ such that
\begin{equation}
\| (H_n - a)^{-1} \|_{\cB(L^2(\R))} \le K_1, \quad a \in \R.
\end{equation}
From this, we conclude that, for all $j \in \N$ and $b>0$, also $\| S_{b,j}^{-1} \|_{\cB(L^2(\R))} \le K_1$. Returning to $T_{\ii b,j}$, we arrive at
\begin{equation*}
\| T_{\ii b,j}^{-1} \|_{\cB(L^2(\R))} \le K_1 b^{-\frac1{n+1}}, \qquad b > 0, \quad j \in \N. \qedhere
\end{equation*}
\end{proof}

To deduce time decay from the previous resolvent estimates, we employ the following semigroup result allowing for the growth of the resolvent norm at $\pm \ii \infty$.

\begin{theorem}[{\cite[Thm.~8.4]{batty2016fine}}]\label{thm:bct.2}
Let $(\e^{tA})_{t \ge 0}$ be a bounded $C_0$-semigroup on a Hilbert space $\mathscr H$ with generator $A$. Assume that $\sigma(A) \cap \ii\R = \{0\}$ and that there exist $\alpha \geq1$ and $\beta >0$ such that
\begin{equation}\label{res.2.sing}
\| (A - \ii b)^{-1} \|_{\cB(\mathscr H)} =
\begin{cases}
\BigO\left(|b|^{-\alpha}\right), &|b| \to 0,
\\[1mm]
\BigO\left(|b|^\beta\right), &|b| \to \infty.
\end{cases}
\end{equation}
Then
\begin{equation}\label{exp.res.2.sing.ab}
\|\e^{tA} A^\alpha (A-1)^{-(\alpha+\beta)} \|_{\cB(\mathscr H)} = \BigO\left(t^{-1}\right), \qquad t \to \infty,
\end{equation}
and
\begin{equation}\label{exp.res.2.sing}
\hspace{1.1cm} \|\e^{tA} A (A-1)^{-2} \|_{\cB(\mathscr H)} = \BigO\big(t^{-\frac 1 \gamma}\big), \qquad t \to \infty,
\end{equation}
where $\gamma=\max\{\alpha,\beta\}$.

Conversely, if \eqref{exp.res.2.sing} holds for some $\gamma>0$, then \eqref{res.2.sing} holds for $\alpha = \max\{\gamma,1\}$ and $\beta = \gamma$.
\end{theorem}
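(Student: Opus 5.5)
Since this is quoted from \cite[Thm.~8.4]{batty2016fine}, the plan is to reconstruct it from the singularity-at-zero version (\cite[Thm.~7.6]{batty2016fine}, used in Remark~\ref{rem:Batty}) and the Hilbert-space argument of Theorem~\ref{thm:exp.res}. The key device is the functional calculus of the sectorial operator $-A$, since $A$ generates a bounded $C_0$-semigroup. This makes the fractional powers $A^\alpha$ and $(A-1)^{-s}$ well defined, commuting with $\e^{tA}$ and with the resolvent. I would set
\begin{equation*}
B := A^\alpha (A-1)^{-(\alpha+\beta)} ,
\end{equation*}
and first show that $s\mapsto (A-\ii s)^{-1} B$ is uniformly bounded on $\ii\R\setminus\{0\}$.

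Near $s=0$, I would use the identity $A(A-\ii s)^{-1} = I + \ii s (A-\ii s)^{-1}$ together with the moment inequality for fractional powers. This should give $\|A^\alpha (A-\ii s)^{-1}\| \lesssim |s|^{-\alpha}\cdot |s|^{\alpha} = \BigO(1)$; for non-integer $\alpha$, interpolate between the two neighbouring integers. For $|s|\to\infty$, the analogous argument with $(A-1)^{-\beta}$ absorbs the growth $|s|^{\beta}$. Concretely, $\|(A-1)^{-\beta}(A-\ii s)^{-1}\| \lesssim |s|^{-\beta}\|(A-\ii s)^{-1}\| + \ldots$, obtained by writing $(A-1)^{-1}=(\ii s-1)^{-1}\big(I-(A-1)^{-1}\cdot\text{stuff}\big)$ repeatedly, or by interpolation in the exponent.

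Next, to turn the bounded ``smoothed resolvent'' into $\BigO(t^{-1})$ decay, I would repeat the Plancherel argument from the proof of Theorem~\ref{thm:exp.res}. In Hilbert space, boundedness of the semigroup gives $\int_\R \|(A-(\mu+\ii\tau))^{-1}x\|^2\,\dd\tau \lesssim \|x\|^2/\mu$, and similarly for $A^*$. Writing $t\,\e^{tA}Bx$ via one integration by parts in the inverse Laplace integral produces $\int \e^{\ii t\tau}\langle (A-\ii\tau)^{-2}Bx,y\rangle\,\dd\tau$. I would bound this by Cauchy--Schwarz: one factor $(A-\ii\tau)^{-1}B$ is uniformly bounded, and the remaining factor is $L^2$ in $\tau$ with norm $\lesssim\|x\|$, and likewise on the adjoint side. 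Letting $\mu\to0$ carefully, as in \cite{batty2016fine}, yields \eqref{exp.res.2.sing.ab}.

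For \eqref{exp.res.2.sing}, I would apply the moment inequality to interpolate between the uniform bound $\|\e^{tA}\|\lesssim1$ and \eqref{exp.res.2.sing.ab}. Since $A(A-1)^{-2}$ sits at ``fraction'' $1/\gamma$ of $A^\alpha(A-1)^{-(\alpha+\beta)}$ in both the zero and the infinity scale (after first trading to $A^\gamma(A-1)^{-2\gamma}$), this gives the rate $t^{-1/\gamma}$. For the converse, I would use the standard argument of Batty--Duyckaerts type. Write $(A-\ii s)^{-1}A(A-1)^{-2}$ as a Laplace transform, $-\int_0^\infty \e^{-\ii st}\e^{tA}A(A-1)^{-2}\,\dd t$, split at $t=|s|^{-\gamma}$ or $|s|^{\gamma}$, and use the assumed decay on one part and the resolvent identity on the other to recover the polynomial resolvent bounds.

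I expect the main obstacle to be the first step: getting the uniform bound on $(A-\ii s)^{-1}B$ with non-integer exponents, in particular near zero where $\alpha$ may be fractional, which requires care with the moment inequality. The limiting argument $\mu\to0$ in the Plancherel step is a secondary delicate point.
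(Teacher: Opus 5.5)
The paper gives no proof of this statement; it is quoted from \cite{batty2016fine}. Your sketch therefore has to stand on its own, and its decisive step fails as written. Pointwise inverse Laplace transform plus Cauchy--Schwarz cannot turn the uniform bound on $(A-\la)^{-1}B$, with $B=A^\alpha(A-1)^{-(\alpha+\beta)}$, into $\BigO(t^{-1})$.

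By Plancherel,
\[
\int_\R\|(A-\mu-\ii\tau)^{-1}x\|^2\,\dd\tau=2\pi\int_0^\infty\e^{-2\mu t}\|\e^{tA}x\|^2\,\dd t\le\pi M^2\mu^{-1}\|x\|^2 .
\]
This is sharp for the worst $x$ whenever $0\in\sigma(A)$, since then $\|(A-2\mu)^{-1}\|\ge(2\mu)^{-1}$. So the ``remaining factor'' is not $L^2$ in $\tau$ with norm $\lesssim\|x\|$ uniformly in $\mu$. On $\ii\R$ itself, the integral of $\langle(A-\ii\tau)^{-2}Bx,y\rangle$ that you write need not even converge.

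On the line $\Re\la=\mu$ your estimate therefore reads $t\|\e^{tA}Bx\|\lesssim\e^{\mu t}\mu^{-1}\|x\|$. The best choice $\mu=1/t$ gives only the trivial bound $\|\e^{tA}B\|=\BigO(1)$.

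Keeping $B$ on one side helps only partly. One has $\|(A-\mu-\ii\,\cdot\,)^{-1}Bx\|_{L^2(\R)}\lesssim\|x\|$ uniformly in $\mu\in(0,1]$, by the resolvent identity with the line $\Re\la=1$. But the adjoint factor still costs $\mu^{-1/2}$, which yields only $t^{-1/2}$. Uniform control on both sides would require $B^2$.

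No careful limit $\mu\to0$ repairs this. It is also why the proof of Theorem~\ref{thm:exp.res} cannot be copied: there Cauchy--Schwarz is used only on a line to the left of $\ii\R$ where the resolvent is uniformly bounded. No such line exists here.

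The missing idea is the Borichev--Tomilov mechanism. Apply Plancherel in the time variable to $t\mapsto t\e^{tA}Bx$, whose Laplace transform is $(A-\la)^{-2}Bx=(A-\la)^{-1}B\,(A-\la)^{-1}x$. This gives
\[
\int_0^\infty\e^{-2\mu t}t^2\|\e^{tA}Bx\|^2\,\dd t=\frac1{2\pi}\int_\R\|(A-\mu-\ii\tau)^{-2}Bx\|^2\,\dd\tau\le\frac{K^2M^2}{2\mu}\|x\|^2,\qquad K:=\sup_{\Re\la>0}\|(A-\la)^{-1}B\|.
\]
Then use almost monotonicity, $\|\e^{sA}Bx\|\le M\|\e^{tA}Bx\|$ for $s\ge t$. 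With $\mu=1/s$ this gives
\[
s^3\|\e^{sA}Bx\|^2\lesssim\int_{s/2}^st^2\|\e^{tA}Bx\|^2\,\dd t\lesssim s\|x\|^2,
\]
which is the $\BigO(t^{-1})$ bound. Note that $K$ is needed on the open half-plane, not only on $\ii\R\setminus\{0\}$. This follows from $(A-\mu-\ii\tau)^{-1}B=(A-\ii\tau)^{-1}B+\mu(A-\mu-\ii\tau)^{-1}(A-\ii\tau)^{-1}B$ and $\|\mu(A-\mu-\ii\tau)^{-1}\|\le M$, plus continuity at $\tau=0$.

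Your other steps are salvageable with care.
\begin{itemize}
\item \emph{Fractional $\alpha$.} Write $\alpha=k+\theta$ with $\theta\in(0,1)$. Peel off $A^k(A-\ii s)^{-1}=\sum_{j<k}(\ii s)^jA^{k-1-j}+(\ii s)^k(A-\ii s)^{-1}$. Then apply the moment inequality only to $(-A)^\theta(A-\ii s)^{-1}$, between $\|(A-\ii s)^{-1}\|\lesssim|s|^{-\alpha}$ and $\|A(A-\ii s)^{-1}\|\lesssim|s|^{1-\alpha}$. Interpolating between consecutive integer levels of $A^k(A-1)^{-k}(A-\ii s)^{-1}$ is lossy because of the bounded polynomial terms.
\item \emph{Interpolation for \eqref{exp.res.2.sing}.} This step is fine, since $\gamma\ge1$.
\item \emph{Converse.} The integral $\int_0^\infty\e^{-\ii st}\e^{tA}A(A-1)^{-2}\,\dd t$ is not absolutely convergent when $\gamma\ge1$. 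Bounding $(A-\ii s)^{-1}A(A-1)^{-2}$ and then dividing out loses a power of $|s|$ at each end. Instead:
\begin{itemize}
\item apply $x=\e^{-\ii s\tau}\e^{\tau A}x-\int_0^\tau\e^{-\ii st}\e^{tA}(A-\ii s)x\,\dd t$ to $x$ itself, on a suitable core;
\item write $\e^{\tau A}x=\e^{\tau A}A(A-1)^{-2}(A-2+A^{-1})x$ and $A^{-1}x=(\ii s)^{-1}\big(x-A^{-1}(A-\ii s)x\big)$;
\item choose $\tau\approx(|s|+|s|^{-1})^\gamma$.
\end{itemize}
The leftover term $(\ii s)^{-1}\e^{\tau A}(A-1)^{-2}(A-\ii s)x$ is exactly what forces $\alpha=\max\{\gamma,1\}$.
\end{itemize}
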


In conclusion, since the singularity of the resolvent at $\pm \ii \infty$ in \eqref{ex.res.G} is milder than at zero (see Theorem~\ref{thm:low}), the energy decay of the solutions for suitable initial data remains as for previous cases. 

\begin{corollary}
Let $\Ac$ be as in \eqref{A.action} and \eqref{A.dom} with $\Omega$, $a$ and $q$ as in \eqref{ex.asm}. Then there exists $C>0$ such that, for any initial data $F =(f,g) \in \widetilde{\KK} :=\Dom(\Ac) \cap \Ran(\Ac)$ and $t \geq 0$, the solution $u (t)$ of \eqref{dwe.2ndorder} satisfies
\begin{align}
\|\partial_t u(t)\|_{L^2} + \|\nabla u(t)\|_{L^2} & \leq C \|F\|_{\widetilde{\KK}} \langle t \rangle^{-1},
\label{u.energy.est.2.sing}
\end{align}
where
\begin{equation}\label{tilde.I(v).def}
\|F\|_{\widetilde{\KK}}^2 : = \nr{F}_{\HH}^2 + \nr{\Ac F}_{\HH}^2 + \nr{\Ac\inv F}_{\HH}^2 = \|F\|_{\KK}^2 + \|\nabla g\|_{L^2}^2 + \|\Delta f - a g\|_{L^2}^2.
\end{equation}
\end{corollary}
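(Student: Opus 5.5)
The plan is to apply Theorem~\ref{thm:bct.2} to the contraction semigroup $\e^{t\cA}$ on $\HH$ with $\alpha = 1$ and $\beta = \frac{n}{n+1}$, and then to translate the operator $\cA(\cA-1)^{-(\alpha+\beta)}$ into the norm on $\widetilde{\KK}$.

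\textbf{Step 1: verify the hypotheses of Theorem~\ref{thm:bct.2}.}
\begin{itemize}
\item \emph{Low frequencies.} Assumption~\ref{asm:a.1} holds: take $\Omega_1 = (-1,1)^2$ (Lipschitz, with $\|\1_{\Omega_1}a\|_{L^1}>0$) and $\Omega_2 = \Omega\setminus\overline{\Omega_1}$, where $a \ge 1$. Hence \eqref{r.A.small.b.H-H} gives $\|(\cA-\ii b)^{-1}\|_{\cB(\HH)} \lesssim |b|^{-1}$ as $b\to0$, i.e.\ $\alpha=1$.
\item \emph{High frequencies.} Proposition~\ref{prop:ex.res} gives growth $|b|^{n/(n+1)}$, i.e.\ $\beta = n/(n+1)$.
\item \emph{Spectrum on the imaginary axis.} We need $\sigma(\cA)\cap\ii\R=\{0\}$. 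Nonzero $\ii b$ lies in $\rho(\cA)$ by Theorem~\ref{thm:A.basic}(ii) together with the spectral equivalence: $T_{\ii b}f=0$ forces $b\|a^{1/2}f\|^2 = \Im\frt_{\ii b}[f]=0$, and $a\neq 0$ a.e. Also $0\in\sigma(\cA)$ by Corollary~\ref{cor:0.sp}, since \eqref{a.q.rel.bdd} fails: $\Omega$ is unbounded in $x$ and $q=0$.
\end{itemize}

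\textbf{Step 2: time decay from \eqref{exp.res.2.sing.ab}.} Theorem~\ref{thm:bct.2} yields
\[
\|\e^{t\cA}\cA(\cA-1)^{-1-\beta}\|_{\cB(\HH)} = \BigO(t^{-1}).
\]

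\textbf{Step 3: convert the bound to the $\widetilde{\KK}$-norm.} For $F\in\Dom(\cA)\cap\Ran(\cA)$, write
\[
\e^{t\cA}F = \e^{t\cA}\cA(\cA-1)^{-1-\beta}\,\big[(\cA-1)^{1+\beta}\cA^{-1}F\big],
\]
and bound $\|(\cA-1)^{1+\beta}\cA^{-1}F\|_{\HH}$ by $\|F\|_{\widetilde{\KK}}$. Since $\beta\le 1$, we have $(\cA-1)^{1+\beta}\cA^{-1} = (\cA-1)^{\beta-1}\,(\cA-1)^2\cA^{-1}$, where $(\cA-1)^{\beta-1}$ is bounded as a fractional power of an m-dissipative operator shifted into the resolvent set. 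Moreover
\[
(\cA-1)^2\cA^{-1}F = \cA F - 2F + \cA^{-1}F ,
\]
whose norm is at most $\|F\|_{\widetilde{\KK}}$ up to a constant.

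\textbf{Step 4: conclude and check the norm identity.} Combining Steps 2 and 3 gives $\|\e^{t\cA}F\|_{\HH}\lesssim \langle t\rangle^{-1}\|F\|_{\widetilde{\KK}}$; for small $t$ use contractivity. This is \eqref{u.energy.est.2.sing}, since $\|U(t)\|_{\HH}$ controls $\|\partial_t u\|_{L^2}+\|\nabla u\|_{L^2}$ when $q=0$. The identity \eqref{tilde.I(v).def} follows from \eqref{K.norm.equiv} and $\cA F = (g,\Delta f - a g)$, using $\|g\|_{\cW}=\|\nabla g\|_{L^2}$.

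\textbf{Main obstacle.} The delicate point is justifying the fractional-power manipulations, in particular the commutation of $\cA^{-1}$ with powers of $(\cA-1)$ on $\Ran(\cA)$. If this is problematic, I would instead apply \eqref{exp.res.2.sing.ab} with $\beta$ replaced by $1$, which is legitimate since $|b|^{n/(n+1)}\le |b|$ at infinity. This avoids fractional powers entirely: the operator becomes $\cA(\cA-1)^{-2}$, and the vector $(\cA-1)^2\cA^{-1}F$ is computed directly as above.
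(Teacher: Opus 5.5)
Your proof is correct and is essentially the paper's. Your fallback yields exactly the bound the paper uses: the paper applies \eqref{exp.res.2.sing} with $\gamma=\max\{1,\tfrac{n}{n+1}\}=1$ to get $\|\e^{t\cA}\cA(\cA-1)^{-2}\|_{\cB(\HH)}=\BigO(t^{-1})$, and then uses $(\cA-1)^2\cA^{-1}F=\cA F-2F+\cA^{-1}F$. Your primary fractional-power route is also valid, and no commutation is actually needed there since $\cA^{-1}F\in\Dom(\cA^2)$. It buys nothing, however, because you discard the gain from the sharp exponent $\beta=\tfrac{n}{n+1}$ as soon as you bound $(\cA-1)^{\beta-1}$ by a constant.

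Two justifications in Step~1 need repair.

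First, Theorem~\ref{thm:A.basic}~(ii) cannot exclude $\ii b$ from $\sigma(\cA)$ here, because $\essinf a=0$. Your computation $\Im\frt_{\ii b}[f]=b\|a^{1/2}f\|_{L^2}^2$ only shows that $\ii b$ is not an eigenvalue. To conclude $\ii b\in\rho(\cA)$ you need the discreteness from Theorem~\ref{thm:A.basic}~(iii). That applies because $a=x^{2n}$ satisfies \eqref{a.unbd} on the strip. Alternatively, the proof of Proposition~\ref{prop:ex.res} shows directly that $0\in\rho(T_{\ii b})$ for all $b\neq 0$.

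Second, \eqref{a.q.rel.bdd} does not fail merely because $\Omega$ is unbounded and $q=0$. On the strip, the Dirichlet condition in $y$ gives $\|f\|_{L^2}\lesssim\|\nabla f\|_{L^2}$. It fails because $a$ is unbounded: translating a fixed $\varphi\in\CcOm$ in the $x$-direction keeps $\|\nabla\varphi\|_{L^2}$ fixed while $\|a^{1/2}\varphi\|_{L^2}\to\infty$. Alternatively, $0\in\sigma_{\rm e2}(\cA)$ by Proposition~\ref{prop:ex.sp}.
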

\begin{proof}
The claim follows by Theorem~\ref{thm:bct.2}, in particular \eqref{exp.res.2.sing}, with $\mathscr{H} = \HH$ and $A=\Ac$. To this end, note that Assumption~\ref{asm:a.1} is satisfied, thus the resolvent estimate \eqref{r.A.small.b.H-H} holds. Moreover, we have also \eqref{ex.res.G} and it is easy to see from \eqref{ev.eq} that $\sigma(\cA)\cap \ii \R = \{0\}$. Finally, similarly as in \eqref{semigr}, we arrive at
\begin{align}
\|\e^{t\Ac} F\|_{\HH} &= \|\e^{t\Ac} \Ac(\Ac-1)^{-2} (\Ac-1)^{2} \Ac^{-1} F\|_{\HH}
\\
&
\ls \langle t \rangle^{-1} \|\Ac F - 2 F + \Ac^{-1} F\|_{\HH}
\ls \langle t \rangle^{-1} \|F\|_{\widetilde \KK}. \qedhere
\end{align}
\end{proof}

\section{Comparison with the result of Ikehata-Takeda}
\label{sec:comp.IT}

As said in the introduction, Theorem \ref{thm:decay} generalizes a previous result by Ikehata and Takeda, based on an approximation of the possibly unbounded damping by a sequence of bounded dampings and a modified Morawetz multiplier method. Their precise result is the following.

\begin{theorem}[{\cite[Thm.~1.2]{ikehata2020uniform}}]
\label{thm:IT}
Let $\Omega=\Rd$ with $d \geq 3$, let $q=0$ and let $a \in C(\Rd)$ with $a(x)\geq a_0 >0$ for all $x \in \Rd$. If the initial data
\begin{equation}\label{v.IT.1}
(f,g) \in \left(H^1(\Rd) \cap L^1(\Rd)\right) \times \left(L^2(\Rd) \cap L^1(\Rd) \right)
\end{equation}
further satisfy
\begin{equation}\label{v.IT.2}
a f \in L^1(\Rd) \cap L^2(\Rd),
\end{equation}
then there exists a unique weak solution
\begin{equation}\label{weak.sol.space}
u \in L^\infty\left((0,\infty); H^1(\Rd)\right) \cap W^{1,\infty}\left((0,\infty); L^2(\Rd)\right)
\end{equation}
to
\eqref{dwe.2ndorder} (in the sense of \eqref{weak.sol.def} below) satisfying
\begin{equation}\label{IT.estimate}
\|u(t)\|_{L^2} \leq C I_{00} \langle t \rangle^{- \frac 12},
\qquad
\|\partial_t u(t)\|_{L^2} + \|\nabla u(t)\|_{L^2} \leq C I_{00} \langle t \rangle^{-1},
\end{equation}
where $C>0$ and
\begin{equation}\label{I00.def}
I_{00}^2 = \|f\|_{L^2}^2 + \|\nabla f\|_{L^2}^2 + \|a f\|_{L^1}^2 + \|a f\|_{L^2}^2 + \|g\|_{L^1}^2 + \|g\|_{L^2}^2.
\end{equation}
\end{theorem}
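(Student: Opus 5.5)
This is Ikehata--Takeda's theorem, which the paper cites rather than proves. In Section~\ref{sec:comp.IT} the natural move is to show that it follows from Theorem~\ref{thm:decay}, in dimensions $d\ge3$. The plan is therefore to check that any data satisfying \eqref{v.IT.1}--\eqref{v.IT.2} belong to $\KK$, with $\nr{F}_\KK\lesssim I_{00}$. We then identify the weak solution with the mild solution of Remark~\ref{rem:mild.sol}, and read off \eqref{IT.estimate} from \eqref{u.energy.est}--\eqref{u.L2.est}.

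\emph{Step 1: membership in $\KK$.} Here $\Omega=\Rd$, $q=0$ and $a\ge a_0>0$.
\begin{itemize}
\item Since $f\in H^1(\Rd)=H^1_0(\Rd)$, the condition $f\in\Dom(q^{\frac12})$ is trivial.
\item We have $af\in L^1\cap L^2\subset L^1_\loc$ and $g\in L^2$.
\item It remains to show $af+g\in\cW^*$ with a norm bound. For $d\ge3$, $\cW=\dot H^1(\Rd)$, and Sobolev's inequality gives $\dot H^1\hookrightarrow L^{2^*}$ with $2^*=2d/(d-2)$. Dualizing, $L^{p}\hookrightarrow \cW^*$ with $p=2d/(d+2)\in(1,2)$.
\item By interpolation, $\nr{h}_{L^p}\le \nr{h}_{L^1}^{\theta}\nr{h}_{L^2}^{1-\theta}\le \nr{h}_{L^1}+\nr{h}_{L^2}$.
\item Applying this to $h=af+g$ gives $\nr{af+g}_{\cW^*}\lesssim \nr{af}_{L^1}+\nr{af}_{L^2}+\nr g_{L^1}+\nr g_{L^2}$.
\end{itemize}
Together with $\nr{f}_{\cW}+\nr f_{L^2}+\nr g_{L^2}\le I_{00}$, this gives $\nr F_\KK\lesssim I_{00}$ by \eqref{KK.norm}. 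This step is exactly where $d\ge3$ enters. In $d=1,2$ the embedding $L^1\cap L^2\subset\dot H^{-1}$ fails, which explains the restriction and why $\KK$ is the better-adapted space.

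\emph{Step 2: identification of solutions.} I would check that the mild solution $u$ from $\e^{t\cA}F$ lies in the class \eqref{weak.sol.space}.
\begin{itemize}
\item Contractivity of the semigroup on $\HH$ bounds $\nr{\nabla u(t)}_{L^2}$ and $\nr{\partial_tu(t)}_{L^2}$ uniformly in $t$.
\item The bound on $\nr{u(t)}_{L^2}$ comes from \eqref{u.L2.est}, or directly from $\nr{\cdot}_\KK$-contractivity in Proposition~\ref{prop:AK}.
\item Testing the integrated equation \eqref{Cauchy.int} against $\varphi\in\CcRd$ shows that $u$ satisfies the weak formulation \eqref{weak.sol.def}.
\end{itemize}
Uniqueness of weak solutions is part of the cited theorem, though an energy argument on the approximating bounded dampings would also give it. The resulting bounds from \eqref{u.energy.est}, \eqref{u.partialt.est} and \eqref{u.L2.est}, combined with $\nr F_\KK\lesssim I_{00}$, give \eqref{IT.estimate}, indeed with the better rate $\langle t\rangle^{-3/2}$ for $\partial_t u$.

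The main obstacle I expect is Step~2, the rigorous identification of the weak solution with the mild one. The weak formulation of \cite{ikehata2020uniform} uses test functions adapted to a continuous $a$. One must verify that $\int_0^t u$ lies in $\Dom(\cA)$ and that $a\,\partial_t u$ makes sense distributionally. I would first try to show that, for $F\in\Dom(\cA)$, the classical solution satisfies the weak identity, and then pass to general $F\in\KK$ by density in the $\KK$-norm, using the contractivity from Proposition~\ref{prop:AK}.
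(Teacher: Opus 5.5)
Your proposal is correct and follows the paper's route in Section~\ref{sec:comp.IT}. The paper shows $F\in\KK$ with $\|F\|_{\KK}\lesssim I_{00}$ using exactly your interpolation $\|h\|_{L^{2d/(d+2)}}\le\|h\|_{L^1}^{2/d}\|h\|_{L^2}^{(d-2)/d}$ together with the dual Sobolev inequality. It then checks that the mild solution has the regularity \eqref{weak.sol.space} and satisfies \eqref{weak.sol.def}, and it takes uniqueness from the cited theorem, just as you do.

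The obstacle you anticipate in Step~2 is milder than you fear. The property $\int_0^t U(s)\,\dd s\in\Dom(\cA)$ is part of the definition of a mild solution, so nothing needs checking there. Moreover, the paper verifies \eqref{weak.sol.def} directly, without any density argument, from the second component of the integrated equation, $\partial_t u(t)=(\Delta-q)\int_0^t u\,\dd s-a(u(t)-f)+g$. This works because $u\in C([0,\infty);\cD_\frt)$, which follows from continuity of the solution in $\KK$ and Proposition~\ref{prop:Ran.A}.
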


Recall that our main Theorem~\ref{thm:decay} concerns a solution of~\eqref{dwe.2ndorder} constructed by means of the semigroup.
Ikehata and Takeda, on the other hand, study weak solutions of~\eqref{dwe.2ndorder} in~\cite{ikehata2020uniform} (with $\Omega = \Rd$), i.e.~functions $u : [0,\infty) \times \Omega \to \C$ such that, for any $\phi \in C_c^\infty([0,\infty) \times \Omega)$, we have the following identity, where in particular all the integrals have to be finite:
\begin{equation}\label{weak.sol.def}
\begin{aligned}
& \int_0^\infty \int_\Omega u(t,x) \big(\partial_{tt}\phi(t,x) - a (x) \partial_t\phi(t,x) -(\Delta - q(x)) \phi(t,x)\big)  \; \dd x \, \dd t =
\\
&
\qquad   \int_\Omega g(x) \phi(0,x) \, \dd x - \int_\Omega f(x) \partial_t\phi(0,x) \, \dd x
+ \int_\Omega  a(x) f(x) \phi(0,x) \, \dd x.
\end{aligned}
\end{equation}
This identity arises by formally transporting all (time and space) derivatives to the test function $\phi$. It turns out that, for the initial conditions  we consider, every mild solution has the time-space regularity~\eqref{weak.sol.space} and is a weak solution of~\eqref{dwe.2ndorder}.

\begin{lemma}
Let the assumptions of Theorem~\ref{thm:decay} hold. For $F \in \KK$, let $U(t) = (u(t),\partial_t u(t))= \e^{t\Ac}F = \e^{t\AK} F$ be the unique mild solution of~\eqref{Cauchy} in $\HH$ and $\KK$. Then $u(t)$ satisfies~\eqref{weak.sol.space} (with $\Rd$ replaced by $\Omega$) and is a weak solution of \eqref{dwe.2ndorder} according to~\eqref{weak.sol.def}.
\end{lemma}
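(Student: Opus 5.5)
We have to show two things: that $u$ has the regularity \eqref{weak.sol.space} (with $\Omega$ in place of $\Rd$), and that it satisfies the weak formulation \eqref{weak.sol.def}.

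\textbf{Regularity.} Since $F\in\KK$, Proposition~\ref{prop:AK} gives $U(t)=\e^{t\AK}F\in\KK$ with $\|U(t)\|_\KK\le\|F\|_\KK$. By \eqref{K.in.Dt+L2} and the norm equivalence in Proposition~\ref{prop:Ran.A}, this yields $\|u(t)\|_{\cD_\frt}\lesssim\|F\|_\KK$ uniformly in $t$. Hence $u\in L^\infty((0,\infty);H^1_0(\Omega))$, and also $a^{1/2}u\in L^\infty L^2$. Next, \eqref{der-u-f2} gives $u\in C^1([0,\infty);\LOm)$ with $\partial_t u$ equal to the second component of $U$. That component is bounded in $L^2$ by contractivity in $\HH$, so $u\in W^{1,\infty}((0,\infty);\LOm)$. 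Continuity of $t\mapsto U(t)$ in $\KK$ also gives strong measurability.

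\textbf{Weak formulation.} We use the mild formulation \eqref{Cauchy.int}, but applied in $\KK$. Put $W(t)=\int_0^tU(s)\,\dd s$; then $W(t)\in\Dom(\cA)$ and $U(t)-F=\cA W(t)$. Write $U=(u,v)$.
\begin{enumerate}[\upshape (a)]
\item The first component gives $u(t)-f=\int_0^t v$.
\item The second component gives, for each $t$,
\[
v(t)-g=(\Delta-q)\int_0^tu\,\dd s-a\int_0^tv\,\dd s=(\Delta-q)\int_0^tu\,\dd s-a\,(u(t)-f)
\]
as an identity in $\cD_\frt^*$. Here the integral is a Bochner integral in $\cW$; since $U$ takes values in $\KK\subset\cD_\frt\oplus L^2$, it is also a Bochner integral in $\cD_\frt$, and $a\in\cB(\cD_\frt,\cD_\frt^*)$ commutes with it.
\item Pair this identity with $\phi(t,\cdot)\in\CcOm\subset\cD_\frt$, using \eqref{CD.def}. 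This gives, for every $t$,
\[
\langle v(t),\phi\rangle-\langle g,\phi\rangle=\Big(\int_0^t u,\ (\Delta-q)\phi\Big)+\big(a(u(t)-f),\phi\big).
\]
Here $(\Delta-q)\phi$ is interpreted weakly: moving the derivatives onto $\phi$ is legitimate because $u(s)\in H^1_0$ and $q\phi\in L^1_\loc$ pairs with $u\in\Dom(q^{1/2})$.
\item Differentiate this identity against $\partial_t\phi$, or equivalently multiply the integrated identity by $\partial_{tt}\phi$ and integrate by parts in $t$. This transfers all time derivatives onto $\phi$. The boundary terms at $t=0$ produce exactly $\int g\phi(0)-\int f\partial_t\phi(0)+\int af\phi(0)$. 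The term $af\phi(0)$ makes sense because $f\in\Dom(a^{1/2})$ and $\phi(0)\in\CcOm$. Compact support in time removes the boundary terms at infinity.
\end{enumerate}

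A cleaner route is to differentiate $\int_\Omega v(t)\phi(t)$ formally and then justify by density. Take $F_n\in\Dom(\AK)$ converging to $F$ in $\KK$; this is possible since $\AK$ is densely defined. Then $U_n$ is a classical solution in $\KK$, and in particular $U_n\in C^1([0,\infty);\KK)\cap C([0,\infty);\Dom(\cA))$. For such $U_n$ the identity $\partial_tv_n=(\Delta-q)u_n-av_n$ holds in $C([0,\infty);L^2)$. Testing with $\phi$ and integrating by parts twice in time gives \eqref{weak.sol.def} for $u_n$. Finally, pass to the limit: $u_n\to u$ in $C([0,T];\cD_\frt)$ by contractivity in $\KK$, and every term in \eqref{weak.sol.def} is continuous with respect to this convergence.

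\textbf{Main obstacle.} The delicate point is ensuring that all integrals in \eqref{weak.sol.def} are finite, since $a$ and $q$ are only in $L^1_\loc$. For instance, $\int\!\int u\,q\phi$ and $\int\!\int u\,a\,\partial_t\phi$ need not converge absolutely from local integrability of $a$ and $q$ and $u\in L^2$ alone. I would handle them via $a^{1/2}u,\ q^{1/2}u\in L^\infty L^2$, which follows from $u(t)\in\cD_\frt$ bounded uniformly. Together with $a^{1/2}\phi,\ q^{1/2}\phi\in L^2$ (because $\phi$ is bounded with compact support), Cauchy--Schwarz then gives the needed bounds. This is exactly why the stronger space $\KK$, rather than $\HH$, is required.
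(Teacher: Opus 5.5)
Your proof is correct. Your first route (a)--(d) is essentially the paper's argument. The paper sets $w(t)=\int_0^t u(s)\,\dd s\in C^1([0,\infty);\cD_\frt)$ and reads off $\partial_t u=(\Delta-q)w-a(u-f)+g$ from the second component of \eqref{Cauchy.int}. It then tests against $\partial_t\phi(t)$ and integrates by parts in time: Green's formula handles the $(\Delta-q)w$ term, and the fundamental theorem of calculus handles the $af+g$ term. Your time-integrated version against $\partial_{tt}\phi$ leads to the same identity. One small slip: after pairing in (c), the damping term should carry a minus sign, $-\bigl(a(u(t)-f),\phi\bigr)$.

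Two parts of your proposal genuinely differ from the paper.
\begin{itemize}
\item \textbf{Regularity.} For \eqref{weak.sol.space} the paper invokes the decay estimates of Theorem~\ref{thm:decay}. You use only contractivity of $\e^{t\AK}$ in $\KK$ and the norm equivalence of Proposition~\ref{prop:Ran.A}. This is more economical: boundedness is all that is needed, and your argument does not depend on the high-frequency resolvent bound, hence not on the uniform positivity of $a$.
\item \textbf{Density route.} You take $F_n\in\Dom(\AK)$, for which $u_n\in C^1([0,\infty);\cD_\frt)$ and $\partial_t v_n=(\Delta-q)u_n-av_n$ holds in $C([0,\infty);L^2)$. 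You then pass to the limit using $u_n\to u$ in $C([0,T];\cD_\frt)$ and $F_n\to F$ in $\KK$. This is a valid alternative that avoids the integrated Cauchy problem altogether.
\end{itemize}

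The price of the density route is checking that every term of \eqref{weak.sol.def}, including the right-hand side terms in $f_n,g_n$, is continuous under $\KK$-convergence. Your Cauchy--Schwarz bounds with $a^{1/2}u$, $q^{1/2}u$ against $a^{1/2}\phi$, $q^{1/2}\phi$ supply exactly this. The paper instead gets the same finiteness from continuity of $u$ into $\cD_\frt$ and $a^{1/2}\in L^2_{\rm loc}$.
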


\begin{proof}
Fix $F=(f,g) \in \cK$. By Theorem \ref{thm:decay},  we have
\begin{equation}\label{est.weak.sol.space}
\|u(t)\|_{H^1} + \|\partial_t u(t)\|_{L^2}  \ls \langle t \rangle^{-\frac12} \, \|F\|_{\KK} \ls 1, \qquad t \ge 0,
\end{equation}
and thus~\eqref{weak.sol.space}.  

It remains to prove~\eqref{weak.sol.def}.
	Note first that by the properties of a mild solution, $U : [0,\infty) \to \cK$ is continuous. Hence $u :[0,\infty) \to \cD_{\frt}$ is continuous by Proposition \ref{prop:Ran.A}.
Setting
\begin{equation}
	w(t) := \int_0^t u(s) \, \dd s , \qquad t \ge 0,
\end{equation}
it follows that 
\begin{equation} \label{der-g}
	w \in C^1 ([0,\infty);\cD_{\frt}), \qquad  \partial_t w (t) = u(t),
\end{equation}
and from the second component of \eqref{Cauchy.int} we get
	\begin{equation}\label{u1.mild}
			\partial_t u(t) = (\Delta-q) w(t) - a (u(t) - f) + g, \qquad t \geq 0. \\
\end{equation}

Consider $\phi \in C_c^\infty ([0,\infty) \times \Omega;\R)$ (the identity for $\phi \in C_c^\infty ([0,\infty)\times \Omega)$ follows by considering $\Re \phi$ and $\Im \phi$ separately) and let $T>0$ and $\Sigma \subset \Rd$ be open and bounded with  $\overline \Sigma \subset \Omega$ such that $\supp \phi \subset [0,T) \times \Sigma$.
By \eqref{der-u-f2} and \eqref{u1.mild}, we have
\begin{equation}\label{mild1}
	\begin{aligned}
		& \int_0^T \int_\Sigma u (t, x) \partial_{tt}\phi (t, x) \, \dd x \, \dd t = \int_0^T \innp{u(t)}{\partial_{tt}\phi(t)}_{L^2} \diff t \\
		& \qquad = -\int_0^T \innp{\partial_t u(t)}{\partial_{t}\phi(t)}_{L^2} \diff t - \innp{u(0)}{\partial_t \phi(0)}_{L^2}\\
		& \qquad =  - \int_0^T \langle (\Delta-q) w(t) -  a (u(t) - f) + g, \partial_t \phi (t) \rangle_{L^2} \, \dd t
		- \langle f ,\partial_t \phi (0) \rangle_{L^2}.
	\end{aligned}
\end{equation}
From $a^\frac12\in L^2_\loc(\Omega)$ and the continuity of $u : [0,\infty) \to \cD_{\frt}$, we conclude 
\begin{equation} \label{au}
\sup_{t \in [0,T]} \|au(t)\|_{L^1(\Sigma)} < \infty, \qquad af = au(0) \in L^1(\Sigma).
\end{equation}
By \eqref{der-g} and \eqref{au}, for fixed $t \in [0,T]$ we can write 
\begin{equation}\label{mild2}
\begin{aligned}
	& \big\langle (\Delta-q) w(t) -  a (u(t) - f), \partial_t \phi (t) \big\rangle_{L^2}  \\[2mm]
	&  \qquad= \big((\Delta-q) w(t) -  a (u(t) - f), \partial_t \phi (t) \big)_{\Dc_t^* \times \Dc_t}   \\
	&  \qquad= \big( (\Delta-q) w(t), \partial_t \phi (t) \big)_{\cW^* \times \cW}  -  \int_\Sigma a (x) u(t, x) \partial_t \phi (t, x) \, \dd x \\
	& \qquad \qquad \qquad \quad  + \int_\Sigma a (x)  f(x)\partial_t \phi (t, x) \, \dd x.
\end{aligned}
\end{equation}

Using~\eqref{au}, we can split the time integration in the last line of \eqref{mild1} according to the summands in \eqref{mild2}; recall the identity~\eqref{u1.mild} and that $\partial_t u \in C([0,\infty);L^2(\Omega))$ by the properties of our mild solution. For the first summand, by~\eqref{CD.def}, the Green Formula, integrating by parts in time and using \eqref{der-g}, we obtain
\begin{equation}\label{mild3}
\begin{aligned}
& \int_0^T \big( (\Delta-q) w(t), \partial_t \phi (t) \big)_{\cW^* \times \cW} \, \dd t \\
& \qquad \qquad \qquad = - \int_0^T \big(\langle \nabla w(t) , \nabla \partial_t \phi(t) \rangle_{L^2} +  \langle w(t), q \partial_t \phi(t) \rangle_{L^2} \big) \diff t \\
& \qquad \qquad \qquad  = \int_0^T \langle w(t) , \partial_t ( (\Delta- q) \phi(t) ) \rangle_{L^2} \diff t \\
& \qquad \qquad \qquad =  - \int_0^T \langle u(t) ,  (\Delta- q) \phi(t) \rangle_{L^2} \diff t .
\end{aligned}
\end{equation}
Finally, since $af + g \in L^1(\Sigma)$ (see \eqref{au}), applying dominated convergence to exchange the integral and time derivative leads to
\begin{equation}
	\begin{aligned}
			& \int_0^T \int_\Sigma \big(  a(x) f (x) + g(x) \big) \partial_t \phi (t, x) \, \dd x\, \dd t  \\ 
			& \qquad \qquad \qquad \qquad = \int_0^T \frac{\dd}{\diff t} \int_\Sigma \big(  a(x) f (x) + g(x) \big) \phi (t, x) \, \dd x\, \dd t \\
			& \qquad \qquad \qquad \qquad = - \int_\Sigma \big(  a(x) f (x) + g(x) \big) \phi (0,x)  \, \dd x .
	\end{aligned}
\end{equation}
The proof is completed by putting together the above equation with~\eqref{mild1}, a version of \eqref{mild2} where the summands are integrated over $t \in [0,T]$, and~\eqref{mild3}.
\end{proof}

Employing the above lemma, it follows that our decay result contains Theorem~\ref{thm:IT}. To prove this, the Sobolev inequality is used. This indicates the origin of the difficulty in treating low dimensions $d=1,2$ by the method in \cite{ikehata2020uniform}.

\begin{proposition}
Let the assumptions of Theorem~\ref{thm:IT} hold and let $F =(f,g)$ be as in~\eqref{v.IT.1} and~\eqref{v.IT.2}. Then $F \in \KK$ with $\|F\|_{\KK} \ls I_{00}$ and Theorem~\ref{thm:decay} applies. The solution $u(t)$ in Theorem~\ref{thm:decay} coincides with the unique weak solution of~\eqref{dwe.2ndorder} in~\eqref{weak.sol.space}. In particular,~\eqref{u.energy.est},~\eqref{u.partialt.est}~and~\eqref{u.L2.est} imply~\eqref{IT.estimate}.
\end{proposition}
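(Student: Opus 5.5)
The plan is to check the three defining conditions of $\KK$ in \eqref{KK.def}, estimate each term of the $\KK$-norm by $I_{00}$, invoke uniqueness of weak solutions, and then compare estimates.

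\textbf{Membership in $\KK$.} Here $\Omega=\Rd$ and $q=0$. Clearly $f\in H^1(\Rd)=H^1_0(\Rd)$ and $\Dom(q^{1/2})=L^2$. Also $af\in L^1(\Rd)\subset L^1_{\loc}$ and $g\in L^2$. It remains to show $af+g\in\cW^*$ with $\|af+g\|_{\cW^*}\ls I_{00}$.

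For $d\ge3$ and $q=0$, the space $\cW$ is the homogeneous space $\dot H^1(\Rd)$. By the Sobolev inequality, $\cW\hookrightarrow L^{2^*}(\Rd)$ with $2^*=2d/(d-2)$. Hence, by duality, $L^{(2^*)'}(\Rd)\hookrightarrow\cW^*$ with $(2^*)'=2d/(d+2)\in[1,2)$. Since $af$ and $g$ lie in $L^1\cap L^2$, interpolation puts them in $L^{(2^*)'}$, with
\[
\|af+g\|_{L^{(2^*)'}}\le \|af\|_{L^1}+\|af\|_{L^2}+\|g\|_{L^1}+\|g\|_{L^2}\ls I_{00}.
\]
Concretely, for $\varphi\in\CcRd$ we have
\[
\Big|\int (af+g)\varphi\Big|\le\|af+g\|_{L^{(2^*)'}}\|\varphi\|_{L^{2^*}}\ls I_{00}\|\nabla\varphi\|_{L^2}.
\]
This gives the unique bounded extension to $\cW$. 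The remaining terms of $\|F\|_\KK^2$ are $\|\nabla f\|^2+\|g\|^2+\|f\|^2$, all bounded by $I_{00}^2$. This step is where $d\ge3$ enters, and it is the main point to get right: one needs the identification of $\cW$ with $\dot H^1$ and the Sobolev embedding. Density of $\CcRd$ in both spaces (Appendix~\ref{app:W0}) is what I will rely on.

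\textbf{Coincidence of solutions.} By the preceding lemma, the semigroup solution $u$ has regularity \eqref{weak.sol.space} and is a weak solution in the sense of \eqref{weak.sol.def}. Theorem~\ref{thm:IT} asserts uniqueness of the weak solution in this class. The identity \eqref{weak.sol.def} is linear in the data and is the same formulation as in \cite{ikehata2020uniform}. Therefore the two solutions coincide, so the Ikehata--Takeda solution is ours.

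\textbf{Estimates.} Now apply Theorem~\ref{thm:decay}, whose hypotheses hold since $a\ge a_0>0$ and $a\in C\subset L^1_{\loc}$. Combining \eqref{u.L2.est} with $\|F\|_\KK\ls I_{00}$ gives $\|u(t)\|_{L^2}\ls I_{00}\langle t\rangle^{-1/2}$. From \eqref{u.energy.est} we get $\|\nabla u(t)\|_{L^2}\ls I_{00}\langle t\rangle^{-1}$. From \eqref{u.partialt.est} we get $\|\partial_tu(t)\|_{L^2}\ls I_{00}\langle t\rangle^{-3/2}\le I_{00}\langle t\rangle^{-1}$. Together these yield \eqref{IT.estimate}.
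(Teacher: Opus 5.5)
Your proposal is correct and follows essentially the same route as the paper. The only nontrivial point is $af+g\in\cW^*$, which the paper also gets by interpolating $af+g\in L^1\cap L^2$ into $L^{2d/(d+2)}$ and pairing with $\varphi\in C_c^\infty(\R^d)$ via H\"older and the Sobolev inequality $\|\varphi\|_{L^{2d/(d-2)}}\ls\|\nabla\varphi\|_{L^2}$ ($d\ge3$); the coincidence of solutions follows, as you say, from the preceding lemma together with the uniqueness in Theorem~\ref{thm:IT}.
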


\begin{proof}
Clearly $\Omega = \R^d$ with $d \ge 3$, $q(x)=0$ and $0 < a_0 \le a (x) \in C(\R^d)$ satisfy the basic assumptions of Theorem~\ref{thm:decay}. For $F = (f,g)$ as in \eqref{v.IT.1} and \eqref{v.IT.2}, we have $F \in \cD_\frt \oplus L^2(\R^d)$; notice that $f \in \Dom(a) \subset \Dom\big(a^\frac12\big)$. Moreover, from $d \ge 3$ and a straightforward application of H\"older's inequality it follows that
\begin{equation}
h:=af + g \in L^1(\R^d) \cap L^2(\R^d) \subset L^{\frac{2d}{d+2}}(\R^d),
\qquad \|h\|_{L^{\frac{2d}{d+2}}} \le \|h\|^{\frac2d}_{L^1} \|h\|_{L^2}^{\frac{d-2}{d}}.
\end{equation}
Combining H\"older's inequality and the Sobolev inequality~\cite[Thm.~4.31]{Adams-2003} with the above, we further estimate
\begin{equation}
\begin{aligned}
\left| \int_{\R^d} h(x) \overline{\varphi(x)} \, \dd x \right| &  \le \|h\|_{L^{\frac{2d}{d+2}}} \| \varphi\|_{L^{\frac{2d}{d-2}}} \\
& \ls \|h\|_{L^{\frac{2d}{d+2}}} \| \nabla \varphi\|_{L^2}
\ls \|h\|^{\frac2d}_{L^1} \|h\|_{L^2}^{\frac{d-2}{d}} \|\varphi \|_{\cW},
\end{aligned}
\end{equation}
for all $\varphi \in C_c^\infty(\R^d)$. This implies that $h \in \cW^*$ and by Young's inequality
\begin{equation}
\|h\|_{\cW^*}	\ls  \|h\|^{\frac2d}_{L^1} \|h\|_{L^2}^{\frac{d-2}{d}} \ls \|h\|_{L^1} + \|h\|_{L^2}.
\end{equation}
In particular, considering~\eqref{KK.def}, we have $F \in \KK$. Finally, returning to $h=af + g$ and employing the triangle inequality, we arrive at $\|F\|_{\KK} \ls I_{00}$ (see \eqref{KK.norm} and \eqref{I00.def}).
\end{proof}

\appendix

\section{Description of the space \texorpdfstring{$\Wc$}{W}}
\label{app:W0}

The space $\Wc$ was defined in the introduction as the Hilbert space completion of $\CcOm$ w.r.t.~the polar form of
\begin{equation}
\|f \|^2_{\Wc} = \|\nabla f \|_{L^2}^2 + \|q^\frac12 f \|_{L^2}^2.
\end{equation}
Thus, an element of $\Wc$ is an equivalence class of Cauchy sequences in $\CcOm$ for the norm $\nr{\cdot}_{\Wc}$. If $\{f_n\}_n$ is a representative of this equivalence class, then $\{\nabla f_n\}_n$ and $\{q^{\frac 12} f_n\}_n$ are Cauchy sequences in $L^2(\Omega)$, and we define $\nabla f$ and $q^{\frac 12} f$ as their respective limits. These definitions do not depend on the choice of the representative $\{f_n\}_n$. Then we can define $(-\Delta+q) \in \Bc(\Wc,\Wc^*)$ by
\[
((-\Delta+q)f, g)_{\Wc^* \times \Wc} = \innp{\nabla f}{\nabla g}_{L^2} + \langle q^{\frac 12} f,q^{\frac 12} g\rangle_{L^2}, \qquad f,g \in \Wc.
\]

First recall that if \eqref{Om.q.Poincare} holds,
then $\Wc = H_0^1(\Omega) \cap \Dom (q^\frac12)$, while in general $\Wc$ need not be a subspace of $L^2(\Omega)$. 
Our goal is to identify every element of $\Wc$ with a function $f \in L^2_\loc(\Omega)$, so that $\nabla f$ and $q^{\frac 12} f$ can be understood in the sense of distributions. However, if $q=0$ a.e.~on a connected component of $\Omega$, then $f$ is only determined up to a constant thereon. We show that $\cW$ is included in a suitable quotient space.
To this end, $\Cc_q$ shall denote the space of locally constant functions $w$ (i.e.~constant on every connected component of $\Omega$) such that $q w = 0$ a.e. on $\Omega$.

\begin{proposition}\label{prop:D_q}
$\Wc$ is a closed subspace of
\begin{equation}
\dot \cD_q := \cD_q \, \big/ \,  \Cc_q, \qquad \cD_q := \left\{ f \in L^1_{\rm loc}(\Omega) \, :\, \nabla f \in L^2(\Omega)^d, \, \, q^\frac12 f \in L^2 (\Omega) \right\}.
\end{equation}
\end{proposition}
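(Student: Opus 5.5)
We construct an injective, isometric map from $\Wc$ into $\dot\cD_q$, where $\dot\cD_q$ carries the seminorm $\|f\|_{\dot\cD_q}^2 := \|\nabla f\|_{L^2}^2 + \|q^{\frac12} f\|_{L^2}^2$. This is a genuine norm on $\dot\cD_q$: if it vanishes, then $\nabla f=0$ in $\cD'(\Omega)$, so $f$ is locally constant, and $q^{\frac12}f=0$, so $qf=0$ a.e., i.e.\ $f\in \Cc_q$. The isometry will give closedness once we know $\Wc$ is complete, which holds by construction. No completeness of $\dot\cD_q$ is needed.

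\textbf{Key step: local $L^2$ convergence modulo constants.} Let $\{f_n\}\subset \CcOm$ be Cauchy in $\|\cdot\|_{\Wc}$. Fix a connected component $\Omega'$ of $\Omega$ and an exhaustion of $\Omega'$ by connected open sets $\Sigma_k \Subset \Omega'$ with Lipschitz boundary, e.g.\ finite unions of balls joined along chains. On each $\Sigma_k$ the Poincaré--Wirtinger inequality gives
\[
\|g - m_{\Sigma_k}(g)\|_{L^2(\Sigma_k)} \lesssim_k \|\nabla g\|_{L^2(\Sigma_k)}, \qquad m_{\Sigma}(g) := |\Sigma|^{-1}\int_\Sigma g .
\]
Hence $f_n - m_{\Sigma_1}(f_n)$ is Cauchy in $L^2(\Sigma_k)$ for every $k$. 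Indeed, the difference of means $m_{\Sigma_k}(g)-m_{\Sigma_1}(g)$ is controlled by $\|g-m_{\Sigma_k}(g)\|_{L^2(\Sigma_k)}$, since $\Sigma_1\subset\Sigma_k$. A diagonal argument then gives a limit $f\in L^2_{\loc}(\Omega')$ with $\nabla f = \lim \nabla f_n$ in $\cD'$, so $\nabla f\in L^2$.

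\textbf{Main obstacle: handling $q$ through the normalizing constants.} We must show $q^{\frac12}f = \lim q^{\frac12}f_n$ after adjusting the normalizing constants $c_n := m_{\Sigma_1}(f_n)$. If $q=0$ a.e.\ on $\Omega'$, the constants are irrelevant and $f$ is determined only modulo $\Cc_q$. If $q\not\equiv 0$ on $\Omega'$, choose $k$ and a set $E\subset\Sigma_k$ of positive measure on which $q\ge \delta>0$. Then $\|q^{\frac12}(f_n - c_n)\|_{L^2(E)}$ is bounded while $\|q^{\frac12}f_n\|_{L^2(E)}$ is Cauchy, so $c_n\,q^{\frac12}\1_E$ converges, and hence $c_n\to c$. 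In this case we take $f := \lim (f_n - c_n) + c$, i.e.\ the $L^2_{\loc}$ limit of $f_n$ itself. Passing to an a.e.-convergent subsequence and applying Fatou's lemma identifies the $L^2$ limit of $q^{\frac12}f_n$ with $q^{\frac12}f$. Doing this componentwise, with countably many components, produces a class $\iota(\{f_n\})\in\dot\cD_q$.

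\textbf{Conclusion.} Equivalent Cauchy sequences give the same class, since their difference tends to $0$ in $\Wc$ and hence converges to $0$ modulo $\Cc_q$ by the same argument. The norms agree by construction, so $\iota$ is a linear isometry, and $\iota(\Wc)$ is complete, hence closed. Identifying $\Wc$ with $\iota(\Wc)$ proves the claim.
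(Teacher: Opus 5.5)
Your argument is correct and rests on the same analytic core as the paper's proof: Poincar\'e--Wirtinger on an exhaustion of each connected component by bounded connected sets, with the normalizing constants pinned down on components where $q\not\equiv 0$ and left free (i.e.\ modulo $\Cc_q$) where $q\equiv 0$. The global organization differs, though.

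\emph{Paper.} It first shows that $\dot\cD_q$ is itself a Hilbert space, running the Cauchy-sequence argument for arbitrary sequences in $\cD_q$. It then identifies $\Wc$ with the closure of $\CcOm$ in $\dot\cD_q$, and this identification uses the completeness of $\dot\cD_q$.

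\emph{Your route.} You build the isometry $\iota$ directly on Cauchy sequences from $\CcOm$ and get closedness from the completeness of $\Wc$. So you prove exactly the statement, without the extra (but useful) fact that $\dot\cD_q$ is complete. Since you never use compact support, your argument would yield that fact as well. Your single normalization by $m_{\Sigma_1}(f_n)$ is slightly cleaner than the paper's separate normalizations on each $U_j$ followed by patching of constants. Your well-definedness and linearity argument, namely that a class in $\dot\cD_q$ is determined by $\nabla f$ and $q^{1/2}f$, is sound.

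\emph{One step needs repair.} From ``$\|q^{1/2}(f_n-c_n)\|_{L^2(E)}$ is bounded and $q^{1/2}f_n$ is Cauchy'' you cannot conclude that $c_n q^{1/2}\1_E$ converges. Moreover, without an upper bound on $q$ on $E$, the boundedness itself does not follow from the $L^2(\Sigma_k)$-convergence of $f_n-c_n$. The fix is immediate and uses only $q\ge\delta$ on $E\subset\Sigma_k$:
\begin{itemize}
\item $\|f_n-f_m\|_{L^2(E)}\le\delta^{-1/2}\|q^{1/2}(f_n-f_m)\|_{L^2}\to 0$;
\item $(f_n-c_n)-(f_m-c_m)\to 0$ in $L^2(E)$;
\item hence $|c_n-c_m|\,|E|^{1/2}\to 0$.
\end{itemize}
Alternatively, choose $E$ with $\delta\le q\le M$, or argue distributionally as the paper does.

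\emph{Minor points.} No diagonal argument is needed, since the limits on the nested $\Sigma_k$ are automatically compatible. Likewise, passing to an a.e.-convergent subsequence already identifies the $L^2$-limit of $q^{1/2}f_n$ with $q^{1/2}f$, so Fatou's lemma is unnecessary.
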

The action of $\Delta - q$ is then well-defined on $\dot \cD_q$ (and thus on $\Wc$) in the usual distributional sense. To prove the proposition, we show that $\dot \cD_q$ is a Hilbert space and that the completion $\Wc$ can be embedded in it. We will use the following standard result.

\begin{lemma} \label{lem:Uj}
Let $\omega \neq \emptyset$ be an open connected subset of $\R^d$. There exists a non-decreasing (for inclusion) sequence $\{U_j\}_{j \in \N}$ of open and connected Lipschitz subsets of $\omega$ such that $\bigcup_{j \in \N} U_j = \omega$.
\end{lemma}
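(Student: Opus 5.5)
The construction proceeds by exhausting $\omega$ with compact sets and fattening each one into a Lipschitz set that stays connected. For $j \in \N$, set
\[
K_j := \{x \in \omega : |x| \le j, \ \operatorname{dist}(x, \R^d\setminus\omega) \ge 2^{-j}\},
\]
(with the distance condition void if $\omega=\R^d$). Each $K_j$ is compact, $K_j \subset K_{j+1}$, and $\bigcup_j K_j = \omega$. Fix a point $x_0 \in \omega$ and discard finitely many indices so that $x_0 \in K_1$.

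\textbf{Connectedness.} Since $\omega$ is open and connected, it is path-connected. Using compactness, we choose a compact connected set $L_j \subset \omega$ with $K_j \subset L_j$. To do this, cover $K_j$ by finitely many balls $B(y_i, r_i)$ with $\overline{B(y_i,2r_i)} \subset \omega$, join each $y_i$ to $x_0$ by a path in $\omega$, and let $L_j$ be the union of the closed balls and the paths. We can also arrange $L_j \subset L_{j+1}$ by adding $L_j$ into the construction of $L_{j+1}$; this is harmless because $L_j$ is compact in $\omega$ and connected to $x_0$.

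\textbf{Lipschitz smoothing.} Set $\delta_j := \tfrac13 \operatorname{dist}(L_j, \R^d\setminus\omega) > 0$. Consider the dyadic grid of closed cubes of side $2^{-m}$, where $2^{-m}\sqrt d < \delta_j$. Let $P_j$ be the union of all grid cubes meeting the $\delta_j$-neighbourhood of $L_j$, and set $V_j := \operatorname{int} P_j$. Then $V_j$ is open, $L_j \subset V_j \subset \overline{V_j} \subset \omega$, and $V_j$ is connected, since each cube meets the connected set $L_j^{\delta_j}$ and the neighbourhood is itself connected.

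\textbf{The main obstacle} is that a union of grid cubes need not be a Lipschitz domain. Two cubes touching only along an edge or at a vertex produce non-Lipschitz points. I would first try the standard fix: replace $P_j$ by a smooth superlevel set. Take $\chi_j := \mathds 1_{L_j^{2\delta_j}} * \rho_{\delta_j/2}$ with a mollifier $\rho$, so that $\chi_j \in C^\infty$, $\chi_j = 1$ on $L_j^{\delta_j}$ and $\chi_j = 0$ outside $L_j^{3\delta_j}$. By Sard's theorem, pick a regular value $s \in (0,1)$. Then $\{\chi_j > s\}$ is an open set with compact smooth, hence Lipschitz, boundary, and it is contained in $\omega$. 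Let $U_j$ be the connected component of $\{\chi_j > s\}$ containing the connected set $L_j^{\delta_j}$. This component is again smooth-bounded, because its boundary consists of components of a smooth compact hypersurface.

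\textbf{Monotonicity.} To get $U_j \subset U_{j+1}$, arrange that $L_{j+1} \supset \overline{L_j^{3\delta_j}}$ when choosing $L_{j+1}$. Then $U_j \subset L_{j+1} \subset U_{j+1}$. Finally, $\bigcup_j U_j \supset \bigcup_j K_j = \omega$, which completes the construction.
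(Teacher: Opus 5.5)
Your construction is correct up to one fixable slip, and it takes a genuinely different route from the paper. The paper works with dyadic cubes throughout. It lets $U_j$ be the component containing $x_0$ of the interior of the union of all closed cubes of side $2^{-j}$ contained in $\omega\cap B(x_0,j)$. Monotonicity then follows from dyadic nesting, exhaustion from covering a path from $x_0$ to $x$ by small cubes, and the Lipschitz property is simply asserted.

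Your misgiving about unions of grid cubes is justified. Take $d=2$ and two grid squares sharing only a vertex $v$, lying in the same component because a chain of squares joins them around. If each of the other two squares at $v$ contains a point of $\R^2\setminus\omega$, then near $v$ the set consists of two opposite open quadrants, which is not locally the region above a Lipschitz graph. This is harmless for the paper's later use, since only a Poincar\'e--Wirtinger inequality on $U_j$ is needed, and that holds on such unions by chaining through common faces. Your mollified indicator, Sard regular value and connected component, however, really do produce a $C^\infty$, hence Lipschitz, boundary. The price is Sard's theorem and a recursive choice of the compacts $L_j$ to force $U_j\subset U_{j+1}$. The paragraph introducing $P_j$ and $V_j$ is never used and can be deleted.

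The slip is in the monotonicity step. Since $3\delta_j=\operatorname{dist}(L_j,\R^d\setminus\omega)$ is attained at some $z\notin\omega$, the closed neighbourhood $\overline{L_j^{3\delta_j}}$ contains $z$. It therefore cannot be placed inside $L_{j+1}\subset\omega$. The fix is easy: $\rho_{\delta_j/2}$ is supported in $\overline{B(0,\delta_j/2)}$, so $\chi_j$ already vanishes outside $L_j^{5\delta_j/2}$. It suffices to require $L_{j+1}\supset\overline{L_j^{5\delta_j/2}}$, which is a compact subset of $\omega$, connected (as the closure of a connected open neighbourhood) and contains $x_0$. Hence $L_{j+1}$ remains compact and connected. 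You should also set $\delta_j:=1$, say, when $\omega=\R^d$.
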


\begin{proof}
For $x \in \R^d$ and $j \in \N$, we set $\mathfrak c_x^j = [x_1,x_1 + 2^{-j}] \times \dots \times [x_d,x_d + 2^{-j}]$. Fix $x_0 \in \omega$ and define
\[
\mathfrak C_j = \left(\bigcup_{x \in \mathfrak X_j} \mathfrak c_x^j \right)^{\circ}, \qquad \mathfrak X_j = \set{x \in 2^{-j} \Z^d \, : \, \mathfrak c_x^j \subset \omega \cap B(x_0,j)}.
\]
For $j \ge j_0$, where $j_0 \in \N$ is large enough (such that $x_0 \in \mathfrak C_j$), we denote by $U_j$ the connected component of $\mathfrak C_j$ which contains $x_0$. Note that $U_j$ is open and Lipschitz and that $U_j \subset U_{j+1}$ for all $j \ge j_0$.

Consider $x \in \omega$ arbitrary. Since $\omega$ is open and connected, there is $\g \in C([0,1];\omega)$ with $\g(0) = x_0$ and $\g(1) = x$. There exists $j \ge j_0$ such that
\begin{equation}
	\overline{B(\g(t),2^{-j} \sqrt d)} \subset \omega \cap B(x_0,j), \qquad t \in [0,1].
\end{equation}
We set
\[
\mathfrak C_{j,\g} = \left(\bigcup_{y \in \mathfrak X_{j,\g}} \mathfrak c_y^j \right)^{\circ} , \qquad \mathfrak X_{j,\gamma} = \set{y \in \mathfrak X_j \, : \, \mathfrak c_y^j \cap \g([0,1]) \neq \emptyset}.
\]
Then $\g([0,1]) \subset \mathfrak C_{j,\g} \subset \mathfrak C_j$. This implies that $x$ is in the same connected component of $\mathfrak C_j$ as $x_0$, so $x \in U_j$. Finally, we have $\bigcup_{j \ge j_0} U_j = \omega$ and the lemma is proved.
\end{proof}

 The next lemma generalizes~\cite[Lem.~II.6.2]{Galdi-2011}.

\begin{lemma}
The space $\dot \cD_q$ is Hilbert when equipped with the polar form of
\begin{equation}
\| [f]_{\sim} \|_{\dot \cD_q} := \|f\|_{\Wc}.
\end{equation}
\end{lemma}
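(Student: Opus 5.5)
We have to show three things: the quantity $\|[f]_\sim\|_{\dot\cD_q}$ is well defined, its polar form is an inner product, and the resulting space is complete. Here $\|f\|_{\Wc}^2 = \|\nabla f\|_{L^2}^2 + \|q^{\frac12}f\|_{L^2}^2$ is computed for $f\in\cD_q$ in the distributional sense.

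\textbf{Well-definedness and definiteness.} If $w\in\Cc_q$, then $\nabla w=0$ and $qw=0$, so $q^{\frac12}w=0$ a.e. Hence $\|f+w\|_{\Wc}=\|f\|_{\Wc}$. The polar form is clearly a nonnegative Hermitian form.

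For definiteness, suppose $\|f\|_{\Wc}=0$. Then $\nabla f=0$ in $\cD'(\Omega)$. The standard fact that a distribution with zero gradient is constant on each connected component shows that $f$ is locally constant. Also $q^{\frac12}f=0$ a.e. gives $qf=0$ a.e., so $f\in\Cc_q$ and $[f]_\sim=0$.

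\textbf{Completeness: setup on one component.} Let $\{[f_n]\}$ be Cauchy. Then $\nabla f_n\to G$ in $L^2(\Omega)^d$ and $q^{\frac12}f_n\to h$ in $L^2(\Omega)$. It suffices to treat each connected component $\omega$ of $\Omega$ separately. There are countably many components, and on each we may adjust representatives by constants; a constant is in $\Cc_q$ only if $q=0$ a.e.\ on that component, and we handle this below.

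Fix $\omega$ and use Lemma~\ref{lem:Uj} to get Lipschitz connected sets $U_1\subset U_2\subset\cdots$ exhausting $\omega$. On $U_1$ set $c_n:=|U_1|^{-1}\int_{U_1} f_n$. These averages are finite since $f_n\in L^1_{\rm loc}$ and, by the Poincar\'e inequality, $f_n\in L^2(U_1)$. Replace $f_n$ by $\tilde f_n:=f_n-c_n$.

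\textbf{Completeness: the main step (convergence in $L^2_{\rm loc}$).} By the Poincar\'e--Wirtinger inequality on each bounded connected Lipschitz set $U_j$,
\[
\|g-\langle g\rangle_{U_j}\|_{L^2(U_j)}\lesssim_j \|\nabla g\|_{L^2(U_j)} .
\]
Applying this to $g=\tilde f_n-\tilde f_m$ shows $\tilde f_n-\langle\tilde f_n\rangle_{U_j}$ is Cauchy in $L^2(U_j)$. Integrating over $U_1\subset U_j$, and using that the $U_1$-average of $\tilde f_n$ vanishes, shows $\langle\tilde f_n\rangle_{U_j}$ is Cauchy in $\C$. So $\tilde f_n$ is Cauchy in $L^2(U_j)$ for every $j$. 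A diagonal argument yields $f\in L^2_{\rm loc}(\omega)$ with $\tilde f_n\to f$ in $L^2_{\rm loc}$, and distributional limits give $\nabla f=G$ on $\omega$.

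\textbf{Completeness: the potential term.} Two cases arise on $\omega$.

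If $q=0$ a.e.\ on $\omega$, then $q^{\frac12}f_n=0$ there and constants lie in $\Cc_q$, so $[\,\cdot\,]$ is unaffected by the shift $f_n\mapsto\tilde f_n$, and $h=0=q^{\frac12}f$.

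If $q\not\equiv0$ on $\omega$, we must show the $c_n$ converge. Choose $j$ and $\varepsilon>0$ such that $E:=U_j\cap\{q>\varepsilon\}$ has positive measure. Then $q^{\frac12}f_n = q^{\frac12}\tilde f_n + c_n q^{\frac12}$. Restricted to $E$, the term $q^{\frac12}\tilde f_n$ is Cauchy in $L^1(E)$: indeed $q^{\frac12}\in L^2_{\rm loc}$ because $q\in L^1_{\rm loc}$, and $\tilde f_n$ is Cauchy in $L^2(U_j)$. Also $q^{\frac12}f_n$ converges in $L^2(E)$, hence in $L^1(E)$. Integrating over $E$ therefore shows $c_n\int_E q^{\frac12}$ converges, and $\int_E q^{\frac12}>0$, so $c_n\to c$. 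Then $f_n\to f+c$ in $L^2_{\rm loc}(\omega)$. Passing to an a.e.-convergent subsequence identifies $h=q^{\frac12}(f+c)$ on $\omega$.

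\textbf{Conclusion.} Gluing over components gives $F\in\cD_q$ with $\nabla F=G$ and $q^{\frac12}F=h$. Hence $\|[f_n]-[F]\|_{\dot\cD_q}\to0$, and $\dot\cD_q$ is complete.

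The main obstacle is the component-wise normalization: controlling the constants $c_n$ when $q\not\equiv0$ on a component, and obtaining $L^2_{\rm loc}$ convergence from gradient control alone. This is exactly where the Lipschitz exhaustion of Lemma~\ref{lem:Uj} and the Poincar\'e--Wirtinger inequality are needed.
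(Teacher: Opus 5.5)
Your proof is correct and follows the paper's argument. Definiteness comes from local constancy together with $qf=0$. Completeness is handled componentwise, using the Lipschitz exhaustion of Lemma~\ref{lem:Uj}, Poincar\'e--Wirtinger on each $U_j$, and the $L^2$-limit $h$ of $q^{\frac12}f_n$ to fix the additive constant when $q\not\equiv 0$ on the component.

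The only difference is bookkeeping. You subtract the single $U_1$-average and show the resulting sequence is Cauchy in every $L^2(U_j)$, so the limits are automatically compatible. The paper instead normalizes by the $U_j$-average on each $U_j$ separately and then glues the limits $v_j+c_j$, using $q^{\frac12}$ to remove the constant discrepancy, or choosing the constants inductively when $q=0$ on the component.
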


\begin{proof}
From the properties of $\|\cdot\|_\cW$, it follows that $\dot \cD_q$ is an inner product space.
Note therefore that if $\|f \|_{\Wc} = 0$ then in particular $\nabla f = 0$, so $f$ is locally constant.
We also have $q^\frac12 f =0$, so $qf=0$ and hence $f \in \cC_q$. Positive definiteness in $\dot \Dc_q$ is implied.

For the completeness, consider a Cauchy sequence $\{[f_n]_\sim\}_n \subset \dot D_q$, represented by a sequence $\{f_n\}_n \subset D_q$ being Cauchy w.r.t.~$\|\cdot\|_\cW$. To prove the claim, one needs to find a limit
\begin{equation}\label{eq:conv}
f \in \cD_q, \qquad \lim_{n\to \infty} \|f_n-f\|_{\Wc} \to 0.
\end{equation}
By the completeness of $L^2$-spaces, there exist $g\in L^2(\Omega)^d$ and $h\in L^2(\Omega)$ such that 
\begin{equation}\label{eq:complete}
\lim_{n\to \infty} \| \nabla f_n - g\|_{L^2} = 0, \qquad   \lim_{n\to \infty} \| q^\frac12 f_n - h\|_{L^2} =0,
\end{equation}
(implying the respective convergence in $\cD'(\Omega)^d$ and $\cD'(\Omega)$).

Let $\omega$ be a connected component of $\Omega$ and $\{U_j\}_j$ the sequence given by Lemma \ref{lem:Uj}. For every $j \in \N$, since $\left\{\nabla f_n|_{U_j}\right\}_n$ is Cauchy in $L^2(U_j)$ and $U_j$ is bounded, Lipschitz and connected, it follows from the Poincar\'e--Wirtinger inequality and a completeness argument that there exists $v_j \in L^2(U_j)$ satisfying
\begin{equation}\label{eq:lim.Poincare}
\lim_{n\to\infty} \| (f_n - \overline{f_n}^j) - v_j \|_{L^2(U_j)} = 0, \qquad \overline{f_n}^j = \frac 1 {\abs{U_j}} \int_{U_j} f_n(x) \, \dd x.
\end{equation}
As this implies convergence in $\Dc'(U_j)$, we get $\nabla v_j = g\vert_{U_j}$ in the sense of distributions.
Using that $q \in  L^1_{\rm loc}(\Omega)$, we also have
\begin{equation}\label{eq:lim.q1}
\lim_{n\to \infty} q^\frac12 (f_n - \overline{f_n}^j) = q^\frac12 v_j \quad \text{in} \quad \cD'(U_j).
\end{equation}

Assume that $q$ is not a.e.~zero on $\o$; w.l.o.g.~we can assume that it is not a.e.~zero on $U_1$ (hence on any $U_j$, $j \in \N$). From~\eqref{eq:complete} and~\eqref{eq:lim.q1}, it follows that the sequence of scalars $\{\overline{f_n}^j\}_n$ converges to some $c_j \in \C$ satisfying $c_j q^\frac12 = h - q^\frac12 v_j$ on $U_j$. Setting $\f_j = v_j + c_j$ we have on $U_j$
\[
\nabla \f_j = g \quad \text{and} \quad  q^{\frac 12} \f_j = h.
\]
Let $j,k \in \N$ with $j\leq k$. Since $\nabla(\f_j - \f_k) = 0$ on $U_j$, there exists $c \in \C$ such that $\f_j - \f_k\vert_{U_j} = c$. On the other hand, $q^{\frac 12} (\f_j-\f_k)=0$ on $U_j$, so $c = 0$. Then $\f_k$ coincides with $\f_j$ on $U_j$ and we can define a function $f_\o$ by $f_\o \vert_{U_j} = \f_j$, $j \in \N$.

Now assume that $q = 0$ on $\omega$. In this case we set $c_1 = 0$. Then, as above, we see by induction in $j \in \N$ that there exists $c_{j+1} \in \C$ such that $v_{j+1}\vert_{U_j} + c_{j+1} = v_j + c_j$. Again we set $\f_j = v_j + c_j$ and we can consider the unique function $f_\o$ which agrees with $\f_j$ on $U_j$ for all $j \in \N$.

In any case, we have a function $f_\o \in L^2_\loc(\omega)$ such that $\nabla f_n  \to \nabla f_\o$ and $q^{\frac 12} f_n \to q^{\frac 12} f_\o$ in $L^2_\loc(\omega)$. We proceed similarly on all connected components of $\Omega$. This defines a function $f \in L^2_\loc(\Omega)$ such that $\nabla f_n  \to \nabla f$ and $q^{\frac 12} f_n \to q^{\frac 12} f$ in $L^2_\loc(\Omega)$. By \eqref{eq:complete}, we deduce that $\nabla f = g$ and $q^{\frac 12} f = h$ belong to $L^2(\Omega)$, so $f \in \Dc_q$ and $[f_n]_\sim \to [f]_\sim$ in $\dot \Dc_q$ as $n \to \infty$.
\end{proof}

\begin{proof}[Proof of Proposition~\ref{prop:D_q}]
The space $\CcOm$ can be understood as a subspace of $\dot \cD_q$ by means of the linear injection
\begin{equation}
\left\{ \begin{aligned}
\CcOm & & \to & \quad \dot \cD_q \\
f & & \mapsto & \quad [f]_{\sim}.
\end{aligned}\right.
\end{equation}
The completion $\Wc$ can be then identified with the closure of this subspace in $\dot \cD_q$.
\end{proof}

\section{Separation of variables}
\label{app:separation}

\begin{lemma}
\label{lem:ext.by.inv}
Let $A$ and $B$ be two linear operators in a Hilbert space $\HH$ such that $A \subset B$. Assume further that $A^{-1}$ and $B^{-1}$ exist and are everywhere defined on $\HH$. Then $A = B$.
\end{lemma}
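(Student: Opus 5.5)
Since $A \subset B$, every $x \in \Dom(A)$ satisfies $Ax = Bx$. The only thing left to prove is the domain inclusion $\Dom(B) \subset \Dom(A)$. The argument uses injectivity of $A$ and $B$ (they have inverses) together with surjectivity of $A$ onto $\HH$ (its inverse is everywhere defined).

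Fix $y \in \Dom(B)$ and set $z := By \in \HH$. Because $A^{-1}$ is defined on all of $\HH$, the element $x := A^{-1} z$ exists and belongs to $\Dom(A)$. It satisfies $Ax = z$. The inclusion $A \subset B$ then gives $x \in \Dom(B)$ and
\begin{equation}
Bx = Ax = z = By.
\end{equation}
Since $B$ is injective (as $B^{-1}$ exists), we conclude $y = x \in \Dom(A)$. Hence $\Dom(B) = \Dom(A)$, and therefore $A = B$.

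This argument needs only injectivity of $B$ and surjectivity of $A$. Everywhere-definedness of $B^{-1}$ and any boundedness assumptions play no role, and I expect no step to be an obstacle. In the application to the separation of variables in \eqref{T.decomp}, the lemma will be used with $A$ the orthogonal sum $\bigoplus_j T_{\la,j}$ (after unitary conjugation) and $B = T_\la$, or the other way round. The genuine work there lies in checking the inclusion and the invertibility of both operators, not in this lemma.
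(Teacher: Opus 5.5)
Your proof is correct and is essentially identical to the paper's argument: pick an element of $\Dom(B)$, pull its image back through the everywhere-defined $A^{-1}$, and use $A \subset B$ together with the injectivity of $B$ to conclude it lies in $\Dom(A)$. Your remark that only the surjectivity of $A$ and the injectivity of $B$ are used also holds for the paper's proof.
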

\begin{proof}
Let $x \in \Dom(B)$ and set $y := B x$. Then $x' := A^{-1} y \in \Dom(A)$ and $A x' = y = B x$. Since we also have $A x' = B x'$ (because $B$ extends $A$ by assumption) and $B$ is injective, it follows that $x' = x$, i.e.~$x \in \Dom(A)$.
\end{proof}
\begin{lemma}
\label{lem:B.orthog.decomp}
Let $\HH, \HH_1, \HH_2$ be separable Hilbert spaces and $U:\HH_1 \otimes \HH_2 \to \HH$ a unitary operator. Let $B$ be a linear operator in $\HH$ and assume further that:
\begin{enumerate}[\upshape (i)]
\item \label{lem:B.i} there exist linear operators $B_1, B_2$ in $\HH_1, \HH_2$, respectively, such that
\begin{equation}\label{B.orthg.decomp}
B_1 \otimes I_{\HH_2} + I_{\HH_1} \otimes B_2 \subset U^{-1}BU,
\end{equation}
where the left hand side is defined on the linear span of simple tensors  in $\Dom(B_1)\otimes\Dom(B_2)$;
\item \label{lem:B.ii} there exists an orthonormal basis $\{e_j\}_{j \in \N} \subset \HH_2$ of eigenvectors of $B_2$ with corresponding eigenvalues $\{\zeta_j\}_{j \in \N}$;
\item \label{lem:B.iii} there exists $\la_0 \in \rho(B)$ such that
\begin{equation}\label{B1j.res.unif.ubound}
\la_0 \in \underset{j \in \N}{\bigcap} \rho(B_{1} + \zeta_j), \qquad \underset{j \in \N}{\sup} \| (B_{1}+ \zeta_j- \la_0)^{-1}\|_{\cB(\HH_1)} < \infty.
\end{equation}
\end{enumerate}
Consider the linear operator
\begin{equation*}
\begin{aligned}
\Dom(A) &:= \bigg\{f = \sum_{j \in \N} f_j \otimes e_j \in \HH_1 \otimes \HH_2: \; f_j \in \Dom(B_{1}), \; j \in \N,\\
&\hspace{6cm}  \sum_{j \in \N} \| (B_{1}+ \zeta_j) f_j \|_{\HH_1}^2 < \infty \bigg\},\\
A f &:= \sum_{j \in \N} \left( (B_{1}+ \zeta_j) f_j\right) \otimes e_j,
\end{aligned}
\end{equation*}
acting in $\HH_1 \otimes \HH_2$. Then $A = U^{-1} BU$ and
\begin{equation}\label{B.res.norm.orthg}
\| (B - \la_0)^{-1} \|_{\cB(\HH)} = \underset{j \in \N}{\sup} \| (B_{1} + \zeta_j - \la_0)^{-1} \|_{\cB(\HH_1)}.
\end{equation}
Moreover, in this case, for any $\la \in \C$ one has $\la \in \rho(B)$ if and only if $\la$ satisfies \eqref{B1j.res.unif.ubound} (with $\la_0=\la$) and then formula \eqref{B.res.norm.orthg} holds (with $\la_0=\la$).

\end{lemma}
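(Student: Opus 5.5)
The plan is to build the inverse of $A-\la_0$ explicitly, show that it is bounded, and then compare $A$ with $U^{-1}BU$ via Lemma~\ref{lem:ext.by.inv}. Since $\{e_j\}_j$ is an orthonormal basis of $\HH_2$, every $f \in \HH_1 \otimes \HH_2$ has a unique expansion $f = \sum_j f_j \otimes e_j$ with $\|f\|^2 = \sum_j \|f_j\|^2_{\HH_1}$. This identifies $\HH_1 \otimes \HH_2$ isometrically with $\ell^2(\N;\HH_1)$, and under this identification $A$ is the diagonal operator $\bigoplus_j (B_1+\zeta_j)$ with its maximal domain.

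First I show that $A$ is closed and determine its resolvent. Put $M := \sup_j \|(B_1+\zeta_j-\la_0)^{-1}\|$, which is finite by \eqref{B1j.res.unif.ubound}. Define
\[
R g := \sum_j (B_1+\zeta_j-\la_0)^{-1} g_j \otimes e_j .
\]
Then $\|Rg\| \le M\|g\|$. Moreover $Rg \in \Dom(A)$, because $(B_1+\zeta_j)(B_1+\zeta_j-\la_0)^{-1}g_j = g_j + \la_0 (B_1+\zeta_j-\la_0)^{-1} g_j$ is square summable over $j$. One checks directly that $(A-\la_0)R = I$ and $R(A-\la_0) = I_{\Dom(A)}$. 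Hence $\la_0 \in \rho(A)$ and $\|(A-\la_0)^{-1}\| = \sup_j \|(B_1+\zeta_j-\la_0)^{-1}\|$. The inequality $\le$ has just been shown. For $\ge$, test $R$ on vectors $g \otimes e_j$, for which $\|R(g\otimes e_j)\| = \|(B_1+\zeta_j-\la_0)^{-1}g\|$.

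The main step is the inclusion $A \subset U^{-1}BU$; I expect this to be the principal obstacle, because hypothesis \ref{lem:B.i} only gives agreement on finite sums of simple tensors. My approach is a closure argument. The operator $U^{-1}BU$ is closed, since $\la_0 \in \rho(B)$. By \ref{lem:B.i} it agrees with $A$ on the span $\mathcal{S}$ of $\{h \otimes e_j : h \in \Dom(B_1)\}$, where we use $B_2 e_j = \zeta_j e_j$. Now take $f \in \Dom(A)$ and its truncations $f^N = \sum_{j\le N} f_j\otimes e_j \in \mathcal{S}$. Then $f^N \to f$ and $Af^N \to Af$, since both are tails of convergent orthogonal series. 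By closedness, $f \in \Dom(U^{-1}BU)$ and $U^{-1}BUf = Af$. Therefore $A - \la_0 \subset U^{-1}(B-\la_0)U$. Both operators are bijective, with everywhere defined inverses, so Lemma~\ref{lem:ext.by.inv} yields $A = U^{-1}BU$. Formula \eqref{B.res.norm.orthg} then follows by unitary invariance from the norm computed above.

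It remains to characterize $\rho(B)$ for an arbitrary $\la$. Now that $A = U^{-1}BU$ is established, we have $\la \in \rho(B)$ if and only if $\la \in \rho(A)$. If $\la$ satisfies \eqref{B1j.res.unif.ubound}, the construction of $R$ above, carried out with $\la$ in place of $\la_0$, shows that $\la \in \rho(A)$. Conversely, suppose $\la \in \rho(A)$. The operator $A$ leaves each fibre $\HH_1 \otimes e_j$ invariant and commutes with the orthogonal projections onto these fibres, so $(A-\la)^{-1}$ commutes with them as well. Restricting to the $j$th fibre shows that $B_1+\zeta_j-\la$ is bijective with inverse of norm at most $\|(A-\la)^{-1}\|$, uniformly in $j$. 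Finally, the norm identity follows exactly as in the argument for $\la_0$.
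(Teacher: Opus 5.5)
Your proposal is correct and follows the paper's proof essentially step by step. It uses the same explicit diagonal inverse $\sum_j (B_1+\zeta_j-\la_0)^{-1}g_j\otimes e_j$, the same closure argument via finite sums $\sum_{j\le N} f_j\otimes e_j$ together with the closedness of $U^{-1}BU$, and Lemma~\ref{lem:ext.by.inv} to upgrade the inclusion to an equality. Your fibre-projection argument for the converse direction of the final claim ($\la\in\rho(B)$ implies the uniform resolvent bound) is a useful addition. The paper only says to repeat the argument with $\la_0=\la$, and that covers only the other direction.
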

\begin{proof}
It is evident from the definition of $A$ that the finite sums
\begin{equation}
f_N := \sum_{j=1}^N f_j \otimes e_j, \qquad f_j \in \Dom (B_1), \qquad j=1, \dotsc, N, \qquad N \in \N,
\end{equation}
form a core of $A$. The inclusion $A\subset U^{-1}BU$ then readily follows since $U^{-1}BU$ is closed, the $f_N$ lie in its domain (they are in the domain of the tensor in~\eqref{B.orthg.decomp}) and
\begin{equation}
Af_N = \sum_{j =1}^N \left( (B_{1}+ \zeta_j) f_j\right) \otimes e_j = \sum_{j=1}^N \left( B_{1} f_j\right) \otimes e_j + \sum_{j =1}^N f_j\otimes (\zeta_j e_j) = U^{-1}BU f_N.
\end{equation}

By assumption, $\{e_j\}_{j \in \N}$ is an orthonormal basis in $\HH_2$ and hence we can write any $g \in \HH_1 \otimes \HH_2$ as
\begin{equation}
g = \sum_{j \in \N} g_j \otimes e_j, \qquad g_j \in \HH_1,  \qquad \| g \|_{\HH_1 \otimes \HH_2}^2 = \sum_{j \in \N} \| g_j \|_{\HH_1}^2,
\end{equation}
(see e.g.~\cite[Prop.~II.4.2]{Reed1} and note that the $g_j$ are uniquely determined). Let us define the operator
\begin{equation*}
C_{\la_0} g := \sum_{j \in \N} ((B_{1} + \zeta_j - \la_0)^{-1} g_j) \otimes e_j, \qquad g \in \HH_1 \otimes \HH_2.
\end{equation*}
It is clear from \eqref{B1j.res.unif.ubound} that $C_{\la_0}$ is well-defined and bounded on $\HH_1 \otimes \HH_2$. Moreover, we have $C_{\la_0} g \in \Dom(A)$ for every $g \in \HH_1 \otimes \HH_2$ and
\begin{equation*}
(A - \la_0) C_{\la_0} g = \sum_{j \in \N} g_j \otimes e_j = g.
\end{equation*}
Similarly, for $g \in \Dom(A)$ we have
\begin{equation}
C_{\la_0} (A - \la_0) g = \sum_{j \in \N} g_j \otimes e_j = g.
\end{equation}
It follows that $\la_0 \in \rho(A)$ with $(A - \la_0)^{-1} = C_{\la_0}$. By Lemma~\ref{lem:ext.by.inv}, we conclude $A = U^{-1}B U$ and hence
\begin{equation}
(B - \la_0)^{-1} = U(A - \la_0)^{-1}U^{-1} =  UC_{\la_0}U^{-1}.
\end{equation}
The equality \eqref{B.res.norm.orthg} then follows since for any $g \in \HH_1 \otimes \HH_2$ we have
\begin{equation}
\| C_{\la_0} g \|_{\HH_1 \otimes \HH_2}^2 = \sum_{j \in \N} \| (B_{1}  + \zeta_j - \la_0)^{-1} g_j \|_{\HH_1}^2.
\end{equation}

After having established the unitary equivalence of $A$ and $B$, the final claim follows by repeating the above argument with $\la_0 = \la$.
\end{proof}

To justify \eqref{T.decomp} for a fixed $\la \in \C\setminus(-\infty,0]$, we apply Lemma~\ref{lem:B.orthog.decomp}. We begin by noting that
\begin{equation*}
U: \left\{ \begin{aligned}
\, \Lt(\R) \otimes \Lt(-1, 1) &\to \LtOm\\
f \otimes g \qquad \,\, \, \quad &\mapsto f(x) g(y)
\end{aligned} \right.
\end{equation*}
determines a unique unitary operator from $\HH_1 \otimes \HH_2 := \Lt(\R) \otimes \Lt(-1, 1)$ onto $\HH := \LtOm$ (see e.g.~\cite[Thm.~II.10~(a)]{Reed1}). Take $B := T_{\la}$ as in \eqref{Tla.noGCC} and
\begin{align}
\label{B1.def}B_1 &:= -\partial_x^2 + \la x^{2n} + \la^2,  & \Dom(B_1) & := H^2(\R) \cap \Dom(x^{2n}),\\
\label{B2.def}B_2 &:= -\partial_y^2,  & \Dom(B_2) & := H^2(-1,1) \cap H_0^1(-1,1).
\end{align}
One can then verify the inclusion in Lemma~\ref{lem:B.orthog.decomp} \ref{lem:B.i}. Indeed, for simple tensors $f \otimes g$ with $f \in \Dom(B_1)$ and $g \in \Dom (B_2)$, one can prove that
\begin{equation}
U(f \otimes g) \in \Dom (T_\la) = H^2(\Omega) \cap H_0^1(\Omega) \cap \Dom \left(x^{2n}\right)
\end{equation}
and moreover that
\begin{equation}
\begin{aligned}
T_\la U (f \otimes g) & = \left( - f'' + \la x^{2n}f + \la^2 f\right)(x)  g (y) + f(x) (-g'')(y) \\
& = U \left((B_1f) \otimes g + f \otimes (B_2g) \right).
\end{aligned}
\end{equation}
This extends to the linear span of simple tensors and assumption \ref{lem:B.i} follows. We also recall that $B_2$ has a complete orthonormal set of eigenvectors
\begin{equation}\label{B2.ef}
e_j(y) = c_j \left\{
\begin{aligned}
&\cos\left(\frac{\pi}{2} j y\right), \; &j = 1, 3, 5, \dots,\\
&\sin\left(\frac{\pi}{2} j y\right), \; &j = 2, 4, 6, \dots,
\end{aligned}
\right.
\end{equation}
with corresponding eigenvalues
\begin{equation}\label{B2.ev}
\zeta_j = \left(\frac{j \pi}{2} \right)^2, \qquad j \in \N.
\end{equation}
Finally, to verify the assumption \ref{lem:B.iii}, consider first the case $\Im \la \ge 0$. By simple geometric considerations, one sees that
\begin{equation}
\Num (T_\la) \cup \bigcup_{j \in \N} \Num (B_1+ \zeta_j) \subset \left\{ z \in \C : \Im z \ge \Im (\la^2) \right\}.
\end{equation}
It follows that all $\mu \in \C$ with $\Im \mu < \Im (\la^2)$ are in the resolvent set of $T_\la$ and $B_1+ \zeta_j$ for all $j \in \N$. Moreover, for such $\mu$ one has the resolvent bound
\begin{equation}
\| (B_1 + \zeta_j - \mu)^{-1}\|_{\cB(L^2(\R))} \le \frac1{\Im (\la^2) - \Im \mu}.
\end{equation}
Hence the assumption \ref{lem:B.iii} holds (uniformly) on any half plane $\Im \mu \le \Im (\la^2) - \eps$ with $\eps >0$. An analogous claim is true in the case $\Im \la <0$.

From the application of Lemma~\ref{lem:B.orthog.decomp} it follows that $T_\la$ is unitarily equivalent to an infinite direct sum of operators
\begin{equation}
U^{-1}T_\la U = \bigoplus_{j \in \N} \, (B_1 + \zeta_j) \otimes I_{\lspan \{e_j\}}
\end{equation}
acting in the direct sum of spaces
\begin{equation}
\bigoplus_{j \in \N} L^2 (\R) \otimes \lspan \{e_j\} = 	L^2(\R) \otimes L^2(-1,1).
\end{equation}
From this, however,~\eqref{T.decomp} follows immediately since
\begin{equation}
L^2 (\R) \otimes \lspan \{e_j\} \simeq L^2(\R), \qquad (B_1 + \zeta_j) \otimes I_{\lspan \{e_j\}} \simeq B_1 + \zeta_j = T_{\la,j}.
\end{equation}
In the above, for linear operators $T_1$ and $T_2$ acting in Hilbert spaces $\HH_1$ and $\HH_2$, respectively, we write $\HH_1 \simeq \HH_2$  if there exists a unitary operator $V : \HH_1 \to \HH_2$, as well as $T_1 \simeq T_2$ if moreover $VT_1 V^{-1} = T_2$. Note that the resolvent formula~\eqref{B.res.norm.orthg} now translates into~\eqref{Tla.res.orth}.

\bibliography{references}
\bibliographystyle{acm}

\end{document}